\documentclass[11pt]{amsart}
\usepackage{amsmath,amsthm,amssymb,mathtools,xcolor,mathrsfs}
\usepackage[final]{microtype}
\usepackage{hyperref}
\hypersetup{pdfpagemode={UseOutlines},	bookmarksopen, pdfstartview={FitH},	colorlinks,	linkcolor={black},	citecolor={black},	urlcolor={black}}
\usepackage[top=3.5cm, bottom=3.5cm, left=3cm, right=3cm]{geometry}

\newtheorem{thm}{Theorem}[section]
\newtheorem{corollary}[thm]{Corollary}
\newtheorem{proposition}[thm]{Proposition}
\newtheorem{theorem}[thm]{Theorem}
\newtheorem{lemma}[thm]{Lemma}

\theoremstyle{definition}
\newtheorem{definition}[thm]{Definition}
\newtheorem{example}[thm]{Example}
\theoremstyle{remark}
\newtheorem{remark}[thm]{Remark}
\numberwithin{equation}{section}
\newcommand{\RR}{\mathbb{{R}}}
\newcommand{\ZZ}{\mathbb{{Z}}}
\newcommand{\NN}{\mathbb{{N}}}
\newcommand{\TT}{\mathbb{{T}}}
\newcommand{\CC}{\mathbb{{C}}}

\newcommand{\ran}{\operatorname{ran}}

\newcommand{\linearOp}{\mathcal{L}}

\newcommand{\SC}{\mathcal{SC}}
\newcommand{\SQC}{\mathcal{SQC}}
\newcommand{\e}{e}
\newcommand{\maxone}[1]{\max\left\{1,#1\right\}}
\newcommand{\minone}[1]{\min\left\{1,#1\right\}}
\newcommand{\finiteN}{\mathcal N}

\DeclareMathOperator*{\motimes}{\text{\raisebox{0.25ex}{\scalebox{0.7}{$\bigotimes$}}}}

\newcommand\Real{{\mathfrak R}{\mathfrak e}\,} %

\title[Similarity to contraction semigroups on infinite tensor products]{On similarity to contraction semigroups and tensor products, II: Infinite tensor products}
\author{J.\ Oliva-Maza}
\thanks{The first author has been supported by XXXV Scholarships for Postdoctoral Studies, Ram\'on Areces Foundation, by Project PID2022-137294NB-I00, DGI-FEDER, of the MCYTS and Project E48-23R, D.G. Arag\'on, Universidad de Zaragoza, Spain.}
\author{Y.\ Tomilov}
\thanks{The second author was supported by the NCN Opus grant UMO-2023/49/B/ST1/01961, by the NAWA/NSF grant BPN/NSF/2023/1/00001 and the NCN Weave-Unisono grant 2024/06/Y/ST1/00044.}
\subjclass[2020]{Primary 47D03, 47A65; Secondary 47A20, 47A80}
\keywords{infinite tensor products, semigroups, similarity, contractions, Hilbert spaces}
\date{}

\begin{document}

\begin{abstract}
We develop a framework for infinite tensor products of Hilbert spaces, operators, and semigroups tailored to questions of similarity to contraction semigroups. On the operator-theoretic side, we give a systematic treatment of incomplete infinite tensor products, including criteria for existence, non-vanishing, and continuity properties of the associated tensor product semigroups. On the semigroup-theoretic side, we prove a low-regularity similarity theorem showing that global quasi-contractive control together with local contractive information at one positive time implies similarity to a contraction semigroup, with explicit bounds on the similarity constant. These ingredients are then combined to obtain infinite analogues of the finite tensor-product splitting principle for similarity to contraction and quasi-contraction semigroups.  We also clarify the role of complete tensor products and show, in particular, that whenever a complete infinite tensor product of semigroups is a \(C_0\)-semigroup, it decomposes along incomplete tensor-product components.
\end{abstract}

\maketitle
\tableofcontents

\section{Introduction}

Contractions on Hilbert spaces form one of the central and best understood classes of operators
in analysis. Their structure is intertwined with dilation theory, functional models, and a broad
family of norm estimates. For this reason, the problem of understanding \emph{similarity} to
contractions has occupied operator theory for a long time: one asks when an operator, or a
semigroup of operators, can be reduced by an invertible change of Hilbertian norm to a
contractive one. Such a reduction preserves the algebraic and spectral dynamics while moving
the problem into a substantially more tractable analytic framework. In the semigroup setting
this is particularly attractive, since contraction semigroups are one of the basic outputs of the
Lumer--Phillips theorem and therefore form a natural class in the abstract treatment of evolution
equations.

If $H$ is a Hilbert space and $\mathcal T=(T(t))_{t\ge 0}\subset \linearOp(H)$ is a semigroup, we say that
$\mathcal T\in \SC(H)$ if there exist an invertible operator $R\in \linearOp(H)$ and a contraction semigroup
$\mathcal{S}=(S(t))_{t\ge 0}$ on $H$ such that
\[
T(t)=R^{-1}S(t)R,\qquad t\ge 0.
\]
Equivalently, there exists an equivalent Hilbertian norm $\|\cdot\|_{\rm eq}$ on $H$ such that
\[
\|T(t)h\|_{\rm eq}\le \|h\|_{\rm eq},\qquad h\in H,\ t\ge 0.
\]
In other words, similarity of a semigroup to a contraction semigroup is equivalent to the
existence of an equivalent inner product on $H$ whose associated norm makes the whole semigroup
contractive. This reformulation is fundamental for the paper and will be used repeatedly,
often implicitly, in both directions. The equivalence is basic, but it already underlines the
central difficulty: similarity for a semigroup is a \emph{joint} property of the whole family
$(T(t))_{t\ge 0}$. In contrast to the
discrete case, similarity of each individual operator $T(t)$ to a contraction does not, in
general, imply that the semigroup itself belongs to $\SC(H)$. The primary quantitative object in
this paper is therefore the similarity constant
\begin{align*}
	\mathcal C(\mathcal T) := \inf\Bigl\{\|R\|\,\|R^{-1}\| \, :\, R\in \linearOp(H)\ \text{invertible}, \,
	\|RT(t)R^{-1}\|\le 1\ \text{for all } t\ge 0 \Bigr\}.
\end{align*}
We set $\mathcal C(\mathcal T)=\infty$ if $\mathcal T\notin \SC(H)$. If $\mathcal T \in \mathcal{SC}(H)$, then the infimum above is attained; see e.g.
\cite[Section~2]{OlivaMazaTomilovSimilarity}. Thus the problem is not only
whether a contractive renorming exists, but whether it can be chosen uniformly for all times.

If $\|\cdot\|_\sharp$ is an equivalent Hilbertian norm on $H$, we denote by
\[
\mathcal C_H(\|\cdot\|_\sharp)
=
\inf\Bigl\{MN:\ \|h\|\le M\|h\|_\sharp,\ \|h\|_\sharp\le N\|h\|\ \text{for all }h\in H\Bigr\}.
\]
its distortion constant with respect to the original Hilbert norm. Then, if $R$ is any bounded and invertible operator such that $\|h\|_\sharp=\|Rh\|$ for all $h\in H$, we have that
\[
\mathcal C_H(\|\cdot\|_\sharp)
= \|R\|\,\|R^{-1}\|.
\]
Hence,
\[
\mathcal C(\mathcal T)
=
\inf\Bigl\{
\mathcal C_H(\|\cdot\|_\sharp):
\|T(t)h\|_\sharp\le \|h\|_\sharp\ \text{for all }h\in H,\ t\ge 0
\Bigr\}.
\]
Thus the similarity constant of a semigroup is obtained by minimizing the distortion of all
equivalent Hilbertian norms that make the semigroup contractive.

A natural enlargement of $\SC(H)$ is provided by quasi-contraction semigroups. Recall that a
semigroup $\mathcal{T}=(T(t))_{t\ge 0}$ is called a quasi-contraction semigroup if there exists
$\lambda\in\mathbb R$ such that
\[
\|T(t)\|\le e^{\lambda t},\qquad t\ge 0,
\]
or equivalently, if the rescaled semigroup $(e^{-\lambda t}T(t))_{t\ge 0}$ is contractive. We
denote by $\SQC(H)$ the class of semigroups similar to quasi-contraction semigroups. For
$d\in\mathbb R$ and a semigroup $\mathcal T=(T(t))_{t\ge 0}$, we write
\[
e_d\mathcal T := (e^{dt}T(t))_{t\ge 0}.
\]
This class arises naturally throughout semigroup theory and PDE, but in the present context it plays
a more structural role: it separates the global growth of the semigroup from the local
contractive information carried by one operator $T(\tau)$, $\tau>0$.

The present paper continues two earlier lines of work. In
\cite{OlivaMazaTomilovSimilarity}, semigroups similar to contraction semigroups were studied
from a structural point of view. One of the main outcomes was that, in the $C_0$-case,
similarity to a contraction semigroup can be recovered by combining two pieces of
information: a quasi-contractive renorming of the whole semigroup and a contractive
renorming of one operator $T(\tau)$ at a single positive time. 
In \cite{OlivaMazaTomilovTensorI}, finite tensor products of
semigroups were considered, and a
splitting phenomenon was established: after a suitable balancing rescaling, similarity of a
tensor product semigroup to a contraction semigroup is equivalent to the corresponding
similarity of its factors. For a more exhaustive discussion of the problem of similarity to contraction semigroups,
including the current state of the art and its motivations, we refer to
\cite{OlivaMazaTomilovSimilarity,OlivaMazaTomilovTensorI}. For broader operator-theoretic background on similarity problems and
completely bounded methods, we refer to the monographs
\cite{paulsen2002completely,pisier2001similarity}. For classical dilation-theoretic and model-theoretic
background in the single-operator setting, we refer to
\cite{nikolski2002operators2,sznagy2010harmonic}. On the semigroup side, antecedents
relevant to the present circle of ideas include the dilation and similarity results for
Hilbert-space semigroups in \cite{arendt2001functional,arhancet2017isometric,chernoff1976two,lemerdy1996dilation,lemerdy1998similarity} and in \cite[Chapter~7]{haase2007functional}.

The finite tensor-product results of \cite{OlivaMazaTomilovTensorI} may be summarized
schematically as follows.  If \(\mathcal T_n=(T_n(t))_{t\geq 0}\) are semigroups on
Hilbert spaces \(H_n\), then,
after a suitable balancing of scalar
exponential factors, similarity of the tensor product semigroup $\otimes_{n=1}^{N} \mathcal T_n$
to a
contraction semigroup on \( \motimes_{n=1}^{N} H_n \) splits into the corresponding similarity properties of
the factors:
\[
\motimes_{n=1}^{N} \mathcal T_n \in \mathcal{SC}\left(\motimes_{n=1}^N H_n\right)
\]
if and only if
\[
\exists d_1,\ldots,d_N\in\mathbb R,\quad
\sum_{n=1}^{N}d_n=0,
\quad e_{d_n}\mathcal T_n\in \mathcal{SC}(H_n)
\quad (1\leq n\leq N).
\]
The condition \(\sum_{n =1}^N d_n=0\) expresses the fact that the scalar rescalings cancel on
the tensor product.  Thus the finite theorem is not merely a permanence
result.  It says that the tensor-product structure is rigid enough to force
similarity to split.

This rigidity is special to tensor products. It fails for general products
of commuting semigroups, and this makes the tensor-product case a useful
testing ground for similarity phenomena.
Moreover, the finite theorem already points towards the infinite problem in
two ways.  First, its proof is based on changing the Hilbertian geometry on
each factor and then tensoring the resulting norms.  For finitely many factors
this creates no additional restriction, but for countably many factors the
renormings themselves must be tensorable.  Thus the similarity constants can
no longer be treated as harmless finite constants; their infinite product
becomes part of the structure.

Second, the balancing parameters which are finite-dimensional in \cite{OlivaMazaTomilovTensorI}
become an infinite sequence.  The finite condition
\(
\sum_{n=1}^{N} d_n=0
\)
has to be replaced by an absolutely summable cancellation condition
\[
(d_n)_{n\in\mathbb N}\in\ell^1,
\qquad
\sum_{n\in\mathbb N}d_n=0.
\]
Thus the expected infinite analogue cannot merely require
\[
e_{d_n} \mathcal T_n\in \mathcal{SC}(H_n)
\qquad (n\in\mathbb N).
\]
It must also contain quantitative convergence conditions, of the form
\[
(d_n)_{n\in\mathbb N}\in\ell^1,\qquad
\sum_{n\in\mathbb N} d_n=0,\qquad
\prod_{n\in\mathbb N} \mathcal C(e_{d_n} \mathcal T_n)<\infty .
\]



There is, however, a second difficulty which has no finite counterpart.
Before one can ask whether the infinite tensor product of semigroups
\[
\motimes_{n\in\mathbb N} \mathcal T_n
\]
is similar to a contraction semigroup on $\motimes_{n\in\mathbb N} H_n$,
one must know whether this infinite tensor product is well defined and has
the required continuity properties. The same problem arises for incomplete
infinite tensor products $\motimes_{n\in\mathbb N}^{\boldsymbol{y}} H_n$, defined in terms
of the reference vectors \(\boldsymbol{y}=(y_n)_{n\in\mathbb N}\), and for the corresponding
semigroups
\[
\motimes_{n\in\mathbb N}^{\boldsymbol{y}} \mathcal T_n.
\]

At the operator level, as we show below, the basic conditions involve both
norm control and compatibility with \(\boldsymbol{y}\):
\[
\prod_{n\in\mathbb N}\maxone{\|T_n(t)\|}<\infty,
\qquad
\sum_{n\in\mathbb N}
\bigl|1-\langle T_n(t)y_n,y_n\rangle\bigr|<\infty,
\qquad t>0.
\]
Thus incomplete tensor products are governed by two independent mechanisms:
growth of the factors, and preservation, up to a summable error, of the
chosen reference vector.

The classical theory of infinite tensor products provides the Hilbert-space
background.  The basic definitions of complete and incomplete infinite tensor
products go back to von Neumann~\cite{von1939infinite}, while the incomplete
setting was further elaborated in
\cite{guichardet1966produits,guichardet1969tensor}.
Related constructions of incomplete tensor products can be found, for example,
in \cite{berezansky1995spectral,rudol1992conditionally}, and
\cite{ng2013genuine} offers, in particular, an alternative approach to the
classical theory.  For tensor products of operators,
\cite{nakagami1970infinite,nakagami1974infinite} provide the classical
complete-space framework, while \cite{gill1978infinite} discusses related
Banach-space aspects, and \cite{krajczok2024examples} addresses products of
unbounded operators.  Thus the subject is classical and has appeared in
several different forms.  However, the available results do not provide a
systematic framework for the questions considered here, especially in the
incomplete infinite tensor-product setting.  They either cover rather specific
situations or are not in a form suited to the operator- and semigroup-theoretic
purposes of the present paper.

With few exceptions, notably the Banach-space approach of
\cite{gill1978infinite}, the literature on infinite products of operators and
semigroups remains largely confined to very restrictive settings.  In
particular, infinite products of unitary groups, and more generally infinite
tensor products of unitary representations, have received considerable
attention; see, for instance,
\cite{aita1997amenable,badea2017kazhdan,bedos2004infinite,
	bergmann2003asymptotic, bergmann2003induced, Kraus, reents1974infinite}.
However, the unitary case does not involve the norm-growth phenomena which
are central for general bounded operators and semigroups.  We return to these
points in Sections~\ref{sec:tensor-def} and~\ref{sec:incomplete-operators-semigroups}.

For the purposes of this paper, the missing theory has three parts: existence
and non-vanishing for incomplete tensor products of operators, regularity for
incomplete tensor product semigroups, and the low-regularity phenomena which
arise for semigroups on complete tensor products.
In particular, pathologies occur already at a basic level:
an incomplete tensor product $\otimes_{n\in\NN}^{\boldsymbol{y}}A_n$ of bounded operators $(A_n)_{n\in\NN}$ may collapse to the zero
operator, and a complete infinite tensor product $\otimes_{n\in\NN} \mathcal T_n$  of semigroups $(\mathcal T_n)_{n\in\NN}$ may fail to be
strongly measurable even when $\mathcal T_n$ extend to uniformly continuous unitary
groups, as illustrated in Section~\ref{sec:incomplete-operators-semigroups}.  For this reason, the infinite theory is
not simply the finite theory with more indices.  It requires a framework in
which incomplete tensor products of Hilbert spaces, operators, and semigroups
are treated simultaneously, and in which similarity estimates are
quantitative enough to survive tensoring.

Moreover, the relevance of infinite tensor products goes beyond the present
semigroup setting.  They are classical tools in operator algebra theory,
especially in the study of von Neumann algebras and their representations;
see, for instance, Takesaki~\cite{Takesaki} or
Kadison--Ringrose~\cite{Kadison}.  They also arise naturally in questions of
self-adjointness and spectral theory
\cite{berezansky1995spectral, Reed1970SelfAdjointness}, in the theory of
reproducing kernels \cite{gnewuch2022countable, gnewuch2024infinite}, in the representation
theory of infinite quantum systems \cite{Bratteli}, in \(E_0\)-semigroup and
product system theory \cite{arveson2001infinite,Floricel}, and 
in constructions from mathematical physics, including quantum general
relativity, see, for instance, \cite{sahlmann, ThiemannWinkler2001}. The references cited here are meant
as representative examples rather than as an exhaustive account.

The novelty of the paper lies in combining a low-regularity similarity
theorem for a single semigroup with a systematic operator/semigroup theory
of incomplete infinite tensor products, and in using this combination to
obtain infinite splitting theorems within and  beyond the classical \(C_0\)-framework.

%

\subsection{Main results}

The logic of the paper is as follows. Although the infinite tensor-product theorems are the main
target, the first main conceptual result has to be an abstract low-regularity similarity theorem.
Only after such a theorem is available can one treat infinite tensor products in a satisfactory
generality. In particular, the similarity theorem is not parallel to the tensor-product part,
nor merely complementary to it: it is one of the basic tools that make the latter possible.

Our first main theorem is therefore a similarity theorem for a single semigroup under low
regularity assumptions. Let $\mathcal{T}=(T(t))_{t\ge 0}\subset \linearOp(H)$ be a semigroup on a Hilbert space
$H$ (not assumed measurable). Suppose that there exist equivalent Hilbertian norms
$\|\cdot\|_1$, $\|\cdot\|_2$ on $H$, constants $C_1,C_2\ge 1$, and parameters
$\lambda,\tau>0$ such that
\[
\|h\|\le \|h\|_j\le C_j\|h\|,\qquad h\in H,\ j=1,2,
\]
\[
\|T(t)\|_1\le e^{\lambda t},\qquad t\ge 0,
\qquad\text{and}\qquad
\|T(\tau)\|_2\le 1.
\]
Thus the whole semigroup is quasi-contractive in one Hilbertian geometry, while one time slice
is contractive in another. Theorem~\ref{Th:eqNorm} shows that these two pieces of information can be
combined into joint similarity to contractions:
\[
\mathcal T\in \SC(H).
\]
Moreover, the resulting similarity constant is controlled explicitly. More precisely,
Proposition~\ref{Prop:simConstant} yields the estimate
\[
\mathcal C(\mathcal T) \le C_1\,\frac{\sinh (2\lambda)}{\lambda}
+ 2\sqrt{2}\,C_2\maxone{\sqrt{\tau}}
\sup_{t\in[0,\maxone{\tau}]}
\|T(t)\|^2,
\]
with a corresponding asymptotic simplification. In this form, the result may be viewed as a
low-regularity version of the main criterion from
\cite{OlivaMazaTomilovSimilarity}: the conclusion is the same, but the hypotheses are adapted
to the irregular setting that naturally appears for infinite tensor products.


The second group of results concerns the operator- and semigroup-theoretic
structure of infinite tensor products themselves.  These results are independent
of the similarity theorem in spirit, but indispensable for the infinite splitting
arguments.
We first recall the complete and
incomplete infinite tensor products of Hilbert spaces in a form adapted to
countable tensor products, and we record how finite tensor products embed
as special incomplete infinite tensor products.  Thus the finite tensor-product
splitting theorem from \cite{OlivaMazaTomilovTensorI} is not an external object but sits naturally
inside the framework developed here.

At the Hilbert-space level, we prove a tensorization result for equivalent
Hilbertian norms.  If each \(H_n\) is renormed by an equivalent Hilbertian
norm with distortion \(C_n\), then the corresponding complete and incomplete
tensor norms are equivalent precisely under the expected product condition
\[
\prod_{n\in\mathbb N} C_n <\infty,
\]
and the resulting tensor distortion is equal to this product.  This
elementary-looking fact is one of the quantitative mechanisms behind the
later similarity arguments: it explains why convergence of products of
similarity constants appears naturally in the infinite theory.

We then develop the operator theory of incomplete infinite tensor products.
For a sequence of bounded operators \(A_n\in\mathcal L(H_n)\) and a reference
\(C_0\)-sequence \(\boldsymbol{y}=(y_n)_{n\in\mathbb N}\), the central question is whether
\[
\motimes_{n\in\mathbb N}^{\boldsymbol{y}} A_n
\]
defines a bounded nonzero operator on
\[
\motimes_{n\in\mathbb N}^{\boldsymbol{y}} H_n .
\]
The answer involves two independent requirements: control of the upper norm
growth of the factors and compatibility with the reference component.  In
its simplest form these conditions are reflected by
\[
\prod_{n\in\mathbb N}\max\{1,\|A_n\|\}<\infty,
\qquad
\sum_{n\in\mathbb N}
\bigl|1-\langle A_n y_n,y_n\rangle\bigr|<\infty .
\]
The results in Section~\ref{sec:incomplete-operators-semigroups} make this precise, distinguish the factorwise and
strong-limit constructions, and show in particular that these two natural
ways of forming incomplete tensor products of operators are not equivalent
in general.  They also relax earlier sufficient conditions from the existing
literature and clarify the possibility that an incomplete product may exist
but collapse to the zero operator.

Passing from operators to semigroups gives another part of the tensor-product
theory.  Using the operator criteria, we characterize when
\[
\motimes_{n\in\mathbb N}^{\boldsymbol{y}} \mathcal T_n
\]
is a well-defined nonzero semigroup on
\[
\motimes_{n\in\mathbb N}^{\boldsymbol{y}} H_n .
\]
We then show that the usual regularity properties behave well in the
incomplete setting: weak measurability, strong measurability, and
\(C_0\)-continuity are inherited from the factors under the natural
nondegeneracy and local boundedness assumptions.  In particular, in the
Hilbert-space setting the strong continuity of the incomplete tensor product
does not require an additional summability condition of the form
\[
\sum_{n\in\mathbb N}
\sup_{t\in[0,\tau]}\|T_n(t)y_n-y_n\|<\infty,
\]
which is often imposed in earlier treatments.  Thus incomplete tensor
products provide a robust environment for semigroup constructions beyond
the unitary case.

Finally, we analyze complete infinite tensor products of semigroups.  This
part has a rather different flavour.  Weak measurability is preserved by the
complete tensor product construction, but strong measurability and
\(C_0\)-regularity are much more rigid.  We show that if a complete tensor
product semigroup is strongly continuous, then its action is forced to
respect the decomposition of the complete tensor product into incomplete
tensor-product components.  More precisely, each incomplete component is
invariant under the complete \(C_0\)-semigroup, and the complete product
decomposes as the orthogonal direct sum of the corresponding incomplete
tensor product semigroups.  Hence, in the \(C_0\)-case, the complete theory
does not replace the incomplete theory; rather, it reduces structurally to it.

With this tensor-product framework in place, the first application to
similarity is the infinite analogue of the splitting theorem from the finite
tensor paper.  Let \(\mathcal T_n=(T_n(t))_{t\geq0}\) be semigroups on Hilbert spaces
\(H_n\), let \(\boldsymbol{y}=(y_n)_{n\in\mathbb N}\) be a \(C_0\)-sequence, and assume that
the incomplete tensor product semigroup
\[
\motimes_{n\in\mathbb N}^{\boldsymbol{y}} \mathcal T_n
\]
is well-defined and nonzero.  Theorem~\ref{thm:main_incomplete} gives a two-sided splitting result,
but with asymmetric quantitative conclusions.

In the sufficient direction, if each \(\mathcal T_n\) is locally bounded on
\((0,\infty)\), and if there exists a real sequence \((d_n)_{n\in\mathbb N}\)
such that
\[
\sum_{n\in\mathbb N}|d_n|<\infty,
\qquad
\sum_{n\in\mathbb N}d_n=0,
\]
\[
e_{d_n} \mathcal T_n\in \mathcal{SC}(H_n)
\quad (n\in\mathbb N),
\qquad
\prod_{n\in\mathbb N} \mathcal C(e_{d_n}\mathcal T_n)<\infty,
\]
then
\[
\motimes_{n\in\mathbb N}^{\boldsymbol{y}} \mathcal T_n
\in
\mathcal{SC}\!\left(
\motimes_{n\in\mathbb N}^{\boldsymbol{y}} H_n
\right).
\]
Here the condition \(\sum_{n\in\NN} d_n=0\) is the infinite balancing condition:
the scalar rescalings cancel on the tensor product, while
\(\sum_n|d_n|<\infty\) ensures that the balancing itself is compatible with
the infinite product.

Conversely, if
\[
\motimes_{n\in\mathbb N}^{\boldsymbol{y}} \mathcal T_n
\in
\mathcal{SC}\!\left(
\motimes_{n\in\mathbb N}^{\boldsymbol{y}}H_n
\right),
\]
then there exists an absolutely summable balancing sequence
\((d_n)_{n\in\mathbb N}\) with
\[
\sum_{n\in\mathbb N}d_n=0
\]
such that
\[
e_{d_n} \mathcal T_n\in \mathcal{SC}(H_n)
\quad (n\in\mathbb N),
\qquad
\sup_{n\in\mathbb N} \mathcal C(e_{d_n} \mathcal T_n)<\infty .
\]
Thus similarity of the infinite tensor product forces similarity of all
suitably balanced factors, with uniform control of the corresponding
similarity constants.  The stronger product condition in the sufficient
direction is the natural condition for tensoring the individual equivalent
Hilbertian norms; it is not asserted to be necessary in this generality.

Thus the splitting phenomenon from the finite theory survives in the
infinite setting, but only after one takes into account both the summability
of the balancing rescaling and the convergence of the renormings. 
A particularly striking feature is that no measurability or strong continuity assumption on the
individual semigroups is imposed.
Apart from the nondegeneracy of the
tensor product and the stated local boundedness assumption, the theorem is
formulated at a level of generality tailored to the intrinsic low-regularity
nature of infinite tensor products.

The quasi-contractive counterpart is described in Theorem~\ref{continuousInfiniteCor11}.  In that
case the zero-sum balancing condition is no longer part of the statement,
because an overall exponential factor is allowed.  More precisely, if there is
an absolutely summable sequence \((d_n)_{n\in\mathbb N}\) such that
\[
e_{d_n} \mathcal T_n\in \mathcal{SC}(H_n)
\quad (n\in\mathbb N),
\qquad
\prod_{n\in\mathbb N} \mathcal C(e_{d_n} \mathcal T_n)<\infty,
\]
then
\[
\motimes_{n\in\mathbb N}^{\boldsymbol{y}} \mathcal T_n
\in
\mathcal{SQC}\!\left(
\motimes_{n\in\mathbb N}^{\boldsymbol{y}}H_n
\right).
\]
Conversely, if the incomplete tensor product semigroup belongs to
\(\mathcal{SQC}\), then such an absolutely summable rescaling exists with
\[
e_{d_n} \mathcal T_n\in \mathcal{SC}(H_n)
\quad (n\in\mathbb N),
\qquad
\sup_{n\in\mathbb N} \mathcal C(e_{d_n}\mathcal T_n)<\infty .
\]
Thus Theorem~\ref{continuousInfiniteCor11} gives the quasi-contractive analogue of Theorem~\ref{thm:main_incomplete},
with the same distinction between product control in the sufficient direction
and uniform control in the converse direction.

The link between the two main results is vital. Theorem~\ref{thm:main_incomplete} is not proved by a direct tensor
argument alone. Its proof requires one to extract similarity of the individual factors from the
similarity of the whole product, and then to convert suitable quasi-contractive information on
each factor into genuine similarity to contractions with estimates that are uniform enough to be
tensored again. This is precisely where the low-regularity similarity theorem enters. Theorem~\ref{Th:eqNorm}
and Proposition~\ref{Prop:simConstant} provide the mechanism that turns local contractive information together
with global exponential control into quantitative similarity bounds. Those bounds are then fed
back into the infinite tensor-product argument through tensorization of equivalent Hilbertian norms and the operator-theoretic
structure of incomplete tensor products. In this sense, the theorem for single semigroups is not
just a preliminary result, but one of the engines of the infinite tensor-product theory.

Although the incomplete tensor-product setting is the natural one for semigroups, we also
address the complete tensor product. Here the situation is subtler because regularity may fail in
a rather dramatic way. The complete theory therefore plays a complementary and partly cautionary role: it clarifies
which parts of the tensor-product picture persist on the complete infinite tensor product, and which features
force one back to the incomplete setting in semigroup theory.

\subsection{Methods and organization}

The methods of the paper combine three ingredients that, although closely related in the final
argument, come from rather different directions.

The first ingredient is the geometry of infinite tensor products themselves. At the level of
Hilbert spaces, equivalent norms can be tensored provided the corresponding infinite product
of distortion constants is finite. Quantitatively, if $\|\cdot\|_{H_n}$ is an equivalent Hilbertian norm on
$H_n$ with
\[
\|h\|\le \|h\|_{H_n}\le C_n\|h\|,\qquad h\in H_n,
\]
and if $\prod_{n\in \NN} C_n<\infty$, then the corresponding complete and incomplete tensor norms are
again equivalent, with distortion equal to $\prod_{n\in \NN} C_n$. This simple tensorization principle lies behind all
subsequent tensorization arguments. At the same time, the operator theory developed later in the paper shows that tensor
products on incomplete spaces are governed by a
mixture of norm control and compatibility with the reference sequence $\boldsymbol{y}$. The arguments here
are elementary in spirit but rather delicate in bookkeeping, because one has to separate the
possibility of existence from the possibility of collapse to the zero operator.

This tensorization viewpoint is closely related to 
earlier renormalization
procedures for infinite products.  
In the work \cite{arendt1998infinite} on infinite products of commuting \(C_0\)-semigroups, one
introduces a change of speed
\[
T_n(t)\mapsto T_n(\lambda_n t),
\]
or, at the generator level, \(A_n\mapsto \lambda_n A_n\), in order to make
the infinite product converge.  Thus there is a time scaling of
the individual semigroups.  By contrast, 
renormalization of infinite
direct products of unitary one-parameter groups in \cite{reents1974infinite} is additive at the level of
generators:
\[
U_n(t)=e^{itH_n}
\mapsto
e^{-it\alpha_n}U_n(t)
=
e^{it(H_n-\alpha_n I)} .
\]
The rescalings used in the present paper are closer to this second mechanism:
\[
T_n(t)\mapsto e^{d_nt}T_n(t).
\]
At the generator level, this corresponds formally to replacing \(A_n\) by
\(A_n+d_nI\).  The condition
\(\sum_{n\in \NN} d_n=0\) expresses cancellation of the scalar renormalizations on
the tensor product, while \(\sum_{n\in \NN} |d_n|<\infty\) ensures that the balancing
itself is compatible with the infinite tensor product.  What is specific to
the similarity problem is the additional quantitative requirement that the
renormalized Hilbertian geometries tensor, 
reflected in conditions such as
\(
\prod_{n\in \NN} \mathcal  C(e_{d_n} \mathcal T_n)<\infty .
\)

The second ingredient is semigroup theory on incomplete tensor products. Once the operator
level is understood, one can pass to semigroups and show that the relevant local boundedness,
measurability, and continuity properties survive tensorization. This part of the paper may be
viewed as foundational. It prepares the ground for the similarity results and also clarifies why
incomplete tensor products are the right environment for semigroups: they preserve enough
regularity to support semigroup arguments, yet they are flexible enough to contain genuinely
infinite constructions that are invisible in the usual $C_0$-theory on complete tensor spaces.

The third ingredient is the new proof of the similarity theorem for single semigroups. The idea
is to combine a specific exponential upper bound with a contractive quotient obtained by
invariant averaging. The two assumptions enter the construction in genuinely different ways:
the quasi-contractive estimate controls the size of finitely supported orbit representations
directly, whereas the contractive information at one fixed time produces, after averaging over
one period, the quotient semigroup that carries the contractive part of the argument. After
dilating the quotient semigroup, one defines a Hilbert seminorm on finitely supported orbit
representations and then passes to the induced quotient norm on the original space.
Concretely, the proof works with finitely supported expressions
\[
f=\sum_{j=1}^n \delta_{t_j}\otimes h_j,
\qquad
h=\sum_{j=1}^n T(t_j)h_j,
\]
and builds the contractive norm through an infimum over such representations, combining an
$\alpha$-part controlled by the exponential estimate and a $\beta$-part coming from the
contractive quotient. This construction is flexible enough to survive under weak regularity and,
more importantly for the present paper, explicit enough to yield similarity-constant estimates
that can later be tensored. 

The organization of the paper reflects this logic. After the introduction, we first isolate the invariant-mean formalism on finite intervals that underlies the averaging arguments used later. We then develop
the low-regularity similarity theorem for a single semigroup, providing the explicit equivalent norm and an upper bound for the similarity constant. Only after this abstract machinery is in place do we return to
infinite tensor products themselves: we collect the necessary numerical preliminaries on
unconditional infinite products, recall the complete and incomplete tensor products of Hilbert
spaces, and study the corresponding tensor products of operators and semigroups, with emphasis
on the incomplete setting. On that basis we prove the main splitting theorems for similarity to
contractions and quasi-contractions of infinite tensor products. We also provide a simple
counterexample illustrating the role of the local boundedness assumptions, showing that their necessity already appears in a very elementary finite-dimensional setting. More broadly, we include examples clarifying both the scope and the limits of the developed framework. 
We also provide a counterpart for infinite tensor products of operators of one of the main similarity results proved earlier for semigroups.
\bigskip

\noindent \textbf{Notation}. We fix here some notation that will be used throughout the paper. Given a Hilbert space $H$, $\linearOp(H)$ denotes the algebra of bounded operators on $H$. $I_H$ denotes the identity operator on $H$, or simply $I$ when the choice of space is apparent.

For $\lambda \in \CC$, $e_\lambda$ denotes the exponential function $t \mapsto e^{\lambda t}$, and its domain will typically be clear from context. Consequently, given a semigroup $\mathcal T = (T(t))_{t\geq0}$ of bounded operators and $\lambda \in \CC$, $e_{\lambda} \mathcal T$ denotes the rescaled semigroup $(e^{\lambda t} T(t))_{t\geq0}$. 

As is standard in the literature, and with a slight abuse of notation, we denote by $\|T\|$ the norm of a bounded operator $T$ on a Hilbert space equipped with a fixed norm $\|\cdot\|.$ This convention avoids unnecessarily cumbersome notation, especially when the same space is endowed with several norms.

Given $N \in \NN$ and Hilbert spaces $H_1, \ldots, H_N$, we denote by $\motimes_{n=1}^N H_n$ their Hilbert tensor product. Similarly, given bounded operators $A_n \in \linearOp(H_n)$, or semigroups of operators $\mathcal T_n = (T_n(t))_{t\geq0} \subset \linearOp(H_n)$, $n=1,\ldots,N$, we denote their tensor products by $\motimes_{n=1}^N A_n$ and $\motimes_{n=1}^N \mathcal T_n$, respectively. For a sequence of Hilbert spaces $(H_n)_{n\in \NN}$, we use the notation $\times_{n\in \NN}$ for their direct product.

Given a set $X$, $\ell^\infty(X)$ denotes the Banach space of complex, bounded functions on $X$.

Finally, we write $\ZZ_+$ for the set of non-negative integers, $\RR_+$ for $[0,\infty)$, and set $\mathbb T := \RR /\ZZ$. Given a subset $\Delta \subseteq \RR$, we denote by $\chi_\Delta$ the characteristic function of $\Delta$. If $\Delta$ is measurable, then $\operatorname{meas}(\Delta)$ denotes its Lebesgue measure. The real part of a complex number $z \in \CC$ is denoted by $\Real z$.

\section{Invariant means on finite intervals}\label{subsec:periodic-means}

This section provides the averaging formalism used later in the
low-regularity similarity argument and fixes the notation for the interval
means \(\mu_a\), transported from the fixed invariant mean on \(\mathbb T\),
and for the associated set functions \(\nu_a\).
Since these objects reappear in several logically distinct
parts of the paper, it is convenient to isolate them once and for all before turning to the main
operator-theoretic constructions.

Throughout the paper, let
\[
\mathbb T:=\mathbb R/\mathbb Z,
\]
and fix a positive functional
\[
m:\ell^\infty(\mathbb T)\to \mathbb C
\]
with $m(\chi_\TT)=1$ and which is translation-invariant and inversion-invariant, that is,
\[
m(\theta_s\varphi)=m(\varphi),\qquad
m(\varphi\circ\iota)=m(\varphi),
\qquad s\in\mathbb R,\ \varphi\in \ell^\infty(\mathbb T),
\]
where
\[
(\theta_s\varphi)(x):=\varphi(x+s),\qquad x\in\mathbb T,
\]
and
\[
\iota(x):=-x,\qquad x\in\mathbb T.
\]
Such a choice can always be made by symmetrizing a translation-invariant mean $\widetilde m: \ell^\infty(\mathbb T)\to \mathbb C$, that is, defining $m$ as
$$m(\varphi):= \frac{1}{2} \left(\widetilde m (\varphi)  +  \widetilde m ( \varphi \circ \iota) \right), \qquad \varphi \in \ell^\infty(\TT).
$$

For each $a>0$, let
\[
q_a:[0,a)\to \mathbb T,\qquad q_a(t):=t/a+\mathbb Z.
\]
Given $f\in \ell^\infty([0,a))$, define $J_af\in \ell^\infty(\mathbb T)$ by
\[
(J_af)(q_a(t)):=f(t),\qquad t\in[0,a).
\]
We then set
\[
\mu_a(f):=m(J_af),\qquad f\in \ell^\infty([0,a)).
\]
If $A\subseteq [0,a)$, we write
\[
\nu_a(A):=\mu_a(\chi_A)=m(J_a\chi_A).
\]
Thus $\mu_a$ is the functional on bounded functions on $[0,a)$ induced from the
fixed mean $m$ on $\mathbb T$, while $\nu_a$ is the associated finitely additive
probability measure on subsets of $[0,a)$.

The next identities express how translations on $\mathbb T$ encode shifts on $[0,a)$.
Let $0\le s<a$ and let $f\in \ell^\infty([0,a))$. Then
\[
J_a\bigl(f(\cdot+s)\chi_{[0,a-s)}\bigr)
   = \theta_{s/a}\bigl(J_a(f\chi_{[s,a)})\bigr),
\]
and
\[
J_a\bigl(f(\cdot+s-a)\chi_{[a-s,a)}\bigr)
   = \theta_{s/a}\bigl(J_a(f\chi_{[0,s)})\bigr).
\]
Consequently,
\[
\mu_a\bigl(f(\cdot+s)\chi_{[0,a-s)}\bigr)=\mu_a(f\chi_{[s,a)}),
\]
and
\[
\mu_a\bigl(f(\cdot+s-a)\chi_{[a-s,a)}\bigr)=\mu_a(f\chi_{[0,s)}).
\]

Moreover, if $A\subseteq[0,a)$, then
\[
J_a(\chi_{a-A})=(J_a\chi_A)\circ\iota,
\]
where \(a-A\) is understood modulo \(a\), that is,
\[
a-A:=q_a^{-1}\bigl(-q_a(A)\bigr)\subseteq[0,a).
\]
Equivalently, for \(t\in(0,a)\) the point \(a-t\) is taken in the usual
sense, while \(0\) is sent to \(0\). Hence
\[
\nu_a(a-A)=\nu_a(A).
\]

Moreover, for every $s\in[0,a]$ one has
\[
\nu_a([0,s))=\frac{s}{a}.
\]
Indeed, for each $m\in\NN$ the intervals
\[
\Bigl[\frac{ka}{m},\frac{(k+1)a}{m}\Bigr),\qquad k=0,\dots,m-1,
\]
are translates of one another in $\mathbb T$, so translation invariance and finite additivity give
\[
\nu_a\Bigl(\Bigl[\frac{ka}{m},\frac{(k+1)a}{m}\Bigr)\Bigr)=\frac1m,
\qquad k=0,\dots,m-1.
\]
Hence
\[
\nu_a([0,pa/m))=\frac{p}{m}
\]
for every $p=0,\dots,m$. The general formula follows by monotonicity, approximating $s/a$ from below and above by rationals. In particular, if $c\in \mathbb C$, then
\[
\mu_a\bigl(c\,\chi_{[0,s)}\bigr)=\frac{s}{a}\,c.
\]

Finally, if $F:\mathbb R_+\to \mathbb C$ is bounded on $[0,a)$, we write
\[
\mu_a(F):=\mu_a(F|_{[0,a)}).
\]
Likewise, whenever $0\le s<a$ and the displayed expressions are bounded on the
indicated intervals, we interpret
\[
\mu_a\bigl(F(\cdot+s)\chi_{[0,a-s)}\bigr),\qquad
\mu_a\bigl(F(\cdot+s-a)\chi_{[a-s,a)}\bigr)
\]
by first viewing the corresponding bounded functions on $[0,a)$ and then applying
the transport $J_a$.

\section{An explicit equivalent norm and a low-regularity similarity theorem}\label{sec:sim-reg-th}

We now begin the abstract part of the paper by proving a low-regularity norm version of the similarity criterion
from \cite{OlivaMazaTomilovSimilarity}. Since the notions of $\SC(H)$, $\SQC(H)$, and the
constants $\mathcal C(\mathcal T)$ and $\mathcal C_H(\|\cdot\|_\sharp)$ were already introduced
in the Introduction, we do not repeat the corresponding operator-theoretic formulation here.
Instead, we work directly with equivalent Hilbertian norms and their distortion constants.

More precisely, we show that if the whole semigroup is quasi-contractive in one equivalent
Hilbertian norm, while one fixed operator $T(\tau)$ is contractive in another, then these two
pieces of information can be combined into a single equivalent Hilbertian norm for which the
whole semigroup is contractive. Quantitatively, the proof produces an explicit bound for the
distortion constant of that norm, and hence for the similarity constant $\mathcal C(\mathcal T)$.
This is the form of the result that will later be used in the tensor-product arguments.


To begin with, we need an existence result for the unitary dilation of semigroups  of contractions $\mathcal T\subset \linearOp(H)$ under essentially no regularity assumptions, which can be found in \cite[Theorem 3.8]{evans1977dilations}. Our later similarity argument depends on it in an essential way, while
the lack of regularity rules out several of the usual tools, such as semigroup generators, Cayley
transforms, and arguments based on measurability or integration.

\begin{proposition}[Unitary dilation of a contraction semigroup]\label{prop:unitary-dilation}
	Let $H$ be a Hilbert space and let $\mathcal T=(T(t))_{t\ge0}\subset\mathcal L(H)$ be a semigroup
	of contractions. 
	Then there exist a Hilbert space $K$, a unitary group $\mathcal U=(U(t))_{t\in\mathbb R}$ on $K$,
	and an isometric embedding $J:H\to K$ such that
	\begin{align*}
		J^*U(t)J = \begin{cases}
			T(t), & t\ge 0,\\[1mm]
			T(-t)^*, & t<0,
		\end{cases}
		\qquad t\in\mathbb R.
	\end{align*}
	
	Moreover, if
	\[
	K_+:=\overline{\operatorname{span}}\{U(t)Jx:\ t\ge0,\ x\in H\}\subseteq K,
	\]
	then $W(t):=U(t)|_{K_+}$, $t\ge0$, defines an isometric semigroup on $K_+$ and
	\[
	T(t)=J^*W(t)J,\qquad t\ge0.
	\]
\end{proposition}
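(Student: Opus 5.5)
The plan is to build $K$ as the Hilbert space attached to a positive definite kernel on $\mathbb R$, with no regularity needed. Define $\Phi:\mathbb R\to\mathcal L(H)$ by $\Phi(t)=T(t)$ for $t\ge0$ and $\Phi(t)=T(-t)^*$ for $t<0$. The first step, and the heart of the matter, is to show that $\Phi$ is a positive definite function on the group $\mathbb R$:
\[
\sum_{i,j=1}^n\langle \Phi(t_i-t_j)h_j,h_i\rangle\ge 0
\]
for all finite families $t_i\in\mathbb R$, $h_i\in H$. Once this holds, the construction is the usual Naimark/GNS one, and everything else is formal.

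For positive definiteness I follow Sz.-Nagy's argument for contraction semigroups. It uses only the algebraic semigroup law and $\|T(t)\|\le1$, so no measurability enters. Order the points as $t_1<\dots<t_n$. Reduce to $t_1=0$ by translation invariance of the kernel, and put $s_k=t_k-t_{k-1}\ge0$. Set
\[
g_k=\sum_{j\ge k}T(t_j-t_k)h_j .
\]
These satisfy the recursion $g_k=h_k+T(s_{k+1})g_{k+1}$. Expanding with this recursion gives Sz.-Nagy's identity
\[
\sum_{i,j}\langle\Phi(t_i-t_j)h_j,h_i\rangle=\sum_k\bigl(\|g_k\|^2-\|T(s_{k+1})g_{k+1}\|^2\bigr),
\]
with the convention $g_{n+1}=0$. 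I expect this bookkeeping to be the main obstacle. I would verify it directly by splitting the double sum into the parts $i\le j$, $i>j$ and the diagonal. The right-hand side is then a sum of terms $\|g_{k+1}\|^2-\|T(s_{k+1})g_{k+1}\|^2$ plus $\|g_1\|^2$, and each term is nonnegative because $T(s_{k+1})$ is a contraction.

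Given positive definiteness, let $K$ be the Hausdorff completion of finitely supported functions $\mathbb R\to H$, written $\sum\delta_{t_j}\otimes h_j$, under the semi-inner product
\[
\Bigl\langle \sum_j\delta_{t_j}\otimes h_j,\sum_i\delta_{s_i}\otimes k_i\Bigr\rangle=\sum_{i,j}\langle\Phi(s_i-t_j)h_j,k_i\rangle .
\]
Define $U(t)$ on generators by $\delta_s\otimes h\mapsto\delta_{s+t}\otimes h$. These maps preserve the form, so they descend and extend to unitaries with $U(t+s)=U(t)U(s)$ and $U(-t)=U(t)^{-1}$. Set $Jh=[\delta_0\otimes h]$. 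Then $\|Jh\|^2=\langle\Phi(0)h,h\rangle=\|h\|^2$, and
\[
\langle U(t)Jh,Jk\rangle=\langle\Phi(t)h,k\rangle ,
\]
which is exactly $J^*U(t)J=\Phi(t)$.

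For the second assertion, $K_+$ is invariant under $U(t)$ for $t\ge0$, since $U(t)U(s)Jx=U(t+s)Jx$ with $t+s\ge0$. So $W(t)=U(t)|_{K_+}$ consists of isometries forming a semigroup. Since $JH\subseteq K_+$, we have $J^*W(t)J=J^*U(t)J=T(t)$ for $t\ge0$.
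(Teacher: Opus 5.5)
Your strategy is the right one: Sz.-Nagy's positive-definiteness computation, followed by the Kolmogorov/Naimark construction with translations. However, the orientation of the kernel is inconsistent, and as written two of your displayed claims are false.

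First, take $t_1<\dots<t_n$ and $g_k=\sum_{j\ge k}T(t_j-t_k)h_j$. Then the right-hand side of your identity equals $\sum_{i,j}\langle\Phi(t_j-t_i)h_j,h_i\rangle$, not $\sum_{i,j}\langle\Phi(t_i-t_j)h_j,h_i\rangle$. Already for $n=2$, $t_1=0<t_2=s$:
\begin{itemize}
\item your left-hand side is $\|h_1\|^2+\|h_2\|^2+2\operatorname{Re}\langle T(s)h_1,h_2\rangle$;
\item the right-hand side is $\|h_1\|^2+\|h_2\|^2+2\operatorname{Re}\langle T(s)h_2,h_1\rangle$.
\end{itemize}
These differ for suitable $h_1,h_2$ unless $T(s)$ is self-adjoint.

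Second, with your inner product $\langle\delta_t\otimes h,\delta_s\otimes k\rangle=\langle\Phi(s-t)h,k\rangle$ and the forward shift, you get $\langle U(t)Jh,Jk\rangle=\langle\delta_t\otimes h,\delta_0\otimes k\rangle=\langle\Phi(-t)h,k\rangle$. So $J^*U(t)J=T(t)^*$ for $t>0$, i.e.\ you have dilated the adjoint semigroup.

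Both slips are fixed by one consistent change: define $\langle\delta_t\otimes h,\delta_s\otimes k\rangle:=\langle\Phi(t-s)h,k\rangle$. This is the orientation the paper itself uses in its $\beta$-form, $\langle\Phi_0(t_j-s_\ell)J_0h_j,J_0g_\ell\rangle_0$. With it:
\begin{itemize}
\item the positivity of the form is exactly what your $g_k$-identity proves (the sum is $\|g_1\|^2$ plus terms $\|g_{k+1}\|^2-\|T(s_{k+1})g_{k+1}\|^2\ge 0$);
\item the forward shifts still preserve the form;
\item $\langle U(t)Jh,Jk\rangle=\langle\Phi(t)h,k\rangle$;
\item your treatment of $K_+$ and $W(t)$ goes through unchanged.
\end{itemize}
Equivalently, you can keep your kernel, obtain its positivity from the other one via $t_i\mapsto -t_i$, and use backward shifts.

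With this correction your proof is complete. The paper gives no argument of its own: it imports the statement from Evans \cite[Theorem~3.8]{evans1977dilations}. Your self-contained version makes explicit that only $T(0)=I$, the algebraic semigroup law and $\|T(t)\|\le 1$ are used; no measurability, continuity or generator appears. That regularity-free character is exactly what the paper needs when it dilates the averaged quotient semigroup $\mathcal T_0$ in Section~\ref{sec:sim-reg-th}. The citation buys brevity, but leaves that point to the reference.
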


\begin{remark}
 If $\mathcal U$ is weakly measurable (respectively, strongly continuous), then
 \[
 t\mapsto T(t)x=J^*U(t)Jx, \qquad t \ge 0,
 \]
 has the same regularity for every \(x\in H\). Thus a non-measurable semigroup cannot arise as
 the compression of a weakly measurable or strongly continuous unitary dilation.
\end{remark}

Now we return to our similarity result. Throughout the rest of this section, $H$ is a Hilbert space and
$\mathcal T=(T(t))_{t\ge0}\subset\mathcal L(H)$ is a semigroup, not assumed measurable.
We assume that there exist equivalent Hilbertian norms $\|\cdot\|_1,\|\cdot\|_2$ on $H$
and constants $C_1,C_2,\lambda,\tau>0$ such that
\begin{equation}\label{eq:Tasumptions}
\begin{aligned}
	\|h\|\le \|h\|_j\le C_j\|h\|, \qquad h\in H,\ j=1,2, \\
	\|T(t)\|_1\le e^{\lambda t}\qquad (t\ge0), \qquad
	\|T(\tau)\|_2\le1.
\end{aligned}
\end{equation}

Using Section~\ref{subsec:periodic-means}, let \(\mu_\tau\) be the
periodic mean on \(\ell^\infty([0,\tau))\) induced by the fixed mean on \(\mathbb T\). Observing that $\mathcal T$ is bounded on compact intervals since $\|T(t)\| \leq C_1 e^{\lambda t}$, $t\geq0$, define
\begin{equation}\label{eq:innerProd0Def}
\langle h,g\rangle_0
:=
\mu_\tau\bigl(\langle T(\cdot)h,T(\cdot)g\rangle_2\bigr),
\qquad h,g\in H.
\end{equation}
Let \(N_0:=\{h\in H:\|h\|_0=0\}\), let \(H_0\) be the completion of the quotient pre-Hilbert
space \(H/N_0\), and let
\[
J_0:H\to H_0
\]
be the canonical map. We also use the symbol $\|\cdot\|_{0}$ to denote the induced norm on $H_0$. For \(t\ge0\), define
\[
T_0(t)J_0h:=J_0T(t)h,\qquad h\in H.
\]

\begin{lemma}\label{lem:T0-contractive}
 The family \(\mathcal T_0=(T_0(t))_{t\ge0}\) is a contractive semigroup on \(H_0\).
\end{lemma}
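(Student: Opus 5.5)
The plan is to prove that the quotient construction is well defined, satisfies the semigroup law, and is contractive, with the whole argument reduced to a single inequality
\[
\|T(s)h\|_0\le\|h\|_0,\qquad h\in H,\ s\ge0.
\]

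\emph{Step 1: $\langle\cdot,\cdot\rangle_0$ is a bounded positive semi-definite form.} The function $t\mapsto\langle T(t)h,T(t)g\rangle_2$ is bounded on $[0,\tau)$, since $\|T(t)\|\le C_1e^{\lambda t}$. Moreover $m$ is a positive normalized functional, and therefore so is $\mu_\tau$. Hence $\langle\cdot,\cdot\rangle_0$ is sesquilinear, Hermitian and positive semi-definite. We also get $\|h\|_0\le C_2C_1e^{\lambda\tau}\|h\|$, so $J_0$ is bounded. This makes $N_0$ a closed subspace and $H_0$ well defined.

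\emph{Step 2: the key inequality.} Fix $s\in[0,\tau)$ and put $F(t):=\|T(t)h\|_2^2$. Then
\[
\|T(s)h\|_0^2=\mu_\tau\bigl(F(\cdot+s)\bigr)
=\mu_\tau\bigl(F(\cdot+s)\chi_{[0,\tau-s)}\bigr)+\mu_\tau\bigl(F(\cdot+s)\chi_{[\tau-s,\tau)}\bigr).
\]
By the shift identities of Section~\ref{subsec:periodic-means}, the first term equals $\mu_\tau(F\chi_{[s,\tau)})$.

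For the second term, take $t\in[\tau-s,\tau)$ and write $t+s=\tau+(t+s-\tau)$ with $t+s-\tau\in[0,s)$. The semigroup law and $\|T(\tau)\|_2\le1$ then give
\[
F(t+s)=\|T(\tau)T(t+s-\tau)h\|_2^2\le F(t+s-\tau).
\]
The function $F(\cdot+s-\tau)\chi_{[\tau-s,\tau)}$ is bounded on $[0,\tau)$, so positivity of $\mu_\tau$ and the second shift identity bound the second term by $\mu_\tau(F\chi_{[0,s)})$. Adding the two terms, finite additivity yields $\|T(s)h\|_0^2\le\mu_\tau(F)=\|h\|_0^2$.

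For general $s\ge0$, write $s=k\tau+r$ with $r\in[0,\tau)$. Since $\|T(\tau)g\|_0\le\|g\|_0$ is the case $s=\tau$, I will check it separately. Every $F(t+\tau)\le F(t)$ pointwise, so positivity gives it directly. Iterating this case and combining with the case $r$ then gives the inequality for all $s\ge0$.

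\emph{Step 3: $\mathcal T_0$ is a contractive semigroup on $H_0$.} By Step 2, $T(t)N_0\subseteq N_0$. Hence $J_0h\mapsto J_0T(t)h$ is well defined on $J_0H$, linear, and $\|\cdot\|_0$-contractive there. Since $J_0H$ is dense in $H_0$, it extends uniquely to a contraction $T_0(t)$ on $H_0$. On the dense range we have $T_0(t)T_0(s)J_0h=J_0T(t+s)h=T_0(t+s)J_0h$ and $T_0(0)=I$. These identities pass to $H_0$ by continuity, which gives the semigroup property.

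\emph{Main obstacle.} The only delicate step is the wrap-around part of Step 2. There are no measurability assumptions, so the argument must use only positivity, finite additivity and the shift-invariance identities of $\mu_\tau$, with no integration. It must also check that every function fed to $\mu_\tau$ is bounded on $[0,\tau)$, which holds by the exponential bound.
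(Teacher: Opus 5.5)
Your proof is correct, and its core is the paper's own computation: you split $\mu_\tau$ into the non-wrapping part, handled by the shift identity, and the wrap-around part, handled by $\|T(\tau)\|_2\le 1$ together with the second shift identity. The only difference is in well-definedness. You deduce it from the seminorm inequality $\|T(s)h\|_0\le\|h\|_0$, via $T(s)N_0\subseteq N_0$. The paper instead proves separately the stronger fact that $J_0h=0$ forces $T(t)h=0$ for all $t>0$, and reuses this later to define $P(t)\colon H_0\to H$ in Step 1 of the proof of Theorem~\ref{Th:eqNorm}.
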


\begin{proof}
 First we check that \(T_0(t)\) is well-defined, or equivalently, that $T_0 (t) J_0 h =0$ if $J_0 h=0$. For this purpose, suppose that $h\in H$ is such that \(J_0h=0\). If \(0<t\le\tau\), then
 \begin{equation}\label{eq:T0welldefined1}
 \begin{aligned}
  \|T(t)h\|_2^2
  &\le
  \frac{\tau}{t}\,
  \sup_{u\in[0,t]}\|T(u)\|_2^2\,
  \mu_\tau\bigl(\|T(\cdot)h\|_2^2\,\chi_{[0,t)}\bigr) 
  \leq  \frac{\tau}{t}\,
  \sup_{u\in[0,t]}\|T(u)\|_2^2\, \|Jh\|_0^2
  =0.
  \end{aligned}
 \end{equation}
 In particular $T(\tau/2)h=0$, hence, if \(t\ge\tau\), then 
 \begin{equation}\label{eq:T0welldefined2}
 	T(t)h = T(t-\tau/2) T(\tau/2)h = 0.
 \end{equation}
It follows from the above that \(J_0T(t)h=0\) for all $t\geq0$, so \(T_0(t)\) is well-defined.
 
 Next we prove contractivity for \(0\le t\le\tau\).
 Let \(h\in H\), and define
 \[
 \varphi_h(s):=\|T(s)h\|_2^2,
 \qquad s\in[0,\tau).
 \]
  Then
 \begin{align*}
  \|T_0(t)J_0h\|_0^2
  = \mu_\tau \bigl(\varphi_{T(t)h}\bigr)
  =
  \mu_\tau\bigl(\varphi_h(\cdot+t)\chi_{[0,\tau-t)}\bigr)
  +
  \mu_\tau\bigl( \varphi_{T(t)h} \chi_{[\tau-t,\tau)}\bigr).
 \end{align*}
 By the identities from Section~\ref{subsec:periodic-means},
 \begin{equation}\label{eq:T0a}
 \mu_\tau\bigl(\varphi_h(\cdot+t)\chi_{[0,\tau-t)}\bigr)
 =
 \mu_\tau\bigl(\varphi_h\,\chi_{[t,\tau)}\bigr).
 \end{equation}
 Also, for \(s\in[\tau-t,\tau)\),
 \[
 T(s)T(t)h=T(s+t-\tau)\,T(\tau)h,
 \]
and then
 \begin{equation}\label{eq:T0b}
 \mu_\tau\bigl(\varphi_{T(t)h} \chi_{[\tau-t,\tau)}\bigr)
  =
  \mu_\tau  \bigl(\varphi_{T(\tau)h} \chi_{[0,t)}\bigr)
  \leq \mu_\tau\bigl(\varphi_h\,\chi_{[0,t)}\bigr),
 \end{equation}
 where we used that $\varphi_{T(\tau) h} \leq \varphi_h$ since $\|T(\tau)\|_2\leq 1$.
 Combining \eqref{eq:T0a} and \eqref{eq:T0b}, we get
 \[
 \|T_0(t)J_0h\|_0^2
 \le
 \mu_\tau(\varphi_h)
 =
 \|J_0h\|_0^2.
 \]
 That is, $T_0(t)$ can be extended continuously to a contraction on $H_0$ for each $t\in [0,\tau]$.

 From the definition of $\mathcal T_0$, it is direct that $\mathcal T_0$ is a semigroup on $(0,\infty)$, thus $T_0(t)$ is a bounded operator on $H_0$ for every $t\geq0$. Moreover, given \(t\ge0\), write \(t=n\tau+r\) with \(n\in\mathbb N_0\) and \(r\in[0,\tau)\). Then
 \[
 T_0(t)=T_0(\tau)^nT_0(r),
 \]
 so \(T_0(t)\) is contractive since both $T_0(r)$ and $T_0(\tau)$ are contractive by the above.

\end{proof}

By Lemma~\ref{lem:T0-contractive}, \(\mathcal T_0\) is a contractive semigroup on \(H_0\).
Applying Proposition~\ref{prop:unitary-dilation} to \(\mathcal T_0\), we obtain a Hilbert space \(K\),
a unitary group \(U=(U(t))_{t\in\mathbb R}\) on \(K\), and an isometric embedding
\[
J_{H_0}:H_0\to K
\]
such that
\[
J_{H_0}^*U(t)J_{H_0}= \Phi_0(t),\qquad t\in\RR,
\]
where $\Phi_0$ is the operator-valued function given by
\[
\Phi_0(t):=
\begin{cases}
	T_0(t), & t\ge0,\\[1mm]
	T_0(-t)^*, & t<0,
\end{cases}
\qquad t\in\mathbb R.
\]
We keep this notation for the positive semidefinite operator-valued function obtained from the dilation.
We then write
\[
J:=J_{H_0}J_0:H\to K
\]
for the canonical map from \(H\) into the dilation space \(K\), obtained by first passing to
the averaged quotient \(H_0\) and then using the isometric embedding furnished by the dilation
of \(\mathcal T_0\). 

Let
\[
\mathcal F_+ := \operatorname{span}\{\delta_t:t\in\mathbb R_+\}
\]
be the vector space of finitely supported scalar-valued functions on \(\mathbb R_+\), where
\[
\delta_t(s):=
\begin{cases}
 1,& s=t,\\
 0,& s\neq t.
\end{cases}
\]
Set
$$F^+:=\mathcal F_+\otimes H,
$$
and note that via the canonical identification
\[
\delta_t\otimes h \longleftrightarrow [\,s\mapsto \delta_t(s)h\,], \qquad t \in \RR_+, \, h \in H,
\]
we may view \(F^+\) as the space of finitely supported \(H\)-valued functions on \(\mathbb R_+\).
Thus every \(f\in F^+\) can be written in the form
\[
f=\sum_{j=1}^n \delta_{t_j}\otimes h_j, \qquad n \in \mathbb{N}, \, t_j \in \mathbb R_+, \, h_j \in H.
\]
We now define two sesquilinear forms and their sum. 
First, for \(s,t\ge0\), define 
\[
a(s,t):=(1-|s-t|)_+=\operatorname{meas}([s,s+1]\cap[t,t+1]).
\]
Recall that $\operatorname{meas}$ denotes the Lebesgue measure on $\RR$. Then, for
\begin{equation}\label{eq:fgFunct}
f=\sum_{j=1}^n \delta_{t_j}\otimes h_j,
\qquad
g=\sum_{\ell=1}^m \delta_{s_\ell}\otimes g_\ell
\end{equation}
in \(F^+\), define
\begin{align*}
	\langle f,g\rangle_\alpha
	&:=
	\int_0^2 e^{-2\lambda r}
	\Biggl\langle
	\sum_{j=1}^n T(t_j)h_j\,\chi_{[t_j,t_j+1]}(r),\
	\sum_{\ell=1}^m T(s_\ell)g_\ell\,\chi_{[s_\ell,s_\ell+1]}(r)
	\Biggr\rangle_1\,dr,\\
	\langle f,g\rangle_\beta
	&:=
	\sum_{j=1}^n\sum_{\ell=1}^m
	\bigl(1+a(t_j,s_\ell)\bigr)
	\bigl\langle \Phi_0(t_j-s_\ell)J_0h_j,\ J_0g_\ell\bigr\rangle_0,
\end{align*}
where $\langle \cdot, \cdot\rangle_1$ is the inner product associated to the Hilbertian norm $\|\cdot\|_1$ satisfying~\eqref{eq:Tasumptions}, and $\langle \cdot, \cdot\rangle_0$ is defined in~\eqref{eq:innerProd0Def}; and set
\begin{equation}\label{eq:qDef}
\langle f,g\rangle_q:=\langle f,g\rangle_\alpha+\langle f,g\rangle_\beta.
\end{equation}

\begin{proposition}\label{prop:q-hilbert}
\(\langle\cdot,\cdot\rangle_q\) is a positive semidefinite sesquilinear form on \(F^+\).
\end{proposition}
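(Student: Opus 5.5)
Sesquilinearity is immediate from the definitions: both $\langle\cdot,\cdot\rangle_\alpha$ and $\langle\cdot,\cdot\rangle_\beta$ are built from finite double sums of inner products that are linear in the first and conjugate-linear in the second entry. They are also well defined on $F^+=\mathcal F_+\otimes H$, i.e.\ independent of the chosen representation of $f$. The reason is that each form is given by a bilinear expression in the pairs $(\delta_{t_j},h_j)$, additive in $h_j$ for fixed $t_j$, so the expressions factor through the algebraic tensor product. Since positivity of a sum follows from positivity of each summand, the plan is to show $\langle f,f\rangle_\alpha\ge0$ and $\langle f,f\rangle_\beta\ge0$ separately.

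For the $\alpha$-part, positivity is immediate once the form is read as an integral of a squared norm. With $f=\sum_j\delta_{t_j}\otimes h_j$ we have
\[
\langle f,f\rangle_\alpha=\int_0^2 e^{-2\lambda r}\Bigl\|\sum_{j=1}^n T(t_j)h_j\,\chi_{[t_j,t_j+1]}(r)\Bigr\|_1^2\,dr\ \ge 0.
\]
The integrand is a finite step function in $r$, so no measurability issue for $\mathcal T$ arises.

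For the $\beta$-part, write $\langle f,f\rangle_\beta=\Sigma_1+\Sigma_2$, where
\[
\Sigma_1=\sum_{j,\ell}\langle\Phi_0(t_j-t_\ell)J_0h_j,J_0h_\ell\rangle_0,\qquad
\Sigma_2=\sum_{j,\ell}a(t_j,t_\ell)\langle\Phi_0(t_j-t_\ell)J_0h_j,J_0h_\ell\rangle_0.
\]
Using the dilation, $\langle\Phi_0(t_j-t_\ell)J_0h_j,J_0h_\ell\rangle_0=\langle U(t_j-t_\ell)Jh_j,Jh_\ell\rangle_K$. Since $U(t_j-t_\ell)=U(-t_\ell)^*U(t_j)$, this equals $\langle U(-t_\ell)Jh_j,U(-t_\ell)Jh_\ell\rangle$ after moving the unitaries appropriately. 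More cleanly, set $x_j:=U(-t_j)Jh_j\in K$. Then
\[
\langle U(t_j-t_\ell)Jh_j,Jh_\ell\rangle=\langle U(-t_\ell)Jh_j,U(-t_j)Jh_\ell\rangle.
\]
This is not of Gram form, so I would instead use $x_j:=U(t_j)Jh_j$. Then $\langle x_j,x_\ell\rangle=\langle U(t_j-t_\ell)Jh_j,Jh_\ell\rangle$ by unitarity and the group law. Hence $\Sigma_1=\|\sum_j x_j\|_K^2\ge0$; that is, the kernel $(t,s)\mapsto\Phi_0(t-s)$ is positive definite.

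For $\Sigma_2$, I would use Schur's product idea. The scalar kernel $a(s,t)=\operatorname{meas}([s,s+1]\cap[t,t+1])=\int_{\mathbb R}\chi_{[s,s+1]}(r)\chi_{[t,t+1]}(r)\,dr$ is a Gram kernel in $L^2(\mathbb R)$. Therefore
\[
\Sigma_2=\int_{\mathbb R}\Bigl\|\sum_j\chi_{[t_j,t_j+1]}(r)\,x_j\Bigr\|_K^2\,dr\ge0,
\]
with $x_j$ as above. Again the integrand is a step function. The only mildly delicate point, which I expect to be the main (though small) obstacle, is justifying the identity $\langle\Phi_0(t-s)J_0h,J_0g\rangle_0=\langle U(t)Jh,U(s)Jg\rangle_K$ for all real $t-s$, including negative values. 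This follows from Proposition~\ref{prop:unitary-dilation}, which covers both signs, together with $J=J_{H_0}J_0$ and the group property of $U$. Summing the three nonnegative contributions gives $\langle f,f\rangle_q\ge0$.
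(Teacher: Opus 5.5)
Your argument is correct and is essentially the paper's. The paper writes $1+a(t,s)=\langle\eta_t,\eta_s\rangle$ with $\eta_t=1\oplus\chi_{[t,t+1]}\in\mathbb C\oplus L^2(\mathbb R_+)$ and obtains $\langle f,f\rangle_\beta=\bigl\|\sum_j\eta_{t_j}\otimes U(t_j)Jh_j\bigr\|^2$, which is exactly your $\Sigma_1+\Sigma_2$ split along the two summands, using the same identity $\langle\Phi_0(t-s)J_0h,J_0g\rangle_0=\langle U(t)Jh,U(s)Jg\rangle_K$; the only slip is in your discarded aside, where $U(t_j-t_\ell)$ equals $U(t_\ell)^*U(t_j)$, not $U(-t_\ell)^*U(t_j)$, but your final choice $x_j=U(t_j)Jh_j$ is right.
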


\begin{proof}
Fix $f,g \in F^+$, and let $t_j, h_j, s_\ell, g_\ell$ be the elements for the tensorial decomposition of $f$ and $g$ as in \eqref{eq:fgFunct}. Then form \(\langle\cdot,\cdot\rangle_\alpha\) is sesquilinear and positive semidefinite, since
\[
\langle f,f\rangle_\alpha
=
\int_0^2 e^{-2\lambda r}
\Bigl\|
\sum_{j=1}^n T(t_j)h_j\,\chi_{[t_j,t_j+1]}(r)
\Bigr\|_1^2\,dr
\ge0.
\]
 
 For the \(\beta\)-part, define
 \[
 \eta_t:=1\oplus \chi_{[t,t+1]}\in \mathbb C\oplus L^2(\mathbb R_+),\qquad t\ge0.
 \]
 Then, for every $s,t\geq0,$ we have that
 \[
 \langle \eta_t,\eta_s\rangle_{\mathbb C\oplus L^2(\mathbb R_+)}
 =
 1+\operatorname{meas}([t,t+1]\cap[s,s+1])
 =
 1+a(t,s).
 \]
 Also,
 \[
 \langle \Phi_0(t_j-s_\ell)J_0h_j,\ J_0g_\ell\rangle_0
 =
 \langle U(t_j)Jh_j,\ U(s_\ell)Jg_\ell\rangle_K.
 \]
 Hence
 \begin{align*}
  \langle f,g\rangle_\beta
  &=
  \sum_{j=1}^n \sum_{\ell=1}^m
  \langle \eta_{t_j},\eta_{s_\ell}\rangle\,
  \langle U(t_j)Jh_j,\ U(s_\ell)Jg_\ell\rangle_K\\
  &=
  \Biggl\langle
  \sum_{j=1}^n \eta_{t_j}\otimes U(t_j)Jh_j,\
  \sum_{\ell=1}^m \eta_{s_\ell}\otimes U(s_\ell)Jg_\ell
  \Biggr\rangle_{(\mathbb C\oplus L^2(\mathbb R_+))\otimes K}.
 \end{align*}
 In particular,
 \begin{equation*}
 	\langle f, f \rangle_\beta = \left\| \sum_{j=1}^n \eta_{t_j}\otimes U(t_j)Jh_j \right\|_{(\mathbb C\oplus L^2(\mathbb R_+))\otimes K}^2 \geq 0,
\end{equation*}
 thus \(\langle\cdot,\cdot\rangle_\beta\) is positive semidefinite, and the claim follows.
\end{proof}

We define the map
\[
\Sigma:F^+\to H,
\qquad
\Sigma\left(\sum_{j=1}^n \delta_{t_j}\otimes h_j\right)
:=
\sum_{j=1}^n T(t_j)h_j,
\]
and, using this map, for \(h\in H\), we set
\[
F_h^+:=\{f\in F^+:\ \Sigma f=h\}.
\]

We are now ready to prove the main result of this section. 
It says that an arbitrary semigroup
\(\mathcal T=(T(t))_{t\ge0}\subset \mathcal L(H)\), not assumed measurable
or strongly continuous, is contractive with respect to a single equivalent
Hilbertian norm whenever \(\mathcal T\) is quasi-contractive with respect to
one equivalent Hilbertian norm and one operator \(T(\tau)\), \(\tau>0\), is
contractive with respect to another.

\begin{theorem}\label{thm:explicit-eq-norm}\label{Th:eqNorm}
Let $H$ be a Hilbert space and let $\mathcal T=(T(t))_{t\ge0}\subset \mathcal L(H)$ be a semigroup. Suppose that there exist equivalent Hilbertian norms
$\|\cdot\|_1$ and $\|\cdot\|_2$ on $H$, constants $C_1,C_2\ge1$, and parameters
$\lambda,\tau>0$ such that~\eqref{eq:Tasumptions} holds.
Then the mapping
\begin{equation*}
	\|h\|_{\rm{eq}} := \inf\left\{ \sqrt{q(f)} \, : \, f \in F_h^+\right\}, \qquad h \in H,
\end{equation*}
is an equivalent Hilbertian norm on $H$ such that
\[
\|T(t)h\|_{\rm{eq}} \le \|h\|_{\rm{eq}},
\qquad h\in H,\ t\ge0. 
\]
In particular, $\mathcal T\in \SC(H)$. Moreover,
\[
\frac{1}{M_2} \|h\|\le \|h\|_{\rm{eq}} \le M_1 \|h\|,
\qquad h\in H,
\]
where
\[
M_1=
\left(
C_1^2\frac{1-e^{-4\lambda}}{2\lambda}
+
2C_2^2\sup_{s\in[0,\tau)}\|T(s)\|^2
\right)^{1/2},
\]
and
\[
M_2=
\left(
\frac{e^{4\lambda}-1}{2\lambda}
+
4\maxone{\tau}\sup_{s\in[0,1]}\|T(s)\|^2
\right)^{1/2}.
\]
Consequently, $\mathcal C (\mathcal T) \leq \mathcal C_H(\|\cdot\|_{\rm{eq}})\le M_1 M_2$.
\end{theorem}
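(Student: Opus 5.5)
The proof has three parts: show that $\|\cdot\|_{\rm eq}$ is Hilbertian, prove contractivity by a shift argument, and then derive the two norm bounds. The lower bound is where I expect the real work.

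\emph{Hilbertian structure.} By Proposition~\ref{prop:q-hilbert}, $q(f)=\langle f,f\rangle_q$ is a Hilbert seminorm on $F^+$. The map $\Sigma:F^+\to H$ is linear and onto, since $\Sigma(\delta_0\otimes h)=h$. For $f_0\in F_h^+$ we have $F_h^+=f_0+\ker\Sigma$. Hence $\|h\|_{\rm eq}$ is the quotient seminorm of $\sqrt q$ on $F^+/\ker\Sigma\cong H$. A quotient of a Hilbert seminorm by a subspace again satisfies the parallelogram law: take infima over the coset and use the parallelogram identity for $q$ on representatives $f\pm g$. So $\|\cdot\|_{\rm eq}$ is a Hilbertian seminorm, and the two-sided bounds below make it an equivalent norm.

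\emph{Contractivity.} For $t\ge0$ define the shift $S_t\bigl(\sum_j\delta_{t_j}\otimes h_j\bigr):=\sum_j\delta_{t_j+t}\otimes h_j$. Then $\Sigma S_tf=T(t)\Sigma f$, so $S_t$ maps $F_h^+$ into $F^+_{T(t)h}$, and it suffices to prove $q(S_tf)\le q(f)$.

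For the $\beta$-part this holds with equality. Indeed $a(t_j+t,s_\ell+t)=a(t_j,s_\ell)$, and $\Phi_0$ depends only on the difference $t_j-s_\ell$.

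For the $\alpha$-part, the integrand of $\alpha(S_tf)$ at $r$ equals $T(t)$ applied to the integrand of $\alpha(f)$ at $r-t$. Using $\|T(t)\|_1\le e^{\lambda t}$ and substituting $r'=r-t$, we get
\[
\alpha(S_tf)\le\int_{-t}^{2-t}e^{-2\lambda r'}\Bigl\|\sum_j T(t_j)h_j\chi_{[t_j,t_j+1]}(r')\Bigr\|_1^2\,dr'\le\alpha(f).
\]
The last inequality holds because the integrand vanishes for $r'<0$, as all $t_j\ge0$. Taking the infimum over $f\in F_h^+$ gives $\|T(t)h\|_{\rm eq}\le\|h\|_{\rm eq}$.

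\emph{Upper bound.} Test with $f=\delta_0\otimes h$. Then
\[
\alpha(f)=\int_0^1e^{-2\lambda r}\|h\|_1^2\,dr\le C_1^2\frac{1-e^{-4\lambda}}{2\lambda}\|h\|^2 .
\]
For the $\beta$-part, $\beta(f)=2\|J_0h\|_0^2=2\mu_\tau(\|T(\cdot)h\|_2^2)\le 2C_2^2\sup_{s\in[0,\tau)}\|T(s)\|^2\|h\|^2$. Together these give $M_1$.

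\emph{Lower bound (main obstacle).} Given $f\in F_h^+$, I would split
\[
h=\sum_{t_j<1}T(t_j)h_j+T(1)x,\qquad x:=\sum_{t_j\ge1}T(t_j-1)h_j .
\]
\begin{itemize}
\item[(i)] The first sum is read off from the $\alpha$-integrand on $[0,2]$. At times $r$ near $1$ it involves exactly the indices with $t_j\in[r-1,r]$. The weight satisfies $e^{-2\lambda r}\ge e^{-4\lambda}$ on $[0,2]$, and I would use Cauchy--Schwarz in $r$ together with $\|\cdot\|\le\|\cdot\|_1$. This produces the $(e^{4\lambda}-1)/(2\lambda)$ term.
\item[(ii)] The second term is controlled through the contractive quotient. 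By \eqref{eq:T0welldefined1} with $t=\min\{1,\tau\}$ and the semigroup law, $\|T(1)x\|^2\lesssim\max\{1,\tau\}\sup_{s\in[0,1]}\|T(s)\|^2\,\|J_0x\|_0^2$.
\item[(iii)] Next, $\|J_0x\|_0=\bigl\|J_{H_0}^*U(-1)\sum_{t_j\ge1}U(t_j)Jh_j\bigr\|$. This is bounded via the $\mathbb C$-component of $\sum_j\eta_{t_j}\otimes U(t_j)Jh_j$, i.e.\ by $\sqrt{\beta(f)}$.
\end{itemize}
The difficulty is that (iii) bounds the full sum $\sum_jU(t_j)Jh_j$, not just the part with $t_j\ge1$. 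To isolate the $t_j\ge1$ indices I would use the $L^2$-component $\chi_{[t_j,t_j+1]}$ of $\eta_{t_j}$, which separates early from late indices on the overlap window. Balancing these pieces by $(a+b)^2\le2a^2+2b^2$ should give the constant $4\max\{1,\tau\}\sup_{s\in[0,1]}\|T(s)\|^2$ in $M_2$.

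Finally, $\mathcal C(\mathcal T)\le\mathcal C_H(\|\cdot\|_{\rm eq})\le M_1M_2$.
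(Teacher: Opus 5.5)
Your Hilbertianity argument (the quotient of the Hilbert seminorm $\sqrt q$ by $\ker\Sigma$), the shift argument for contractivity, and the upper bound via $\delta_0\otimes h$ are correct. They essentially coincide with Steps 2, 4 and 5 of the paper.

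\textbf{The gap is in the lower bound.} You split $h$ sharply into $\sum_{t_j<1}T(t_j)h_j$ and $T(1)x$ and bound the pieces separately. This cannot work, because neither piece is bounded by $\sqrt{q(f)}$. Take $H=\mathbb C$, $T(t)=I$ and $\|\cdot\|_1=\|\cdot\|_2=|\cdot|$; all hypotheses hold, with $J_0=I$ and $\Phi_0\equiv I$. Let $f=\delta_{1-\varepsilon}\otimes 1-\delta_1\otimes 1$. Then $\alpha(f)\le 2\varepsilon$ and $\beta(f)=2+2-2\bigl(1+a(1-\varepsilon,1)\bigr)=2\varepsilon$. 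Your pieces, however, are $\sum_{t_j<1}T(t_j)h_j=1$ and $T(1)x=-1$. So no estimate of type (i) or (ii)--(iii) can hold.

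The obstruction is structural. Both forms see the times only through the continuous kernels $\chi_{[t_j,t_j+1]}$ and $1+a(t_i,t_j)$, so any weight you extract from them depends continuously on $t_j$. In particular, the $L^2$-component of $\eta_{t_j}$ cannot separate early from late indices sharply. Point (i) is also not right as stated. For $r<1$ the integrand $G(r)$ contains only the indices with $t_j\le r$. In any case, pointwise values of $G$ are not controlled by an $L^2$ integral.

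\textbf{The missing idea} is to cut with the weight that the $\alpha$-integral actually produces. With $G(r)=\sum_jT(t_j)h_j\chi_{[t_j,t_j+1]}(r)$ one has $\int_0^2G(r)\,dr=\sum_jc(t_j)T(t_j)h_j$, where $c(t)=\operatorname{meas}([0,2]\cap[t,t+1])$. Cauchy--Schwarz then gives the bound $\bigl((e^{4\lambda}-1)/(2\lambda)\bigr)^{1/2}\sqrt{\alpha(f)}$ for this part.

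The complementary weight $1-c(t)=\int_0^1\chi_{[0,t-1]}(s)\,ds$ vanishes for $t\le 1$. Hence the remainder equals $P(1)\int_0^1\Psi(s)\,ds$, where $T(1)=P(1)J_0$ as in your step (ii) and
\[
\Psi(s)=\sum_{t_j\ge1}J_{H_0}^*U(t_j-1)Jh_j\,\chi_{[0,t_j-1]}(s).
\]
On $[0,1]$, write $\chi_{[0,t_j-1]}=1-\chi_{[t_j-1,1]}$. The constant part is the full sum $\sum_jU(t_j-1)Jh_j$ over all $j$. This is where the negative times of the unitary dilation are genuinely used, and this part is controlled by the $\mathbb C$-component of $\eta$. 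For the other part, shift by $1$ and use $\chi_{[t,2]}=\chi_{[t,t+1]}+\chi_{[t,t+1]}(\cdot-1)$ on $[1,2]$; it is then controlled by the $L^2$-component. Together this yields $\|\sum_j(1-c(t_j))T(t_j)h_j\|^2\le 4\|P(1)\|^2\beta(f)$.

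\textbf{A smaller point.} For $\|P(1)\|$, redo the averaging in the original norm, as in Step 1 of the paper. Quoting \eqref{eq:T0welldefined1} directly brings in $\|T(u)\|_2$, and hence a $C_2$-dependence that $M_2$ does not have.
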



\begin{proof}
 The proof is quite long and we split it into five steps.
 
 \medskip
 \noindent
 \textbf{Step 1: factorization through \(H_0\).}
 For \(t>0\), define
 \[
 P(t)J_0h:=T(t)h,\qquad h\in H.
 \]
 We show that this extends to a bounded operator \(P(t):H_0\to H\). Note first that, if \(J_0h=0\), then $T(t)h=0$ as proven in \eqref{eq:T0welldefined1} and \eqref{eq:T0welldefined2}, thus \(P(t)\) is well-defined.

If \(0<t\le\tau\), repeating the argument in \eqref{eq:T0welldefined1} with a minor modification provides that
 \begin{align*}
  \|T(t)h\|^2
  &=
  \frac{\tau}{t}\,
  \mu_\tau\bigl(\|T(t)h\|^2\,\chi_{[0,t)}\bigr)\\
  &\le
  \frac{\tau}{t}\,
  \sup_{u\in[0,t]}\|T(u)\|^2\,
  \mu_\tau\bigl(\|T(\cdot)h\|^2\,\chi_{[0,t)}\bigr)\\
  &\le
  \frac{\tau}{t}\,
  \sup_{u\in[0,t]}\|T(u)\|^2 \,\|J_0h\|_0^2, \qquad h \in H,
 \end{align*}
 where we used that $\|\cdot\| \leq \|\cdot\|_2$. Similarly, if \(t\ge\tau\), then for every \(s\in[0,\tau)\) one has
 \[
 \|T(t)h\|^2
 =
 \|T(t-s)T(s)h\|^2
 \le
 \sup_{u\in[t-\tau,t]}\|T(u)\|^2\,\|T(s)h\|_2^2, \qquad h\in H.
 \]
 Averaging over \(s\in[0,\tau)\) with \(\mu_\tau\) yields
 \[
 \|T(t)h\|^2
 \le
 \sup_{u\in[t-\tau,t]}\|T(u)\|^2\,\|J_0h\|_0^2, \qquad h \in H.
 \]
 Therefore
 \[
 \|P(t)\|
 \le
 \maxone{\sqrt{\tau/t}}
 \sup_{u\in[0,t]}\|T(u)\|,
 \qquad t>0,
 \]
 and, in particular, $\|P(1)\|\le \maxone{\sqrt{\tau}}\sup_{u\in[0,1]}\|T(u)\|$.
 \medskip

 \noindent
 \textbf{Step 2: upper bound.}
 For $h\in H$, since \(h=T(0)h\), we have \(\delta_0\otimes h\in F_h^+\). Hence
 \[
 \|h\|_{\rm{eq}}^2\le \|\delta_0\otimes h\|_\alpha^2+\|\delta_0\otimes h\|_\beta^2.
 \]
 Now
 \[
 \|\delta_0\otimes h\|_\alpha^2
 =
 \int_0^{2} e^{-2\lambda s}\|h\|_1^2\,ds
 =
 \frac{1-e^{-4\lambda}}{2\lambda}\|h\|_1^2
 \le
 C_1^2\frac{1-e^{-4\lambda}}{2\lambda}\|h\|^2,
 \]
 while, since \(a(0,0)=1\),
 \[
 \|\delta_0\otimes h\|_\beta^2
 =
 2\|J_0h\|_0^2
 \le
 2C_2^2\sup_{s\in[0,\tau)}\|T(s)\|^2\,\|h\|^2.
 \]
 Thus $\|h\|_{\rm{eq}}\le M_1 \|h\|$ for all $h\in H$, as claimed.
\medskip

\noindent
 \textbf{Step 3: lower bound.}
 For \(t\ge0\), define
 \[
 c(t):=\operatorname{meas}([0,2]\cap[t,t+1])=
 \begin{cases}
  1,& 0\le t\le1,\\
  2-t,& 1\le t\le2,\\
  0,& t\ge2.
 \end{cases}
 \]
 Fix \(h\in H\), and let
 \[
 f=\sum_{j=1}^n \delta_{t_j}\otimes h_j\in F_h^+,
 \qquad\text{so that}\qquad
 h=\sum_{j=1}^n T(t_j)h_j.
 \]
 We write
 \begin{equation}\label{two_sum}
  h=
  \sum_{j=1}^n c(t_j)T(t_j)h_j
  +
  \sum_{j=1}^n (1-c(t_j))T(t_j)h_j,
 \end{equation}
 and  estimate each of the two summands above separately. The first summand will be controlled directly by the $\alpha$-term on the interval $[0,2]$, whereas the second summand will be rewritten through the quotient semigroup and the dilation so as to be controlled by the $\beta$-term.
 
 \smallskip
 \noindent
 To estimate the ``\(c(t_j)\)''-part, we set
 \[
 G(s):=\sum_{j=1}^n T(t_j)h_j\,\chi_{[t_j,t_j+1]}(s),\qquad s\in[0,2].
 \]
Then
 \[
 \sum_{j=1}^n c(t_j)T(t_j)h_j
 =
 \int_0^2 G(s) \,ds,
 \]
so the Cauchy--Schwarz inequality and the inequality $\|\cdot\| \leq \|\cdot\|_1$ give
\begin{align*}
 \Bigl\|
 \sum_{j=1}^n c(t_j)T(t_j)h_j
 \Bigr\|
 &\le  \int_0^2 \|G(s)\|_1\,ds\\
 &\le
   \left(\int_0^2 e^{2\lambda s}\,ds\right)^{1/2}
 \left(\int_0^2 e^{-2\lambda s}\|G(s)\|_1^2\,ds\right)^{1/2}\\
 &= \Biggl(\frac{e^{4\lambda}-1}{2\lambda}\Biggr)^{1/2}\|f\|_\alpha.
\end{align*}
 
 \smallskip
 \noindent
 The bound for the ``\((1-c(t_j))\)''-part is more involved.
 Note that
 \[
 1-c(t)=\operatorname{meas}([0,1]\cap[0,t-1]),\qquad t\ge0,
 \]
 where we use the convention $[a,b] =\emptyset$ if $b<a$. 
 Since $1-c(t)=0$ when $t\in [0,1)$, then
 \begin{align*}
 \sum_{j=1}^n (1-c(t_j))T(t_j)h_j
 &=
 T(1) \sum_{t_j\geq1} (1-c(t_j))T(t_j-1)h_j
 \\&=
 T(1)\int_0^1 \sum_{t_j\ge1} T(t_j-1)h_j\,\chi_{[0,t_j-1]}(s)\,ds.
 \end{align*}
 Define
\[
H(s):=\sum_{t_j\ge1} T_0(t_j-1)J_0h_j\,\chi_{[0,t_j-1]}(s),\qquad s\in[0,1].
\]
Therefore, using the operator \(P(1):H_0\to H\) from Step 1,
\begin{align*}
 \Bigl\|
 \sum_{j=1}^n (1-c(t_j))T(t_j)h_j
 \Bigr\|
 \le \|P(1)\|\int_0^1 \|H(s)\|_0\,ds
 \le \|P(1)\|\left(\int_0^1 \|H(s)\|_0^2\,ds\right)^{1/2}.
\end{align*}
 Now
 \[
 T_0(t_j-1)J_0h_j
 =
 J_{H_0}^*U(t_j-1)J_{H_0}J_0h_j
 =
 J_{H_0}^*U(t_j-1)Jh_j,
 \]
 so, since \(\|J_{H_0}^*\|\le1\), if we set
\begin{align*}
\widetilde H(s)&:=\sum_{t_j\ge1} U(t_j-1)Jh_j\,\chi_{[0,t_j-1]}(s)
= \sum_{j=1}^n U(t_j-1)Jh_j\,\chi_{[0,t_j-1]}(s),\qquad s\in[0,1],
\end{align*}
then
\[
\int_0^1 \|H(s)\|_0^2\,ds\le \int_0^1 \|\widetilde H(s)\|_K^2\,ds.
\]
For \(s\in[0,1]\) we have
 \[
 \chi_{[0,t_j-1]}(s)
 = 1-\chi_{[t_j-1,1]}(s).
 \]
 Hence, 
 \begin{align*}
  \int_0^1 \|\widetilde H(s)\|_K^2\,ds 
  &\le
  2\Bigl\|
  \sum_{j=1}^n U(t_j-1)Jh_j
  \Bigr\|_K^2
  +
  2\int_0^1
  \Bigl\|
  \sum_{j=1}^n U(t_j-1)Jh_j\,\chi_{[t_j-1,1]}(s)
  \Bigr\|_K^2\,ds.
 \end{align*}
 Changing variables in the second integral and using
 \[
 \chi_{[t,2]}(s)=\chi_{[t,t+1]}(s)+\chi_{[t,t+1]}(s-1),
 \qquad s\in[1,2], \quad t\geq0,
 \]
 we obtain
 \begin{align*}
  &\int_0^1
  \Bigl\|
  \sum_{j=1}^n U(t_j-1)Jh_j\,\chi_{[t_j-1,1]}(s)
  \Bigr\|_K^2\,ds
  = \int_1^2
  \Bigl\|
  \sum_{j=1}^n U(t_j)Jh_j\,\chi_{[t_j,2]}(s)
  \Bigr\|_K^2\,ds\\
  &\quad\le
  2\int_1^2
  \Bigl\|
  \sum_{j=1}^n U(t_j)Jh_j\,\chi_{[t_j,t_j+1]}(s)
  \Bigr\|_K^2\,ds
  + 2\int_1^2
  \Bigl\|
  \sum_{j=1}^n U(t_j)Jh_j\,\chi_{[t_j,t_j+1]}(s-1)
  \Bigr\|_K^2\,ds\\
  &\quad=
  2\int_0^2
  \Bigl\|
  \sum_{j=1}^n U(t_j)Jh_j\,\chi_{[t_j,t_j+1]}(s)
  \Bigr\|_K^2\,ds.
 \end{align*}
Putting these estimates together we deduce that
 \begin{align*}
	\int_0^1 \|H(s)\|_0^2\,ds
  & \le
  2\Bigl\|
  \sum_{j=1}^n U(t_j-1)Jh_j
  \Bigr\|_K^2
  +
  4\int_0^2
  \Bigl\|
  \sum_{j=1}^n U(t_j)Jh_j\,\chi_{[t_j,t_j+1]}(s)
  \Bigr\|_K^2\,ds.
 \end{align*}
 Since, 
 \[
 \Bigl\|
 \sum_{j=1}^n U(t_j-1)Jh_j
 \Bigr\|_K^2
 = \Bigl\|\sum_{j=1}^n U(t_j)Jh_j
 \Bigr\|_K^2
 =
 \sum_{i,j=1}^n\langle \Phi_0(t_i-t_j)J_0h_i,\ J_0h_j\rangle_0,
 \]
 and
 \begin{align*}
  &\int_0^2
  \Bigl\|
  \sum_{j=1}^n U(t_j)Jh_j\,\chi_{[t_j,t_j+1]}(s)
  \Bigr\|_K^2\,ds
  \\ & \quad \leq \int_0^\infty
   \Bigl\|
   \sum_{j=1}^n U(t_j)Jh_j\,\chi_{[t_j,t_j+1]}(s)
   \Bigr\|_K^2\,ds
  \\ & \quad = \sum_{i,j=1}^n  \langle \Phi_0(t_i-t_j)J_0h_i,\ J_0h_j  \rangle_0 \int_0^\infty \chi_{[t_i,t_i+1]}(s) \, \chi_{[t_j,t_j+1]}(s)\, ds
  \\ & \quad = 
  \sum_{i,j=1}^n
  a(t_i,t_j)\,
  \langle \Phi_0(t_i-t_j)J_0h_i,\, J_0h_j\rangle_0,
 \end{align*}
 we have
 \begin{align*}
  \Bigl\|
  \sum_{j=1}^n (1-c(t_j))T(t_j)h_j
  \Bigr\|^2
  & \le 
  2 \|P(1)\|^2
  \sum_{i,j=1}^n(1 + 2 a(t_i,t_j) )  \langle \Phi_0(t_i-t_j)J_0h_i,\, J_0h_j\rangle_0
 \\
  &\le
  4\|P(1)\|^2\,\|f\|_\beta^2\\
  &\le
  4\maxone{\tau}\sup_{s\in[0,1]}\|T(s)\|^2 \|f\|_\beta^2.
 \end{align*}
 
 Now combining the obtained estimates for the two summands in \eqref{two_sum}, we infer that
 \begin{equation}\label{eq:qBounded}
 \|h\|
 \le
 \left(\frac{e^{4\lambda}-1}{2\lambda}\right)^{1/2}
 \|f\|_\alpha
 +2\maxone{\sqrt{\tau}}\sup_{s\in[0,1]}\|T(s)\|\,
 \|f\|_\beta
 \le M_2 q(f)^{1/2}.
 \end{equation}
 Taking the infimum over all \(f\in F_h^+\) yields
 \[
 \|h\|\le M_2\|h\|_{\rm{eq}},
 \qquad h\in H.
 \]
 
 \medskip
 \noindent
 \textbf{Step 4: Hilbertianity.}
 By Proposition~\ref{prop:q-hilbert}, \(q\) is induced by a positive semidefinite sesquilinear form
 on \(F^+\).
 Hence, if we set 
 $$N_q := \{ f \in F^+ \, : \, q(f) = 0\},
 $$
 then \(F^+/N_q\) is a pre-Hilbert space, and its completion \(X\) is a Hilbert space.
 The map \(\Sigma:F^+\to H\), which is $q$-bounded by \eqref{eq:qBounded}, induces a surjective bounded linear map
 \[
 \widetilde\Sigma:X\to H,
 \]
 and \(\|\cdot\|_{\rm{eq}}\) is precisely the quotient norm induced by \(\widetilde\Sigma\).
 Indeed, for every \(h\in H\) one has
 \[
 \|h\|_{\rm{eq}}
 =
 \inf\{q(f)^{1/2}: f\in F^+,\ \Sigma f=h\},
 \]
 and this formula passes unchanged to the quotient by \(N_q\) and then to
 the completion \(X\).
 Thus \(\|\cdot\|_{\rm eq}\) is the quotient norm of a Hilbert space, hence a
 Hilbertian seminorm.
 Since the lower bound proved above implies that \(\|h\|_{\rm{eq}}=0\) only for \(h=0\),
 it follows that \(\|\cdot\|_{\rm{eq}}\) is an equivalent Hilbertian norm on \(H\).

 \medskip
 \noindent
 \textbf{Step 5: contractivity.}
 Fix \(h\in H\), let
 \[
 f=\sum_{j=1}^n \delta_{t_j}\otimes h_j\in F_h^+,
 \]
 and  \(t\ge0\). Consider the right shift $\sigma_t: F^+ \to F^+$, defined as
 \[
 \sigma_t f:=\sum_{j=1}^n \delta_{t+t_j}\otimes h_j, \qquad f \in F^+.
 \]
 Note that $\sigma_t (F_h^+) \subseteq F_{T(t)h}^+$. Indeed, given $f\in F_h^+$,
 \[
 \Sigma(\sigma_t f)=\sum_{j=1}^n T(t+t_j)h_j = T(t)\sum_{j=1}^n T(t_j)h_j=T(t)h.
 \]
 
 If \(t\ge2\), then clearly \(\|\sigma_t f\|_\alpha=0\le \|f\|_\alpha\).
 Assume now that \(0\le t\le2\). Using \(\|T(t)\|_1\le e^{\lambda t}\), we get
 \begin{align*}
  \|\sigma_t f\|_\alpha^2
  &=
  \int_0^2 e^{-2\lambda s}
  \Bigl\|
  \sum_{j=1}^n T(t+t_j)h_j\,\chi_{[t+t_j,t+t_j+1]}(s)
  \Bigr\|_1^2\,ds\\
  &\le
  \int_0^2 e^{-2\lambda(s-t)}
  \Bigl\|
  \sum_{j=1}^n T(t_j)h_j\,\chi_{[t_j,t_j+1]}(s-t)
  \Bigr\|_1^2\,ds\\
  &=
  \int_0^{2-t} e^{-2\lambda r}
  \Bigl\|
  \sum_{j=1}^n T(t_j)h_j\,\chi_{[t_j,t_j+1]}(r)
  \Bigr\|_1^2\,dr
  \le
  \|f\|_\alpha^2.
 \end{align*}
 
 For the \(\beta\)-part, since \(a(r+t,s+t)=a(r,s)\) for all \(r,s,t\ge0\),
 \begin{align*}
  \|\sigma_t f\|_\beta^2
  &=
  \sum_{i,j=1}^n
  \bigl(1+a(t_i+t,t_j+t)\bigr)
  \langle \Phi_0((t_i+t)-(t_j+t))J_0h_i,\ J_0h_j\rangle_0\\
  &=
  \sum_{i,j=1}^n
  \bigl(1+a(t_i,t_j)\bigr)
  \langle \Phi_0(t_i-t_j)J_0h_i,\ J_0h_j\rangle_0
  =
  \|f\|_\beta^2.
 \end{align*}
 Hence
 \[
 q(\sigma_t f)\le q(f).
 \]
 Taking infima over all \(f\in F_h^+\), we conclude that
 \[
 \|T(t)h\|_{\rm{eq}}\le \|h\|_{\rm{eq}},
 \qquad h\in H,\ t\ge0.
 \]
 Thus
 \[
 \|T(t)\|_{\rm{eq}}\le1,
 \qquad t\ge0.
 \]
This completes the proof.
\end{proof}

\begin{remark}
 The choice
 \[
 a(t,s)=(1-|t-s|)_+
 \]
 is convenient because it admits the concrete Gram representation
 \[
 1+a(t,s)=\langle \eta_t,\eta_s\rangle,
 \qquad
 \eta_t=1\oplus \chi_{[t,t+1]}
 \in \mathbb C\oplus L^2(\mathbb R_+),
 \]
 and because the corresponding \(\alpha\)-term interacts well with the exponential bound
 \[
 \|T(t)\|_1\le e^{\lambda t}.
 \]
 
 Alternatively, one may replace the scalar kernel \(1+a(t,s)\) by any positive semidefinite kernel
 \(b(t,s)\) on \(\mathbb R_+\) for which the resulting \(\beta\)-form remains positive and for which
 the lower-bound argument can still be carried out. In that case one defines
 \[
 \|f\|_{\beta,b}^2
 :=
 \sum_{i,j=1}^n b(t_i,t_j)\,
 \langle \Phi_0(t_i-t_j)J_0h_i,\ J_0h_j\rangle_0,
 \qquad
 f=\sum_{j=1}^n \delta_{t_j}\otimes h_j.
 \]
 The proof above then goes through once the analogues of the positivity and lower-bound estimates
 are available.
 
 A more homogeneous variant would be to replace \(1+a(t,s)\) by a translation-invariant positive
 definite kernel \(\varphi(t-s)\) on \(\mathbb R\).
 This would keep the \(\beta\)-part manifestly Toeplitz. However, the present kernel
 \(1+a(t,s)\) is tailored to the elementary decomposition used in Step 3, and it is not clear
 that the same estimates remain equally transparent for a general choice of \(\varphi\).
\end{remark}

To finish this section, we provide a tighter upper bound for the similarity constant $\mathcal C(\mathcal T)$, which is obtained by averaging in an optimal way the $\alpha$-part and $\beta$-part of the equivalent norm.
\begin{proposition}\label{Prop:simConstant}
 In the setting of Theorem~\ref{Th:eqNorm}, 
 \[
  \mathcal C(\mathcal T)
  \le C_1 \frac{\sinh(2\lambda)}{\lambda}
  + 2\sqrt{2}\,C_2\maxone{\sqrt{\tau}}
    \sup_{t\in[0,\maxone{\tau}]}\|T(t)\|^2.
 \]
\end{proposition}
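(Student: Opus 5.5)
The plan is to rerun the construction of Theorem~\ref{Th:eqNorm} with the $\alpha$- and $\beta$-parts weighted independently, and then optimize over the weights. For parameters $c,d>0$ set
\[
q_{c,d}(f):=c^2\|f\|_\alpha^2+d^2\|f\|_\beta^2,\qquad
\|h\|_{c,d}:=\inf\bigl\{q_{c,d}(f)^{1/2}:\ f\in F_h^+\bigr\},\qquad h\in H.
\]

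First I check that Steps 4 and 5 of the proof of Theorem~\ref{Th:eqNorm} go through unchanged for $q_{c,d}$. Proposition~\ref{prop:q-hilbert} shows that each of $\|\cdot\|_\alpha^2$ and $\|\cdot\|_\beta^2$ is a positive semidefinite form. Hence $q_{c,d}$ is again a positive semidefinite form, and $\|\cdot\|_{c,d}$ is a quotient Hilbertian seminorm. Step 5 shows separately that $\|\sigma_tf\|_\alpha\le\|f\|_\alpha$ and $\|\sigma_tf\|_\beta=\|f\|_\beta$. Therefore $q_{c,d}(\sigma_tf)\le q_{c,d}(f)$, and $\mathcal T$ is contractive for $\|\cdot\|_{c,d}$. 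It remains to estimate the distortion $\mathcal C_H(\|\cdot\|_{c,d})$ and to use $\mathcal C(\mathcal T)\le \mathcal C_H(\|\cdot\|_{c,d})$.

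\emph{Upper bound.} Test with $f=\delta_0\otimes h$, exactly as in Step 2. This gives $\|h\|_{c,d}^2\le (c^2X+d^2Y)\|h\|^2$ with
\[
X:=C_1^2\,\frac{1-e^{-4\lambda}}{2\lambda},\qquad Y:=2C_2^2\sup_{s\in[0,\tau)}\|T(s)\|^2.
\]

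\emph{Lower bound.} Inequality \eqref{eq:qBounded} from Step 3 says that $\|h\|\le a\|f\|_\alpha+b\|f\|_\beta$ for every $f\in F_h^+$, where
\[
a=\Bigl(\frac{e^{4\lambda}-1}{2\lambda}\Bigr)^{1/2},\qquad b=2\maxone{\sqrt\tau}\sup_{s\in[0,1]}\|T(s)\|.
\]
Write $a\|f\|_\alpha+b\|f\|_\beta=\frac ac\,(c\|f\|_\alpha)+\frac bd\,(d\|f\|_\beta)$ and apply Cauchy--Schwarz in $\mathbb R^2$. This gives
\[
\|h\|\le\Bigl(\frac{a^2}{c^2}+\frac{b^2}{d^2}\Bigr)^{1/2}q_{c,d}(f)^{1/2}.
\]
Taking the infimum over $f\in F_h^+$ yields the lower bound for $\|\cdot\|_{c,d}$, and in particular shows that it is a norm.

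Combining the two bounds,
\[
\mathcal C(\mathcal T)^2\le (c^2X+d^2Y)\Bigl(\frac{a^2}{c^2}+\frac{b^2}{d^2}\Bigr).
\]
By Cauchy--Schwarz the right-hand side is at least $(a\sqrt X+b\sqrt Y)^2$, with equality when $c^2/d^2=b\sqrt Y/(a\sqrt X)$. This choice gives $\mathcal C(\mathcal T)\le a\sqrt X+b\sqrt Y$.

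The remaining step is arithmetic. Since $(e^{4\lambda}-1)(1-e^{-4\lambda})=(e^{2\lambda}-e^{-2\lambda})^2=4\sinh^2(2\lambda)$, we get $a\sqrt X=C_1\sinh(2\lambda)/\lambda$. For the second term,
\[
b\sqrt Y=2\sqrt2\,C_2\maxone{\sqrt\tau}\sup_{[0,1]}\|T\|\,\sup_{[0,\tau)}\|T\|\le 2\sqrt2\,C_2\maxone{\sqrt\tau}\sup_{t\in[0,\maxone{\tau}]}\|T(t)\|^2.
\]
This is the claimed estimate.

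I expect the only delicate point to be confirming that the contractivity and Hilbertianity arguments of Theorem~\ref{Th:eqNorm} really do not use the relative weighting of the $\alpha$- and $\beta$-terms. They do not, because each term is separately positive semidefinite and separately non-increasing under the shifts $\sigma_t$.
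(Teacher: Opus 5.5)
Your proposal is correct and follows the paper's proof essentially verbatim: the paper likewise weights the two forms as $r\|f\|_\alpha^2+s\|f\|_\beta^2$ (your $c^2,d^2$), takes the upper and lower bounds from Steps~2 and~3, and minimizes the resulting product over the ratio $r/s$. There is one slip in your stated optimizer: minimizing $(uX+Y)(a^2/u+b^2)$ over $u=c^2/d^2$ gives $u=a\sqrt{Y}/(b\sqrt{X})$, not $b\sqrt{Y}/(a\sqrt{X})$, but the minimal value $(a\sqrt{X}+b\sqrt{Y})^2$ you use is correct.
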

\begin{proof}
 Set
 \begin{align*}
  a&:= C_1 \sqrt{\frac{1-e^{-4\lambda}}{2\lambda}}, \qquad  &b&:= \sqrt{2} C_2 \sup_{t\in [0,\tau)} \|T(t)\|, \\
  c&:= \sqrt{\frac{e^{4\lambda}-1}{2\lambda}}, \qquad  &d&:= 2 \maxone{\sqrt{\tau}} \sup_{t\in [0,1]} \|T(t)\|,
 \end{align*}
 and for each $r,s>0$, define
 \begin{align*}
  \|h\|_{r,s}^2 := \inf_{f \in  F_h^+} r \|f\|_\alpha^2 + s \|f\|_\beta^2, \qquad h \in H,
 \end{align*}
 Repeating the argument given in the proof of Theorem \ref{thm:explicit-eq-norm}, we obtain that, for each $r,s>0$, $\|\cdot\|_{r,s}$ is an equivalent Hilbertian norm on $H$ such that $\|T(t)\|_{r,s} \leq 1$ for all $t\geq 0$ and
 \begin{align*}
  \|h\|_{r,s} \leq \sqrt{r a^2 + s b^2} \, \|h\|, \qquad \|h\| \leq \sqrt{\frac{1}{r} c^2 + \frac{1}{s} d^2} \, \|h\|_{r,s},
 \end{align*}
 for all $h\in H$. This yields that
 \begin{align}\label{Eq:BadeaTrick}
  \mathcal C(\mathcal T) \leq \sqrt{ \left(r a^2 + s b^2 \right) \left(\frac{1}{r} c^2 + \frac{1}{s} d^2 \right)}, 
 \end{align}
 for all $r,s>0$. It is readily seen that the absolute minimum of the right-hand side in \eqref{Eq:BadeaTrick} is reached for any choice of $r,s$ satisfying $\frac{r}{s} = \frac{bc}{ad}$, and then \eqref{Eq:BadeaTrick} reads as
 \begin{align*}
  \mathcal C(\mathcal T) \leq ac + bd, 
 \end{align*}
 which yields the claim.
\end{proof}

\section{Numerical preliminaries on unconditional infinite products}
\label{sec:num-prod}

We now return to the tensor-product side of the paper. First, in Subsection~\ref{subsec:preliminaries} we collect several
basic facts on unconditional convergence of infinite products. They are elementary, but they
enter repeatedly in the operator and semigroup constructions below, so it is useful to assemble
them in one place. Secondly, we provide in Subsection~\ref{subsec:uniform-bounds-zero} the near-zero product bounds that will be needed in the converse direction of the similarity theorems in Section~\ref{sec:similarity-tensor}.


\subsection{Definitions and basic identities}\label{subsec:preliminaries}

Given a sequence $(z_n)_{n\in \NN} \subset \CC$, we say that the product $\prod_{n\in \NN} z_n$ is \emph{unconditionally convergent} to $a\in\mathbb C$ if, for every bijection $\sigma: \NN\to \NN$ we have that
	\begin{equation*}
		\lim_{N \to \infty} \prod_{n=1}^N z_{\sigma(n)} = a.
	\end{equation*}
	This is equivalent to the fact that, for every $\varepsilon>0$
	there exists a finite set $I_0\subset \NN$ such that
	\begin{equation}\label{eq:uncondProd}
		\Bigl|a-\prod_{n\in I} z_n\Bigr|\le \varepsilon
		\qquad
		\text{for every finite }I\subset \NN \text{ with } I_0\subseteq I.
	\end{equation}
	In other words, let $\finiteN$ be the net of finite subsets of $\NN$, ordered by inclusion. That is,
	$$\finiteN := \{F \subset \NN \, : \, |F| < \infty\},
	$$
	and $F\leq E$ in $\finiteN$ if and only if $F\subseteq E$. Then $\prod_{n\in \NN} z_n$ is unconditionally convergent to $a\in\mathbb C$ if and only if the net
	$$\Big( \prod_{n\in F} z_n \Big)_{F\in\finiteN}  
	$$
	converges to $a$.
	
	If $\prod_{n\in \NN}|z_n|$ is unconditionally convergent, we say that $\prod_{n\in \NN} z_n$ is
	\emph{quasi-convergent}, and we define the quasi-convergent product, denoted by the same symbol $\prod_{n\in \NN} z_n$, as 
	\begin{align*}
		\begin{cases}
			\prod_{n\in \NN} z_n, \quad &\mbox{if } \prod_{n\in \NN} z_n \mbox{ is unconditionally convergent,} \\
			0, \quad & \mbox{otherwise}.
		\end{cases}
\end{align*}


	Now we define analogous limits for infinite series. We say that the series $\sum_{n\in \NN} z_n$ is \emph{unconditionally convergent} to $a \in \CC$ if for every bijection $\sigma:\NN \to \NN$, 
	\begin{equation}\label{eq:uncondSeries}
		\lim_{N\to \infty} \sum_{n=1}^N z_{\sigma(n)} = a.
	\end{equation}
	We will also consider series of sequences $(c_n)_{n\in \NN} \subset [-\infty, \infty)$, where we consider the usual convention
	$$-\infty + (-\infty) = - \infty, \qquad -\infty + x = -\infty, \qquad x \in \RR.
	$$
	For such sequences we also allow unconditional convergence to an
	infinite-valued limit \(a\in\{-\infty,\infty\}\), defined analogously.
	Hence, the series $\sum_{n\in \NN} c_n$ is unconditionally convergent to a finite value if and only if it is absolutely convergent, and it is unconditionally convergent to $\pm \infty$ if and only if, either $c_n = -\infty$ for some $n\in \NN$, or exactly one of the series 
	$$\sum_{n\in \NN} \max\{c_n,0\},\qquad \sum_{n\in \NN} \min\{c_n,0\},
	$$
	converges, while the other one diverges.



For $x\in\mathbb R$ set 
$$x_+:=\max\{x,0\}, \qquad x_-:=\min\{x,0\}.
$$

We record several basic lemmas on convergence of products that will be used repeatedly.

	\begin{lemma}\label{lem:vn1}
		Let $(\alpha_n)_{n\in \NN}\subset[0,\infty)$. 
		\begin{itemize}
			\item [(i)] $\prod_{n\in \NN}\alpha_n$ is unconditionally convergent
			if and only if either $\alpha_n = 0$ for some $n\in \NN$, or 
			$$\sum_{n\in \NN} (\alpha_n - 1)_+ < \infty.
			$$
			\item [(ii)] $\prod_{n\in \NN}\alpha_n$ is unconditionally convergent to a value in $(0,\infty)$ if and only if $\alpha_n\neq 0$ for all $n\in \NN$ and 
			$$\sum_{n\in \NN} |\alpha_n - 1| < \infty.
			$$
		\end{itemize}
	\end{lemma}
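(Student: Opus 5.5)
The plan is to reduce both parts to the behaviour of finite partial products $P_F:=\prod_{n\in F}\alpha_n$, $F\in\finiteN$, viewed as a net ordered by inclusion. If $\alpha_{n_0}=0$ for some $n_0$, then $P_F=0$ for every $F\ni n_0$, so the net converges to $0$. This settles the ``zero'' alternative in (i) immediately, and also shows that in (ii) a nonzero limit forces $\alpha_n\neq 0$ for all $n$. From now on assume all $\alpha_n>0$.

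For (i) I will split the index set into $A:=\{n:\alpha_n\ge 1\}$ and $B:=\{n:\alpha_n<1\}$, so that $P_F=P_{F\cap A}\,P_{F\cap B}$. The net $P_{F\cap A}$ is nondecreasing in $F$, and $P_{F\cap B}$ is nonincreasing with values in $(0,1]$, so both converge. Their limits are $\sup_F P_{F\cap A}\in[1,\infty]$ and $\inf_F P_{F\cap B}\in[0,1]$. Using $\log(1+x)\le x$ and $1+x\ge$ the tails of $e^{x/2}$ for small $x$, or simply $1+x\le e^x$ together with $\prod(1+x_n)\ge 1+\sum x_n$, the first limit is finite if and only if $\sum_n(\alpha_n-1)_+<\infty$.

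For sufficiency, when this sum is finite the product of the two monotone limits is a finite number $L\ge0$, and the net $P_F$ converges to $L$. This needs a short check: convergence of each factor net plus boundedness of the $A$-factor gives convergence of the product. For necessity, assume the $A$-sum diverges, so the $A$-part tends to $+\infty$. I then produce an increasing sequence of finite sets on which $P_F\to\infty$: take $F$ containing a fixed finite $B$-part $F_0$ with $P_{F_0}>0$, and enlarge only within $A$. This contradicts convergence to a finite value. If one allows infinite limits, I note the convention that the limit must be a complex number, so this case is excluded.

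For (ii) I use logarithms, $\log P_F=\sum_{n\in F}\log\alpha_n$. Unconditional convergence of $P_F$ to a value in $(0,\infty)$ is equivalent, by continuity of $\log$ and $\exp$, to unconditional convergence of the real series $\sum_n\log\alpha_n$. The paper notes that for real series this is equivalent to absolute convergence, $\sum_n|\log\alpha_n|<\infty$. The last step is the comparison $|\log\alpha|\asymp|\alpha-1|$ near $\alpha=1$: either summability forces $\alpha_n\to1$, and then $\tfrac12|\alpha-1|\le|\log\alpha|\le2|\alpha-1|$ for $|\alpha-1|\le\tfrac12$ gives the equivalence of $\sum|\log\alpha_n|<\infty$ and $\sum|\alpha_n-1|<\infty$. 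I expect the only mildly delicate point to be the necessity direction in (i), namely handling the $B$-part possibly tending to $0$ while the $A$-part diverges. Fixing a finite $B$-part with positive product, as described above, resolves it.
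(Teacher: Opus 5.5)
Your argument is correct. The paper gives no argument of its own for this lemma: it simply refers to von Neumann \cite{von1939infinite}, Sections~2.3--2.4. So your proposal is a self-contained replacement for a citation rather than a variant of an in-paper proof.

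Your route fits the paper's own framework. For (i), splitting into $\{\alpha_n\ge1\}$ and $\{\alpha_n<1\}$ and using two monotone nets is the mechanism the paper describes in the remark after the lemma. For (ii), your use of $\log P_F=\sum_{n\in F}\log\alpha_n$, together with the fact that unconditionally convergent real series converge absolutely, is the argument the paper gives for (i)$\Leftrightarrow$(ii) in Lemma~\ref{lem:maxmin}. You supplement it with the comparison $\tfrac12|\alpha-1|\le|\log\alpha|\le2|\alpha-1|$ for $|\alpha-1|\le\tfrac12$.

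The citation buys brevity and covers arbitrary index sets. Your version works directly with the net formulation \eqref{eq:uncondProd}, so the reader need not reconcile von Neumann's conventions with the paper's bijection-based definition. It also shows why only the upper deviations $(\alpha_n-1)_+$ decide finiteness in (i).

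One step in the necessity half of (i) needs tightening. You enlarge a fixed finite set only within $A=\{n:\alpha_n\ge1\}$. When $B$ is infinite, the resulting increasing sequence of finite sets is not cofinal in $\finiteN$. By itself it therefore does not contradict convergence of the net $(P_F)_{F\in\finiteN}$.

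The fix is immediate:
\begin{itemize}
\item Suppose $P_F\to L\in\CC$, and take the finite set $I_0$ given by \eqref{eq:uncondProd} with $\varepsilon=1$.
\item Since all $\alpha_n>0$, you have $P_{I_0}>0$.
\item For finite $J\subseteq A\setminus I_0$ you get $P_{I_0\cup J}=P_{I_0}P_J$. This is unbounded in $J$, because $P_J\ge1+\sum_{n\in J}(\alpha_n-1)$ and removing the finitely many indices of $I_0$ leaves $\sum_{n\in A}(\alpha_n-1)_+=\infty$ intact.
\item This contradicts $|P_I-L|\le1$ for all $I\supseteq I_0$.
\end{itemize}
So your starting set should be $I_0$ itself, which in general mixes $A$- and $B$-indices, rather than an arbitrary ``fixed finite $B$-part''.
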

	\begin{proof}
		The proof of this result is given in \cite[Sections 2.3 \& 2.4]{von1939infinite}.
	\end{proof}
	
	Note that if $\beta_n\ge 1$ for all $n\in \NN$, then the net of partial products $\prod_{n\in I}\beta_n$
	is monotone increasing and hence converges 
	if and only if it is bounded.
	In particular, for such products ordinary and unconditional convergence coincide. If $0<\gamma_n\le 1$
	for all $n\in \NN$, then $\prod_{n\in I}\gamma_n$ is monotone decreasing and hence always converges (possibly to $0$).
	This is the reason why controlling only the ``upper deviations'' $(\alpha_n-1)_+$ yields finite convergence of
	$\prod_{n\in \NN} \alpha_n$, but does not by itself prevent the limit from being $0$.

The next lemma isolates the nonzero case and explains the max/min splitting in logarithmic variables.


\begin{lemma}[Max/min factorization and log parts]\label{lem:maxmin}
Let $(\alpha_n)_{n\in \NN}\subset(0,\infty)$.
The following are equivalent:
\begin{enumerate}
\item [(i)] $\prod_{n\in \NN}\alpha_n$ is unconditionally convergent to a value in $(0,\infty)$;
\item [(ii)] $\sum_{n\in \NN}|\log \alpha_n|<\infty$;
\item [(iii)]$\sum_{n\in \NN}(\log\alpha_n)_+<\infty$ and $\sum_{n\in \NN}(\log\alpha_n)_->-\infty$;
\item [(iv)] $\prod_{n\in \NN} \maxone{\alpha_n}<\infty$ and $\prod_{n\in \NN}\minone{\alpha_n}>0$.
\end{enumerate}
If any of the above hold,
\[
\prod_{n\in \NN}\alpha_n
=
\Bigl(\prod_{n\in \NN} \maxone{\alpha_n}\Bigr)\Bigl(\prod_{n\in \NN} \minone{\alpha_n}\Bigr),
\qquad
\log\Bigl(\prod_{n\in \NN}\alpha_n\Bigr)=\sum_{n\in \NN}\log \alpha_n.
\]
\end{lemma}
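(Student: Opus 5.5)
The plan is to go around the cycle (i)$\Rightarrow$(ii)$\Rightarrow$(iii)$\Rightarrow$(iv)$\Rightarrow$(i), using Lemma~\ref{lem:vn1} to connect products with sums of deviations and elementary inequalities to connect $|\alpha_n-1|$ with $|\log\alpha_n|$.

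\emph{(i)$\Rightarrow$(ii).} By Lemma~\ref{lem:vn1}(ii), $\sum_{n\in\NN}|\alpha_n-1|<\infty$. Then $\alpha_n\to 1$, so for all but finitely many $n$ we have $\alpha_n\in[1/2,2]$. On that interval $|\log x|\le 2|x-1|$ (mean value theorem, since $|1/\xi|\le 2$). The finitely many remaining terms are finite because $\alpha_n>0$, so $\sum_{n\in\NN}|\log\alpha_n|<\infty$.

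\emph{(ii)$\Rightarrow$(iii).} This is immediate from $|x|=x_+-x_-$.

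\emph{(iii)$\Rightarrow$(iv).} We have $\maxone{\alpha_n}=e^{(\log\alpha_n)_+}$, and the partial products of these factors, all $\ge1$, are monotone with supremum $\exp\sum_{n\in\NN}(\log\alpha_n)_+<\infty$. Similarly, $\minone{\alpha_n}=e^{(\log\alpha_n)_-}$, and the decreasing partial products converge to $\exp\sum_{n\in\NN}(\log\alpha_n)_->0$. The remark after Lemma~\ref{lem:vn1} shows these limits are unconditional, and continuity of $\exp$ identifies them.

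\emph{(iv)$\Rightarrow$(i).} Apply Lemma~\ref{lem:vn1}(ii) separately to the two products. The factors $\maxone{\alpha_n}$ and $\minone{\alpha_n}$ are nonzero, and their products converge to values in $(0,\infty)$, so
\[
\sum_{n\in\NN}|\maxone{\alpha_n}-1|<\infty
\quad\text{and}\quad
\sum_{n\in\NN}|\minone{\alpha_n}-1|<\infty.
\]
Since $|\alpha_n-1|=|\maxone{\alpha_n}-1|+|\minone{\alpha_n}-1|$, the series $\sum_{n\in\NN}|\alpha_n-1|$ converges. Lemma~\ref{lem:vn1}(ii) then gives (i).

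\emph{The identities.} For each finite $F\in\finiteN$ we have $\alpha_n=\maxone{\alpha_n}\minone{\alpha_n}$, hence $\prod_{n\in F}\alpha_n=\prod_{n\in F}\maxone{\alpha_n}\prod_{n\in F}\minone{\alpha_n}$. Pass to the limit along the net $\finiteN$; limits of products of convergent nets multiply. Similarly $\log\prod_{n\in F}\alpha_n=\sum_{n\in F}\log\alpha_n$. The right-hand side converges unconditionally by (ii), and the left-hand side converges to $\log$ of the limit by continuity of $\log$ on $(0,\infty)$.

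The only mildly delicate point is (iv)$\Rightarrow$(i). There one must make sure that unconditional convergence of each split product, rather than mere boundedness, yields absolute summability of the deviations. Lemma~\ref{lem:vn1}(ii) handles this directly.
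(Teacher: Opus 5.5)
Your proof is correct, but it is organized differently from the paper's.

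The paper does not use Lemma~\ref{lem:vn1} here. It proves (i)$\Leftrightarrow$(ii) directly from the identity $\prod_{n\in F}\alpha_n=\exp\bigl(\sum_{n\in F}\log\alpha_n\bigr)$ for finite $F$. By continuity of $\exp$ and $\log$, the product net converges in $(0,\infty)$ exactly when the sum net converges in $\mathbb R$, and an unconditionally convergent real series is absolutely convergent. It then gets (iii)$\Leftrightarrow$(iv) by applying the same equivalence to the sequences $\maxone{\alpha_n}$ and $\minone{\alpha_n}$, using $\log\maxone{\alpha}=(\log\alpha)_+$ and $\log\minone{\alpha}=(\log\alpha)_-$.

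You instead make von Neumann's criterion $\sum_n|\alpha_n-1|<\infty$ from Lemma~\ref{lem:vn1}(ii) the hub of the argument. You pass to logarithms through the bound $|\log x|\le 2|x-1|$ on $[1/2,2]$. You close the cycle with the splitting $|\alpha_n-1|=|\maxone{\alpha_n}-1|+|\minone{\alpha_n}-1|$, so that (iv)$\Rightarrow$(i) needs no logarithms at all.

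The paper's route is more self-contained: it needs only continuity of $\exp$ and $\log$ and the fact that unconditional convergence of a real series means absolute convergence. Yours relies on the cited von Neumann result, but in exchange it makes the link between $\sum_n|\log\alpha_n|$ and the deviation criterion $\sum_n|\alpha_n-1|$ explicit. The two arguments derive the final identities in essentially the same way.
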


\begin{proof}
(i)$\Rightarrow$(ii): Assume $\prod_{n\in \NN} \alpha_n$ is unconditionally convergent to a value in $(0,\infty)$.
Then for every finite $I\subset \NN$,
\[
\log\left( \prod_{n\in I}\alpha_n\right) = \sum_{n\in I}\log\alpha_n.
\]
Since $\log:(0,\infty)\to\mathbb R$ is continuous, the net $I\mapsto\sum_{n\in I}\log\alpha_n$ converges in
$\mathbb R$. Hence $\sum_{n\in \NN}\log\alpha_n$ converges unconditionally and therefore absolutely, so
$\sum_{n\in \NN}|\log\alpha_n|<\infty$.

(ii)$\Rightarrow$(i): If $\sum_{n\in \NN}|\log\alpha_n|<\infty$, then for finite $I\subset \NN$,
$\prod_{n\in I}\alpha_n=\exp(\sum_{n\in I}\log\alpha_n)$, and the unconditional convergence of
$\sum_{n\in \NN}\log\alpha_n$ implies unconditional convergence of the product to a nonzero limit.

(ii)$\Leftrightarrow$(iii) is immediate from $|\log\alpha|=(\log\alpha)_+-(\log\alpha)_-$.

(iii)$\Leftrightarrow$(iv) follows from what we already proved and the equalities 
$$\log(\maxone{\alpha})=(\log\alpha)_+, \qquad \log(\minone{\alpha})=(\log\alpha)_-.
$$
The displayed identities then follow from the same logarithmic identities.
\end{proof}
\noindent

%
%
%

Let $(\alpha_n)_{n\in \NN} \subset [0,\infty)$, so that the sequence is allowed to take the value $0$. With the convention \(\log0=-\infty\), the product \(\prod_{n\in\NN} \alpha_n\) is
unconditionally convergent if and only if \(\sum_{n\in\NN} \log\alpha_n\) is
unconditionally convergent in the extended sense, and then
$$
\log \left(\prod_{n\in \NN} \alpha_n \right) 
=
\sum_{n\in \NN} \log \alpha_n.
$$



\subsection{Uniform product bounds near zero}\label{subsec:uniform-bounds-zero}


The converse similarity arguments in Section~\ref{sec:similarity-tensor} require quantitative control
of the complementary tensor factors near the origin. The purpose of this
subsection is to isolate the near-zero product estimates that can be recovered
from boundedness of the whole tensor product. 
More precisely, under local boundedness of the
factors and boundedness of the infinite product near zero, we show that
each individual factor is bounded near zero and then obtain a positive-measure
set on which the complementary products remain uniformly controlled. These
estimates will be the main bridge from tensor-product similarity to the
one-factor similarity arguments from Section~\ref{sec:sim-reg-th}. 

We first prove the following result which does not impose any measurability on each of the factors. 

\begin{lemma}\label{lem:unif_bound}
	For each $n\in \NN$, let $f_n:(0,\infty) \to [0,\infty)$ be a submultiplicative function that is locally bounded in $(0,\infty)$. 
	Assume that, for every \(t>0\), the product
	\[
	f(t):=\prod_{n\in\mathbb N} f_n(t)
	\]
	is unconditionally convergent to a finite value, that \(f\) is not
	identically zero on \((0,\infty)\), and that
	\[
	\limsup_{t\to0+} f(t)<\infty .
	\]
	Then the following holds:
	\begin{itemize}
		\item [(i)] For each finite subset $I \subset \NN$, we have that
		$$\limsup_{t \to 0^+} \left( \prod_{n \in I}  \maxone{f_n(t)} \right) \leq \limsup_{t\to 0^+} f(t).
		$$ 
		In particular, $f_n$ is bounded near $0$ for every $n\in \NN$. \medskip
		
		\item [(ii)] Let $\delta>0$ such that $\prod_{n\in \NN} f_n(\delta) \neq 0$, and let $\mu_\delta$ and $\nu_\delta$ be as in Section~\ref{subsec:periodic-means}. Then there exists $M>0$ such that
		$$\nu_\delta \bigg( \bigg\{ t \in (0,\delta)  \, : \, \prod_{\ell \neq n} f_\ell(t)\leq M \bigg\} \bigg) \geq \frac{1}{2}, \qquad n \in \NN.
		$$
	\end{itemize} 
\end{lemma}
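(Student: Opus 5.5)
The plan is to treat (ii) first, since it has a clean proof using the symmetry $\nu_\delta(\delta-A)=\nu_\delta(A)$ from Section~\ref{subsec:periodic-means}, and then to handle (i) with the same kind of splitting. Throughout, submultiplicativity $f_n(s+t)\le f_n(s)f_n(t)$ passes to finite partial products and, by taking limits along the net $\finiteN$, to $f$ and to the complementary products $P_n(t):=\prod_{\ell\ne n}f_\ell(t)$.

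\emph{Part (ii).} The first step is to show $\sup_{t\in(0,\delta)}f(t)=:S<\infty$. By hypothesis $f$ is bounded, say by $L$, on some $(0,\varepsilon)$. For $t<\delta$, write $t=k(t/k)$ with $k=\lceil \delta/\varepsilon\rceil$; submultiplicativity of $f$ gives $f(t)\le f(t/k)^k\le L^k$. The second step is to show $c:=\inf_n f_n(\delta)>0$. Since $\prod_n f_n(\delta)\neq0$, every $f_n(\delta)>0$ and the product converges to a value in $(0,\infty)$. By Lemma~\ref{lem:vn1}(ii) this forces $f_n(\delta)\to1$, so the infimum is positive. The key step is the following splitting. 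From $f_n(\delta)\le f_n(t)f_n(\delta-t)$ we see that for each $t\in(0,\delta)$ at least one of $f_n(t)$, $f_n(\delta-t)$ is $\ge\sqrt{f_n(\delta)}\ge\sqrt c$. Hence the set
\[
B_n:=\{t\in(0,\delta):f_n(t)\ge\sqrt c\}
\]
satisfies $B_n\cup(\delta-B_n)\supseteq(0,\delta)$. Finite additivity together with $\nu_\delta(\delta-B_n)=\nu_\delta(B_n)$ then gives $\nu_\delta(B_n)\ge\frac12$. For $t\in B_n$ we have $f_n(t)>0$, so $P_n(t)=f(t)/f_n(t)\le S/\sqrt c$. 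Taking $M:=S/\sqrt c$, which does not depend on $n$, proves (ii).

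\emph{Part (i).} Fix a finite $I$. For $t$ small, let $J_t:=\{n\in I: f_n(t)\ge1\}$, so that
\[
\prod_{n\in I}\maxone{f_n(t)}=\prod_{n\in J_t}f_n(t)=\frac{f(t)}{Q_t(t)},
\qquad Q_t(s):=\prod_{n\notin J_t}f_n(s).
\]
It therefore suffices to prove $\liminf_{t\to0+}Q_t(t)\ge1$. Since $J_t$ ranges over the finitely many subsets of $I$, it is enough to prove $\liminf_{t\to0+}Q^{(J)}(t)\ge1$ for each fixed $J\subseteq I$. Fix $s_0$ with $f(s_0)>0$. Then all $f_n(s_0)>0$ and, by Lemma~\ref{lem:maxmin}, $\sum_n|\log f_n(s_0)|<\infty$, so $Q^{(J)}(s_0)\in(0,\infty)$. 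Writing $s_0=kt+r$ with $k=\lfloor s_0/t\rfloor$ and $r\in[0,t)$, submultiplicativity gives
\[
Q^{(J)}(s_0)\le Q^{(J)}(t)^k\,Q^{(J)}(r),
\]
so that
\[
\log Q^{(J)}(t)\ge \tfrac1k\bigl(\log Q^{(J)}(s_0)-\log Q^{(J)}(r)\bigr).
\]
If $\log Q^{(J)}(r)$ is bounded above uniformly for small $r$, letting $k\to\infty$ yields $\liminf_{t\to0+}Q^{(J)}(t)\ge1$, as required. Once (i) holds, each $f_n$ is bounded near $0$, by taking $I=\{n\}$.

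\emph{Main obstacle.} The hard step is exactly this uniform upper bound on $Q^{(J)}(r)$ for small $r$. One has $Q^{(J)}(r)=f(r)/\prod_{n\in J}f_n(r)$, and the numerator is bounded, but the denominator involves the factors $f_n$, $n\in J$, whose lower behaviour near $0$ is not yet controlled. To get around this, I would first try to avoid $r$ altogether. The idea is to apply the above inequality only to the whole product $f$ and to $\prod_{n\in J}f_n$, where $\limsup_{r\to 0+} f(r)<\infty$ is available. Concretely, I would use $f(s_0)\le f(t)^k f(r)$ together with the analogous upper bound for the finite product $\prod_{n\in J}f_n(t)^k$, and compare the two. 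Failing that, I would run the argument only along the grid $t=s_0/k$, where $r=0$, to obtain a lower bound for $Q^{(J)}(s_0/k)$. I would then use local boundedness of the finitely many $f_n$ with $n\in I$, combined with submultiplicativity, to pass from the grid to arbitrary small $t$.
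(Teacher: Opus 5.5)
Your part (ii) is correct, and it is slightly more direct than the paper's. The paper normalizes $g_n(t)=e^{-d_nt}f_n(t)$ so that $g_n(\delta)=1$ and pays a factor $e^{\delta\sum_n|d_n|}$. You instead use $c=\inf_n f_n(\delta)>0$ and the dichotomy $\max\{f_n(t),f_n(\delta-t)\}\ge\sqrt{f_n(\delta)}$, which feeds the same reflection argument $\nu_\delta(\delta-B_n)=\nu_\delta(B_n)$. Your reduction in (i) is also sound. In fact $Q_t(t)$ coincides with the paper's complementary quantity $\prod_{n}\min\{1,f_n(t)\}\prod_{n\notin I}\max\{1,f_n(t)\}$.

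The gap is the step you flag yourself: you never prove $\liminf_{t\to0^+}Q^{(J)}(t)\ge1$, because the remainder factor $Q^{(J)}(r)$ is left uncontrolled. Neither fallback repairs this.

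\emph{First fallback.} The inequalities $f(s_0)\le f(t)^kf(r)$ and $\prod_{n\in J}f_n(s_0)\le\bigl(\prod_{n\in J}f_n(t)\bigr)^k\prod_{n\in J}f_n(r)$ both bound small-time values from below. So they cannot be combined into a lower bound for $f(t)/\prod_{n\in J}f_n(t)$. That would need an upper bound on $\prod_{n\in J}f_n$ near $0$, which is essentially (i) itself.

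\emph{Second fallback.} Passing from the grid $t=s_0/k$ to arbitrary small $t$ via submultiplicativity again requires an upper bound on $Q^{(J)}$ at small arguments. This is the same missing estimate.

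The missing idea is that local boundedness away from $0$ gives positive lower bounds near $0$. For $0<r\le s_0/2$ one has $f_n(r)\ge f_n(s_0)/f_n(s_0-r)\ge f_n(s_0)/\sup_{[s_0/2,s_0]}f_n>0$. Since $J$ is finite and $f$ is bounded near $0$, $Q^{(J)}(r)=f(r)/\prod_{n\in J}f_n(r)$ is then bounded above uniformly for small $r$, and your inequality finishes the proof.

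The paper sidesteps the remainder entirely. For each small $t$ it picks $m_t$ with $m_tt\in[b,2b)$. It uses $D=\inf_{(0,2b)}f>0$, obtained from $f(2b)\le f(t)f(2b-t)$, and $M_n=\sup_{[b,2b]}f_n$ for $n\in I$. It then gets $\prod_{n\in I}\max\{1,f_n(t)\}\le(1+\varepsilon)K\bigl(\prod_{n\in I}\max\{1,M_n\}/D\bigr)^{1/m_t}$ with $K=\limsup_{t\to0^+}f(t)$, and the right-hand side tends to $(1+\varepsilon)K$.
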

\begin{proof}
	To prove~(i), let $\varepsilon>0$ and fix $b>0$ such that $f(2b) \neq 0$ and
	$$ f(t)  \leq (1 +\varepsilon) K, \qquad t \in (0,2b),
	$$
	where $K := \limsup_{t\to 0^+} f(t)$. Set also
	$$M_n := \sup_{t \in [b, 2b]} f_n(t), \qquad n \in \NN,
	$$
	and note that $M_n<\infty$ for all $n\in \NN$, since we assumed that $f_n$ is locally bounded on $(0,\infty)$. For every $t \in (0,2b)$,
	\begin{align*}
		0 \neq  f(2b)
		&\leq  f(t) f(2b - t)
		\leq (1+\varepsilon) K   f(t).
	\end{align*}
	Consequently,
	\[
	D:= \inf_{t \in (0,2b)} f(t) > 0.
	\]
	Next, let $I\subset \NN$ be finite. Then
	\begin{align*}
		(1+\varepsilon) K 
		&\geq f(t) 
		= \left( \prod_{n\in \NN} \minone{f_n(t)} \right)
		\left( \prod_{n\in \NN} \maxone{f_n(t)} \right)
	\end{align*}
	for all $t \in (0,2b)$. Consequently,
	\begin{align}\label{eq:minmax1}
		\left( \prod_{n\in \NN} \minone{f_n(t)} \right)
		&\left( \prod_{n\in \NN \setminus I}  \maxone{f_n(t)} \right) 
		\leq \frac{(1+\varepsilon) K}{\prod_{n\in I}  \maxone{f_n(t)}}, \qquad t \in (0,2b).
	\end{align}
	
	For each $t\in (0,2b)$, let $m_t$ be the least positive integer satisfying $m_t t\in [b, 2b)$, so $m_t \geq b / t$. Then
	\begin{align*}
		D
		\leq f (m_t t)  
		&= \left(\prod_{n\in \NN} \minone{f_n(m_t t)}\right)
		\left(\prod_{n\in \NN} \maxone{f_n(m_t t)} \right) \\
		&\leq \left(\prod_{n\in \NN} \minone{f_n(t)}\right)^{m_t}  \left(\prod_{n\in \NN \setminus I} \maxone{f_n(t)}\right)^{m_t}
		\left(\prod_{n\in I} \maxone{M_n} \right)
	\end{align*}
	for all $t \in (0,2b)$. It follows that
	\begin{equation}\label{eq:minmax2}
		\begin{aligned}
			&\left(\prod_{n\in \NN} \minone{f_n(t)} \right)
			\left(\prod_{n\in \NN \setminus I} \maxone{f_n(t)}\right) 
			\geq \left(\frac{D}{\prod_{n\in I} \maxone{M_n}} \right)^{1/m_t},
		\end{aligned}
	\end{equation}
	for every $t \in (0,2b)$. Hence, by \eqref{eq:minmax1} and \eqref{eq:minmax2},
	\begin{align*}
		\prod_{n\in I}  \maxone{f_n(t)}
		&\leq (1+\varepsilon) K
		\left(\frac{\prod_{n\in I} \maxone{M_n}}{D} \right)^{1/m_t},
		\qquad t \in (0,2b).
	\end{align*}
	Therefore
	\[
	\limsup_{t \to 0^+} \left( \prod_{n \in I} \maxone{f_n(t)} \right) \leq (1+\varepsilon ) K,
	\]
	and the claim follows since $\varepsilon>0$ was arbitrary. \medskip
	
	We now prove~(ii). Define
	$$g_n(t) := e^{-d_n t} f_n(t), \qquad t >0,
	$$
	where $d_n:= \frac{1}{\delta}\log f_n(\delta)$, so that $g_n(\delta) =1$ for all $n\in \NN$. Since 
	$$f(\delta) = \prod_{n\in \NN} f_n(\delta) \neq 0,
	$$
	where the product is unconditionally convergent, we obtain from Lemma~\ref{lem:maxmin} that
	$$c:=\sum_{n\in \NN} |d_n| < \infty,
	$$
	and if $d:= \sum_{n\in \NN} d_n$, then
	$$g(t) := \prod_{n\in \NN} g_n(t) = e^{-d t}  f(t), \qquad t > 0.
	$$
	
	Set
	\[
	M_0 := \sup_{t \in (0,\delta)} g(t),
	\]
	and note that $M_0 < \infty$ as 
	$$\limsup_{t\to 0^+} g(t) = \limsup_{t\to 0^+} f(t) < \infty.$$
	Indeed, boundedness of \(g\) on a sufficiently small interval near \(0\),
	together with the submultiplicativity of $g$, gives boundedness on every bounded
	interval.
	Next define
	$$A_n:=\{ t \in (0,\delta) \, : \, g_n(t) \geq 1 \}, \qquad n \in \NN.
	$$
	Note that
	\begin{align*}
		1 = g_n(\delta) \leq g_n(\delta - t) g_n(t)  < g_n(\delta - t), \qquad n\in \NN, \, t \in (0,\delta) \setminus A_n,
	\end{align*}
	that is, $\delta - \left((0,\delta) \setminus A_n \right) \subseteq A_n$ and hence
	\begin{align*}
		\nu_\delta \left(\delta - \left((0,\delta) \setminus A_n \right) \right) \leq \nu_\delta (A_n), \qquad n\in \NN.
	\end{align*}
	By the identities from Section~\ref{subsec:periodic-means}, we also have
	\begin{align*}
		\nu_\delta \left( \delta - \left((0,\delta) \setminus A_n \right) \right) = \nu_\delta \left( (0,\delta) \setminus A_n \right) = 1 - \nu_\delta (A_n), \qquad n \in \NN,
	\end{align*}
	and we conclude that $\nu_\delta(A_n) \geq 1/2$. It also follows that
	\begin{align*}
		\prod_{\ell \neq n} f_\ell(t) = \exp \left( t \sum_{\ell \neq n} d_\ell \right)  \prod_{\ell \neq n} g_\ell(t) \leq e^{c\delta} g(t) \leq e^{c\delta} M_0, \qquad t \in A_n,
	\end{align*}
	and the claim follows with $M := e^{c\delta} M_0.$
\end{proof}


Assuming that each of the factors is measurable, the lemma below provides a uniform bound, analogous to that of Lemma~\ref{lem:unif_bound}(i), but now valid across all factors simultaneously. While the bounds obtained for the $\limsup$ of finite subproducts do not, in general, extend to the infinite product, the additional regularity assumptions imposed below ensure that this uniform bound holds. Moreover, this estimate will be necessary for the proof of Lemma~\ref{strongContInfProp}.

\begin{lemma}\label{unifBoundProdLemma}
	For each $n\in \NN$, let $f_n:(0,\infty) \to [0,\infty)$ be a measurable, submultiplicative function. Assume that, for every \(t>0\), the product
	\[
	f(t):=\prod_{n\in\mathbb N} f_n(t)
	\]
	is unconditionally convergent to a finite value, that \(f\) is not
	identically zero on \((0,\infty)\), and that
	\[
	\limsup_{t\to0+} f(t)<\infty .
	\]
	Then, 
	\begin{equation*}
		\limsup_{t \to 0^+} \left( \prod_{n\in \NN} \maxone{f_n(t)} \right) \leq \limsup_{t\to0^+} f(t).
	\end{equation*}
\end{lemma}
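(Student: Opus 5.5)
The plan is to control the two halves of the factorization $f=pG$ separately, where, following Lemma~\ref{lem:maxmin},
\[
p(t):=\prod_{n\in\NN}\minone{f_n(t)},\qquad G(t):=\prod_{n\in\NN}\maxone{f_n(t)} .
\]
Put $K:=\limsup_{t\to0^+}f(t)$. We may assume $K>0$; otherwise the argument below shows that $f$ vanishes on a whole interval near $0$, contradicting submultiplicativity.

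Since $G(t)=f(t)/p(t)$ wherever $f(t)>0$, it suffices to prove that $p(t)\to1$ as $t\to0^+$. Unlike in Lemma~\ref{lem:unif_bound}(i), we cannot use the suprema $M_n=\sup_{[b,2b]}f_n$, because $\prod_n \maxone{M_n}$ may diverge. Measurability is used to replace these suprema by a set of positive measure on which the whole product $G$ is bounded.

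\emph{Step 1: a good set where $G$ is bounded.} Choose $t_0>0$ with $f(t_0)>0$. Then $f>0$ on $(0,t_0]$, since $0<f(t_0)\le f(t)f(t_0-t)$. Each $f_n$ is measurable, so $f$ and $G$ are measurable as pointwise limits of finite partial products. On $\{f>0\}$ the value $G$ is finite by Lemma~\ref{lem:maxmin}. Consequently, for suitable constants $L<\infty$ and $D'>0$, the set
\[
E:=\{s\in(t_0/2,t_0):\ G(s)\le L,\ f(s)\ge D'\}
\]
has positive Lebesgue measure. Let $F:=E\cap (t_0/2,\,t_0/2+\theta)$ for a small $\theta>0$ chosen so that $F$ still has positive measure. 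By Steinhaus' theorem, the difference set $E-F$ contains an interval $J=[c-\eta,c+\eta]$ with $c>0$. This can be arranged: if $E-F$ only contains intervals around $0$, replace $F$ by a small positive-measure piece of $E$ lying to the left of another piece of $E$, and apply Steinhaus to the two pieces.

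\emph{Step 2: lower bound for $p$.} For $m\ge1$ and $u>0$, submultiplicativity gives $f_n(mt+u)\le f_n(t)^m f_n(u)$. Taking $\min\{1,\cdot\}$ factorwise, this yields
\[
\minone{f_n(mt+u)}\le \minone{f_n(t)}^m\,\maxone{f_n(u)} .
\]
Multiplying over $n$ gives $p(mt+u)\le p(t)^m G(u)$. We also have $p(s)=f(s)/G(s)\ge D'/L$ for $s\in E$.

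Now let $0<t<2\eta$. Some multiple $mt$ lies in $J\subseteq E-F$, with $m\ge (c-\eta)/t$. Write $mt=s-u$ with $s\in E$ and $u\in F$. Then
\[
D'/L\le p(s)\le p(t)^m L,
\]
so that
\[
p(t)\ge \bigl(D'/L^2\bigr)^{t/(c-\eta)}\longrightarrow 1\qquad (t\to0^+).
\]
In particular $f(t)>0$ for small $t$.

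\emph{Step 3: conclusion.} For small $t$ we have $G(t)=f(t)/p(t)$, hence $\limsup_{t\to0^+}G(t)\le K$. This is the claim.

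The main obstacle is Step 1. One must make sure that $E$ can be chosen inside a region where $f$ is bounded below and $G$ is finite. One must also arrange $E-F$ to contain an interval bounded away from $0$, so that $m\sim c/t\to\infty$. This is the only place where measurability enters, replacing the local boundedness of each $f_n$ used in Lemma~\ref{lem:unif_bound}.
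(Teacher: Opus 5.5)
Your proof is correct. Its skeleton matches the paper's: split $f=pG$ (the paper calls these $f_2$ and $f_1$), show $p(t)\to1$ by comparing $p$ at a multiple $mt$ with $m\sim c/t$ against $p(t)^m$, then divide. The difference lies in how measurability is used. The paper invokes the classical fact that a measurable, finite, submultiplicative function is bounded above on compact subsets of $(0,\infty)$. It applies this to $G$ and to $f$, which gives $\sup_{(\delta,2\delta)}G<\infty$ and $\inf_{(\delta,2\delta)}f>0$ on a whole interval. The unshifted estimate $p(mt)\le p(t)^m$ with $mt\in(\delta,2\delta)$ then suffices, and the conclusion $p\to1$ is reached by contradiction.

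You only have $G\le L$ and $f\ge D'$ on a set $E$ of positive measure. You compensate with Steinhaus' theorem and the shifted inequality $p(mt+u)\le p(t)^m G(u)$ for $u\in E$. In effect you inline the measure-theoretic core of the local-boundedness theorem, so your argument is self-contained. It gives the explicit bound $p(t)\ge (D'/L^2)^{t/(c-\eta)}$. It also obtains $p(t)\to1$ without using $\limsup_{t\to0^+}f(t)<\infty$, which enters only in the final step. The paper's route is shorter once the classical theorem is granted.

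Two parts of your write-up are more complicated than they need to be.

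\begin{itemize}
\item The worry in Step 1 is unnecessary. Steinhaus gives $(-\eta_0,\eta_0)\subseteq E-E$, and this already contains $J=[\eta_0/4,3\eta_0/4]$, which is bounded away from $0$. So you can simply take $F=E$. Your claim that $E\cap(t_0/2,t_0/2+\theta)$ has positive measure for small $\theta$ is not automatic, but with $F=E$ it is not needed.
\item The preliminary reduction to $K>0$ is superfluous. Steps 2--3 give $\limsup_{t\to0^+}G(t)\le K$ directly, and since $G\ge1$ this forces $K\ge1$ anyway.
\end{itemize}
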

\begin{proof}
	For each $t>0$, set 
	\begin{align*}
	f_1(t) := \prod_{n\in \NN}  \maxone{f_n(t)}, \quad f_2(t) := \prod_{n\in \NN} \minone{f_n(t)}.
	\end{align*}
	Then \(f(t)=f_1(t)f_2(t),\) and,  by Lemma~\ref{lem:vn1},  \(f(t)<\infty\) implies \(f_1(t)<\infty\) for every
	\(t>0\). As $f_1$ is a pointwise limit of finite partial products, it is measurable, and being submultiplicative it is bounded above on every compact subset of $(0,\infty)$. Note that the same arguments apply to $f$.
	
	Fix $\delta>0$ such that $f(2\delta) \neq 0$. We then obtain that
	\begin{equation*}
		f(2\delta) \leq f(2\delta-s) f(s), \qquad s \in (\delta, 2\delta),
	\end{equation*}
	and thus
	$$
	\inf_{s\in (\delta, 2\delta)} f(s) \geq \frac{f(2\delta)}{\sup_{t\in (\delta, 2\delta)} f(t)} > 0.
	$$
%
%
%
	
	Now let us see that $\lim_{t\to 0} f_2(t) = 1$ by contradiction. Otherwise, there exist $c \in (0,1)$ and $(t_n)_{n\in \NN} \subset (0,\infty)$ such that $\lim_{n\to \infty} t_n = 0$ and 
	$$f_2(t_n) \leq c, \qquad n \in \NN.
	$$
	For each $n \in \NN$ big enough, take $m_n \in \NN$ such that $m_n t_n \in (\delta, 2\delta)$, so $\lim_{n\to \infty} m_n  = \infty$. Then, for all $n\in \NN$ big enough,
	\begin{align*}
		f(m_n t_n) &= f_1(m_n t_n) \,  f_2 (m_n t_n) 
		\leq \left(\sup_{t\in (\delta, 2\delta)} f_1(t)\right) (f_2(t_n))^{m_n} 
		\leq \left(\sup_{t\in (\delta, 2\delta)} f_1(t)\right) c^{m_n},
	\end{align*}
	where we have used that $f_2(nt) \leq (f_2(t))^n$ for all $n\in \NN$ and $t>0$. The above would imply $\inf_{t\in (\delta, 2\delta)} f(t) = 0$, reaching a contradiction and proving $\lim_{t\to 0} f_2(t) = 1$. Hence, we conclude
	\begin{align*}
		\limsup_{t\to 0^+} f_1(t) = \limsup_{t\to 0^+} \frac{f(t)}{f_2(t)} = \frac{\limsup_{t\to 0^+} f(t)}{\lim_{t\to0^+} f_2(t)} = \limsup_{t\to 0^+} f(t) < \infty.
	\end{align*}
\end{proof}


\section{Infinite tensor products: definitions}\label{sec:tensor-def}


Having collected the numerical input in Section~\ref{sec:num-prod}, we now recall the complete
and incomplete infinite tensor products of Hilbert spaces and then pass to the corresponding
tensor products of operators and semigroups. The purpose of the present section is mainly
foundational: we fix the language, notation, and basic constructions that will be used in the
rest of the paper. Together with Section~\ref{sec:incomplete-operators-semigroups}, this yields
a workable framework for infinite tensor products of Hilbert spaces, operators, and semigroups,
adapted to the similarity questions studied later on.
\medskip

\noindent\textbf{Convention.} 
Unless explicitly stated otherwise, infinite sums are understood in the
unconditional sense, and infinite products are understood either in the
unconditional or in the quasi-convergent sense, as appropriate.
In particular, if $b_n\ge 1$ for all $n\in \NN$, then $\prod_{n\in \NN} b_n$ converges if and only if it converges unconditionally, since the net of partial products is monotone. 
\medskip

We will also occasionally consider sums and products indexed by other countable sets (typically countable subsets of $\NN$); in such cases, the corresponding notions are defined in the natural way by identifying the index set with $\NN$ via a bijection.

\subsection{Infinite tensor products of Hilbert spaces}\label{InfiniteTensorProdDefSect}\label{InfiniteTensorSection}

We begin with the Hilbert-space level, since both the operator and the semigroup constructions
are built on it. The original construction of the complete and incomplete
tensor products of a family $(H_\alpha)_{\alpha \in I}$ of Hilbert spaces, where $I$ is an index set of arbitrary cardinality, is due to von Neumann
\cite{von1939infinite}. This construction was further developed by Guichardet
\cite{guichardet1966produits}, and the incomplete tensor products of von Neumann are often
referred to as Guichardet tensor products.

We restrict ourselves to countably infinite tensor products, since this is the relevant setting
for the operator- and semigroup-theoretic questions considered here; see
\cite[Th. 3.1.II (ii)]{nakagami1970infinite} and \cite[Th. 5.5]{gill1978infinite}. From now on, $(H_n)_{n\in \NN}$ is a family of Hilbert spaces. We will also occasionally consider tensor products indexed by other countable sets, say $I$. These tensor products are understood in the natural way through an identification of $I$ with $\NN$.


We say that a sequence
\[
(h_n)_{n\in\NN}\in\times_{n\in\NN}H_n
\]
is a \textit{\(C\)-sequence} for the family \((H_n)_{n\in\NN}\) if
\(\prod_{n\in\NN}\|h_n\|\) is unconditionally convergent to a finite value.
Given two \(C\)-sequences
\((h_n)_{n\in\NN},(f_n)_{n\in\NN}\in\times_{n\in\NN}H_n\), the product
\(\prod_{n\in\NN}\langle h_n,f_n\rangle\) is quasi-convergent. 
Then, each $C$-sequence $(h_n)_{n\in \NN}$ defines an ``element-wise'' anti-linear functional $\otimes_{n\in \NN} h_n$ on the space of $C$-sequences given by
$$\left(\motimes_{n\in \NN} h_n\right) \left( (f_n)_{n\in \NN}\right) = \prod_{n\in \NN} \langle h_n, f_n\rangle, \qquad (f_n)_{n\in \NN} \, \, C\mbox{-sequence for } (H_n)_{n\in \NN} .
$$
It is readily seen that the functional $\otimes_{n\in \NN} h_n$ satisfies the following desirable properties:
\begin{enumerate}
	\item [(i)] $\otimes_{n\in \NN} z_n h_n = \left(\prod_{n\in \NN} z_n \right) \left( \otimes_{n\in \NN} h_n\right)$ for every $C$-sequence $(h_n)_{n\in \NN}$ and for every $(z_n)_{n\in \NN} \subset \CC$ such that $\prod_{n\in \NN} z_n$ converges unconditionally to a value in $\CC$;
	\item [(ii)] if $(h_n)_{n\in \NN}$ and $(f_n)_{n\in \NN}$ are two $C$-sequences such that $h_n=f_n$ for all $n\in \NN$ except for $n=j \in \NN$, then
	$$\motimes_{n\in \NN} h_n + \motimes_{n\in \NN} f_n =   \motimes_{n\in \NN} g_n,
	$$
	where $g_n = h_n = f_n$ if $n\neq j$ and $g_j = h_j + f_j$.
\end{enumerate}
The functionals on the space of $C$-sequences of the form $\otimes_{n\in \NN} h_n$, where $(h_n)_{n\in \NN}$ is a $C$-sequence itself, are labeled as \textit{elementary tensors}. An inner product is defined on the linear span of elementary tensors by setting
\[
\left\langle \otimes_{n\in \NN} h_n, \otimes_{n\in \NN} f_n\right\rangle
=
\prod_{n\in \NN} \langle h_n, f_n\rangle,
\]
for $C$-sequences $(h_n)_{n\in \NN}$ and $(f_n)_{n\in \NN}$ for the family
\((H_n)_{n\in\NN}\). 
That this is indeed an inner product was proved in \cite[Th. II]{von1939infinite}. The completion of this space is the \textit{complete infinite tensor product} $\otimes_{n\in \NN} H_n$.

The \textit{incomplete infinite tensor products} are a special class of closed subspaces of $\otimes_{n\in \NN} H_n$, and are defined as follows. A $C$-sequence $(h_n)_{n\in \NN} \in \times_{n\in \NN} H_n$ is said to be a \textit{$C_0$-sequence} if 
$$\sum_{n\in \NN} |1 - \|h_n\| | < \infty.
$$
Then two $C_0$-sequences $(h_n)_{n\in \NN}, (f_n)_{n\in \NN} \in \times_{n\in \NN} H_n$ are said to be \textit{equivalent} if 
$$\sum_{n\in \NN} |1- \langle h_n, f_n \rangle | < \infty,
$$
and then we write 
$$(h_n)_{n\in \NN} \sim (f_n)_{n\in \NN}.
$$ 
That this defines an equivalence relation on the set of $C_0$-sequences was proved by von Neumann in \cite[Lemma 3.3.3]{von1939infinite}. This equivalence relation guarantees that the product $\prod_{n\in \NN} \langle h_n, f_n\rangle$ is unconditionally convergent to a finite value, see \cite[Section 2.4]{von1939infinite}. 

Let $\Gamma$ denote the set of all those equivalence classes. Given $\mathcal A \in \Gamma$, the \textit{incomplete infinite tensor product} $\otimes_{n\in \NN}^{\mathcal A} H_n$ (\textit{associated to $\mathcal A$}) is given by
\begin{align*}
	\motimes_{n\in \NN}^{\mathcal A} H_n := \overline{\operatorname{span}} \bigg\{\motimes_{n\in \NN} h_n \, : \, (h_n)_{n\in \NN} \in \mathcal A \bigg\}.
\end{align*}
The incomplete infinite tensor products $\otimes_{n\in \NN}^{\mathcal A} H_n$, where $\mathcal A$ runs over all equivalence classes $\Gamma$, are mutually orthogonal, and their closed linear span is $\otimes_{n\in \NN} H_n$, see \cite[Lemma 4.1.1]{von1939infinite}. 

Fixed $\mathcal A \in \Gamma$ and given a representative $\boldsymbol y=(y_n)_{n\in \NN}\in \mathcal A$,
we say that an elementary tensor $\otimes_{n\in \NN} h_n$ is \textit{finitely $\boldsymbol y$-elementary}
if $h_n=y_n$ for all but finitely many $n\in \NN$. Then the linear span of finitely $\boldsymbol{y}$-elementary tensors is dense in $\otimes_{n\in \NN}^{\mathcal A} H_n$, that is,
\[
\motimes_{n\in \NN}^{\mathcal A} H_n
=
\overline{\operatorname{span}}
\left\{
\motimes_{n\in \NN} h_n : \motimes_{n\in \NN} h_n \text{ is finitely }\boldsymbol y\text{-elementary}
\right\},
\]
see \cite[Lemma~4.1.4 \& Th.~V]{von1939infinite}. Following Guichardet's notation
\cite{guichardet1966produits}, we also write $\otimes_{n\in \NN}^{\boldsymbol y} H_n$ for
$\otimes_{n\in \NN}^{\mathcal A} H_n$.

We will use only a restricted form of associativity: if $F$ is a non-empty proper subset of $\NN$,
then the map initially defined on elementary tensors $\otimes_{n\in \NN} h_n$ by
\begin{equation}\label{eq:assoc_law1}
	\motimes_{n\in \NN} h_n \mapsto \Bigl(\motimes_{n\in F} h_n\Bigr)\otimes \Bigl(\motimes_{n\in \NN\setminus F} h_n\Bigr),
\end{equation}
extends to a canonical unitary identification
\[
\motimes_{n\in \NN} H_n \simeq 
\Bigl(\motimes_{n\in F} H_n\Bigr)\otimes
\Bigl(\motimes_{n\in \NN\setminus F} H_n\Bigr),
\]
see \cite[Theorem VII]{von1939infinite}. Moreover, the above identification preserves the equivalence classes of $C_0$-sequences and thus it also preserves the incomplete tensor products \cite[Theorem VI]{von1939infinite}. 
Consequently, given \(A\in\Gamma\), \((y_n)_{n\in\mathbb N}\in A\), 
the mapping above extends to a
canonical unitary identification
\begin{equation}\label{eq:assLawIncomplete}
\bigotimes_{n\in\mathbb N}^{A}H_n
\simeq
\left(\bigotimes_{n\in F}^{A_F}H_n\right)
\otimes
\left(\bigotimes_{n\in\mathbb N\setminus F}^{A_{\mathbb N\setminus F}}H_n\right),
\end{equation}
where \(A_F\) is the equivalence class, for the family \((H_n)_{n\in F}\),
containing \((y_n)_{n\in F}\), and \(A_{\mathbb N\setminus F}\) is defined in
the same way.



\subsection{Infinite tensor products of operators}\label{operatorTensorSubsection}

Having fixed the Hilbert-space framework, we now pass to infinite tensor products of bounded
operators. We first recall the complete tensor product, formulated in a way compatible with our
unconditional-product convention, and then turn to the incomplete setting associated with a
reference $C_0$-sequence.

Given $A_n \in \linearOp(H_n)$ for each $n\in \NN$,
we say that the complete tensor product $\motimes_{n\in \NN} A_n$ of $(A_n)_{n\in \NN}$ is \emph{well-defined} if, for every elementary tensor $\otimes_{n\in \NN} h_n$ in $\otimes_{n\in \NN} H_n$, $\otimes_{n\in \NN} A_n h_n$ is also an elementary tensor, and moreover the linear mapping defined on the linear span of elementary vectors by
$$\motimes_{n\in \NN} h_n \mapsto \motimes_{n\in \NN} A_n h_n,
$$
extends to a bounded operator on $\otimes_{n\in \NN} H_n$. We then denote such a bounded operator by
\[
\motimes_{n\in \NN} A_n.
\]
Note that if such extension exists, it is unique since the linear span of elementary tensors is dense in $\otimes_{n\in \NN} H_n$.

The existence and uniqueness of the complete tensor product $\otimes_{n\in \NN} A_n$ is characterized by Nakagami in \cite[Theorem 3.1]{nakagami1970infinite} in the more general case where the index set $I$ may have non-countable cardinality. Here, we restate his result for the special case in which $I$ is countably infinite, for which we first introduce some notation. In the countable case, the zero product case can be described explicitly in terms of
our unconditional-product convention, which is why we record the formulation
below.

Let $A_n = U_n |A_n|$ be the polar decomposition of $A_n$, so $U_n$ is a partial isometry and $|A_n|$ is a positive operator for each $n\in \NN$. It is shown in \cite[Lemma 3.1]{nakagami1970infinite} that the complete tensor product $\otimes_{n\in \NN} U_n$ is a well-defined partial isometry on $\otimes_{n\in \NN} H_n$. Using the associative law for complete tensor products, it follows that if $F\in \finiteN$, then 
$$A_F := \left(\motimes_{n\in F} A_n \right) \otimes \left(\motimes_{n\in \NN\setminus F} U_n\right)
$$
is a well-defined bounded operator on $\otimes_{n\in \NN} H_n$. Recall that $\finiteN$ denotes the net of finite subsets of $\NN$, ordered by inclusion.

\begin{proposition}\label{prop:nakagami}
	Let $(H_n)_{n\in \NN}$ be a family of Hilbert spaces and let $A_n \in \linearOp(H_n)$ for each $n\in \NN$. Then the following are equivalent:
	\begin{itemize}
		\item [(i)] the complete tensor product $\otimes_{n\in \NN} A_n$ is well-defined;
		\item [(ii)] the net $(A_F)_{F\in \finiteN}$ 
		converges strongly to a bounded operator in $\otimes_{n\in \NN} H_n$;
		\item [(iii)] the product $\prod_{n\in \NN} \|A_n\|$ converges unconditionally to a finite value.
	\end{itemize}
	If any of the above equivalent statements holds, then $\otimes_{n\in \NN} A_n$ is the strong limit of the net $(A_F)_{F\in \finiteN}$, and in addition
	\begin{equation}\label{eq:norm-complete}
		\|\otimes_{n\in \NN} A_n\| = \prod_{n\in \NN} \|A_n\|.
	\end{equation}
\end{proposition}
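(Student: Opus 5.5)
The plan is to prove the cycle (iii)$\Rightarrow$(ii)$\Rightarrow$(i)$\Rightarrow$(iii), treating separately the \emph{zero case}, where $\prod_{n\in\NN}\|A_n\|$ converges unconditionally to $0$. Throughout I use the polar decompositions $A_n=U_n|A_n|$, the fact that $\otimes_{n\in\NN}U_n$ is a well-defined partial isometry, and the associativity law \eqref{eq:assoc_law1}.

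\textbf{(iii)$\Rightarrow$(ii).} Assume first that $\prod_{n\in \NN}\|A_n\|$ converges to a finite nonzero value. Then Lemma~\ref{lem:maxmin} gives $\sum_n|\log\|A_n\||<\infty$, so no $A_n$ vanishes. For $F\in\finiteN$ one has $\|A_F\|\le\prod_{n\in F}\|A_n\|$. By Lemma~\ref{lem:vn1} the partial products $\prod_{n\in F}\|A_n\|$ are uniformly bounded in $F$. It therefore suffices to prove strong convergence of the net $(A_F)$ on the total set of elementary tensors $\otimes_n h_n$.

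For $F\subseteq E$, write
\[
A_Eh-A_Fh=\Big(\motimes_{n\in F}A_nh_n\Big)\otimes\Big(\motimes_{n\in E\setminus F}A_nh_n-\motimes_{n\in E\setminus F}U_nh_n\Big)\otimes\Big(\motimes_{n\notin E}U_nh_n\Big).
\]
Its norm is governed by $\prod_{n\in E\setminus F}$ of the $2\times2$ Gram data $\|A_nh_n\|^2$, $\|U_nh_n\|^2=\||A_n|h_n\|^2$ and $\langle A_nh_n,U_nh_n\rangle=\langle|A_n|h_n,h_n\rangle$. The key step is a Cauchy estimate for these products over tails $E\setminus F$. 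I expect this to be the main obstacle. Here $\|A_nh_n\|$ need not tend to $\|h_n\|$; only the operator norms are controlled, so the tails must be handled as in von Neumann's equivalence theory for $C_0$-sequences.

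To get around this, I would first normalise to $\|A_n\|=1$ for all but finitely many $n$. Scalars factor out of the complete tensor product by property~(i), and the scalar product converges to a nonzero value, so this costs nothing. Then $\|A_nh_n\|\le\|h_n\|$, and the difference above is an expression in the completed space with norm bounded by the constant $2\prod\|h_n\|$. I would then prove convergence by showing that the net $\langle A_Eh,A_Fh\rangle$ converges. This reduces to quasi-convergence of products of the form $\prod\langle A_nh_n,U_nh_n\rangle$, which is exactly the convergence machinery underlying the definition of the inner product in \cite{von1939infinite}.

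\textbf{Zero case.} If $\prod_{n\in\NN}\|A_n\|=0$, the bound $\|A_F\|\le\prod_{n\in F}\|A_n\|$ already gives $A_F\to0$ in norm along the net. Indeed, $\prod_{n\in F}\|A_n\|\to0$, while the factor $\prod_{n\notin F}\|U_n\|$ is at most $1$.

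\textbf{(ii)$\Rightarrow$(i).} On an elementary tensor, $A_F(\otimes h_n)$ is the elementary tensor with entries $A_nh_n$ for $n\in F$ and $U_nh_n$ otherwise. As $F$ grows, its limit is identified with $\otimes_n A_nh_n$ by pairing against elementary tensors and using continuity of quasi-convergent products. This requires checking that $(A_nh_n)$ is a $C$-sequence, which follows from $\|A_nh_n\|\le\|A_n\|\|h_n\|$ together with the convergence of $\prod_n\|A_n\|$ obtained along the way. Boundedness of the limiting operator then gives well-definedness in the sense of the definition.

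\textbf{(i)$\Rightarrow$(iii).} Choose unit vectors $h_n$ with $\|A_nh_n\|\ge\|A_n\|(1-\varepsilon_n)$, where $\sum_n\varepsilon_n<\infty$. Since $\otimes_nA_nh_n$ must again be an elementary tensor, $(A_nh_n)$ is a $C$-sequence. By Lemma~\ref{lem:vn1}(i), this forces $\prod_n\max\{1,\|A_n\|\}<\infty$. Hence $\prod_n\|A_n\|$ converges unconditionally, again by Lemma~\ref{lem:vn1}(i).

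\textbf{Norm formula.} The upper bound $\|\otimes_nA_n\|\le\prod_n\|A_n\|$ follows from the bound on $\|A_F\|$. For the lower bound, the same almost-norming vectors give $\|\otimes_nA_nh_n\|=\prod_n\|A_nh_n\|\ge\prod_n\|A_n\|\prod_n(1-\varepsilon_n)$, and letting $\varepsilon_n\to0$ uniformly yields \eqref{eq:norm-complete}.
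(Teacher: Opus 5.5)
\textbf{Gap: the step (ii)$\Rightarrow$(i) assumes (iii).} In that step you check that $(A_nh_n)_{n\in\NN}$ is a $C$-sequence using ``the convergence of $\prod_n\|A_n\|$ obtained along the way''. Under hypothesis (ii) alone, nothing has produced this convergence: it is statement (iii), which your cycle reaches only afterwards. As written, you prove (i)$\Leftrightarrow$(iii) and (iii)$\Rightarrow$(ii), but no implication starting from (ii).

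Nor can (ii) be exploited one vector at a time. If $h_n\in\ker A_n$ for infinitely many $n$, then $A_F(\otimes_n h_n)=0$ for every $F\in\finiteN$, however large the $\|A_n\|$ are. What is missing is a uniform argument for (ii)$\Rightarrow$(iii):
\begin{itemize}
\item If some $A_m=0$, (iii) is trivial.
\item Otherwise every $U_n\neq0$. Then $\otimes_{n\notin F}U_n$ is a nonzero partial isometry (test it on a tensor of unit vectors from the initial spaces), so $\|A_F\|=\prod_{n\in F}\|A_n\|$.
\item If (iii) failed, Lemma~\ref{lem:vn1}(i) would give $\sum_n(\|A_n\|-1)_+=\infty$, hence $\sum_n(\log\|A_n\|)_+=\infty$. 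You can then choose an increasing cofinal sequence $F_N\supseteq\{1,\dots,N\}$ with $\prod_{n\in F_N}\|A_n\|\ge e^N$, exactly as in the proof of (b)$\Rightarrow$(c) of Theorem~\ref{incompleteOperatorTh}.
\item Strong convergence of the net makes $(A_{F_N}h)_N$ convergent, hence bounded, for each $h$. The uniform boundedness principle then contradicts $\|A_{F_N}\|\to\infty$.
\end{itemize}
With this inserted, your identification of the limit in (ii)$\Rightarrow$(i) may legitimately use (iii). For comparison, the paper imports (ii)$\Leftrightarrow$(iii) and the nonzero case directly from Nakagami's Theorem~3.1.

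\textbf{The Cauchy estimate in (iii)$\Rightarrow$(ii) is left vague, and the Gram data are misstated.} $\|U_nh_n\|^2$ is not $\||A_n|h_n\|^2$; the latter equals $\|A_nh_n\|^2$. In fact $\|U_nh_n\|^2=\|P_nh_n\|^2$, where $P_n=U_n^*U_n$ is the projection onto $(\ker A_n)^\perp$. After normalizing $\|A_n\|=\|h_n\|=1$, put $b_n:=\|A_nh_n\|^2$, $a_n:=\langle |A_n|h_n,h_n\rangle$ and $p_n:=\|P_nh_n\|^2$. The operator inequalities $|A_n|^2\le|A_n|\le P_n\le I$ give $b_n\le a_n\le p_n\le 1$, and for $F\subseteq E$,
\[
\|A_Eh-A_Fh\|^2=\prod_{n\in F}b_n\prod_{n\notin E}p_n\Bigl(\prod_{n\in E\setminus F}b_n+\prod_{n\in E\setminus F}p_n-2\prod_{n\in E\setminus F}a_n\Bigr).
\]
If $\sum_n(1-b_n)<\infty$, all three tail products tend to $1$. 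Otherwise the prefactor $\prod_{n\in F}b_n$ tends to $0$ while the bracket stays at most $2$. It is this ordering, not mere quasi-convergence of $\prod_n\langle A_nh_n,U_nh_n\rangle$, that closes the estimate. An analogous computation identifies the limit with $\otimes_nA_nh_n$.

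\textbf{A smaller slip in (i)$\Rightarrow$(iii).} Set aside the case where some $A_n=0$: there $\prod_n\max\{1,\|A_n\|\}$ can be infinite even though (iii) holds trivially. Apart from that, this argument works in full generality (it is the paper's zero-case argument), and your proof of the norm formula is fine.
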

\begin{proof}
	(iii) $\implies$ (i): The claim is trivial if $\prod_{n\in \NN} \|A_n \|=0$, while the case $\prod_{n\in \NN} \|A_n\| \in (0,\infty)$ is proved in Theorem~3.1(II) of~\cite{nakagami1970infinite}. 
	
	(i) $\implies$ (iii): If $\otimes_{n\in \NN} A_n \neq 0$, this is proved in Theorem~3.1(II) of~\cite{nakagami1970infinite}. Thus, assume that $\otimes_{n\in \NN} A_n = 0$. Then for every $C_0$-sequence $(h_n)_{n\in\NN}$ we have that $(A_n h_n)_{n\in \NN}$ is a $C_0$-sequence satisfying
	\begin{equation}\label{eq:hnzero}
		\prod_{n\in \NN} \|A_n h_n\| = \|(\otimes_{n\in \NN} A_n) (\otimes_{n\in \NN} h_n) \|= 0.
	\end{equation}
	Suppose by contradiction that the product
	\(
	\prod_{n\in\mathbb N}\|A_n\|
	\)
	is not unconditionally convergent to a finite value.  By Lemma~\ref{lem:vn1}(i), this
	implies that
	\[
	A_n\ne0\quad(n\in\mathbb N),
	\qquad
	\sum_{n\in\mathbb N}(\|A_n\|-1)_+=\infty .
	\]
	Then, for each \(n\in\mathbb N\), we choose a unit vector \(g_n\in H_n\)
	such that
	\[
	A_ng_n\ne0,
	\qquad
	\|A_ng_n\|\geq \|A_n\|-2^{-n}.
	\]
	This is possible since \(A_n\ne0\).  Then
	\[
	\sum_{n\in\mathbb N}(\|A_ng_n\|-1)_+=\infty .
	\]
	Indeed,
	\[
	(\|A_n\|-1)_+
	\leq
	(\|A_ng_n\|-1)_+ + 2^{-n},\qquad n\in\NN,
	\]
	and \(\sum_{n\in\NN} 2^{-n}<\infty\). 
	By Lemma~\ref{lem:vn1}, this contradicts \eqref{eq:hnzero}, since
	\((g_n)_{n\in\NN}\) is a \(C_0\)-sequence.  Thus the factorwise tensor product
	\(\motimes_{n\in\NN} A_n\) cannot be well-defined, and this proves
	(i) \(\Rightarrow \) (iii).
	 \medskip

	(ii) $\iff$ (iii): This is Theorem~3.1(I) of~\cite{nakagami1970infinite}. 
	In particular, the zero strong-limit case is included here: if
	\((A_F)_{F\in\finiteN}\) converges strongly to the zero operator, then
	Nakagami's Theorem~3.1(I) implies that
	\(
	\prod_{n\in\NN}\|A_n\|
	\)
	converges unconditionally to a finite value.  If this value were nonzero,
	then the nonzero part of Nakagami's criterion, Theorem~3.1(II), would give a
	nonzero strong limit.  
	Hence the zero strong-limit case is precisely the zero product case.\medskip

	Finally, Theorem~3.1 in~\cite{nakagami1970infinite} also shows that whenever the net $(A_F)_{F\in \finiteN}$ converges in the strong operator topology to a bounded operator $A$, one has
	$$\|A\| = \prod_{n\in \NN} \|A_n\|,
	$$ 
	and, if $A \neq 0$, then
	$$A (\otimes_{n\in \NN} h_n) = \otimes_{n\in \NN} A_n h_n
	$$
	for every $C_0$-sequence $(h_n)_{n\in \NN}$. Equivalently, $A = \otimes_{n\in \NN} A_n$.
	\bigskip

\end{proof}

\begin{remark}\label{rem:tails}
	Given Hilbert spaces $(H_n)_{n\in \NN}$ and bounded operators $A_n \in \linearOp(H_n)$, $n\in \NN$, define
	$$\widetilde{A_F} := \left(\otimes_{n\in F} A_n \right) \otimes I, \quad F \in \finiteN. 
	$$
	Then the well-definedness of $\otimes_{n\in\NN} A_n$ does not imply that the net $(\widetilde{A_F})_{F\in\finiteN}$ is strongly convergent. 
	
	Indeed, set $A_n := -I$. Then
	$$\prod_{n\in \NN} \|A_n\|=1,
	$$
	and thus $\otimes_{n\in\NN} A_n$ is well-defined by Proposition~\ref{prop:nakagami} (in fact, it is a unitary involution). On the other hand,
	$$\widetilde{A_F} = \begin{cases}
		I, \quad & \mbox{if } |F| \mbox{ is even},
		\\ -I, \quad & \mbox{if } |F| \mbox{ is odd},
	\end{cases}
	$$
	so the net $(\widetilde{A_F})_{F\in\finiteN}$ is not strongly convergent.
\end{remark}

Now we turn to the definition of the incomplete tensor product of a sequence of bounded operators $(A_n)_{n\in \NN}$. Unfortunately, the literature does not provide a canonical definition. On the one hand, Guichardet proves in \cite[Prop. 6]{guichardet1969tensor} that certain conditions on $(A_n)_{n\in \NN}$ and on a $C_0$-sequence $\boldsymbol{y}=(y_n)_{n\in \NN}$ are sufficient to ensure that $\otimes_{n\in \NN} A_n$ is well-defined and leaves $\otimes_{n\in \NN}^{\boldsymbol{y}} H_n$ invariant. The incomplete tensor product of $(A_n)_{n\in \NN}$ is then defined as the restriction of  $\otimes_{n\in \NN} A_n$ to $\otimes_{n\in \NN}^{\boldsymbol{y}} H_n$, see~\eqref{eq:Gichardet} for details. This approach is used in several works, see, for instance, \cite{badea2017kazhdan,bedos2004infinite}. However, it is not known whether these sufficient conditions are also necessary. Moreover, it seems natural to ask whether one can define the incomplete tensor product of $(A_n)_{n\in \NN}$ on $\otimes_{n\in \NN}^{\boldsymbol{y}} H_n$ without requiring the boundedness of $\otimes_{n\in \NN} A_n$, since this is a condition imposed on the entire complete tensor product.
  
Hence, with the aim to clarify this situation, we propose two different definitions for the incomplete tensor product of $(A_n)_{n\in \NN}$. These two definitions are motivated by the characterization, given in Proposition~\ref{prop:nakagami}, of the well-definedness of the complete tensor product $\otimes_{n\in\NN} A_n$, which may be defined in two equivalent ways: either through its factorwise action on elementary tensors, or as the strong limit of a suitable net of bounded operators. 
The first definition is more convenient for the purposes of this paper, whereas
the second one is the incomplete-component analogue of Nakagami's strong-limit
point of view for complete tensor products.  Later, we relate these definitions
to the existing constructions in the literature and establish conditions under
which they coincide, see Subsection~\ref{InfOperatorsSubsection}.

Before that, note that we must consider a different net from $(A_F)_{F\in\finiteN}$, since a given incomplete tensor product need not be invariant under the action of $A_F$. With this in mind, let $\boldsymbol{y} = (y_n)_{n\in \NN} \in \times_{n\in \NN} H_n$ be a $C_0$-sequence and let $F \subset \NN$ be a finite set. Define $\boldsymbol y_{\NN \setminus F} := (y_n)_{n\in \NN\setminus F} \in \times_{n\in \NN\setminus F} H_n$, which is a $C_0$-sequence. Then, using the canonical unitary identification
\[
\motimes_{n\in \NN}^{\boldsymbol y} H_n
\simeq
\Bigl(\motimes_{n\in F} H_n\Bigr)\otimes
\Bigl(\motimes_{n\in \NN\setminus F}^{\boldsymbol{y}_{\NN \setminus F}} H_n\Bigr)
\]
we define the bounded operator
\[
A_F^{(\boldsymbol y)}:=\left(\motimes_{n\in F} A_n\right)\otimes I
 \in \linearOp \left(\motimes_{n\in \NN}^{\boldsymbol y} H_n \right).
\]
Equivalently, on finitely $\boldsymbol y$-elementary tensors $\otimes_{n\in \NN} h_n$ one has
\[
A_F^{(\boldsymbol y)}\!\left(\motimes_{n\in \NN}h_n\right)=\motimes_{n\in \NN}g_n,
\qquad
g_n=
\begin{cases}
	A_nh_n,& n\in F,\\
	h_n,& n\notin F.
\end{cases}
\]

\begin{definition}
	Let $(H_n)_{n\in\NN}$ be a sequence of Hilbert spaces and, for each $n\in\NN$, let $A_n\in\linearOp(H_n)$. Let $\boldsymbol{y} = (y_n)_{n\in \NN} \in \times_{n\in \NN} H_n$ be a $C_0$-sequence. 
	\begin{enumerate}
		\item [(i)] We say that the incomplete tensor product $\motimes_{n\in \NN}^{\boldsymbol y} A_n$ is \emph{well-defined in the factorwise sense} if, for every elementary tensor $\otimes_{n\in \NN} h_n \in \otimes_{n\in \NN}^{\boldsymbol y} H_n$, $\otimes_{n\in \NN} A_n h_n$ is an elementary tensor in $\otimes_{n\in \NN}^{\boldsymbol{y}} H_n$, and the linear mapping defined on the linear span of elementary tensors in $\otimes_{n\in \NN}^{\boldsymbol y} H_n$ by
		\[
		\motimes_{n\in \NN} h_n \mapsto  \motimes_{n\in \NN} A_n h_n
		\]
		extends to a bounded operator on $\otimes_{n\in \NN}^{\boldsymbol {y}} H_n$, which we denote by $\otimes_{n\in\NN}^{\boldsymbol y}A_n$.
		\item[(ii)] We say that the incomplete tensor product $\widehat{\motimes}_{n\in \NN}^{\boldsymbol y} A_n$ is \emph{well-defined in the strong-limit sense} if the net $(A_F^{(\boldsymbol y)})_{F\in\finiteN}$
		converges strongly to a bounded operator on $\motimes_{n\in \NN}^{\boldsymbol y} H_n$, which we denote by $\widehat\otimes_{n\in\NN}^{\boldsymbol y} A_n$.
	\end{enumerate}	
\end{definition}

To simplify notation, we write $\otimes_{n\in \NN} A_n \in \linearOp(\otimes_{n\in \NN} H_n)$ when the complete tensor product is well-defined. For the well-definedness of the incomplete tensor product, we use $\otimes_{n\in \NN}^{\boldsymbol y} A_n \in \linearOp(\otimes_{n\in \NN}^{\boldsymbol y} H_n)$ for the factorwise sense, and $\widehat\otimes_{n\in \NN}^{\boldsymbol y} A_n \in \linearOp(\otimes_{n\in \NN}^{\boldsymbol y} H_n)$ for the strong-limit sense.

We show in Theorem~\ref{incompleteOperatorTh} that if at least one of $\otimes_{n\in \NN}^{\boldsymbol{y}} A_n$ or $\widehat\otimes_{n\in \NN}^{\boldsymbol{y}} A_n$ is well-defined and nonzero, then so is the other one and
$$\otimes_{n\in \NN}^{\boldsymbol{y}} A_n = \widehat\otimes_{n\in \NN}^{\boldsymbol{y}} A_n.
$$
That is, both notions are equivalent in the nonzero case. Under these assumptions,  we further show that $\otimes_{n\in \NN}^{\boldsymbol{y}} A_n$ and $\widehat\otimes_{n\in \NN}^{\boldsymbol{y}} A_n$ agree with Guichardet's definition and satisfy the identity
\begin{equation}\label{eq:norm-incomplete}
	\|\otimes_{n \in \NN}^{\boldsymbol{y}} A_n\| = \|\widehat\otimes_{n \in \NN}^{\boldsymbol{y}} A_n\| = \prod_{n\in \NN} \|A_n\|.
\end{equation}
This will be crucial for our similarity arguments in Section~\ref{sec:similarity-tensor}, where the identities involving the operator norm of tensor products, \eqref{eq:norm-complete} and~\eqref{eq:norm-incomplete}, will play a fundamental role.

However, as we will see in Propositions~\ref{prop:counterEx1} and ~\ref{prop:counterEx2}, the factorwise definition and the strong-limit definition are not equivalent in general, and \eqref{eq:norm-incomplete} may fail too. In particular, the analogue of Proposition~\ref{prop:nakagami} fails in the setting of incomplete tensor products.

To finish this section, let us point out that finite tensor products may be regarded as a special case of incomplete tensors. Indeed, if
$H_1,\dots,H_N$ are Hilbert spaces, one chooses $y_n \in H_n$ for $n=1,\ldots, N$, and one sets $H_n=\mathbb C$ for $n>N$, with reference
vector $y_n=1\in\mathbb C$, then $\boldsymbol y := (y_n)_{n \in \NN} \in \times_{n\in \NN} H_n$ is a $C_0$-sequence and the map
\begin{equation}\label{eq:finite_tensors}
U:\ H_1\otimes\cdots\otimes H_N \to \motimes_{n \in \NN}^{\boldsymbol y} H_n,
\qquad
U(x)=x\otimes\Big(\motimes_{n>N} y_n\Big),
\end{equation}
extends to a unitary isomorphism. 

In the same way, finite tensor products of operators
and semigroups fit into the present framework as special incomplete infinite tensor products. More precisely, if $A_n \in \linearOp(H_n)$ for $1\le n\le N$ and $A_n=I_{\CC}$ for
$n>N$, then under the identification~\eqref{eq:finite_tensors} one has
\[
U^{-1}\left(\bigotimes_{n\in\NN}^{\boldsymbol{y}} A_n\right)U
=
\bigotimes_{n=1}^{N} A_n,
\]
This observation shows in particular that the results from \cite{OlivaMazaTomilovTensorI} are
consistent with, and naturally embedded into, the more general theory developed here.
Still, 
the proofs in Section~\ref{sec:similarity-tensor} do not use the
finite splitting theorem of \cite{OlivaMazaTomilovTensorI} as an input; instead, they rely on the one-semigroup
similarity theorem from Section~\ref{sec:sim-reg-th} together with the infinite tensor-product
framework developed in Sections~\ref{sec:tensor-def}--\ref{sec:incomplete-operators-semigroups}.

%
%

\section{Structure of infinite tensor products of operators and semigroups}\label{sec:incomplete-operators-semigroups}

We now pass from the basic definitions to the operator- and
semigroup-theoretic core of the infinite tensor-product framework. The purpose of this section is twofold. First, we isolate the existence,
non-vanishing, and regularity properties of incomplete tensor products that will be used later in
the similarity arguments. The discussion is organized from operators to semigroups and then to
the regularity issues that are specific to the infinite setting. Second, we study the structure of complete tensor products of semigroups and
show that strong measurability and \(C_0\)-regularity, when they hold on the
complete tensor product, impose highly restrictive structural conditions which
ultimately reduce the analysis to incomplete tensor-product components.

Subsection~\ref{InfOperatorsSubsection} first addresses incomplete infinite tensor products of operators.  Theorem~\ref{incompleteOperatorTh} characterizes when such a product defines a bounded, nonzero operator, both in the factorwise sense and in the strong-limit sense, and it relaxes sufficient conditions from the existing literature.  The remarks following Theorem~\ref{incompleteOperatorTh} clarify the relation with earlier criteria.  As an application of the operator criterion, Proposition~\ref{prop:equivalent-infinite-tensor} records the tensorization of equivalent Hilbertian norms, a tool used repeatedly in the similarity arguments of Section~\ref{sec:similarity-tensor}.

Subsection~\ref{InfSemigrSubsection} then turns to incomplete infinite tensor products of semigroups.  Using Theorem~\ref{incompleteOperatorTh}, Lemma~\ref{lem:nonZeroInfiniteSemigroup} characterizes when such a product defines a well-defined, nonzero semigroup.  We then show that weak measurability, strong measurability, and strong continuity at \(0\) pass from the factors to the incomplete tensor product under the natural nondegeneracy assumptions.  These results extend the corresponding unitary constructions to the present nonunitary setting, see, for instance, \cite[Th.~A]{arveson2001infinite}, \cite[Section~2]{reents1974infinite}, and \cite[Appendix~A]{badea2017kazhdan}.
	
Finally, in Subsection~\ref{subsec:complete-regularity}, we consider complete tensor products of semigroups.  Weak measurability of the factors is preserved under the complete tensor product, but stronger regularity behaves differently: strong measurability may fail even for uniformly continuous one-dimensional unitary factors.  More importantly, strong continuity on \((0,\infty)\) already imposes severe structural restrictions on the complete tensor product, and in the \(C_0\)-case the action decomposes along incomplete tensor-product components. Thus the complete \(C_0\)-theory does not replace the incomplete theory, rather, it reduces structurally to it.

\subsection{Existence and structure of infinite incomplete tensor products of operators}\label{InfOperatorsSubsection}

Let $(H_n)_{n\in \NN}$ be a sequence of Hilbert spaces. Given $A_n \in \linearOp(H_n)$ for $n \in \NN$, recall that $\otimes_{n\in\NN} A_n \in \linearOp\left( \otimes_{n \in \NN} H_n\right)$ if and only if $\prod_{n \in \NN} \|A_n \| < \infty$ (unconditionally), and then
$$\left\|\otimes_{n\in \NN} A_n \right\| = \prod_{n \in \NN} \|A_n\|,
$$
see  Proposition~\ref{prop:nakagami}. 


The situation is more delicate for incomplete tensor products. We show in Theorem~\ref{incompleteOperatorTh} that the analogue of Proposition~\ref{prop:nakagami} holds whenever $\otimes_{n\in \NN}^{\boldsymbol{y}} A_n \neq 0$ (in the factorwise sense) or if $\widehat\otimes_{n\in \NN}^{\boldsymbol{y}} A_n \neq 0$ (in the strong-limit sense), and that, in this case, the norm identity~\eqref{eq:norm-incomplete} also holds.
The nonvanishing assumption is essential, as shown by  Propositions~\ref{prop:counterEx1} and \ref{prop:counterEx2}. We then prove an identity relating the spectral radius of the infinite tensor product to those of each of the factors (Lemma~\ref{infiniteSpectralRadiusLemma}). This identity will play a crucial role in the similarity results of Section~\ref{sec:similarity-tensor}.

The following lemma shows that, under suitable assumptions, the inequality
$$\sum_{n\in \NN} (1 - \|h_n\|)_+ < \infty
$$
holds automatically. This observation will be used repeatedly throughout the paper (in particular, in Theorem~\ref{incompleteOperatorTh}) and slightly streamlines several arguments. 

\begin{lemma}\label{C0obviousLemma}
	Let $(H_n)_{n\in \NN}$ be a sequence of Hilbert spaces, let \((y_n)_{n\in\NN}\in\times_{n\in\NN}H_n\) be a \(C_0\)-sequence, and let
	\((h_n)_{n\in\NN}\in\times_{n\in\NN}H_n\) be such that 
	\begin{equation}\label{eq:hnEquivalent}
		\sum_{n\in \NN} (\|h_n\|-1)_+ < \infty, \qquad  \sum_{n\in \NN} | 1 - \langle y_n, h_n\rangle| < \infty. 
	\end{equation}
	Then $(h_n)_{n\in \NN}$ is a $C_0$-sequence equivalent to $(y_n)_{n\in \NN}$.
\end{lemma}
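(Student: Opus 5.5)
The plan is to check directly the two conditions in the definitions: that $(h_n)_{n\in\NN}$ is a $C_0$-sequence, and that it is equivalent to $(y_n)_{n\in\NN}$. Equivalence is almost free. Since $|1-\langle h_n,y_n\rangle| = |1-\overline{\langle y_n,h_n\rangle}| = |1-\langle y_n,h_n\rangle|$, the second hypothesis in \eqref{eq:hnEquivalent} is exactly the equivalence condition. This holds whichever order of the inner product is used in the definition.

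The real content is therefore the $C_0$ property, i.e.\ $\sum_{n\in\NN}|1-\|h_n\||<\infty$. Once this holds, Lemma~\ref{lem:vn1}(i) gives that $\prod_{n\in\NN}\|h_n\|$ converges unconditionally to a finite value, so $(h_n)$ is a $C$-sequence. Writing $|1-\|h_n\|| = (\|h_n\|-1)_+ + (1-\|h_n\|)_+$, the first part is summable by hypothesis. It remains to control the lower deviations $(1-\|h_n\|)_+$, and this is the only step that needs an idea. The idea is to bound $\|h_n\|$ from below through the Cauchy--Schwarz inequality against the reference vectors $y_n$.

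Since $(y_n)$ is a $C_0$-sequence, $\sum_n|1-\|y_n\||<\infty$, so $\|y_n\|\to1$. Hence there is $n_0$ with $\|y_n\|\ge 1/2$ for $n\ge n_0$; the finitely many remaining indices do not affect summability. For $n\ge n_0$, set $a_n:=|\langle y_n,h_n\rangle|$ and $b_n:=\|y_n\|$. Cauchy--Schwarz gives $\|h_n\|\ge a_n/b_n$, so
\[
(1-\|h_n\|)_+ \le \Bigl(1-\frac{a_n}{b_n}\Bigr)_+ \le \frac{|b_n-a_n|}{b_n} \le 2\bigl(|1-b_n|+|1-a_n|\bigr).
\]
Moreover, $|1-a_n| = \bigl|\,|1|-|\langle y_n,h_n\rangle|\,\bigr| \le |1-\langle y_n,h_n\rangle|$. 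Both $\sum_n|1-\|y_n\||$ and $\sum_n|1-\langle y_n,h_n\rangle|$ are finite, the first because $(y_n)$ is a $C_0$-sequence and the second by \eqref{eq:hnEquivalent}. Therefore $\sum_n(1-\|h_n\|)_+<\infty$, which completes the argument.
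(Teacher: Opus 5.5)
Your proof is correct and follows essentially the same route as the paper: both bound the lower deviations $(1-\|h_n\|)_+$ via Cauchy--Schwarz against $y_n$, using the summability of $|1-\langle y_n,h_n\rangle|$ and of $|1-\|y_n\||$. The only difference is cosmetic. The paper expands $1-\|y_n\|\|h_n\|=(1-\|h_n\|)+\|h_n\|(1-\|y_n\|)$ and uses $\sup_n\|h_n\|<\infty$, whereas you divide by $\|y_n\|\ge 1/2$ (valid for large $n$); you also spell out the conjugation in the equivalence condition and the $C$-sequence property, which the paper leaves implicit.
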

\begin{proof}
	We have that
	\begin{equation}\label{eq:C0-check}
	\begin{aligned}
		\sum_{n\in \NN} |1 - \langle y_n, h_n \rangle|
		&\geq \sum_{n\in \NN} (1 - |\langle y_n, h_n \rangle|)_+ 
		\geq \sum_{n\in \NN} (1-\|y_n\| \|h_n\|)_+ \\
		&\geq \sum_{n\in \NN} (1- \|h_n\|)_+
		+ \sum_{n\in \NN} \|h_n\| (1-\|y_n\|)_-. 
	\end{aligned}
	\end{equation}
	In view of~\eqref{eq:hnEquivalent}, we have $\sup_{n\in \NN} \|h_n\| < \infty$. Then, using that $(y_n)_{n\in \NN}$ is a $C_0$-sequence,
	we deduce that the second series in the last line in~\eqref{eq:C0-check} is finite. Hence,
	$$\sum_{n\in \NN} (1 - \|h_n\|)_+ < \infty,
	$$
	so $\sum_{n\in \NN} | \|h_n\| - 1 | < \infty$ and our claim follows.
\end{proof}



\begin{theorem}\label{incompleteOperatorTh}
	Let $(H_n)_{n\in\NN}$ be a sequence of Hilbert spaces, let \(\boldsymbol y=(y_n)_{n\in\NN}\in\times_{n\in\NN}H_n\) be a
	\(C_0\)-sequence, and let $A_n\in\linearOp(H_n)$ for all $n\in\NN$. Then the following are equivalent:
	\begin{enumerate}
		\item[(a)] $\motimes_{n\in\NN}^{\boldsymbol y}A_n$ is well-defined (in the factorwise sense) and nonzero.
		\item[(b)]  $\widehat\motimes_{n\in\NN}^{\boldsymbol y}A_n$ is well-defined (in the strong-limit sense) and nonzero.
		\item[(c)] The following conditions hold:
		\begin{enumerate}
			\item[(i)] $A_n\neq0$ for all $n\in\NN$;
			\item[(ii)] $\prod_{n\in\NN} \maxone{\|A_n\|} <\infty$; and
			\item[(iii)] $\sum_{n\in\NN}|1-\langle A_ny_n,y_n\rangle|<\infty$.
		\end{enumerate}
	\end{enumerate}
	If these conditions hold, then $\motimes_{n\in\NN}A_n\in\linearOp\bigl(\motimes_{n\in\NN}H_n\bigr)$, the operator $\motimes_{n\in\NN}A_n$ leaves $\motimes_{n\in\NN}^{\boldsymbol y}H_n$ invariant, we have that
	$$\otimes_{n\in \NN}^{\boldsymbol{y}} A_n = \widehat\otimes_{n\in \NN}^{\boldsymbol{y}} A_n = \otimes_{n\in\NN}A_n\Big|_{\motimes_{n\in\NN}^{\boldsymbol y}H_n},
	$$ and
	\[
	\bigl\|\otimes_{n\in\NN}^{\boldsymbol y}A_n\bigr\|= \bigl\|\widehat\otimes_{n\in\NN}^{\boldsymbol y}A_n\bigr\|= 
	\prod_{n\in\NN}\|A_n\|.
	\]
\end{theorem}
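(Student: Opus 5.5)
I would prove the theorem by routing both (a) and (b) through (c), and I would prove (c) \(\Rightarrow\) all conclusions first, since that direction also gives the final identities.

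\emph{Step 1: (c) implies everything.} Assume (i)--(iii). By Lemma~\ref{lem:vn1}(i), (ii) gives unconditional convergence of \(\prod_n\|A_n\|\) to a finite value. Proposition~\ref{prop:nakagami} then yields \(\otimes_{n\in\NN}A_n\in\linearOp(\otimes_{n\in\NN}H_n)\) with norm \(\prod_n\|A_n\|\). For invariance of \(\otimes^{\boldsymbol y}_{n\in\NN}H_n\), consider a finitely \(\boldsymbol y\)-elementary tensor \(\otimes h_n\). Then \(A_nh_n=A_ny_n\) for almost all \(n\). Set \(h'_n:=A_nh_n\). We have \(\sum(\|h'_n\|-1)_+\le\sum(\|A_n\|\|h_n\|-1)_+<\infty\), using (ii) and \(\|y_n\|\to1\). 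By (iii), \(\sum|1-\langle y_n,h'_n\rangle|<\infty\). Lemma~\ref{C0obviousLemma} then shows \((A_nh_n)\sim\boldsymbol y\), so \(\otimes A_nh_n\in\otimes^{\boldsymbol y}H_n\). Since finitely \(\boldsymbol y\)-elementary tensors span a dense subset, the complete operator leaves the incomplete component invariant.

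Every elementary tensor of \(\otimes^{\boldsymbol y}H_n\) comes from a \(C_0\)-sequence equivalent to \(\boldsymbol y\). Repeating the argument with \((h_n)\) in place of \(\boldsymbol y\) shows that the factorwise map is exactly the restriction, so (a) holds with \(\otimes^{\boldsymbol y}A_n=\otimes A_n|\).

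\emph{Step 1, continued: strong-limit sense and norms.} I would show \(A^{(\boldsymbol y)}_F x\to \otimes A_n x\) on finitely \(\boldsymbol y\)-elementary \(x\). The difference is \(\bigl(\otimes_{n\in F}A_nh_n\bigr)\otimes\bigl(\otimes_{n\notin F}y_n-\otimes_{n\notin F}A_ny_n\bigr)\). Its squared tail norm is
\[
\prod\|y_n\|^2+\prod\|A_ny_n\|^2-2\Real\prod\langle A_ny_n,y_n\rangle
\]
over \(n\notin F\). This tends to \(0\) because all three tail products converge to \(1\), by (iii) and the \(C_0\) property. Uniform boundedness \(\|A_F^{(\boldsymbol y)}\|\le\prod\maxone{\|A_n\|}\) extends the convergence to the closure. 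The norm of the restriction is at most \(\prod\|A_n\|\).

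For equality, I would pick unit vectors \(g_n\) with \(\|A_ng_n\|\ge\|A_n\|(1-2^{-n})\) for \(n\in F\) and \(h_n=y_n/\|y_n\|\) otherwise, and let \(F\) grow. Nonvanishing follows because \(\prod\langle A_ny_n,y_n\rangle\neq0\) by (iii) together with (i). This rests on Lemma~\ref{lem:maxmin}, after discarding the finitely many \(n\) where \(\langle A_ny_n,y_n\rangle=0\); I would handle those with a modified finite set of vectors chosen using (i).

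\emph{Step 2: (a) or (b) implies (c).} Given nonzero (a), pick an elementary \(\otimes h_n\) with \((h_n)\sim\boldsymbol y\) and nonzero image. Then \(\|A_nh_n\|\ne0\), which gives (i). Also \((A_nh_n)\sim(h_n)\sim\boldsymbol y\). Replacing finitely many \(h_n\) by \(y_n\), and tweaking where needed, I would deduce (iii).

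For (ii), I would argue by contradiction as in Proposition~\ref{prop:nakagami}: choose unit \(g_n\) with \(\|A_ng_n\|\ge\|A_n\|-2^{-n}\) on a finite set \(F\) and \(y_n\) elsewhere. The ratio of norms is then \(\gtrsim\prod_{n\in F}\|A_n\|\), which is unbounded if \(\sum(\|A_n\|-1)_+=\infty\), contradicting boundedness. The strong-limit case (b) goes similarly: \(A_F^{(\boldsymbol y)}\) applied to these test vectors bounds \(\prod_{F}\maxone{\|A_n\|}\) by the uniform bound from the Banach--Steinhaus theorem. Nonvanishing forces (iii) by evaluating on \(\otimes y_n\) and pairing with finitely \(\boldsymbol y\)-elementary vectors.

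\emph{Main obstacle.} I expect the hardest step to be extracting (iii) from (b): the strong limit gives only limits of tail products of the form \(\prod_{n\notin F}\langle A_ny_n,y_n\rangle\). I would first show that a nonzero limit forces the tails \(\otimes_{n\notin F}A_ny_n\) to converge to a nonzero multiple of \(\otimes_{n\notin F}y_n\), and then apply Lemma~\ref{lem:maxmin} to the moduli and arguments.
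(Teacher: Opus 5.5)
Most of your plan follows the paper. Condition (c) is the hub, Lemma~\ref{C0obviousLemma} gives invariance of the incomplete component, and near-norming vectors give the norm identity. Your explicit formula for $\|\otimes_{n\notin F}y_n-\otimes_{n\notin F}A_ny_n\|^2$ is a good self-contained substitute for the paper's appeal to Nakagami's Lemma~3.3. Your nonvanishing argument via a nonzero matrix coefficient works as well as the paper's bound $\prod_n\|A_n\|\ge\prod_n c_n>0$.

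\textbf{The gap: (b)$\Rightarrow$(c)(ii).} Strong convergence of the \emph{net} $(A_F^{(\boldsymbol y)})_{F\in\finiteN}$ only makes each orbit $F\mapsto A_F^{(\boldsymbol y)}x$ eventually bounded. So Banach--Steinhaus gives no bound on $\sup_F\|A_F^{(\boldsymbol y)}\|$. For instance, $B_F=0$ if $1\in F$ and $B_F=|F|\,I$ otherwise converges strongly to $0$ but is unbounded. Banach--Steinhaus applies only along a cofinal sequence $(F_N)$, and cofinal sets must eventually contain the indices with $\|A_n\|<1$. Hence you cannot read off a bound on $\prod_{n\in F}\maxone{\|A_n\|}$ this way.

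The missing idea, which is what the paper does, is to build increasing sets $F_N\supseteq\{1,\dots,N\}$ with $\sum_{n\in F_N}\log\|A_n\|\ge N$. This is possible precisely because (i) makes every $\log\|A_n\|$ finite, while the failure of (ii) gives $\sum_n(\log\|A_n\|)_+=\infty$. Then $\|A^{(\boldsymbol y)}_{F_N}\|=\prod_{n\in F_N}\|A_n\|\to\infty$ contradicts Banach--Steinhaus applied to this sequence.

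Alternatively, you can avoid Banach--Steinhaus by reversing your order in case (b):
\begin{enumerate}
\item First get (iii) from pairings $\langle A_F^{(\boldsymbol y)}x,z\rangle$, where $x,z$ are finitely $\boldsymbol y$-elementary with $\langle(\widehat\otimes^{\boldsymbol y}_{n\in\NN}A_n)x,z\rangle\ne0$. Do not evaluate on $\otimes_n y_n$, whose image may vanish. Use the complex form of von Neumann's criterion: a product converging unconditionally to a nonzero limit has $\sum_n|1-z_n|<\infty$. Lemma~\ref{lem:maxmin} handles the moduli; the arguments need a separate short argument.
\item Then (ii) follows exactly as in case (a), using only $\|(\widehat\otimes^{\boldsymbol y}_{n\in\NN}A_n)x\|\le\|\widehat\otimes^{\boldsymbol y}_{n\in\NN}A_n\|\,\|x\|$ on your test vectors.
\end{enumerate}
This route differs from the paper's, which derives (iii) only after (ii), via the $C_0$-property of $(A_nh_n)$ and Nakagami's lemma.

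\textbf{Smaller points.}
\begin{enumerate}
\item In (a)$\Rightarrow$(iii), choose the elementary tensor with nonzero image to be finitely $\boldsymbol y$-elementary from the outset, which density allows. Then $(A_nh_n)\sim\boldsymbol y$ with $h_n=y_n$ for almost all $n$ gives (iii) at once. For a general $(h_n)\sim\boldsymbol y$, changing finitely many entries does not make it finitely $\boldsymbol y$-elementary, and $(A_nh_n)\sim(h_n)$ alone does not yield (iii).
\item Your test vectors for (ii) need $\prod_{n\notin F}\|A_ny_n\|$ bounded below uniformly in $F$. That requires (iii) first, and putting the finitely many $n$ with $A_ny_n=0$ into $F$.
\item In Step~1, ``repeating the argument with $(h_n)$ in place of $\boldsymbol y$'' would need (iii) for $(h_n)$, which is not a hypothesis. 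It is also unnecessary: invariance plus the factorwise action of $\otimes_{n\in\NN}A_n$ on all elementary tensors (Proposition~\ref{prop:nakagami}) gives (a) directly.
\item With $\|A_ng_n\|\ge(1-2^{-n})\|A_n\|$ fixed, you only get $\|\otimes^{\boldsymbol y}_{n\in\NN}A_n\|\ge\prod_n(1-2^{-n})\prod_n\|A_n\|$. Take $(1-\varepsilon 2^{-n})$ and let $\varepsilon\to0$, or use an $N$-dependent choice such as the paper's $(1-N^{-2})$.
\end{enumerate}
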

\begin{proof}
	Assume first (c) holds. By (ii), $\otimes_{n\in \NN} A_n$ is a bounded operator on the complete infinite tensor product $\otimes_{n\in \NN} H_n$ and $\|\otimes_{n\in \NN} A_n \| = \prod_{n\in \NN} \|A_n\| \in [0, \infty)$; see Proposition~\ref{prop:nakagami}.

	We first note that under assumption \textup{(iii)} the product $\prod_{n\in\NN}\|A_n\|$ cannot vanish.
	Indeed, since $\boldsymbol{y}$ is a $C_0$-sequence, we have $\sum_{n\in\NN}|1-\|y_n\||<\infty$ and hence also
	$\sum_{n\in\NN}|1-\|y_n\|^2|<\infty$, so we may assume $\inf_{n\in \NN}\|y_n\|>0$ by replacing any zero vector in $(y_n)_{n\in \NN}$ if necessary.
	Define
	\[
	c_n:=\frac{|\langle A_n y_n,y_n\rangle|}{\|y_n\|^2},
	\qquad n\in\NN.
	\]
	By Cauchy--Schwarz, $c_n\le \|A_n\|$ for every $n\in \NN$. Moreover,
	\[
	|1-c_n|
	\le
	\frac{|1-\langle A_n y_n,y_n\rangle|}{\|y_n\|^2}
	+
	\frac{|1-\|y_n\|^2|}{\|y_n\|^2}, \qquad n \in \NN,
	\]
	so assumption \textup{(iii)} and the $C_0$ property of $\boldsymbol{y}$ yield $\sum_{n\in\NN}|1-c_n|<\infty$.
	Hence $\prod_{n\in\NN}c_n\in(0,\infty)$ by Lemma~\ref{lem:vn1}(ii), and therefore $\prod_{n\in\NN}\|A_n\|\ge \prod_{n\in\NN}c_n>0$.

	Let $\otimes_{n\in \NN} h_n$ be a finitely $\boldsymbol{y}$-elementary tensor, i.e.,  $h_n = y_n$ for all but finitely many $n \in \NN$. One has
	$$\prod_{n\in \NN} \maxone{\|A_n h_n\|} \leq \left(\prod_{n\in \NN} \maxone{\|A_n\|} \right) \left(\prod_{n\in \NN} \maxone{\|h_n\|} \right) < \infty.
	$$
	Thus $\sum_{n\in \NN} (\|A_n h_n\|-1)_+ < \infty$ by Lemma~\ref{lem:vn1}(i). Using (iii) we obtain that 
	\begin{equation*}
		\sum_{n\in \NN} |1-\langle A_n h_n, y_n\rangle | < \infty,
	\end{equation*}
	and thus it follows from Lemma~\ref{C0obviousLemma} that $(A_n h_n)_{n\in\NN}$ is a $C_0$-sequence equivalent to $\boldsymbol{y}$. As the finitely $\boldsymbol{y}$-elementary tensors are dense in $\otimes_{n\in \NN}^{\boldsymbol{y}} H_n$, we have that $\otimes_{n\in \NN}^{\boldsymbol{y}} H_n$ is a closed, invariant subspace of $\otimes_{n\in \NN} A_n$, and we conclude then that $\otimes_{n\in \NN}^{\boldsymbol{y}} A_n \in \linearOp\left(\otimes_{n\in \NN}^{\boldsymbol{y}} H_n\right)$ (in the factorwise sense) and that 
	\begin{equation}\label{eq:factorwiseRestr}
		\otimes_{n\in \NN}^{\boldsymbol{y}} A_n = \otimes_{n\in \NN} A_n \big|_{\otimes_{n\in \NN}^{\boldsymbol{y}} H_n}.
	\end{equation}
	
	On the other hand, let $\otimes_{n\in \NN} h_n$ be a finitely $\boldsymbol{y}$-elementary tensor. Then, by what we proved above, $(A_n h_n)_{n\in \NN}$ is $C_0$-sequence equivalent to $(y_n)_{n\in \NN}$ and hence to $(h_n)_{n\in \NN}$. Therefore, for each $\varepsilon>0$ there exists $F\in \finiteN$ such that
	\begin{equation}
		\|\otimes_{n\notin F} h_n - \otimes_{n\notin F} A_n h_n \| < \varepsilon,
	\end{equation}
	see \cite[Lemma 3.3]{nakagami1970infinite}. It then follows that, if $J \in \finiteN$ is such that $F\subseteq J$, then
	\begin{align*}
		\| A_J^{(\boldsymbol{y})} \left(\otimes_{n\in \NN} h_n\right) - \otimes_{n\in \NN}^{\boldsymbol{y}} A_n h_n \| & = \|\otimes_{n\in J} A_n h_n \| \|\otimes_{n\notin J} h_n - \otimes_{n\notin J} A_n h_n \| \\
		& \leq \varepsilon \prod_{n\in \NN} \maxone{\|A_n h_n\|}.
	\end{align*}
	That is, the net 
	$$(A_J^{(\boldsymbol{y})} (\otimes_{n\in \NN} h_n))_{J\in\finiteN}
	$$ converges to the elementary tensor $\otimes_{n\in \NN} A_n h_n$. Since the linear span of finitely $\boldsymbol{y}$-elementary vectors is dense in $\otimes_{n\in \NN}^{\boldsymbol{y}} H_n$, and the net $(A_F^{(\boldsymbol{y})})_{F\in\finiteN}$ is uniformly norm bounded by $\prod_{n\in \NN} \maxone{\|A_n\|}$, it follows that $\widehat\otimes_{n\in \NN}^{\boldsymbol{y}} A_n$ is well-defined (in the strong-limit sense) and that
	$$\otimes_{n\in \NN}^{\boldsymbol{y}} A_n = \widehat\otimes_{n\in \NN}^{\boldsymbol{y}} A_n.
	$$

	Then, to prove that (c) implies both (a) and (b) we need to show that $\otimes_{n\in \NN}^{\boldsymbol{y}} A_n \neq 0$ in the factorwise sense. In fact, let us prove that
	\[
	\left\| \otimes_{n\in \NN}^{\boldsymbol{y}} A_n \right\|
	\geq \prod_{n\in \NN} \|A_n\|.
	\]
	 For each $N\in\NN$ and \(1\leq n\leq N\), choose
	\(f_n^N\in H_n\) such that
	\[
	\|f_n^N\|=1,
	\qquad
	\|A_n f_n^N\|\geq (1-N^{-2})\|A_n\| .
	\]
	Then
	\[
	(1-N^{-2})^N\prod_{n=1}^{N}\|A_n\|
	\leq
	\prod_{n=1}^{N}\|A_n f_n^N\|
	\leq
	\prod_{n=1}^{N}\|A_n\|, \qquad N \in \NN.
	\]
	Since \((1-N^{-2})^N\to1\), this gives
	$$\lim_{N\to \infty} \prod_{n=1}^N \|A_n f_n^N \| = \prod_{n\in \NN} \|A_n\|.
	$$
	On the other hand, since $(A_n y_n)_{n\in \NN}$ is a $C_0$-sequence, we have
	\[
	\lim_{N\to \infty} \prod_{n\geq N} \max\{\|A_n y_n\|, \|A_n y_n\|^{-1}\} = 1.
	\]
	Hence, for every $N \in \NN$, we deduce that
	\[
	g_N := (\otimes_{n\leq N} f_n^N) \otimes (\otimes_{n > N} y_n)\in \otimes_{n\in \NN}^{\boldsymbol{y}} H_n,
	\]
	and that
	\begin{align*}
		\lim_{N\to\infty} \left\| g_N \right\| 
		& = \lim_{N\to\infty} \left(\prod_{n > N} \| y_n \|\right) 
		= 1.
	\end{align*}
	Thus, writing
	\[
	u_N := (\otimes_{n\in\NN} ^{\boldsymbol{y}} A_n)  g_N
	= (\otimes_{n\leq N}  A_n f_n^N)\otimes (\otimes_{n > N} A_n y_n), \qquad N \in \NN,
	\]
	we have
	\begin{align*}
		\lim_{N\to\infty}\|u_N\|
		&= \lim_{N\to\infty}
		\left( \prod_{n=1}^N\|A_n f_n^N\| \right)
		\left(\prod_{n > N} \|A_n y_n \|\right) = \prod_{n\in \NN} \|A_n\|.
	\end{align*}
	We conclude $\left\| \otimes_{n\in \NN}^{\boldsymbol{y}} A_n \right\| \geq \prod_{n\in \NN} \|A_n\|$, which proves (a) and (b). Using~\eqref{eq:factorwiseRestr}, we deduce that
	$$\left\| \otimes_{n\in \NN}^{\boldsymbol{y}} A_n \right\| = \prod_{n\in \NN} \|A_n\|.
	$$ \bigskip

	Assume now that (a) holds. Since $ \otimes_{n\in \NN}^{\boldsymbol{y}} A_n \neq 0$ 
	and the elementary tensors are dense in $ \otimes_{n\in \NN}^{\boldsymbol{y}} H_n$, there exists a finitely $\boldsymbol{y}$-elementary tensor $\otimes_{n\in \NN} h_n$ such that $(A_n h_n)_{n\in \NN}$ is a nonzero $C_0$-sequence equivalent to $\boldsymbol{y}$.
	
	Thus (iii) follows, because $y_n = h_n$ for all but finitely many $n\in\NN$. One also deduces from this that $(A_n y_n)_{n\in \NN}$ is a $C_0$-sequence equivalent to $\boldsymbol{y}$.
	
	Clearly (i) holds since otherwise $\otimes_{n\in \NN}^{\boldsymbol{y}} A_n =0$. Let us prove by contradiction that (ii) holds. Suppose $\prod_{n\in \NN} \maxone{\|A_n\|}  = \infty$, so there exist an infinite subset $I \subset \NN$ and vectors $h_n \in H_n$ for $n \in I$ such that $\|h_n\| =1$ and $\prod_{n\in I} \|A_n h_n\| = \infty$. Since $(A_n y_n)_{n\in \NN}$ is a $C_0$-sequence (see the paragraph above), there exists $J \in \finiteN$ such that $\prod_{n\in \NN\setminus J} \max\left\{\|A_n y_n\|, \|A_n y_n\|^{-1} \right\} < \infty$. As $A_n\neq 0$ for all $n\in \NN$, by replacing the $J$-factors of $\otimes_{n\in \NN} y_n$ we may assume $\prod_{n\in \NN} \max\left\{\|A_n y_n\|, \|A_n y_n\|^{-1} \right\} < \infty$ (otherwise, it may happen that $\|A_n y_n\|=0$). Now, for each $N \in \NN$, define
	\begin{align*}
		f_n^N := \begin{cases}
			h_n, \qquad & n\in I \mbox{ and } n\leq N,
			\\ y_n, \qquad & \mbox{otherwise}.
		\end{cases}
	\end{align*}
	Then, for every $N\in \NN$, $\otimes_{n\in \NN} f_n^N$ is a finitely $\boldsymbol{y}$-elementary tensor such that
	\[
	\sup_{N\in \NN} \left\| \otimes_{n\in \NN} f_n^N \right\| \leq \prod_{n\in\NN} \maxone{\|y_n\|} < \infty
	\]
	and
	\[
	\lim_{N\to \infty} \left\| \otimes_{n\in \NN} A_n f_n^N \right\|
	\geq
	\left( \prod_{n\in \NN} \minone{\|A_n y_n\|} \right) \lim_{N\to \infty} \left( \prod_{n \leq N, \, n\in I} \|A_n h_n\| \right) 
	= \infty.
	\]
	This contradicts that $ \otimes_{n\in \NN}^{\boldsymbol{y}} A_n$ is bounded. We conclude that (ii) holds, and so does (c). \bigskip
	
	Finally, assume (b) holds. Clearly $A_n\neq 0$ for all $n\in \NN$, since otherwise the net $(A_F^{(\boldsymbol{y})})_{F\in\finiteN}$ would converge in the strong topology to the zero operator.

	Let us assume by contradiction that (ii) does not hold. Then we claim that there exists an increasing
	cofinal sequence of finite subsets \(F_N \in \finiteN\), where $N\in\NN$, such that
	\begin{equation}\label{eq:cofinalDiv}
		\prod_{n\in F_N}\|A_n\|\to\infty, \qquad \mbox{as } N\to \infty.
	\end{equation}
	
	To prove this, put
	\[
	b_n:=\log\|A_n\|,\qquad n\in\mathbb N .
	\]
	Since \(A_n\ne0\), all \(b_n\) are finite.  Moreover, it follows from Lemma~\ref{lem:maxmin} that
	\[
	\sum_{n\in\mathbb N} (b_n)_+=\infty .
	\]
	
	We now construct \(F_N\) inductively.  Suppose that \(F_{N-1}\) has been
	chosen.  First insert the index \(N\).  Since the positive tail
	\[
	\sum_{n\notin E} (b_n)_+
	\]
	is infinite after removing any finite set \(E\), we may add finitely many
	further indices and obtain a finite set
	\[
	F_N\supseteq F_{N-1}\cup\{N\}
	\]
	such that
	\[
	\sum_{n\in F_N}b_n\geq N .
	\]
	Then \((F_N)_{N\in\mathbb N}\) is increasing and cofinal in 
	$\finiteN$ since \(F_N\) contains \(\{1,\ldots,N\}\).
	Moreover,
	\[
	\prod_{n\in F_N}\|A_n\|
	=
	\exp\left(\sum_{n\in F_N}b_n\right)
	\geq e^N
	\to\infty,
	\]
	as claimed. \medskip

	Then, using the associativity of incomplete tensor products~\eqref{eq:assLawIncomplete}, we have that
	\begin{equation*}
		\|A_F^{(\boldsymbol{y})} \| = \prod_{n\in F} \|A_n\|, 
	\end{equation*}
	and thus
	\begin{equation}\label{eq:unboundedYNet}
		\|A_{F_N}^{(\boldsymbol{y})}  \|\to\infty, \qquad \mbox{as } N\to \infty.
	\end{equation}
	If the net \((A_F^{(\boldsymbol{y})})_{F\in\finiteN}\) converged strongly, then the cofinal sequence
	\((A_{F_N}^{(\boldsymbol{y})})_{N\in\mathbb N}\) would converge strongly as well.  Hence, for
	every $h \in \otimes_{n\in\NN}^{(\boldsymbol{y})} H_n$, the sequence
	\((A_{F_N}^{(\boldsymbol{y})} h)_{N\in\mathbb N}\) would be bounded.  By the uniform boundedness
	principle, the sequence \((A_{F_N}^{(\boldsymbol{y})})_{N\in\mathbb N}\) would then be norm
	bounded, contradicting the above limit.
	Consequently, the net \((A_F^{(\boldsymbol{y})})_{F\in\finiteN}\) does not converge in the strong operator
	topology, reaching the aforementioned contradiction.  Thus, we conclude that (ii) holds.


	Now fix any finitely $\boldsymbol{y}$-elementary tensor $\otimes_{n\in \NN} h_n$ such that the limit of the net 
	$$(A_F^{(\boldsymbol{y})} (\otimes_{n\in \NN} h_n))_{F\in\finiteN}
	$$
	is a nonzero vector.  
	Then $\otimes_{n\in \NN} h_n \neq 0$, and thus $\prod_{n\in\NN} \maxone{\|h_n\|} < \infty$ and $\prod_{n\in \NN} \minone{\|h_n\|} >0$. Moreover, the estimate (ii) implies that the product $\prod_{n\in \NN} \|A_n h_n\|$ converges unconditionally to a finite value, and we obtain that
	\begin{align*}
		\prod_{n\in \NN} \|A_n h_n\| &= \lim_{m\to\infty} \left(\prod_{n=1}^m \|A_n h_n\| \right) \left(\prod_{n>m} \|h_n\|\right) 
		\\ &= \lim_{m\to\infty} \| A_{\{1,\ldots,m\}}^{(\boldsymbol{y})} (\otimes_{n\in\NN} h_n) \| 
		= \left\|\left(\widehat\otimes_{n\in\NN}^{(\boldsymbol{y})} A_n\right) \left(\otimes_{n\in\NN} h_n\right) \right\| > 0.
	\end{align*}
	Consequently, $\prod_{n\in \NN} \minone{\|A_n h_n\|} > 0$ and we obtain that $(A_n h_n)_{n\in \NN}$ is a $C_0$-sequence. We then set
	$$C := \left(\prod_{n\in \NN} \minone{\|A_n h_n\|} \right) \left(\prod_{n\in \NN} \minone{\|h_n\|} \right) > 0.
	$$
	
	Let now $\varepsilon>0$ be arbitrary, and let $F\in \finiteN$ be
	such that, for every $J \in \finiteN$ with $F\cap J = \emptyset$, 
	we have
	$$\left\| \left( A_F^{(\boldsymbol{y})} - A_{F\cup J}^{(\boldsymbol{y})} \right) \left(\otimes_{n\in \NN} h_n\right) \right\| < \varepsilon. 
	$$
	Then, using the associative law for incomplete tensor products, we obtain that
	\begin{align*}
		\varepsilon &> \left\| \otimes_{n\in F} A_n h_n \right\| \left\| \otimes_{n \notin F \cup J} h_n \right\| \left\| \otimes_{n\in J} A_n h_n - \otimes_{n \in J} h_n\right\|
		\geq C \left\| \otimes_{n\in J} A_n h_n - \otimes_{n \in J} h_n\right\|,
	\end{align*} 
	for every $J\in\finiteN$ with $F\cap J = \emptyset$. Then \cite[Lemma 3.3]{nakagami1970infinite} yields that $(h_n)_{n\in \NN}$ and $(A_n h_n)_{n\in \NN}$ are equivalent $C_0$-sequences, that is, 
	$$\sum_{n\in \NN} | 1 - \langle A_n h_n, h_n \rangle| < \infty,
	$$
	and (iii) follows since $\otimes_{n\in \NN} h_n$ is a finitely $\boldsymbol{y}$-elementary tensor. Hence (c) holds, completing the proof.
	
\end{proof}

Theorem~\ref{incompleteOperatorTh} substantially improves the sufficient conditions obtained by
Guichardet for incomplete tensor products of operators.
To clarify this, let $(H_n)_{n\in \NN}$ be a sequence of Hilbert spaces, let $A_n \in \linearOp(H_n)$, and let $y_n \in H_n$ be such that $\|y_n\| = 1$ for $n\in \NN$. It is proven in \cite[Prop. 6]{guichardet1969tensor} (see also \cite[p. 472]{bedos2004infinite} and \cite[Sect. 2]{nakagami1974infinite}) that the following properties
\begin{enumerate}
	\item [(i)] $\prod_{n\in\NN} \|A_n\| < \infty$,
	\item [(ii)] $\sum_{n\in \NN} |1 - \langle A_n y_n, y_n\rangle| < \infty$, and
	\item [(iii)] $\sum_{n\in \NN} | 1 - \|A_n y_n\| | < \infty$,
\end{enumerate}
imply that $ \otimes_{n\in \NN} A_n$ leaves $ \otimes_{n\in \NN}^{\boldsymbol{y}} H_n$ invariant, and then one defines the incomplete tensor product of $(A_n)_{n\in\NN}$ as
\begin{equation}\label{eq:Gichardet}	
	\otimes_{n\in \NN} A_n \big|_{\otimes_{n\in \NN}^{\boldsymbol{y}} H_n}.
\end{equation}
Theorem~\ref{incompleteOperatorTh} shows that, in the nonzero case, these sufficient conditions are in fact intrinsic.  More precisely, if the incomplete tensor product of \((A_n)_{n\in\NN}\) is well defined and nonzero, either in the sense of \eqref{eq:Gichardet}, in the factorwise sense or in the strong-limit sense, then the norm-growth condition \textup{(i)} and the reference-vector condition \textup{(ii)} are necessary. Moreover, condition \textup{(iii)} follows automatically from them.
\bigskip

We record next a consequence of Theorem~\ref{incompleteOperatorTh} which will be used repeatedly in
Section~\ref{sec:similarity-tensor}.  It says that equivalent Hilbertian norms tensor provided the
corresponding distortion constants have a convergent product. Unlike the finite tensor-product case, we first have to check that the
admissible \(C\)-sequences and the corresponding incomplete components are
unchanged by the renorming.  Only after this identification can the tensor
product of the implementing operators be used to compare the norms.

\begin{proposition}[Tensorization of equivalent Hilbertian norms]\label{prop:equivalent-infinite-tensor}
	Let $(H_n)_{n\in\NN}$ be a sequence of Hilbert spaces and, for each $n\in\NN$, let
	$\|\cdot\|_{\mathscr H_n}$ be an equivalent Hilbertian norm on $H_n$ such that
	\[
	\|h\|_{H_n}\le \|h\|_{\mathscr H_n}\le \mathcal C_n\|h\|_{H_n},
	\qquad h\in H_n,
	\]
	where $\mathcal C_n := \mathcal C_{H_n}(\|\cdot\|_{\mathscr H_n})$. 
	Let $\mathscr H_n$ denote the underlying vector space of $H_n$ endowed with
	$\|\cdot\|_{\mathscr H_n}$, and set
	\[
	\mathcal C:=\prod_{n\in\NN}\mathcal C_n.
	\]
	Then the space of $C$-sequences in $\times_{n\in \NN} \mathscr H_n$ coincides with the
	space of $C$-sequences in $\times_{n\in \NN} H_n$ if and only if $\mathcal C<\infty$. If $\mathcal C<\infty$,
	then the following hold.
	\begin{enumerate}
		\item[(i)] The complete tensor products
		\[
		\otimes_{n\in\NN}H_n
		\qquad\text{and}\qquad
		\otimes_{n\in\NN} \mathscr H_n
		\]
		agree as vector spaces up to equivalent norms, and
		\[
		\mathcal C_{\otimes_{n\in\NN} H_n}\!\left(
		\|\cdot\|_{\otimes_{n\in\NN} \mathscr H_n}
		\right)
		= \mathcal C.
		\]
		
		
		\item[(ii)] The set of equivalence classes of $C_0$-sequences in
		$\times_{n\in \NN} \mathscr H_n$ coincides with the one of $C_0$-sequences in $\times_{n\in \NN} H_n$.
		Moreover, given a $C_0$-sequence $\boldsymbol y=(y_n)_{n\in\NN} \in \times_{n\in \NN} H_n$,
		the incomplete tensor products
		\[
		\otimes_{n\in\NN}^{\boldsymbol y}H_n
		\qquad\text{and}\qquad
		\otimes_{n\in\NN}^{\boldsymbol y}\mathscr H_n
		\]
		agree as vector spaces up to equivalent norms, and
		\[
		\mathcal C_{\otimes_{n\in\NN}^{\boldsymbol y}H_n}\!\left(
		\|\cdot\|_{\otimes_{n\in\NN}^{\boldsymbol y}\mathscr H_n}
		\right)
		= \mathcal C.
		\]
	\end{enumerate}
\end{proposition}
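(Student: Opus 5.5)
The plan is to realize each renorming by a positive operator and then tensor those operators. For each $n$, let $R_n\in\linearOp(H_n)$ be the positive invertible operator with $\|h\|_{\mathscr H_n}=\|R_nh\|_{H_n}$; it is obtained from the Gram operator of the new inner product, $R_n=G_n^{1/2}$. The hypothesis gives $I\le R_n$, hence $\|R_n^{-1}\|\le1$, and we have $\|R_n\|\,\|R_n^{-1}\|=\mathcal C_n$. Since the normalisation $\|h\|\le\|h\|_{\mathscr H_n}$ is required to hold while the distortion equals $\mathcal C_n$, the upper bound in the hypothesis forces $\|R_n^{-1}\|=1$ and $\|R_n\|=\mathcal C_n$. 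We also have $\|h_n\|_{H_n}\le\|h_n\|_{\mathscr H_n}\le \mathcal C_n\|h_n\|_{H_n}$ for every $h_n\in H_n$.

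First we treat the equality of $C$-sequences. If $\mathcal C<\infty$, then for a $C$-sequence $(h_n)_{n\in\NN}$ in $H$ the product $\prod_{n\in\NN}\maxone{\|h_n\|_{\mathscr H_n}}$ is bounded by $\mathcal C\prod_{n\in\NN}\maxone{\|h_n\|}$. By Lemma~\ref{lem:vn1}(i), or trivially in the zero case, this gives unconditional convergence, and the converse inclusion follows from $\|h\|\le\|h\|_{\mathscr H_n}$.

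If instead $\mathcal C=\infty$, we pick unit vectors $h_n\in H_n$ with $\|h_n\|_{\mathscr H_n}\ge \mathcal C_n(1-2^{-n})$. Then $(h_n)_{n\in\NN}$ is a $C$-sequence in $H$ but $\prod_{n\in\NN}\|h_n\|_{\mathscr H_n}=\infty$, so it fails to be a $C$-sequence in $\mathscr H$.

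For the $C_0$ classes we use the same comparison together with Lemma~\ref{C0obviousLemma}. We cannot simply compare the inner products $\langle h_n,f_n\rangle$ and $\langle R_nh_n,R_nf_n\rangle$ directly. The natural route is different: a $C_0$-sequence for $H$ is mapped by $(R_n)_{n\in\NN}$ to a $C$-sequence in $H$. Since $\sum_{n\in\NN}(\mathcal C_n-1)<\infty$ (because $\sum_{n\in\NN}\log\mathcal C_n<\infty$), we get $\|R_n-I\|=\mathcal C_n-1$; this holds because $I\le R_n\le\mathcal C_nI$. Hence $\sum_{n\in\NN}\|R_n-I\|<\infty$, so $\sum_{n\in\NN}|\langle R_n^2h_n,f_n\rangle-\langle h_n,f_n\rangle|<\infty$ for bounded sequences. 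Therefore equivalence of $C_0$-sequences and the $C_0$ property itself are the same for both norms.

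Next we apply Proposition~\ref{prop:nakagami} and Theorem~\ref{incompleteOperatorTh} to $A_n=R_n$. Here $\prod_{n\in\NN}\|R_n\|=\mathcal C<\infty$ and $\prod_{n\in\NN}\|R_n^{-1}\|=1$. Moreover, $\sum_{n\in\NN}|1-\langle R_ny_n,y_n\rangle|\le\sum_{n\in\NN}\bigl(\|R_n-I\|\,\|y_n\|^2+|1-\|y_n\|^2|\bigr)<\infty$, and the same bound holds for $R_n^{-1}$ because $\|R_n^{-1}-I\|\le\|R_n-I\|$. Thus $R:=\otimes_{n\in\NN}R_n$ and $\otimes_{n\in\NN}R_n^{-1}$ are bounded, mutually inverse, and leave every incomplete component invariant. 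Their norms are $\mathcal C$ and $1$, and the same norms hold on $\otimes^{\boldsymbol y}$.

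Finally we identify the norms. On elementary tensors, $\langle\otimes_{n\in\NN} h_n,\otimes_{n\in\NN} f_n\rangle_{\otimes\mathscr H}=\prod_{n\in\NN}\langle R_nh_n,R_nf_n\rangle=\langle R\otimes_{n\in\NN} h_n,R\otimes_{n\in\NN} f_n\rangle_{\otimes H}$. Hence the span of elementary tensors, which is the same set for both spaces by the first step, carries the norm $\|R\,\cdot\|$. Completing then identifies $\otimes_{n\in\NN}\mathscr H_n$ with $\otimes_{n\in\NN}H_n$ normed by $\|R\cdot\|$, and the same holds for the incomplete components, since the $C_0$ classes coincide. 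The distortion is therefore $\|R\|\,\|R^{-1}\|=\mathcal C\cdot 1$, where restriction to $\otimes^{\boldsymbol y}$ preserves both norms by the norm identity in Theorem~\ref{incompleteOperatorTh}.

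The main obstacle is bookkeeping. We must make sure that the abstract completions are genuinely the same vector space, which means the elementary-tensor functionals must be identified consistently, and we must check that the norms are attained exactly (equal to $\mathcal C$, not merely bounded by it). That exactness relies on the nonzero case of Theorem~\ref{incompleteOperatorTh}.
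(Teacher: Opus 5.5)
Your proposal is correct and follows essentially the same route as the paper. You realize each renorming by a positive $R_n\ge I$ with $\|R_n\|=\mathcal C_n$ and $\|R_n^{-1}\|=1$, and tensor these via Proposition~\ref{prop:nakagami} and Theorem~\ref{incompleteOperatorTh}, after checking condition (iii) for $R_n$ and $R_n^{-1}$ against $\boldsymbol y$. You then identify the new tensor norm with $\|\mathcal R\,\cdot\|$ through the inner-product identity on elementary tensors, and match the $C_0$-classes using summability of $\|R_n-I\|$, where the paper uses $\|R_n^2-I\|=\|R_n\|^2-1$. The identification issue you flag is settled by that same inner-product identity: it shows that $\otimes h_n\mapsto\otimes h_n$ is a well-defined isometry from $\bigl(\operatorname{span}\{\text{elementary tensors}\},\|\mathcal R\,\cdot\|\bigr)$ onto the corresponding span in $\otimes_{n\in\NN}\mathscr H_n$, which then extends by density.
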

\begin{proof}
	It is straightforward to check that the set of $C$-sequences in
	$\times_{n\in \NN} H_n$ coincides with the one of $\times_{n\in \NN} \mathscr H_n$ if $\mathcal C<\infty$. Conversely, if \(\mathcal C=\infty\), for each $n\in \NN$ choose \(h_n\in H_n\) with
	\(\|h_n\|_{H_n}=1\) and
	\[
	\|h_n\|_{\mathscr H_n}\ge \frac{\mathcal C_n}{1+2^{-n}} .
	\]
	Then \((h_n)_{n\in \NN}\) is a \(C\)-sequence for \((H_n)_{n\in \NN}\), while
	\[
	\prod_{n\in\mathbb N}\|h_n\|_{\mathscr H_n}
	\ge
	\prod_{n\in\mathbb N}\frac{C_n}{1+2^{-n}}
	=\infty,
	\]
	and then \((h_n)_{n\in \NN}\) is not a \(C\)-sequence for \((\mathscr H_n)_{n\in \NN}\). Thus the two classes of \(C\)-sequences cannot coincide.\medskip

	
	Assume now $\mathcal C<\infty$, and for each $n\in\NN$ let $R_n\in\linearOp(H_n)$ be the
	positive operator such that
	\[
	\|h\|_{\mathscr H_n}=\|R_n h\|_{H_n},
	\qquad h\in H_n.
	\]
	Then $R_n$ is invertible, $\|R_n\|=\mathcal C_n$, and $\|R_n^{-1}\|=1$ for all $n\in\NN$.
	Hence, by Proposition~\ref{prop:nakagami}, the complete tensor product
	\[
	\mathcal R:=\otimes_{n\in\NN}R_n
	\]
	is a bounded invertible operator on $\otimes_{n\in\NN}H_n$, and
	\[
	\|\mathcal R\|=\mathcal C,
	\qquad
	\|\mathcal R^{-1}\|=1.
	\]
	Moreover, for every pair of \(C\)-sequences \((h_n)_{n\in\NN}\) and \((f_n)_{n\in\NN}\)
	for the family \((H_n)_{n\in\NN}\) one has
	\[
	\left\langle \otimes_{n\in\NN}h_n,\otimes_{n\in\NN}f_n
	\right\rangle_{\otimes_{n\in\NN}\mathscr H_n}
	=
	\left\langle
	\mathcal R(\otimes_{n\in\NN}h_n),\mathcal R(\otimes_{n\in\NN}f_n)
	\right\rangle_{\otimes_{n\in\NN}H_n}.
	\]
	Since the linear span of elementary tensors is dense, it follows that
	\[
	\|u\|_{\otimes_{n\in\NN}\mathscr H_n}
	=
	\|\mathcal R u\|_{\otimes_{n\in\NN}H_n},
	\qquad
	u\in \otimes_{n\in\NN}H_n.
	\]
	This yields that 
	$$\mathcal C_{\otimes_{n\in\NN} H_n}\!\left(
	\|\cdot\|_{\otimes_{n\in\NN} \mathscr H_n}
	\right) = \|\mathcal R\| \|\mathcal R^{-1}\| = \mathcal C.
	$$
	
	Now let $(h_n)_{n\in\NN},  (f_n)_{n\in\NN} \in \times_{n\in \NN} H_n$ be two equivalent $C_0$-sequences. Then
	\begin{equation}\label{eq:eqclasses}
		\begin{aligned}
			\sum_{n\in\NN}|1-\langle h_n,f_n\rangle_{\mathscr H_n}|
			&=
			\sum_{n\in\NN}|1-\langle R_n h_n,R_n f_n\rangle_{H_n}|
			\\
			&\le
			\sum_{n\in\NN}\Bigl(
			|\langle (R_n^2-I)h_n,f_n\rangle_{H_n}|
			+
			|1-\langle h_n,f_n\rangle_{H_n}|
			\Bigr)
			\\
			&\le \sum_{n\in\NN}(\|R_n\|^2-1)\|h_n\|_{H_n}\|f_n\|_{H_n}
			+ \sum_{n\in\NN}|1-\langle h_n,f_n\rangle_{H_n}|
			<\infty,
		\end{aligned}
	\end{equation}
	where we used that $\|R_n^2-I\|=\|R_n\|^2-1$ since $R_n\ge I$. Arguing in the reverse
	direction in the same way, one obtains the converse implication as well, so the sets of
	equivalence classes of $C_0$-sequences coincide.
	
	Let \(\boldsymbol y=(y_n)_{n\in\NN}\in\times_{n\in\NN}H_n\) be a
	\(C_0\)-sequence. Arguing as in~\eqref{eq:eqclasses}, one obtains
	\[
	\sum_{n\in\NN}|1-\langle R_n y_n,y_n\rangle_{H_n}|<\infty,
	\qquad
	\sum_{n\in\NN}|1-\langle R_n^{-1}y_n,y_n\rangle_{H_n}|<\infty.
	\]
	Hence, it follows from Theorem~\ref{incompleteOperatorTh} that $\mathcal R ^{\boldsymbol y} := \widehat\otimes_{n\in\NN}^{\boldsymbol{y}} R_n$, is well-defined and invertible, with
	$$(\mathcal R ^{\boldsymbol y})^{-1} = \widehat\otimes_{n\in\NN}^{\boldsymbol{y}} R_n^{-1},
	\qquad \quad  \|\mathcal R ^{\boldsymbol{y}} \| = \mathcal C , \qquad \quad \|(\mathcal R ^{\boldsymbol{y}})^{-1} \| = 1.
	$$
	Moreover, it follows from its factorwise action on elementary tensors that $\mathcal R^{\boldsymbol y}$ is the similarity operator between
	$\otimes_{n\in\NN}^{\boldsymbol y}H_n$ and $\otimes_{n\in\NN}^{\boldsymbol y}\mathscr H_n$.
	Therefore
	\[
	\mathcal C_{\otimes_{n\in\NN}^{\boldsymbol y}H_n}\!\left(
	\|\cdot\|_{\otimes_{n\in\NN}^{\boldsymbol y}\mathscr H_n}
	\right)
	= \|\mathcal R ^{\boldsymbol{y}} \| \|(\mathcal R ^{\boldsymbol{y}})^{-1} \| = \mathcal C,
	\]
	which proves \textup{(ii)}.

\end{proof}

The nonzero assumptions in Theorem~\ref{incompleteOperatorTh} are essential and cannot be weakened, and we illustrate this fact in the following results. First, in Propositions~\ref{prop:stongFactor} and \ref{prop:counterEx1} we prove that, in the vanishing case, the strong-limit definition is strictly stronger than, while not equivalent to, the factorwise one. In addition,  we construct in Proposition~\ref{prop:counterEx2} an example of a zero incomplete tensor product
$$\otimes_{n\in \NN}^{\boldsymbol{y}} A_n = \widehat\otimes_{n\in \NN}^{\boldsymbol{y}} A_n  =0,
$$
for which the identity \eqref{eq:norm-incomplete} fails, that is
$$\left\| \otimes_{n\in \NN}^{\boldsymbol{y}} A_n\right\| \neq \prod_{n\in \NN} \|A_n\|.
$$

\begin{proposition}\label{prop:stongFactor}
	Let $(H_n)_{n\in \NN}$ be a sequence of Hilbert spaces, let \(\boldsymbol y=(y_n)_{n\in\NN}\in\times_{n\in\NN}H_n\) be a
	\(C_0\)-sequence, and let $A_n \in \linearOp(H_n)$ be such that $\widehat\otimes_{n\in \NN}^{\boldsymbol{y}} A_n = 0$.
	Then 
	$$\otimes_{n\in \NN}^{\boldsymbol{y}} A_n = 0.
	$$
	Moreover, $\otimes_{n\in \NN} A_n \in \linearOp(\otimes_{n\in \NN} H_n)$ and
	$$\otimes_{n\in \NN}^{\boldsymbol{y}} H_n \subseteq \ker \left( \otimes_{n\in \NN} A_n\right),
	$$
	so, in particular,
	$$\otimes_{n\in \NN}^{\boldsymbol{y}} A_n = \widehat \otimes_{n\in \NN}^{\boldsymbol{y}} A_n = \otimes_{n\in \NN} A_n\Big|_{\otimes_{n\in \NN}^{\boldsymbol{y}} H_n}.
	$$
\end{proposition}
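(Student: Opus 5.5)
The plan is to first extract the norm condition from strong convergence, then compute the action on finitely $\boldsymbol y$-elementary tensors, and finally pass to the whole incomplete component by density.

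\textbf{Step 1: norm control.} I would reuse the cofinal-sequence argument from the implication (b)$\Rightarrow$(c) in the proof of Theorem~\ref{incompleteOperatorTh}. That argument does not use the non-vanishing of the limit. If $A_n=0$ for some $n$, then $\prod_{n\in\NN}\|A_n\|=0$ trivially. Otherwise, assume $\prod_{n\in\NN}\maxone{\|A_n\|}=\infty$. Then one can build an increasing cofinal sequence $(F_N)_{N\in\NN}$ in $\finiteN$ with $\|A_{F_N}^{(\boldsymbol y)}\|=\prod_{n\in F_N}\|A_n\|\to\infty$. On the other hand, strong convergence of the net (here to $0$) gives pointwise boundedness of $(A_{F_N}^{(\boldsymbol y)})_{N\in\NN}$, so the uniform boundedness principle gives a contradiction. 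Hence $\prod_{n\in\NN}\maxone{\|A_n\|}<\infty$. By Lemma~\ref{lem:vn1}(i), $\prod_{n\in\NN}\|A_n\|$ then converges unconditionally to a finite value in every case. Proposition~\ref{prop:nakagami} therefore gives $\otimes_{n\in\NN}A_n\in\linearOp(\otimes_{n\in\NN}H_n)$.

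\textbf{Step 2: finitely $\boldsymbol y$-elementary tensors.} Fix a finitely $\boldsymbol y$-elementary tensor $\otimes_{n\in\NN}h_n$; we may assume it is nonzero. Then $\prod_{n>m}\|h_n\|\to1$ as $m\to\infty$. By associativity \eqref{eq:assLawIncomplete},
\[
\bigl\|A^{(\boldsymbol y)}_{\{1,\dots,m\}}(\otimes_{n\in\NN}h_n)\bigr\|=\Bigl(\prod_{n\le m}\|A_nh_n\|\Bigr)\Bigl(\prod_{n>m}\|h_n\|\Bigr),
\]
and the left-hand side tends to $0$ by hypothesis. Consequently $\prod_{n\le m}\|A_nh_n\|\to0$. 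By Step~1 we have $\prod_{n\in\NN}\maxone{\|A_nh_n\|}<\infty$, so $(A_nh_n)_{n\in\NN}$ is a $C$-sequence. Its product of norms converges unconditionally, and its value equals the limit along initial segments, namely $0$. Hence $\otimes_{n\in\NN}A_nh_n=0$, that is, $(\otimes_{n\in\NN}A_n)(\otimes_{n\in\NN}h_n)=0$. Since finitely $\boldsymbol y$-elementary tensors span a dense subspace of $\otimes^{\boldsymbol y}_{n\in\NN}H_n$ and $\otimes_{n\in\NN}A_n$ is bounded, we obtain $\otimes^{\boldsymbol y}_{n\in\NN}H_n\subseteq\ker(\otimes_{n\in\NN}A_n)$.

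\textbf{Step 3: the factorwise definition.} Let $\otimes_{n\in\NN}h_n$ be an arbitrary elementary tensor lying in $\otimes^{\boldsymbol y}_{n\in\NN}H_n$. By Step~1, $(A_nh_n)_{n\in\NN}$ is a $C$-sequence, so $\otimes_{n\in\NN}A_nh_n$ is an elementary tensor. It coincides with $(\otimes_{n\in\NN}A_n)(\otimes_{n\in\NN}h_n)$, which is $0$ by Step~2. Thus the factorwise map sends every elementary tensor of the incomplete component to the zero elementary tensor, which belongs to $\otimes^{\boldsymbol y}_{n\in\NN}H_n$. It therefore extends to the zero operator, giving $\otimes^{\boldsymbol y}_{n\in\NN}A_n=0=\widehat\otimes^{\boldsymbol y}_{n\in\NN}A_n=\otimes_{n\in\NN}A_n|_{\otimes^{\boldsymbol y}_{n\in\NN}H_n}$.

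The main obstacle is Step~1, namely obtaining norm control without the non-vanishing hypothesis. I expect the uniform boundedness argument from Theorem~\ref{incompleteOperatorTh} to carry over verbatim, but I would check carefully that the case where some $A_n$ vanish is handled separately.
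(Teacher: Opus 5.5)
Your proposal is correct and uses essentially the same ingredients as the paper. These are the uniform-boundedness argument along a cofinal sequence from the proof of Theorem~\ref{incompleteOperatorTh}, giving $\prod_{n\in\NN}\maxone{\|A_n\|}<\infty$; the associativity identity for $\|A_F^{(\boldsymbol y)}(\otimes_{n\in\NN}h_n)\|$, giving $\prod_{n\in\NN}\|A_nh_n\|=0$; Proposition~\ref{prop:nakagami}; and density. The paper reads off $\prod_{n\in\NN}\|A_nh_n\|=0$ directly from convergence of the whole net, so its factorwise conclusion does not need the norm control.

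One small slip: when some $A_{n_0}=0$, your Step~2 claim $\prod_{n\in\NN}\maxone{\|A_nh_n\|}<\infty$ may fail. In that case $(A_nh_n)_{n\in\NN}$ is trivially a $C$-sequence with zero product, so the argument still goes through.
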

\begin{proof}
	The result is trivial if $A_n= 0$ for some $n\in \NN$, so assume otherwise. Let $\otimes_{n\in \NN} h_n$ be a nonzero elementary tensor in $\otimes_{n\in \NN}^{\boldsymbol{y}} H_n$. Then
	\begin{equation}\label{eq:zeroAnhn}
		\begin{aligned}
			\prod_{n\in \NN} \|A_n h_n\| &= \lim_F \left( \prod_{n\in F} \|A_n h_n\| \right) \left(\prod_{n\in \NN \setminus F} \|h_n\| \right)
			= \lim_F \left\|A_F^{(\boldsymbol{y})} h_n \right\| = 0,
		\end{aligned}
	\end{equation}
	where $\lim_F$ denotes the limit in the net $\finiteN$.
	Consequently, $(A_n h_n)_{n\in \NN}$ is a $C$-sequence and $\otimes_{n\in \NN} A_n h_n = 0$. Since the choice of $\otimes_{n\in \NN} h_n$ in $\otimes_{n\in \NN}^{\boldsymbol{y}} H_n$ was arbitrary, we obtain that $\otimes_{n\in \NN}^{\boldsymbol{y}} A_n = 0$ in the factorwise sense.

	Since we assumed that $A_n\neq 0$ for all $n\in \NN$, then arguing as in the discussion following~\eqref{eq:unboundedYNet} we obtain that
	$$\prod_{n\in\NN} \maxone{\|A_n\|} <\infty.
	$$
	Therefore, Proposition~\ref{prop:nakagami} implies that $\otimes_{n\in \NN} A_n$ is well-defined, and we obtain from \eqref{eq:zeroAnhn} that every elementary tensor $\otimes_{n\in \NN} h_n$ of $\otimes_{n\in \NN}^{\boldsymbol{y}} H_n$ belongs to $\ker \left(\otimes_{n\in \NN} A_n \right)$. Thus, by a density argument,
	$$\otimes_{n\in \NN}^{\boldsymbol{y}} H_n \subseteq \ker \left( \otimes_{n\in \NN} A_n\right),
	$$
	and the proof is finished.
\end{proof}
\begin{proposition}\label{prop:counterEx1}
	There exist a sequence of Hilbert spaces $(H_n)_{n\in \NN}$, bounded operators $A_n\in \linearOp(H_n)$, $n\in \NN$, and a $C_0$-sequence $\boldsymbol{y} = (y_n)_{n\in \NN} \in \times_{n\in \NN} H_n$ such that the following holds.
	\begin{enumerate}
		\item [(i)] $\otimes_{n\in \NN}^{\boldsymbol{y}} A_n = 0$ 
		(in the factorwise sense).
		\item[(ii)] $\widehat\otimes_{n\in \NN}^{\boldsymbol{y}} A_n$ is not well-defined 
		(in the strong-limit sense).
		\item [(iii)] $\otimes_{n\in \NN} A_n$ is not well-defined.
	\end{enumerate}
\end{proposition}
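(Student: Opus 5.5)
The plan is to build the example so that the norms $\|A_n\|$ blow up in an unbounded way, while every finitely $\boldsymbol y$-elementary tensor is still annihilated in the limit. Take $H_n=\CC^2$ with orthonormal basis $\{e_1,e_2\}$ and $y_n=e_1$ for all $n$. Define
\[
A_n:=\tfrac12 P_1+2P_2,
\]
where $P_j$ is the orthogonal projection onto $\CC e_j$. Then $\|A_n\|=2$ for every $n$. Hence $\prod_{n\in\NN}\maxone{\|A_n\|}=\infty$, so by Proposition~\ref{prop:nakagami} the complete product $\otimes_{n\in\NN}A_n$ is not well-defined. This gives~(iii).

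For~(ii), use the associativity identification~\eqref{eq:assLawIncomplete}, which gives $\|A_F^{(\boldsymbol y)}\|=\prod_{n\in F}\|A_n\|=2^{|F|}$. Choose an increasing cofinal sequence $F_N=\{1,\dots,N\}$, so that these norms are unbounded. Suppose the net converged strongly. Then the cofinal sequence would converge strongly, hence be pointwise bounded. The uniform boundedness principle would then make it norm bounded, which is a contradiction. This is exactly the argument following~\eqref{eq:unboundedYNet}.

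For~(i), let $\otimes_n h_n$ be an elementary tensor in $\otimes^{\boldsymbol y}_{n}H_n$. There are two cases.
\begin{itemize}
\item If it is the zero tensor, or some $A_nh_n=0$, the image is trivially the zero tensor.
\item Otherwise, note that $(h_n)$ is a $C_0$-sequence equivalent to $(e_1)$. So $\sum_n|1-\langle h_n,e_1\rangle|<\infty$, and together with $\sum_n|1-\|h_n\||<\infty$ this forces $\sum_n\|h_n-e_1\|^2<\infty$ (expand $\|h_n-e_1\|^2=\|h_n\|^2+1-2\Real\langle h_n,e_1\rangle$). Hence $|\langle h_n,e_2\rangle|\to0$ and $\langle h_n,e_1\rangle\to1$. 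Then
\[
\|A_nh_n\|^2=\tfrac14|\langle h_n,e_1\rangle|^2+4|\langle h_n,e_2\rangle|^2\to\tfrac14 .
\]
So all but finitely many factors have norm at most $\tfrac12+\varepsilon<1$. Therefore $\prod_n\|A_nh_n\|$ converges unconditionally to $0$ (Lemma~\ref{lem:vn1}(i)), $(A_nh_n)$ is a $C$-sequence, and $\otimes_nA_nh_n=0$.
\end{itemize}
The zero tensor lies in $\otimes^{\boldsymbol y}_nH_n$, and the induced map on the span is identically zero, so it extends boundedly (to $0$). This gives~(i).

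The main obstacle is the subtle step in~(i): making sure the definition of ``well-defined in the factorwise sense'' is met. Two points need care. First, the zero functional must count as an elementary tensor in $\otimes^{\boldsymbol y}_nH_n$; it is one, since any $C$-sequence with vanishing product yields the zero functional, which lies in every closed subspace. Second, the linear map on the span must be consistently defined. This holds automatically here, since every image is $0$.
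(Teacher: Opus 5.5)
Your proof is correct. It has the same three-part structure as the paper's, but uses a different example and partly different arguments.

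\textbf{The paper's route.} The paper takes $A_n$ non-injective with $\|A_n\|=2$ and $y_n\in\ker A_n$. It gets (i) from $\|A_nh_n\|=\|A_n(h_n-y_n)\|\le 2\|h_n-y_n\|\to0$. It deduces (ii) from (iii): a well-defined $\widehat\otimes^{\boldsymbol y}_{n\in\NN}A_n$ would force $\otimes_{n\in\NN}A_n$ to be well-defined, by Theorem~\ref{incompleteOperatorTh} in the nonzero case and Proposition~\ref{prop:stongFactor} in the zero case.

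\textbf{Your route.} You use invertible $A_n=\tfrac12P_1+2P_2$ and obtain (i) from $\|A_nh_n\|\to\tfrac12$. You prove (ii) directly by the cofinal-sequence and uniform-boundedness argument following~\eqref{eq:unboundedYNet}, which is exactly what Proposition~\ref{prop:stongFactor} relies on internally.

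\textbf{What each buys.} Your version is more self-contained, and it supplies the justification $\sum_n\|h_n-e_1\|^2<\infty$ that the paper leaves implicit. It also shows that the collapse does not need the reference vectors to lie in kernels: contraction in the reference direction, $\langle A_ny_n,y_n\rangle=\tfrac12$, already suffices. The paper's version is shorter because it reuses structural results it has already proved.

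\textbf{A caveat on the point you flagged.} Your claim that a zero elementary tensor is ``trivially'' sent to zero holds only if the factorwise map is read as acting on vectors. If instead it is applied to an arbitrary representing sequence, it fails: $h_n=\tfrac12e_2$ gives $\otimes_nh_n=0$ but $\otimes_nA_nh_n=\otimes_ne_2\neq0$, which lies outside $\otimes^{\boldsymbol y}_nH_n$. The paper's example has the same feature (take $A_n=2P_2$, $y_n=e_1$ and the same $h_n$). Its proof likewise treats only nonzero elementary tensors. So this is a convention shared with the paper, not a gap specific to your argument.
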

\begin{proof}
	Take Hilbert spaces $(H_n)_{n\in \NN}$ with $\dim H_n \geq 2$, let $A_n \in \linearOp(H_n)$ be non-injective such that $\|A_n\| =2$, and take $y_n\in \ker(A_n)$ with $\|y_n\|=1$ for all $n\in \NN$. Thus (iii) follows from Proposition~\ref{prop:nakagami} since the infinite product $\prod_{n\in \NN} \|A_n\|$ diverges to $\infty$. 
	
	Moreover, (iii) implies (ii). In fact, if $\widehat\otimes_{n\in \NN}^{\boldsymbol{y}} A_n$ was well-defined, then (iii) would contradict either Theorem~\ref{incompleteOperatorTh} if such an operator is nonzero, or Proposition~\ref{prop:stongFactor} otherwise.
	
	Finally, for every nonzero elementary tensor $\otimes_{n\in \NN} h_n$ in $\otimes_{n\in \NN}^{\boldsymbol y} H_n$, one has that $\lim_{n\to \infty} \|y_n-h_n\| = 0$ and then
	\begin{align*}
		\left\| \otimes_{n\in \NN} A_n h_n \right\| =  \prod_{n\in \NN} \left\| A_n h_n\right\| = \prod_{n\in \NN} \left\| A_n (h_n-y_n)\right\| = 0.
	\end{align*}
	
	Consequently $\otimes_{n\in \NN}^{\boldsymbol{y}} A_n= 0$, as claimed. 
\end{proof}
\begin{proposition}\label{prop:counterEx2}
	There exist a sequence of Hilbert spaces $(H_n)_{n\in \NN}$, bounded operators $A_n\in \linearOp(H_n)$, $n\in \NN$, and a $C_0$-sequence $\boldsymbol{y} = (y_n)_{n\in \NN} \in \times_{n\in \NN} H_n$ such that the following holds.
	\begin{enumerate}
		\item[(i)] $\otimes_{n\in \NN}^{\boldsymbol{y}} A_n = \widehat{\otimes}_{n\in \NN}^{\boldsymbol{y}} A_n= 0$.
		\item[(ii)] $\otimes_{n\in \NN} A_n$ is well-defined and nonzero.
	\end{enumerate}
	Consequently,
	$$\left\| \otimes_{n\in \NN}^{\boldsymbol{y}} A_n\right\| \neq \prod_{n\in \NN} \|A_n\|.
	$$
\end{proposition}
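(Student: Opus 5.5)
We need an example where the incomplete product vanishes but the complete product is well-defined and nonzero. Take $H_n=\CC^2$ with orthonormal basis $e_1,e_2$, let $y_n:=e_1$ for all $n$, and define $A_n$ by $A_ne_1:=\tfrac12 e_1$ and $A_ne_2:=e_2$. Equivalently, $A_n=\operatorname{diag}(1/2,1)$. Then $\|A_n\|=1$ for every $n$, so $\prod_{n\in\NN}\|A_n\|=1$ converges unconditionally. By Proposition~\ref{prop:nakagami}, $\otimes_{n\in\NN}A_n$ is well-defined and has norm $1$. This gives (ii).

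For (i), we use the factorwise definition. Let $\otimes_{n\in\NN}h_n$ be a nonzero elementary tensor in $\otimes^{\boldsymbol y}_{n\in\NN}H_n$. Since $(h_n)_{n\in\NN}$ is a $C_0$-sequence equivalent to $\boldsymbol y$, we have $\sum_{n\in\NN}|1-\langle h_n,e_1\rangle|<\infty$ and $\|h_n\|\to1$. Writing $h_n=a_ne_1+b_ne_2$, this forces $a_n\to1$ and $b_n\to0$. Hence
\[
\|A_nh_n\|^2=\tfrac14|a_n|^2+|b_n|^2\to\tfrac14 .
\]
Therefore $\prod_{n\in\NN}\|A_nh_n\|$ is a product of factors eventually bounded by $3/4$, so it converges unconditionally to $0$. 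By Lemma~\ref{lem:vn1}(i), $(A_nh_n)_{n\in\NN}$ is a $C$-sequence whose elementary tensor is zero. Thus $\otimes_{n\in\NN}A_nh_n=0$, which lies in $\otimes^{\boldsymbol y}_{n\in\NN}H_n$. The linear map is therefore zero on the spanning elementary tensors, so $\otimes^{\boldsymbol y}_{n\in\NN}A_n=0$ in the factorwise sense.

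For the strong-limit sense, fix a finitely $\boldsymbol y$-elementary tensor $\otimes_{n\in\NN}h_n$ and a finite set $F$ containing the exceptional indices. Then
\[
\|A_F^{(\boldsymbol y)}(\otimes_{n\in\NN}h_n)\|=\prod_{n\in F}\|A_nh_n\|\prod_{n\notin F}\|h_n\|\le \Bigl(\prod_{n\in F_0}\|h_n\|\Bigr)2^{-|F\setminus F_0|}\Bigl(\prod_{n\notin F}\|e_1\|\Bigr),
\]
where $F_0$ is the fixed exceptional set and we used $\|A_n\|\le1$ together with $A_ne_1=\tfrac12e_1$. This tends to $0$ along the net $\finiteN$. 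The net $(A_F^{(\boldsymbol y)})_{F\in\finiteN}$ is uniformly bounded by $1$, since $\|A_F^{(\boldsymbol y)}\|=\prod_{n\in F}\|A_n\|=1$. Finitely $\boldsymbol y$-elementary tensors are dense, so the net converges strongly to $0$, and $\widehat\otimes^{\boldsymbol y}_{n\in\NN}A_n=0$. The consequence follows at once, because $0\neq1=\prod_{n\in\NN}\|A_n\|$.

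The only point needing care is the verification that every elementary tensor in the incomplete product has $a_n\to1$. This follows from equivalence to $\boldsymbol y$: $\langle h_n,e_1\rangle=a_n$ with $\sum_{n\in\NN}|1-a_n|<\infty$. The rest is direct computation.
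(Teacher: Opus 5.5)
Your proof is correct, and it follows the same overall strategy as the paper. You give an explicit example with $\prod_{n\in\NN}\|A_n\|=1$, so that $\otimes_{n\in\NN}A_n$ is a nonzero operator by Proposition~\ref{prop:nakagami}, while the reference vectors are contracted, so that the incomplete component $\otimes^{\boldsymbol y}_{n\in\NN}H_n$ is annihilated.

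It differs from the paper in two respects.

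\emph{The example.} The paper takes non-injective $A_n$ with $y_n\in\ker A_n$. Your $A_n=\operatorname{diag}(1/2,1)$ are invertible with $A_ny_n=\tfrac12 y_n$. This shows that no kernel is needed: the collapse comes from $\|A_ny_n\|=\tfrac12$ alone, which forces $\prod_{n\in\NN}\|A_nh_n\|=0$ on every elementary tensor of the component. Incidentally, it shows that $\otimes_{n\in\NN}A_n$ can have a large kernel even when every factor is invertible.

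\emph{The route to the factorwise statement.} The paper checks only that $A_F^{(\boldsymbol y)}$ eventually kills finitely $\boldsymbol y$-elementary tensors. It then gets the factorwise statement from Proposition~\ref{prop:stongFactor}. You instead verify the factorwise definition directly on arbitrary elementary tensors of the component. This is self-contained, but it leaves two small points unaddressed:
\begin{enumerate}
\item You use, without comment, that a nonzero elementary tensor lying in $\otimes^{\boldsymbol y}_{n\in\NN}H_n$ comes from a $C_0$-sequence equivalent to $\boldsymbol y$. This is true, since nonzero elementary tensors come from $C_0$-sequences and distinct incomplete components are orthogonal.
\item You omit the zero elementary tensors. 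These are harmless, because $\|A_nh_n\|\le\|h_n\|$ gives $\prod_{n\in\NN}\|A_nh_n\|\le\prod_{n\in\NN}\|h_n\|=0$.
\end{enumerate}
Citing Proposition~\ref{prop:stongFactor} right after your (careful) strong-limit computation would have spared you both points.
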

\begin{proof}
	Let $(H_n)_{n\in \NN}$ be a sequence of Hilbert spaces with $\dim H_n \geq 2$, let $A_n \in \linearOp(H_n)$ be non-injective and such that $\|A_n\| = 1$, and let $y_n \in \ker A_n$ with $\|y_n\| = 1$ for all $n\in \NN$.\newline 	
	For every non-empty $F \in \finiteN$ and every finitely $\boldsymbol{y}$-elementary tensor $\otimes_{n\in \NN} h_n$, we have 
	$$A_F^{(\boldsymbol{y})} \left(\otimes_{n\in \NN} h_n \right) = 0.
	$$
	Since the net $(A_F^{(\boldsymbol{y})})_{F\in\finiteN}$ is uniformly bounded and the linear span of finitely $\boldsymbol{y}$-elementary tensors is dense in $\otimes_{n\in \NN}^{\boldsymbol{y}} H_n$, it follows that $\widehat\otimes_{n\in \NN}^{\boldsymbol {y}} A_n = 0$, and then $\otimes_{n\in \NN}^{\boldsymbol {y}} A_n = 0$ by Proposition~\ref{prop:stongFactor}.
	\newline
	On the other hand, we have that $\prod_{n\in \NN} \|A_n\| = 1$. Hence, it follows from Proposition~\ref{prop:nakagami} that $\otimes_{n\in \NN} A_n$ is well-defined and nonzero. The claim on the norms follows trivially, and the proof is finished.
\end{proof}

Next we prove that the failure of the identity \eqref{eq:norm-incomplete} depicted in the proposition above cannot happen for powers of operators $(A_n^m)_{n\in \NN}$, with $m\in \NN$, provided that $\otimes_{n \in \NN}^{\boldsymbol{y}} A_n \neq 0$ (equivalently, $\widehat\otimes_{n\in \NN}^{\boldsymbol{y}} A_n \neq 0$). This observation will be useful to give a spectral radius formula and to deal with semigroups of operators.

\begin{lemma}\label{powersIncompleteLem}
Let $(H_n)_{n\in \NN}$ be a sequence of Hilbert spaces, let \(\boldsymbol y=(y_n)_{n\in\NN}\in\times_{n\in\NN}H_n\) be a
\(C_0\)-sequence. Take $A_n \in \linearOp(H_n)$ for $n\in \NN$ such that 
$$\otimes_{n\in \NN}^{\boldsymbol{y}} A_n \in  \linearOp \left(\otimes_{n\in \NN}^{\boldsymbol{y}} H_n\right) \setminus\{0\}.
$$
Then, for every $m\in \NN$, 
$$
\left(\otimes_{n\in \NN}^{\boldsymbol y} A_n\right)^m = \otimes_{n\in \NN}^{\boldsymbol y} A_n^m = \widehat\otimes_{n\in \NN}^{\boldsymbol y} A_n^m, \qquad 
\left\| \left(\otimes_{n\in \NN}^{\boldsymbol y} A_n\right)^m  \right\| = \prod_{n\in \NN} \|A_n^m\|,
$$
 and $(A_n^m y_n)_{n\in \NN}$ is a $C_0$-sequence equivalent to $\boldsymbol y$.
\end{lemma}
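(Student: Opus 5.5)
The plan is to reduce everything to Theorem~\ref{incompleteOperatorTh}, applied to the sequence $(A_n^m)_{n\in\NN}$. By that theorem, the hypothesis gives (i) $A_n\neq0$, (ii) $\prod_{n}\maxone{\|A_n\|}<\infty$, and (iii) $\sum_n|1-\langle A_ny_n,y_n\rangle|<\infty$. Moreover, the proof of that theorem shows that for every finitely $\boldsymbol y$-elementary tensor $\otimes_n h_n$, the sequence $(A_nh_n)_{n}$ is a $C_0$-sequence equivalent to $\boldsymbol y$. I will verify conditions (i)--(iii) of Theorem~\ref{incompleteOperatorTh}(c) for $A_n^m$. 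Once they hold, the theorem gives that $\otimes^{\boldsymbol y}_n A_n^m=\widehat\otimes^{\boldsymbol y}_nA_n^m$ is well defined, nonzero, and of norm $\prod_n\|A_n^m\|$.

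\emph{Condition (ii)} is immediate, since $\maxone{\|A_n^m\|}\le \maxone{\|A_n\|}^m$.

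\emph{The $C_0$-property and condition (iii).} I argue by induction on $m$. Suppose $(A_n^{m-1}y_n)_n$ is a $C_0$-sequence equivalent to $\boldsymbol y$. Choose a finite set $F$ outside of which $A_n^{m-1}y_n$ is close to $y_n$. Define $h_n:=y_n$ for $n\in F$ and $h_n:=A_n^{m-1}y_n$ otherwise. A cleaner route is to apply the factorwise operator to the elementary tensor $\otimes_n A_n^{m-1}y_n$, which lies in $\otimes^{\boldsymbol y}_n H_n$ by the inductive hypothesis. Since $\otimes^{\boldsymbol y}_nA_n$ is the restriction of $\otimes_nA_n$, which acts factorwise on all elementary tensors (Proposition~\ref{prop:nakagami}), we get
\[
\Bigl(\otimes^{\boldsymbol y}_nA_n\Bigr)\bigl(\otimes_nA_n^{m-1}y_n\bigr)=\otimes_nA_n^my_n\in\otimes^{\boldsymbol y}_nH_n .
\]
If this vector is nonzero, then $(A_n^my_n)_n$ is a $C$-sequence whose elementary tensor lies in the $\boldsymbol y$-component, hence it is a $C_0$-sequence equivalent to $\boldsymbol y$. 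This gives (iii) for $A_n^m$.

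\emph{The main obstacle} is non-vanishing: showing $\prod_n\|A_n^my_n\|>0$, and more basically $A_n^m\ne0$ (condition (i)), since nilpotent factors could kill the product. My first attempt is to use condition (iii) for $A_n$. It gives $\langle A_ny_n,y_n\rangle\to1$ summably, and since also $\|A_ny_n\|\to1$ summably, we get $\|A_ny_n-y_n\|^2=\|A_ny_n\|^2-2\Real\langle A_ny_n,y_n\rangle+\|y_n\|^2$, which is summable. Then
\[
\|A_n^my_n-y_n\|\le \sum_{k<m}\|A_n\|^k\|A_ny_n-y_n\|,
\]
and since $\sup_n\|A_n\|<\infty$, the sequence $\|A_n^my_n-y_n\|$ is square-summable. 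Square-summability is not quite enough, so I will instead use the estimate $|1-\langle A_n^my_n,y_n\rangle|\le\sum_{k<m}|\langle A_n^k(A_n-I)y_n,y_n\rangle|$ combined with the equivalence of $(A_n^ky_n)$ and $\boldsymbol y$ from the induction. For all $n$ outside a finite set $F$, this shows $A_n^my_n\ne0$ with summable deviation.

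For $n\in F$, I still need $A_n^m\neq0$ and a vector $h_n$ with $A_n^mh_n\neq0$. Here I would use that $\otimes^{\boldsymbol y}A_n\ne0$ forces $\prod_n\|A_n\|>0$, but that is not enough against nilpotency. So I would argue via the spectral radius: $A_n^m=0$ for some $n$ would make the $n$-th tensor factor, and hence $(\otimes^{\boldsymbol y}A_n)^m$, equal to zero. That is a contradiction only if the $m$-th power of the product is nonzero, which is part of what is being proved. This is where I expect a genuine difficulty, and I would check whether it is resolved by exploiting the tail behaviour (where $A_n^m y_n\approx y_n$) together with the fact that the lemma should really require the nonvanishing of the finitely many exceptional factors.

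Assuming this is settled, $\prod_n\|A_n^m\|>0$ follows as in the theorem. The identity $(\otimes^{\boldsymbol y}A_n)^m=\otimes^{\boldsymbol y}A_n^m$ then holds on finitely $\boldsymbol y$-elementary tensors by factorwise action, and extends to the whole space by density.
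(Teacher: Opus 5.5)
Your proposal has a genuine gap, and it is the one you flag yourself and then assume away: you never treat the case where a power vanishes, and you suggest the lemma might need extra hypotheses. It needs none. Non-vanishing also cannot be proved, because it is false. Take $A_1\neq0$ with $A_1^2=0$ and $A_n=I$ for $n\ge2$: then $\otimes_{n\in\NN}^{\boldsymbol y}A_n$ is well defined and nonzero, while its square is $0$. So verifying (c)(i)--(iii) of Theorem~\ref{incompleteOperatorTh} for $A_n^m$ for every $m$ is impossible. The lemma never claims $(\otimes_{n\in\NN}^{\boldsymbol y}A_n)^m\neq0$, and in this example all its conclusions hold with value $0$; note that $(A_n^2y_n)_{n\in\NN}=(0,y_2,y_3,\dots)$ is still a $C_0$-sequence equivalent to $\boldsymbol y$. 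What is needed is a dichotomy.
\begin{itemize}
\item If $(\otimes_{n\in\NN}^{\boldsymbol y}A_n)^m\neq0$, it acts factorwise as $A_n^m$. The implication (a)$\Rightarrow$(c) of Theorem~\ref{incompleteOperatorTh}, its final assertions, and Lemma~\ref{C0obviousLemma} then give everything directly.
\item If $(\otimes_{n\in\NN}^{\boldsymbol y}A_n)^m=0$, then some $A_\ell^m=0$, since otherwise (c)$\Rightarrow$(a) applied to $(A_n^m)_{n\in\NN}$ would make it nonzero. Hence $\prod_{n\in\NN}\|A_n^m\|=0$, the net $(\otimes_{n\in F}A_n^m)\otimes I$ vanishes once $\ell\in F$, and $\widehat\otimes_{n\in\NN}^{\boldsymbol y}A_n^m=0$.
\end{itemize}

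The vanishing branch, and the $C_0$-claim, need condition (iii) for $A_n^m$ with no non-vanishing input, and your proposal does not supply it. Your route through $\otimes_{n\in\NN}A_n^my_n$ only works when that tensor is nonzero. Your telescoping estimate is circular at the top index $k=m-1$, since $\langle A_n^{m-1}(A_n-I)y_n,y_n\rangle$ contains $\langle A_n^my_n,y_n\rangle$ itself. Bounding that term by norms of $(A_n-I)y_n$ or $(A_n^{k}-I)y_n$ gives only $\ell^2$ control, not $\ell^1$.

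The missing device is to pass to adjoints. Because $\|A_n^*\|=\|A_n\|$ and $|1-\langle A_n^*y_n,y_n\rangle|=|1-\langle A_ny_n,y_n\rangle|$, Lemma~\ref{C0obviousLemma} gives $(A_n^*y_n)_{n\in\NN}\sim\boldsymbol y$. Then $\langle A_n^my_n,y_n\rangle=\langle A_n^{m-1}y_n,A_n^*y_n\rangle$ and transitivity of equivalence yield $\sum_{n\in\NN}|1-\langle A_n^my_n,y_n\rangle|<\infty$. Lemma~\ref{C0obviousLemma} once more, with $\prod_{n\in\NN}\maxone{\|A_n^m\|}<\infty$, shows that $(A_n^my_n)_{n\in\NN}$ is a $C_0$-sequence equivalent to $\boldsymbol y$, whether or not some entry is $0$.

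Alternatively, you could rescue your own estimate by noting that $\|(A_n^*-I)y_n\|$ is square-summable, by the same computation you did for $A_n$. The cross terms $\langle (A_n-I)y_n,((A_n^*)^{k}-I)y_n\rangle$ are then products of two $\ell^2$ sequences, hence summable.
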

\begin{proof}On elementary tensors of $\otimes_{n\in \NN}^{\boldsymbol{y}} H_n$ one has the factorwise identity $(\motimes_{n\in\NN}^{\boldsymbol y}A_n)^m=\motimes_{n\in\NN}^{\boldsymbol y}A_n^m$ (by repeated application of the defining action). In particular, to study powers it suffices to consider the sequence $(A_n^m)_{n\in\NN}$ and apply Theorem~\ref{incompleteOperatorTh} to it.

 Let us prove the claim by complete induction. It follows from Theorem~\ref{incompleteOperatorTh} that the claim holds for every $m \in \NN$ such that $\left( \otimes_{n\in \NN}^{\boldsymbol{y}} A_n\right)^m \neq 0$. Thus, we may suppose it holds for $m=1,\ldots, N$, and that $\left( \otimes_{n\in \NN}^{\boldsymbol{y}} A_n\right)^{N+1} = 0$.
 %
 %
 %
 Note that 
 $$\prod_{n\in \NN} \maxone{\|A_n^\ast y_n\|} \leq \left(\prod_{n\in \NN} \maxone{\|A_n^\ast\|} \right) \left(\prod_{n\in \NN} \maxone{\|y_n\|} \right) < \infty,
 $$
 where we used that $\maxone{\|A_n\|} = \maxone{\|A_n^\ast\|}$ and $\prod_{n\in \NN} \|A_n\|< \infty$, see Theorem~\ref{incompleteOperatorTh}(c)(ii). As $(A_n y_n)_{n\in \NN}$ and $(A_n^N y_n)_{n\in \NN}$ are $C_0$-sequences equivalent to $\boldsymbol{y}$ by the induction assumption, it follows from Lemma~\ref{C0obviousLemma} and
 $$\sum_{n\in \NN} |1 - \langle A_n^\ast y_n, y_n\rangle| = \sum_{n\in \NN} |1 - \langle A_n y_n, y_n\rangle| < \infty,
 $$
 that $(A_n^\ast y_n)_{n\in\NN}$ is a $C_0$-sequence equivalent to $\boldsymbol{y}$, and then to $(A_n^N y_n)_{n\in \NN}$ by transitivity. Consequently,
 \begin{equation}\label{eqClassEq}
  \sum_{n\in \NN} | 1 - \langle  A_n^{N+1} y_n , y_n \rangle | = \sum_{n\in \NN} | 1 - \langle  A_n^N y_n , A_n^\ast y_n \rangle |<\infty.
 \end{equation}
Then,
\begin{equation}\label{anotherAuxEqInc}
	\prod_{n\in \NN} \maxone{\|A_n^{N+1}\|}
	\leq
	\left(\prod_{n\in \NN} \maxone{\|A_n\|} \right)^{N+1}
	< \infty.
\end{equation}
implies that
$\prod_{n\in \NN} \maxone{\|A_n^{N+1} y_n\|} < \infty$, and therefore $(A_n^{N+1} y_n)_{n\in \NN}$ is a $C_0$-sequence equivalent to $\boldsymbol{y}$ by Lemma~\ref{C0obviousLemma} and \eqref{eqClassEq}.

Regarding the identity involving the norms, \eqref{anotherAuxEqInc} and our assumption $\left( \otimes_{n\in \NN}^{\boldsymbol{y}} A_n\right)^{N+1} = 0$ imply that $A_\ell^{N+1} = 0$ for some $\ell\in \NN.$ In fact, otherwise Theorem~\ref{incompleteOperatorTh}~(c)(i) and~(c)(ii) would imply that $\otimes_{n\in \NN}^{\boldsymbol{y}} A_n \neq 0$. Then, we have that
$$\left\| \left(\otimes_{n\in \NN}^{\boldsymbol y} A_n\right)^m  \right\|= 0 = \prod_{n\in \NN} \|A_n^m\|.
$$ 
Therefore, the net given by
$$(A^m)^{(\boldsymbol{y})}_F = \left(\otimes_{n\in F} A_n^m \right) \otimes I, \quad F \in \finiteN,
$$
is strongly convergent to $0$, and hence $\widehat \otimes_{n\in\NN}^{\boldsymbol{y}} A_n^m =0$, finishing the proof.

 %

\end{proof}


In view of the preceding equivalence results, the factorwise and strong-limit definitions of incomplete tensor products agree in all cases relevant to the sequel. We therefore use the factorwise notation unless emphasis on the strong-limit construction is required.

We finish this subsection with a formula for the spectral radius of $\otimes_{n\in \NN} A_n$, which will be crucial for our similarity arguments in Section~\ref{sec:similarity-tensor}, in particular for Lemma~\ref{lem:omega_tensor}.

\begin{lemma}\label{infiniteSpectralRadiusLemma}
 Let $(H_n)_{n\in \NN}$ be a sequence of Hilbert spaces, and let $A_n \in \linearOp(H_n)$ for $n \in \NN$ be such that $\otimes_{n\in \NN} A_n \in \linearOp\left(\otimes_{n\in \NN} H_n\right)$. Then 
 $$r\left( \otimes_{n\in \NN} A_n \right) = \prod_{n\in \NN} r(A_n).
 $$
 In addition, let \(\boldsymbol y=(y_n)_{n\in\NN}\in\times_{n\in\NN}H_n\) be a
 \(C_0\)-sequence and assume that
$\otimes_{n\in \NN}^{\boldsymbol{y}} A_n \in \linearOp \left( \otimes_{n\in \NN}^{\boldsymbol{y}}H_n \right) \setminus\{0\}$. Then
$$r\left( \otimes_{n\in \NN}^{\boldsymbol{y}} A_n \right) = \prod_{n\in \NN} r(A_n).
$$
\end{lemma}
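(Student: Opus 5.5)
The plan is to use Gelfand's formula $r(A)=\lim_{m\to\infty}\|A^m\|^{1/m}$ together with the norm identities for powers. For the complete tensor product, the first step is to check that $(\otimes_{n\in\NN}A_n)^m=\otimes_{n\in\NN}A_n^m$. Since $\prod_{n}\|A_n\|$ converges unconditionally to a finite value, Lemma~\ref{lem:vn1}(i) gives either some $A_n=0$ or $\sum_n(\|A_n\|-1)_+<\infty$. In both cases $\prod_n\|A_n^m\|$ converges unconditionally to a finite value, because $\|A_n^m\|\le\|A_n\|^m$ and $\maxone{\|A_n^m\|}\le\maxone{\|A_n\|}^m$. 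Hence $\otimes_n A_n^m$ is well-defined by Proposition~\ref{prop:nakagami}. It agrees with the $m$-th power on elementary tensors, and both are bounded, so they agree everywhere. Proposition~\ref{prop:nakagami} then gives
\[
\bigl\|(\otimes_{n\in\NN}A_n)^m\bigr\|=\prod_{n\in\NN}\|A_n^m\|.
\]
In the incomplete nonzero case, Lemma~\ref{powersIncompleteLem} supplies exactly the same identity. So in both cases it suffices to prove
\[
\lim_{m\to\infty}\Bigl(\prod_{n\in\NN}\|A_n^m\|\Bigr)^{1/m}=\prod_{n\in\NN}r(A_n).
\]

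First I would dispose of the degenerate case. If some $A_\ell$ is nilpotent, then both sides vanish. Otherwise, since $\otimes A_n$ is bounded, each $\|A_n\|$ is finite. In the complete case I would normalize away the zero-product issue as follows. If $\prod_n\|A_n\|=0$, then $\otimes A_n=0$ and $r=0$. In that case I need $\prod_n r(A_n)=0$, which follows from $r(A_n)\le\|A_n\|$ and monotonicity of the product (via the max/min splitting of Lemma~\ref{lem:maxmin}). So I may assume $\prod_n\|A_n\|\in(0,\infty)$, that is, $\sum_n|\log\|A_n\||<\infty$.

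The upper bound is straightforward. Set $a_{n,m}:=\frac1m\log\|A_n^m\|\le\log\|A_n\|$. Then $a_{n,m}\to\log r(A_n)$ for each $n$ and $a_{n,m}$ is nonincreasing in $m$ along the subsequence $m=2^k$ (by submultiplicativity), and bounded above by $\log\|A_n\|$ in general. Applying Fatou's lemma (reverse form) to $\sum_n a_{n,m}$, with dominating function $(\log\|A_n\|)_+\in\ell^1$, gives
\[
\limsup_{m\to\infty}\sum_n a_{n,m}\le\sum_n\log r(A_n),
\]
with the convention $\log 0=-\infty$. This proves $\limsup_m\|(\otimes A_n)^m\|^{1/m}\le\prod_n r(A_n)$.

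The lower bound is the main obstacle, since $a_{n,m}$ need not be bounded below uniformly in $n$. I would use $\|A_n^m\|\ge r(A_n)^m$, so $a_{n,m}\ge\log r(A_n)$ for all $m$. If $\prod_n r(A_n)=0$, the lower bound is trivial. Otherwise $\sum_n(\log r(A_n))_->-\infty$, and the positive parts are dominated by $(\log\|A_n\|)_+$. So $\log r(A_n)\le a_{n,m}\le\log\|A_n\|$ provides a summable two-sided domination, and dominated convergence gives
\[
\sum_n a_{n,m}\to\sum_n\log r(A_n).
\]
Because this argument uses only the norm identity for powers, it applies verbatim to the incomplete tensor product via Lemma~\ref{powersIncompleteLem}.
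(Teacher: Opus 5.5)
Your proof is correct and follows essentially the same route as the paper. Both arguments combine Gelfand's formula with the norm identity $\|(\otimes A_n)^m\|=\prod_n\|A_n^m\|$ (from Proposition~\ref{prop:nakagami}, resp.\ Lemma~\ref{powersIncompleteLem}), then interchange the limit with the series $\sum_n \frac1m\log\|A_n^m\|$.

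The only difference is the tool used for that interchange. The paper restricts to dyadic powers $m=2^j$, where the terms are monotone in $j$, so a single monotone-convergence argument handles all cases at once. You instead use all powers, with reverse Fatou for the upper bound and, when $\prod_n r(A_n)>0$, dominated convergence via $\log r(A_n)\le a_{n,m}\le\log\|A_n\|$.
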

\begin{proof}
If $\prod_{n\in \NN}\|A_n\|=0$, then $\|\otimes_{n\in \NN}A_n\|=0$ and hence $\otimes_{n\in \NN}A_n=0$, so
$$r(\otimes_{n\in \NN}A_n)=0.
$$
Since $r(A_n)\le \|A_n\|$ for each $n\in \NN$, we also have 
$$\prod_{n\in \NN} r(A_n)=0,
$$
and the claim follows. Thus we may assume $0<\prod_{n\in \NN}\|A_n\|<\infty$.

For each $n\in \NN$ and $j\in \ZZ_+$, set
\[
a_j(n):=\frac{1}{2^j}\log\|A_n^{2^j}\|\in[-\infty,\infty).
\]
Since $\|A_n^{2^{j+1}}\|\le \|A_n^{2^j}\|^2$, we have $a_{j+1}(n)\le a_j(n)$ for all $n$ and $j$.
Moreover,
\[
\sum_{n\in \NN}|a_0(n)|=\sum_{n\in \NN}|\log\|A_n\||<\infty,
\]
because $0<\prod_{n\in \NN}\|A_n\|<\infty$; see Lemma~\ref{lem:maxmin}.
Hence, for each $j\in \ZZ_+$, the series $\sum_{n\in \NN}a_j(n)$ converges in $[-\infty,\infty)$ (in the unconditional sense).

Since $(\otimes_{n\in \NN}A_n)^{2^j}=\otimes_{n\in \NN}A_n^{2^j}$ and
$\bigl\|\otimes_{n\in \NN}A_n^{2^j}\bigr\|=\prod_{n\in \NN}\|A_n^{2^j}\|$, we have
\[
\log\Bigl(\bigl\|(\otimes_{n\in \NN}A_n)^{2^j}\bigr\|^{1/2^j}\Bigr)=\sum_{n\in \NN}a_j(n),
\qquad j\in \ZZ_+.
\]
Set $b_j(n):=a_0(n)-a_j(n)\in[0,\infty]$. Then $b_j(n)$ is monotone increasing in $j$ for each fixed $n$.
By the monotone convergence theorem for series of nonnegative terms,
\[
\lim_{j\to\infty}\sum_{n\in \NN}b_j(n)=\sum_{n\in \NN}\lim_{j\to\infty}b_j(n).
\]
Since $\lim_{j\to\infty}a_j(n)=\log r(A_n)$ (spectral radius formula along dyadic powers),
it follows that
\[
\lim_{j\to\infty}\sum_{n\in \NN}a_j(n)=\sum_{n\in \NN}\log r(A_n)\in[-\infty,\infty).
\]
Exponentiating and using $r(B)=\lim_{j\to\infty}\|B^{2^j}\|^{1/2^j}$, we obtain
\[
r\Bigl(\otimes_{n\in \NN}A_n\Bigr)
=
\exp\Bigl(\lim_{j\to\infty}\sum_{n\in \NN}a_j(n)\Bigr)
=
\exp\Bigl(\sum_{n\in \NN}\log r(A_n)\Bigr)
=
\prod_{n\in \NN}r(A_n),
\]
as claimed.

For the incomplete tensor product, argue in the same way, using the identity
\[
\Bigl\|\bigl(\otimes_{n\in \NN}^{\boldsymbol y}A_n\bigr)^m\Bigr\|=\prod_{n\in \NN}\|A_n^m\|,
\qquad m\in \NN,
\]
from Lemma~\ref{powersIncompleteLem} in place of the corresponding complete tensor product identity.
\end{proof}

\begin{remark}
	The nonvanishing assumption $\otimes_{n\in \NN}^{\boldsymbol{y}} A_n\neq 0$ cannot be omitted in the lemma above. In fact, let $A_n\in \linearOp(H_n)$ be as in the example constructed in Proposition~\ref{prop:counterEx2}, and such that $r(A_n) = 1$ for all $n\in \NN$. Then, $\otimes_{n\in \NN}^{\boldsymbol{y}}A_n = \widehat \otimes_{n\in \NN}^{\boldsymbol{y}}A_n= 0$ and
	$$r\left(\otimes_{n\in \NN}^{\boldsymbol{y}} A_n\right) = 0 \neq 1 = \prod_{n\in \NN} r(A_n). 
	$$
\end{remark}

\subsection{Infinite incomplete tensor products of semigroups: existence and regularity}\label{InfSemigrSubsection}

We now turn to incomplete tensor products of semigroups. Since no continuity is assumed a priori, the first issue is existence and non-vanishing of the operators $\motimes_{n\in\NN}^{\boldsymbol y}T_n(t)$ for fixed $t>0$. Once this is settled, one can discuss measurability or continuity of the tensor product family. Thus the subsection separates two issues: first, the pointwise existence and non-vanishing of
the operators \(\motimes_{n\in\NN}^{\boldsymbol y}T_n(t)\), and then the regularity of the
resulting semigroup near \(0\).
Recall that a semigroup $\mathcal T = (T(t))_{t\geq 0}$ is strongly continuous in $(0,\infty)$ if and only if it is strongly measurable if and only if it is weakly measurable and the mapping $t\mapsto T(t)h$ is almost separably-valued on $[0,\infty)$ for every $h \in H$, i.e., there exists a null set $\Omega \subset [0,\infty)$ such that the set $\{T(t)h \, : \, t \in [0,\infty)\setminus \Omega\}$ is separable in $H$, see for instance \cite[Sect. 10.2]{hille1957functional}. Also, note that the weak measurability of $\mathcal T$ does not necessarily imply that the mapping $t \mapsto \|T(t)\|$ is measurable, as explained in \cite[Sect. 10.2]{hille1957functional}. 

It will be useful for us to state the following immediate consequence of Lemma~\ref{powersIncompleteLem}, where we specialize Theorem~\ref{incompleteOperatorTh} to semigroups by applying it pointwise in time.
Here and below, saying that a semigroup $\mathcal T = (T(t))_{t\geq0}$ is
nonzero means that there exists $t>0$ such that $T(t)\neq 0$.

\begin{lemma}\label{lem:nonZeroInfiniteSemigroup}
 Let $(H_n)_{n\in \NN}$ be a sequence of Hilbert spaces and let \(\boldsymbol y=(y_n)_{n\in\NN}\in\times_{n\in\NN}H_n\) be a \(C_0\)-sequence. For each $n \in \NN$, let $\mathcal T_n = (T_n(t))_{t\geq0} \subset \linearOp(H_n)$ be a semigroup. Then $\otimes_{n\in \NN}^{\boldsymbol{y}} \mathcal T_n \subset \linearOp \left(\otimes_{n\in \NN}^{\boldsymbol{y}} H_n\right)$ is a nonzero semigroup if and only if there exists $\tau>0$ such that, for all $t\in [0,\tau]$,
 \begin{enumerate}
  \item [(i)] $T_n(t) \neq 0$ for all $n\in \NN$;
  \item [(ii)] $\prod_{n\in \NN} \maxone{\|T_n(t)\|} < \infty$; and
  \item [(iii)] $\sum_{n\in \NN} \left| 1 - \langle T_n(t) y_n, y_n\rangle \right|<\infty$.
 \end{enumerate}
 If {\normalfont (i), (ii)} and {\normalfont (iii)} hold then, for every $t>0$, 
 $$\otimes_{n\in \NN}^{\boldsymbol{y}} T_n(t) = \widehat\otimes_{n\in \NN}^{\boldsymbol{y}} T_n(t),
 $$  
 $(T_n(t)y_n)_{n\in \NN}$ is a $C_0$-sequence equivalent to $\boldsymbol{y}$, and moreover
 $$ \left\| \otimes_{n\in\NN}^{\boldsymbol{y}} T_n(t) \right\|= \prod_{n\in \NN} \|T_n(t)\|.
 $$
\end{lemma}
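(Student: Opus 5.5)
The plan is to apply Theorem~\ref{incompleteOperatorTh} pointwise in time and to propagate information from small times to all times with Lemma~\ref{powersIncompleteLem}. Throughout, ``$\otimes_{n\in\NN}^{\boldsymbol y}\mathcal T_n$ is a nonzero semigroup'' is read as: for every $t>0$ the operator $\otimes_{n\in\NN}^{\boldsymbol y}T_n(t)$ is well defined (factorwise), and it is nonzero for some $t>0$. The semigroup law itself is immediate on finitely $\boldsymbol y$-elementary tensors, since $T_n(t)T_n(s)=T_n(t+s)$ factorwise, and it extends by density.

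\emph{Necessity.} Suppose $\otimes_{n\in\NN}^{\boldsymbol y}T_n(t_0)\neq0$ for some $t_0>0$. The semigroup law gives $\otimes_{n\in\NN}^{\boldsymbol y}T_n(t_0)=\bigl(\otimes_{n\in\NN}^{\boldsymbol y}T_n(t)\bigr)\bigl(\otimes_{n\in\NN}^{\boldsymbol y}T_n(t_0-t)\bigr)$ for $0<t<t_0$, so $\otimes_{n\in\NN}^{\boldsymbol y}T_n(t)\ne0$ for every $t\in(0,t_0]$. Applying Theorem~\ref{incompleteOperatorTh}(a)$\Rightarrow$(c) at each such $t$ yields (i)--(iii) on $(0,t_0]$, so we may take $\tau=t_0$. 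At $t=0$ we have $T_n(0)$ idempotent with $T_n(0)T_n(t)=T_n(t)$, and $\otimes_{n\in\NN}^{\boldsymbol y}T_n(0)$ is well defined by hypothesis and nonzero, because it acts as the identity on the range of the nonzero operator $\otimes_{n\in\NN}^{\boldsymbol y}T_n(t_0)$. So the theorem applies at $t=0$ as well.

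\emph{Sufficiency.} Assume (i)--(iii) hold on $[0,\tau]$. By Theorem~\ref{incompleteOperatorTh}, $\otimes_{n\in\NN}^{\boldsymbol y}T_n(t)$ is well defined and nonzero for each $t\in[0,\tau]$, and $(T_n(t)y_n)_{n\in\NN}\sim\boldsymbol y$. For an arbitrary $t>0$, write $t=ms$ with $m\in\NN$ and $s\in(0,\tau]$. Lemma~\ref{powersIncompleteLem}, applied to $A_n=T_n(s)$, shows that $\otimes_{n\in\NN}^{\boldsymbol y}T_n(t)=\bigl(\otimes_{n\in\NN}^{\boldsymbol y}T_n(s)\bigr)^m$ is well defined and coincides with the strong-limit product. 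It also gives $\|\otimes_{n\in\NN}^{\boldsymbol y}T_n(t)\|=\prod_{n\in\NN}\|T_n(t)\|$, and $(T_n(t)y_n)_{n\in\NN}$ is a $C_0$-sequence equivalent to $\boldsymbol y$. Since $\otimes_{n\in\NN}^{\boldsymbol y}T_n(s)\neq 0$, the result is a nonzero semigroup.

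\emph{Final claims at an arbitrary $t>0$.} These come from Lemma~\ref{powersIncompleteLem} as above when the power is nonzero. If the power vanishes, the same lemma still gives the identities, namely the equality of the factorwise and strong-limit products and the norm product formula. The only delicate point, and the main obstacle, is the possibility that $\bigl(\otimes_{n\in\NN}^{\boldsymbol y}T_n(s)\bigr)^m=0$, which occurs when some $T_\ell(t)=0$. In that case Theorem~\ref{incompleteOperatorTh} is unavailable at time $t$, and one has to rely on the zero-case branch of the induction in Lemma~\ref{powersIncompleteLem}. That branch also supplies the equivalence $(T_n(t)y_n)_{n\in\NN}\sim\boldsymbol y$ on the cofinitely many factors.
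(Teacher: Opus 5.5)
Your proposal is correct and follows the paper's proof: necessity applies Theorem~\ref{incompleteOperatorTh} pointwise on $(0,t_0]$, and sufficiency applies Lemma~\ref{powersIncompleteLem} to $A_n=T_n(t/m)$ to reach every $t>0$. Your factorization $\otimes^{\boldsymbol y}T_n(t_0)=(\otimes^{\boldsymbol y}T_n(t))(\otimes^{\boldsymbol y}T_n(t_0-t))$ is a slightly simpler replacement for the paper's $t_0=mt+r$ argument, and you say explicitly that the vanishing-power case (some $T_\ell(t)=0$) is covered by the zero branch of Lemma~\ref{powersIncompleteLem}, which the paper leaves implicit.
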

\begin{proof}

We first prove the equivalence.
Assume that
\(
\motimes_{n\in\NN}^{\boldsymbol y}\mathcal T_n
\)
is a nonzero semigroup, i.e. 
\(
\motimes_{n\in\NN}^{\boldsymbol y}T_n(t_0)\ne0
\)
for some \(t_0>0\).  The semigroup law then implies that
\[
\motimes_{n\in\NN}^{\boldsymbol y}T_n(t)\ne0,
\qquad 0<t\le t_0.
\]
Indeed, if the product were zero at some \(0<t\le t_0\), then writing
\(t_0=mt+r\), where \(m\in\NN\) and \(0\le r<t\), would force the product
at time \(t_0\) to be zero.  
Thus, for every \(0<t\le t_0\), applying
Theorem~\ref{incompleteOperatorTh} to the operators \(A_n=T_n(t)\) gives
conditions \textup{(i)}, \textup{(ii)} and \textup{(iii)}.  At \(t=0\)
these conditions are immediate.  
Hence the stated
small-time conditions hold, with \(\tau=t_0\).

Conversely, assume that there exists \(\tau>0\) such that conditions
\textup{(i)}, \textup{(ii)} and \textup{(iii)} hold for every
\(t\in[0,\tau]\).  For each \(t\in(0,\tau]\), Theorem~\ref{incompleteOperatorTh}
applied to \(A_n=T_n(t)\) shows that
\(
\motimes_{n\in\NN}^{\boldsymbol y}T_n(t)
\)
is well defined and nonzero, and
\[
\otimes_{n\in\NN}^{\boldsymbol y}T_n(t)
=
\widehat\otimes_{n\in\NN}^{\boldsymbol y}T_n(t).
\]
Moreover, the theorem shows that \((T_n(t)y_n)_{n\in\NN}\) is a \(C_0\)-sequence equivalent to
\(\boldsymbol y\), and 
\[
\left\|\motimes_{n\in\NN}^{\boldsymbol y}T_n(t)\right\|
=
\prod_{n\in\NN}\|T_n(t)\|,
\qquad 0<t\le\tau .
\]
At \(t=0\) the product is, of course, the identity on
\(\motimes_{n\in\NN}^{\boldsymbol y}H_n\).

It remains to pass from small times to arbitrary positive times.  Fix
\(t>0\), and choose \(m\in\NN\) such that \(s:=t/m\le\tau\).  Put
\(A_n:=T_n(s)\).  Since
\(
\motimes_{n\in\NN}^{\boldsymbol y}A_n
\)
is well defined and nonzero, Lemma~\ref{powersIncompleteLem} applied to the family
\((A_n)_{n\in\NN}\) yields
\[
\left(\motimes_{n\in\NN}^{\boldsymbol y}A_n\right)^m
=
\motimes_{n\in\NN}^{\boldsymbol y}A_n^m
=
\motimes_{n\in\NN}^{\boldsymbol y}T_n(t),
\]
and also
\[
\otimes_{n\in\NN}^{\boldsymbol y}T_n(t)
=
\widehat\otimes_{n\in\NN}^{\boldsymbol y}T_n(t),
\qquad
\left\|\otimes_{n\in\NN}^{\boldsymbol y}T_n(t)\right\|
=
\prod_{n\in\NN}\|T_n(t)\|.
\]
Moreover, the same application of Lemma~\ref{powersIncompleteLem} gives that
\((T_n(t)y_n)_{n\in\NN}\) is a \(C_0\)-sequence equivalent to
\(\boldsymbol y\).

It remains only to note that the operator obtained in this way is
independent of the particular integer \(m\).
Indeed,
if two choices are made, the resulting bounded operators agree on the dense
linear span of finitely \(\boldsymbol y\)-elementary tensor vectors, since on
such tensors both act by
\[
\motimes_{n\in\NN}h_n
\longmapsto
\motimes_{n\in\NN}T_n(t)h_n .
\]
The same density argument shows the semigroup law.  Therefore
\[
\motimes_{n\in\NN}^{\boldsymbol y}\mathcal T_n
\subset
\linearOp\left(\motimes_{n\in\NN}^{\boldsymbol y}H_n\right)
\]
is a well-defined nonzero semigroup, and the asserted identities hold for all
\(t>0\).
\end{proof}


Since the strong-limit construction preserves both weak measurability and strong measurability, the latter being equivalent to strong continuity on $(0,\infty)$, the previous result immediately yields the following corollaries. These provide natural sufficient conditions on the semigroups $\mathcal T_n = (T_n(t))_{t\geq0}$ ensuring that the semigroup $\otimes_{n\in \NN}^{\boldsymbol y} \mathcal T_n$ is either weakly measurable or strongly continuous on $(0,\infty)$. 

\begin{corollary}\label{cor:weakStrongMeasInf}
 Let $(H_n)_{n\in \NN}$ be a sequence of Hilbert spaces, and let $\boldsymbol{y} = (y_n)_{n\in \NN} \in \times_{n\in \NN} H_n$ be a $C_0$-sequence. For each $n\in \NN$, let $\mathcal T_n = (T_n(t))_{t\geq0} \subset \linearOp \left(H_n\right)$ be a semigroup such that 
 $$\motimes_{n\in\NN}^{\boldsymbol{y}} \mathcal T_n \subset \linearOp \left(\motimes_{n\in\NN}^{\boldsymbol{y}} H_n\right)$$
 is a nonzero semigroup. Then, the following holds.
 \begin{enumerate}
 	\item [(i)] If $\mathcal T_n$ is weakly measurable for each $n\in \NN$, then $\otimes_{n\in\NN}^{\boldsymbol{y}} \mathcal T_n$ is weakly measurable.
 	\item [(ii)] If $\mathcal T_n$ is strongly continuous in $(0,\infty)$ for each $n\in \NN$, then $\otimes_{n\in\NN}^{\boldsymbol{y}} \mathcal T_n$ is strongly continuous in $(0,\infty)$.
 \end{enumerate} 
\end{corollary}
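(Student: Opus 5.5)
The plan is to use the strong-limit description of the incomplete tensor product from Lemma~\ref{lem:nonZeroInfiniteSemigroup}: for every \(t>0\) we have \(\otimes_{n\in\NN}^{\boldsymbol y}T_n(t)=\widehat\otimes_{n\in\NN}^{\boldsymbol y}T_n(t)\). It therefore suffices to check measurability or continuity along the net \((T_F^{(\boldsymbol y)}(t))_{F\in\finiteN}\) and then pass to the limit. Since the relevant operations do not commute with limits over an uncountable directed set, we first replace the net by a sequence. Take \(F_N:=\{1,\dots,N\}\), which is cofinal in \(\finiteN\). Strong convergence of the net then gives, for each fixed \(t>0\) and each \(u\in\otimes_{n\in\NN}^{\boldsymbol y}H_n\),
\[
\Bigl(\otimes_{n\in\NN}^{\boldsymbol y}T_n(t)\Bigr)u=\lim_{N\to\infty}T_{F_N}^{(\boldsymbol y)}(t)\,u .
\]
Thus each orbit of the tensor semigroup is, on \((0,\infty)\), a pointwise limit of a sequence of functions.

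For (i), fix \(u,v\) in the incomplete tensor product. The first step is to reduce to finitely \(\boldsymbol y\)-elementary tensors \(u=\otimes_n h_n\), \(v=\otimes_n g_n\). Under the identification \eqref{eq:assLawIncomplete}, with \(F=F_N\),
\[
\bigl\langle T_{F_N}^{(\boldsymbol y)}(t)u,v\bigr\rangle=\prod_{n\le N}\langle T_n(t)h_n,g_n\rangle\cdot\prod_{n>N}\langle h_n,g_n\rangle .
\]
This is a finite product of measurable scalar functions times a constant, so it is measurable in \(t\). By the displayed limit, \(t\mapsto\langle T(t)u,v\rangle\) is a pointwise limit of measurable functions on \((0,\infty)\), hence measurable. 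By linearity this extends to the span of such tensors.

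The second step is to pass to general \(u,v\) by density. Density of elementary tensors gives sequences \(u_k\to u\) and \(v_k\to v\). Then \(\langle T(t)u_k,v_k\rangle\to\langle T(t)u,v\rangle\) pointwise in \(t\), because for each fixed \(t\) the operator \(T(t)\) is bounded. We do not need a uniform bound in \(t\) here, only pointwise convergence, so measurability passes to the limit. I expect this density step to be the only point needing care, and pointwise boundedness is enough to handle it.

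For (ii), fix \(u\) and an interval \([a,b]\subset(0,\infty)\). On a finitely \(\boldsymbol y\)-elementary tensor, \(T_{F_N}^{(\boldsymbol y)}(t)u\) is \((\otimes_{n\le N}T_n(t)h_n)\otimes(\otimes_{n>N}h_n)\). This is continuous in \(t\) on \((0,\infty)\): a finite tensor product of strongly continuous factors is continuous, since multilinear tensoring is jointly continuous on bounded sets and continuous orbits are locally bounded. The tensor semigroup is therefore strongly measurable. By the equivalences recalled from \cite[Sect.~10.2]{hille1957functional}, it is weakly measurable by (i), and its orbits are separably valued as limits of continuous maps into the separable closed span of the countably many ranges. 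Strong measurability is equivalent to strong continuity on \((0,\infty)\), which gives (ii). To extend from elementary tensors to all \(u\), again argue by pointwise density; strong measurability passes to pointwise limits of sequences.
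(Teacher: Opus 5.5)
Your proposal is correct and follows the same route as the paper. The paper obtains the corollary directly from three facts: the identity $\otimes_{n\in\NN}^{\boldsymbol y}T_n(t)=\widehat\otimes_{n\in\NN}^{\boldsymbol y}T_n(t)$ of Lemma~\ref{lem:nonZeroInfiniteSemigroup}, the preservation of weak and strong measurability under strong limits, and the equivalence of strong measurability with strong continuity on $(0,\infty)$. You supply the details the paper leaves implicit: passing to the cofinal sequence $F_N=\{1,\dots,N\}$ so that only sequential pointwise limits are needed, and reducing to finitely $\boldsymbol y$-elementary tensors before a density argument.
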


The next result provides a characterization for $\otimes_{n\in \NN}^{\boldsymbol{y}} \mathcal T_n$ to be a $C_0$-semigroup, assuming each of the factors $\{\mathcal T_n\}_{n\in \NN}$ is itself a $C_0$-semigroup. Surprisingly, the local boundedness at $0$ of $\otimes_{n\in \NN}^{\boldsymbol{y}} \mathcal T_n$ is enough to obtain the $C_0$-property.

\begin{proposition}\label{strongContInfProp}
 Let $(H_n)_{n\in \NN}$ be a sequence of Hilbert spaces, let $\boldsymbol{y} = (y_n)_{n\in \NN} \in \times_{n\in \NN} H_n$ be a $C_0$-sequence, and let $\mathcal T_n = (T_n(t))_{t\geq0} \subset \linearOp \left(H_n\right)$ be a $C_0$-semigroup for each $n\in \NN$.
 Then the following are equivalent.
 \begin{itemize}
  \item [(a)] $\otimes_{n\in\NN}^{\boldsymbol{y}} \mathcal T_n \subset \linearOp \left(\otimes_{n\in\NN}^{\boldsymbol{y}} H_n\right)$ is a $C_0$-semigroup.
  \item [(b)] There exists $\tau>0$ such that $\otimes_{n\in \NN}^{\boldsymbol{y}} T_n(t) \in \linearOp\left(\otimes_{n\in\NN}^{\boldsymbol{y}} H_n \right) \setminus \{0\}$ for all $t\in [0,\tau]$, and 
  $$\displaystyle{\limsup_{t\to0^+} \left\| \otimes_{n\in\NN}^{\boldsymbol{y}}T_n(t)\right\| <\infty}.
  $$
  \item [(c)] There exist $\tau>0$ and $M>0$ such that, for all $t\in [0,\tau]$,
  \begin{enumerate}
   \item [(i)] $T_n(t)\neq 0$ for each $n\in \NN$;
   \item [(ii)] $\prod_{n\in \NN} \|T_n(t)\| \leq M$; and
   \item [(iii)] $\sum_{n\in \NN} | 1 -\langle T_n(t) y_n, y_n \rangle | < \infty$.
  \end{enumerate}
 \end{itemize}  
\end{proposition}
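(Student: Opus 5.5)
The cycle I plan to prove is (a)$\Rightarrow$(b)$\Rightarrow$(c)$\Rightarrow$(a). Throughout I write $\mathcal T:=\otimes_{n\in\NN}^{\boldsymbol y}\mathcal T_n$ and $T(t):=\otimes_{n\in\NN}^{\boldsymbol y}T_n(t)$.

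\textbf{Step (a)$\Rightarrow$(b).} A $C_0$-semigroup satisfies $T(t)\to I\neq 0$ strongly, so $T(t)\neq0$ for small $t$. By the uniform boundedness principle it is also bounded on $[0,\tau]$. Hence (b) holds.

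\textbf{Step (b)$\Rightarrow$(c).} Lemma~\ref{lem:nonZeroInfiniteSemigroup} already gives (i) and (iii) on some interval $[0,\tau]$, together with the identity $\|T(t)\|=\prod_{n}\|T_n(t)\|$ for $t>0$. Set $f_n(t):=\|T_n(t)\|$. Each $f_n$ is submultiplicative. Since $\mathcal T_n$ is $C_0$, $f_n$ is lower semicontinuous, hence measurable, and it is locally bounded. The product $f(t):=\prod_n f_n(t)$ is finite and nonzero for small $t$, and $\limsup_{t\to0^+}f(t)<\infty$ by (b).

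Finiteness of $f(t)$ for every $t>0$ follows from the semigroup property. Lemma~\ref{unifBoundProdLemma} then yields
\[
\limsup_{t\to0^+}\prod_n\max\{1,\|T_n(t)\|\}<\infty .
\]
After shrinking $\tau$, this bounds $\prod_n\|T_n(t)\|\le\prod_n\max\{1,\|T_n(t)\|\}$ by some $M$ on $(0,\tau]$. This gives (ii); at $t=0$ the bound is trivial.

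\textbf{Step (c)$\Rightarrow$(a).} By Lemma~\ref{lem:nonZeroInfiniteSemigroup}, $\mathcal T$ is a well-defined nonzero semigroup with $\|T(t)\|\le M$ on $[0,\tau]$. Since the net is uniformly bounded near $0$ and finitely $\boldsymbol y$-elementary tensors are total, it suffices to show $T(t)x\to x$ for such tensors $x=\otimes_n h_n$, where $h_n=y_n$ for $n>N$. I will not assume any summability of $\sup_t\|T_n(t)y_n-y_n\|$.

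The plan is the standard elementary-tensor estimate. Using the associativity~\eqref{eq:assLawIncomplete}, split
\[
T(t)x-x=\bigl(\otimes_{n\le K}T_n(t)h_n-\otimes_{n\le K}h_n\bigr)\otimes(\text{tail}_t)+(\otimes_{n\le K}h_n)\otimes\bigl(\text{tail}_t-\otimes_{n>K}y_n\bigr),
\]
where $\text{tail}_t:=\otimes_{n>K}T_n(t)y_n$. The first term tends to $0$ by strong continuity of finitely many factors, since $\|\text{tail}_t\|\le M\,\|\otimes_{n>K}y_n\|$ after normalizing.

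\textbf{Main obstacle: the tail.} For the second term I compute
\[
\|\text{tail}_t-\otimes_{n>K} y_n\|^2=\|\text{tail}_t\|^2+\prod_{n>K}\|y_n\|^2-2\,\Real\prod_{n>K}\langle T_n(t)y_n,y_n\rangle.
\]
This is the step I expect to be hardest, because the bound $\prod_{n>K}\|T_n(t)\|\le M$ does not make the tail norms close to $1$ uniformly in $t$.

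The idea is to use uniform boundedness to get weak convergence first. Since $\|T(t)\|\le M$, weak continuity at $0$ follows by testing against finitely $\boldsymbol y$-elementary tensors. The inner products $\prod_{n}\langle T_n(t)h_n,g_n\rangle$ need dominated control of $\sum_n|1-\langle T_n(t)y_n,y_n\rangle|$ as $t\to0$; pointwise each term tends to $0$. To pass from weak to strong convergence I will use that a weakly continuous semigroup is strongly continuous. Alternatively, I will show $\limsup_{t\to0}\|T(t)x\|\le\|x\|$ via the proof of Lemma~\ref{unifBoundProdLemma}, which gives $\prod_n\max\{1,\|T_n(t)\|\}\to\limsup f$, combined with an initial renorming. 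If weak convergence of the tail proves delicate, I will first use Lemma~\ref{unifBoundProdLemma}'s fact $\prod_n\min\{1,\cdot\}\to1$ to get a uniform tail bound $\prod_{n>K}\max\{1,\|T_n(t)\|\}\le 1+\varepsilon$ for large $K$ and small $t$. Weak convergence of the tail then upgrades to norm convergence.
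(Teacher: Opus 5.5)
Your implications (a)$\Rightarrow$(b)$\Rightarrow$(c) are fine. In (b)$\Rightarrow$(c) you do not even need Lemma~\ref{unifBoundProdLemma}, since the norm identity of Lemma~\ref{lem:nonZeroInfiniteSemigroup} turns the $\limsup$ bound in (b) directly into (ii).

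\textbf{The gap.} It lies in (c)$\Rightarrow$(a), exactly at the tail term you flag: you never prove that $\otimes_{n>K}T_n(t)y_n\to\otimes_{n>K}y_n$ as $t\to0^+$, not even weakly.
\begin{itemize}
\item Your main route needs a dominating sequence for $|1-\langle T_n(t)y_n,y_n\rangle|$ uniformly in small $t$. Hypothesis (iii) is only pointwise in $t$. Uniform tail control of this kind is essentially the summability condition \eqref{GillEq}, which the proposition is designed to dispense with, so it is not available.
\item The fallback ``weakly continuous semigroups are strongly continuous'' presupposes the very weak convergence at $0$ that is missing.
\item A renorming giving $\limsup_{t\to0}\|T(t)x\|\le\|x\|$ still needs weak convergence to upgrade to norm convergence. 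Moreover, a non-Hilbertian renorming has no Radon--Riesz property.
\item Lemma~\ref{unifBoundProdLemma}, applied to the tail family, only bounds $\limsup_{t\to0}\prod_{n>K}\maxone{\|T_n(t)\|}$ by $\limsup_{t\to0}\prod_{n>K}\|T_n(t)\|$. You give no reason why this tends to $1$ as $K\to\infty$.
\item Even a uniform tail norm bound would not control $\prod_{n>K}\langle T_n(t)y_n,y_n\rangle$. Norm bounds alone cannot stop the tail vector from drifting away from $\otimes_{n>K}y_n$.
\end{itemize}

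\textbf{The missing idea.} Stop attacking the tail directly and reduce everything to non-degeneracy.

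First, $\otimes^{\boldsymbol y}_{n\in\NN}\mathcal T_n$ is the strong limit of the strongly continuous families $(\otimes_{n\le N}T_n(t))\otimes I$. Hence it is strongly measurable, and so strongly continuous on $(0,\infty)$ by Corollary~\ref{cor:weakStrongMeasInf}(ii).

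Since $\|T(t)\|\le M$ near $0$, weak cluster points of $T(t)x$ as $t\to0$ exist. Any such cluster point $v$ satisfies $T(s)v=\lim T(s+t_k)x=T(s)x$ for all $s>0$. Hence $T(t)x\rightharpoonup x$, and then strong continuity at $0$, follow as soon as $\bigcap_{t>0}\ker T(t)=\{0\}$. This is the standard result for such semigroups on reflexive spaces.

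Non-degeneracy is where the real work lies, and it is where Lemma~\ref{unifBoundProdLemma} genuinely enters. Suppose $T(t)f=0$ for all $t>0$.
\begin{itemize}
\item Approximate $f$ within $\varepsilon$ by $h=g\otimes(\otimes_{n>N}y_n)$ with $N>1/\varepsilon$. Then $\limsup_{t\to0}\prod_{n>N}\|T_n(t)y_n\|\le M\varepsilon/\|g\|$.
\item Submultiplicativity gives $\prod_{n>N}\|T_n(1)y_n\|\le M\prod_{n>N}\|T_n(t)y_n\|$ for $0\le t\le 1$, where $M=\sup_{t\in[0,1]}\prod_n\maxone{\|T_n(t)\|}<\infty$ by Lemma~\ref{unifBoundProdLemma}.
\item Together these force $\liminf_{N}\prod_{n>N}\|T_n(1)y_n\|=0$. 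This contradicts the fact that $(T_n(1)y_n)_{n}$ is a $C_0$-sequence, by Lemma~\ref{lem:nonZeroInfiniteSemigroup}.
\end{itemize}
This comparison of tails at time $1$ with tails at small times is what your proposal lacks.
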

\begin{proof}
 
 (a) $\implies$ (b): The strong continuity at $0$ of $\otimes_{n\in\NN}^{\boldsymbol{y}} \mathcal T_n$ implies that $\left\| \otimes_{n\in\NN}^{\boldsymbol{y}}T_n(t)\right\|$ is bounded for small $t$ and that there exists $\tau>0$ such that $(T_n(t))_{n\in \NN}$ induces a nonzero bounded operator $\otimes_{n\in\NN}^{\boldsymbol{y}} T_n(t)$ on $\otimes_{n\in\NN}^{\boldsymbol{y}} H_n$ for every $t\in [0,\tau]$. \medskip 

 (b) $\iff$ (c): If the product is well-defined and nonzero, Lemma~\ref{lem:nonZeroInfiniteSemigroup} identifies
 \(\|\motimes_{n\in\NN}^{\boldsymbol{y}} T_n(t)\|\) with \(\prod_{n\in\NN}\|T_n(t)\|\); thus the bound in
 (ii) is written for the actual product norm. Once this is clarified, the equivalence follows from Lemma~\ref{lem:nonZeroInfiniteSemigroup}.

 \medskip

 (b) $\implies$ (a): 
 Using Lemma~\ref{lem:nonZeroInfiniteSemigroup} and the implication $(b) \implies (c)$, it follows that $\otimes_{n\in \NN}^{\boldsymbol{y}} \mathcal T_n$ is a nonzero semigroup on $\otimes_{n\in \NN}^{\boldsymbol{y}} H_n$. Let us see that $\otimes_{n\in \NN}^{\boldsymbol{y}} \mathcal T_n$ is non-degenerate, that is,
 \begin{equation}\label{nonDegSem}
  \bigcap_{t>0} \ker \motimes_{n\in \NN}^{\boldsymbol{y}} T_n(t) = \{0\},
 \end{equation}
 by contradiction. Thus, let $f \in \otimes_{n\in \NN}^{\boldsymbol{y}} H_n \setminus\{0\}$ be such that $\otimes_{n\in \NN}^{\boldsymbol{y}} T_n(t) f = 0$ for all $t>0$. For any $\varepsilon >0$, there exists a nonzero $h$ in the linear span of the finitely $\boldsymbol{y}$-elementary tensors such that $\|f - h\|\leq \varepsilon$. Choose $N_\varepsilon \in \NN$ so that $N_\varepsilon > 1/\varepsilon$ and
 $$h = g \otimes \left( \motimes_{n > N_\varepsilon} y_n\right),
 $$
 for some $g \in \otimes_{n=1}^{N_\varepsilon} H_n \setminus\{0\}$. 
 
 For each $n\in \NN$, set 
 $$f_n(t) := \|T_n(t)\|, \qquad t \in (0,\infty).
 $$
 Then each $f_n$ is a measurable, submultiplicative function on $(0,\infty)$ and, using that $\otimes_{n\in \NN} ^{\boldsymbol y} \mathcal T_n \neq 0$ and Lemma~\ref{lem:nonZeroInfiniteSemigroup}, we obtain that
 $$\limsup_{t\to 0^+} \prod_{n\in \NN} f_n(t) = \limsup_{t\to 0^+} \left\| \otimes_{n\in \NN}^{\boldsymbol{y}} T_n(t) \right\| < \infty. 
 $$
 It then follows from Lemma~\ref{unifBoundProdLemma} that
 $$\limsup_{t\to 0^+} \prod_{n\in \NN} \maxone{f_n(t)} 
 = \limsup_{t\to 0^+} \prod_{n\in \NN} \maxone{\|T_n(t)\|}  \leq \limsup_{t\to 0^+} \left\| \otimes_{n\in \NN}^{\boldsymbol{y}} T_n(t) \right\|,
 $$
 and hence, by the submultiplicativity of each $f_n$,
 $$M:= \sup_{t\in [0,1]} \prod_{n\in \NN} \maxone{\|T_n(t)\|}< \infty.
 $$
 Therefore,
 \begin{align*}
  \limsup_{t \to 0^+} \prod_{n>N_\varepsilon} \|T_n(t) y_n\|
  &= \frac{1}{\|g\|} \limsup_{t \to 0^+}
     \left( \left\|\otimes_{n=1}^{N_\varepsilon} T_n(t)g \right\|
     \prod_{n>N_\varepsilon} \|T_n(t) y_n\| \right) \\
  &= \frac{1}{\|g\|} \limsup_{t \to 0^+}
     \left\| \otimes_{n\in \NN}^{\boldsymbol{y}} T_n(t) h\right\| \\
  &= \frac{1}{\|g\|} \limsup_{t \to 0^+}
     \left\| \otimes_{n\in \NN}^{\boldsymbol{y}} T_n(t) (f-h)\right\| \leq \frac{M}{\|g\|} \varepsilon,
 \end{align*}
 where we used $\lim_{t\to0^+} \left\|\otimes_{n=1}^{N_\varepsilon} T_n(t)g \right\| = \|g\|$ since finite tensor products of $C_0$-semigroups are strongly continuous at $0$. For \(0\le t\le1\), submultiplicativity gives
 \[
 \prod_{n>N_\varepsilon}\|T_n(1)y_n\|
 \le
 \Big(\prod_{n>N_\varepsilon} \maxone{\|T_n(1-t)\|}\Big)
 \Big(\prod_{n>N_\varepsilon}\|T_n(t)y_n\|\Big)
 \le
 M\prod_{n>N_\varepsilon}\|T_n(t)y_n\|.
 \]
 Taking the infimum over \(t\in[0,1]\), and then using the preceding
 limsup estimate, yields
 \[
 \liminf_{N\to\infty}
 \prod_{n>N}\|T_n(1)y_n\|=0.
 \]
 This implies that $(T_n(1)y_n)_{n\in \NN}$ is not a $C_0$-sequence, reaching a contradiction with Lemma~\ref{lem:nonZeroInfiniteSemigroup}. Thus, \eqref{nonDegSem} holds. As $\limsup_{t\to0^+} \left\| \otimes_{n\in\NN}^{\boldsymbol{y}}T_n(t)\right\| <\infty$, $\otimes_{n\in \NN}^{\boldsymbol{y}} \mathcal T_n$ is strongly continuous in $(0,\infty)$ (see Corollary~\ref{cor:weakStrongMeasInf}(ii)) and $\otimes_{n\in \NN}^{\boldsymbol{y}} H_n$ is reflexive, we obtain that $\otimes_{n\in \NN}^{\boldsymbol{y}} \mathcal T_n$ is a $C_0$-semigroup; see, for instance, \cite[Prop. 3.3.8 \& Corollary 3.3.11]{arendt2011vector}.

\end{proof}

Proposition \ref{strongContInfProp} is a partial extension to our setting of the result given in \cite[Th. 2.5]{reents1974infinite} for infinite incomplete tensor products of unitary groups. On the other hand, let $\mathcal T_n \subset \linearOp \left(H_n\right)$ be a $C_0$-semigroup for each $n\in \NN$ such that $\otimes_{n\in \NN}^{\boldsymbol{y}} \mathcal T_n \subset \linearOp \left(\otimes_{n\in\NN}^{\boldsymbol{y}} H_n\right)$ is a nonzero semigroup and $\limsup_{t \to 0} \|\otimes_{n\in \NN}^{\boldsymbol{y}} T_n(t)\|<\infty$. It is well known that, if there exists $\tau>0$ such that  
\begin{equation}\label{GillEq}
 \sum_{n\in \NN} \sup_{t\in [0,\tau]} \|T_n(t)y_n - y_n\| < \infty,
\end{equation} 
then $\otimes_{n\in \NN}^{\boldsymbol{y}} \mathcal T_n$ is a $C_0$-semigroup; see \cite{arendt1998infinite, arveson2001infinite, gill1978infinite}, where it is shown that this fact holds even in the Banach space setting. Proposition~\ref{strongContInfProp} (ii) shows that, in Hilbert spaces, condition \eqref{GillEq} is redundant. \medskip 


%

We conclude this section by providing a simple example of an incomplete tensor product $C_0$-semigroup.


\begin{example}
	For each $n\in \NN$, let $H_n=L^2(\TT)$, $y_n= \chi_\TT$,
	and let $(U_n(t))_{t\in\RR}$ be the unitary shift semigroup on $L^2(\TT)$.  Since $U_n(t) y_n= y_n$ and $\|U_n(t)\| =1$ for all $n\in \NN$ and $t\in\RR$, then
	$$\left(\motimes_{n \in \NN}^{\boldsymbol{y}} U_n (t) \right)_{t\in\RR} \subset \linearOp \left(\motimes_{n \in \NN}^{\boldsymbol{y}} H_n\right)
	$$
	is a well-defined $C_0$-group by Proposition~\ref{strongContInfProp}. 
	Under the standard identification
	\[
	\motimes_{n\in\NN}^{\boldsymbol{y}} L^2(\TT)
	\simeq
	L^2(\TT^{\NN},m^{\otimes\NN}),
	\]
	it acts on functions $f\in L^2(\TT^{\NN},m^{\otimes\NN})$ by
	$$
	\left(\motimes_{n \in \NN}^{\boldsymbol{y}} U_n (t) \right) f 
	=
	f \circ \varphi_t, \qquad t \in \RR,
	$$
	where $\varphi_t: \TT^\NN \to \TT^\NN$ is the simultaneous shift in every coordinate, that is,
	$$\varphi_t \left(\times_{n\in\NN} z_n\right) = \times_{n\in\NN} (z_n + t), \qquad (z_n)_{n\in\NN} \subset \TT.
	$$
\end{example}

\subsection{Infinite complete tensor products of semigroups and \(C_0\)-regularity}\label{subsec:complete-regularity}

We now turn from incomplete tensor products to complete infinite tensor
products.  The contrast with Subsection~\ref{InfSemigrSubsection} is substantial.  In the incomplete
setting, the reference sequence fixes one component of the complete tensor
product, and the preceding results show that existence, non-vanishing,
measurability, and \(C_0\)-regularity can be described in terms of norm bounds
and compatibility with that reference sequence.  For complete tensor products
there is no distinguished component, and regularity of the full tensor product
semigroup imposes additional restrictions on how the semigroup may move
between incomplete tensor-product components.

This explains why complete tensor products of semigroups are much more rigid
than incomplete ones.  Weak measurability is still preserved by the complete
tensor product construction, and we begin with this positive result.
Examples~\ref{ex:strong-comp-counter-1} and~\ref{ex:strong-comp-counter-2} below illustrate
how regularity may be lost on the complete tensor product, even when the factors are very
regular.  The main point of the subsection, however, is the converse structural phenomenon:
once strong continuity is imposed, the positive-time dynamics is forced to respect the
decomposition into incomplete tensor-product components. In the \(C_0\)-case this yields the
orthogonal direct-sum splitting of
Theorem~\ref{cor:C0-complete-implies-invariant-incomplete}.

Consequently, the complete tensor product behaves differently from the
incomplete tensor product with respect to regularity.  In particular, one
should not expect strong measurability, and a fortiori strong continuity, of
the complete tensor product semigroup to follow merely from the corresponding
properties of the factors.  This is in contrast with the incomplete setting
of Corollary~\ref{cor:weakStrongMeasInf} and Proposition~\ref{strongContInfProp}.  The next result records the positive
part of the picture: weak measurability is preserved by the complete tensor
product construction.

\begin{lemma}\label{lem:weak-complete}
	Let $(H_n)_{n\in\NN}$ be a sequence of Hilbert spaces and, for each $n\in\NN$,
	let $\mathcal T_n=(T_n(t))_{t\ge 0}\subset\linearOp(H_n)$ be a weakly measurable semigroup such that
	\[
	\motimes_{n\in\NN}\mathcal T_n\subset \linearOp\!\left(\motimes_{n\in\NN}H_n\right)
	\]
	is a nonzero semigroup. Then $\otimes_{n\in\NN}\mathcal T_n$ is weakly measurable.
\end{lemma}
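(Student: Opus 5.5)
To prove Lemma~\ref{lem:weak-complete}, we must show that $t\mapsto \langle (\otimes_{n\in\NN}T_n(t))u,v\rangle$ is measurable for all $u,v\in\otimes_{n\in\NN}H_n$. We reduce first to elementary tensors. By Proposition~\ref{prop:nakagami}, for each $t\ge0$ the operator $\otimes_{n\in\NN}T_n(t)$ is well defined with norm $\prod_{n\in\NN}\|T_n(t)\|<\infty$. The linear span of elementary tensors is dense. A pointwise limit of measurable scalar functions is measurable, and for fixed $t$ the map $u\mapsto \langle (\otimes_n T_n(t))u,v\rangle$ is continuous. So if $u_k\to u$ and $v_k\to v$ with $u_k,v_k$ finite combinations of elementary tensors, then
\[
\langle (\otimes_n T_n(t))u_k,v_k\rangle\to\langle (\otimes_n T_n(t))u,v\rangle
\]
for every $t$. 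Measurability of the limit therefore follows once each approximant is measurable. No uniform bound in $t$ is needed, since convergence is pointwise in $t$. By sesquilinearity it then suffices to treat $u=\otimes_n h_n$ and $v=\otimes_n f_n$ with $(h_n)$, $(f_n)$ $C$-sequences.

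For elementary tensors, Proposition~\ref{prop:nakagami} gives $(\otimes_n T_n(t))(\otimes_n h_n)=\otimes_n T_n(t)h_n$ whenever the operator is nonzero. When it is zero, both sides vanish, since the product $\prod_n\|T_n(t)h_n\|\le\prod_n\|T_n(t)\|\,\|h_n\|=0$. Hence
\[
\phi(t):=\langle (\otimes_nT_n(t))(\otimes_nh_n),\otimes_nf_n\rangle=\prod_{n\in\NN}\langle T_n(t)h_n,f_n\rangle,
\]
understood as a quasi-convergent product. Each factor $\phi_n(t)=\langle T_n(t)h_n,f_n\rangle$ is measurable by weak measurability of $\mathcal T_n$. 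We would then show that $\phi$ is measurable as a limit of finite products. On the set $E$ where $\prod_n|\phi_n(t)|$ converges unconditionally and $\prod_n\phi_n(t)$ converges unconditionally, $\phi(t)=\lim_{N}\prod_{n=1}^N\phi_n(t)$, which is measurable. On the complement, $\phi(t)=0$ by the quasi-convergent convention. So the whole matter reduces to showing that $E$ (or the set where the quasi-product is nonzero) is measurable.

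This measurability of the convergence set is the main obstacle. We would handle it through the characterizations in Lemma~\ref{lem:vn1} and Lemma~\ref{lem:maxmin}. First, quasi-convergence holds for every $t$, because $\prod_n\max\{1,|\phi_n(t)|\}\le\prod_n\max\{1,\|T_n(t)\|\}\max\{1,\|h_n\|\|f_n\|\}<\infty$. Next, the set where the product is nonzero and convergent is the set where $\phi_n(t)\neq0$ for all $n$ and $\sum_n|1-\phi_n(t)|<\infty$. Both conditions are expressed through countable operations on measurable functions: $\bigcap_n\{\phi_n\ne0\}$ and $\{\sup_N\sum_{n\le N}|1-\phi_n|<\infty\}$. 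On this set $\phi$ equals the measurable limit of the partial products, and we would use the standard fact that a nonzero unconditional limit forces $\sum_n|1-z_n|<\infty$. Off this set the quasi-convergent value is $0$, which follows from the von Neumann theory of quasi-convergence. Thus $\phi=\chi_E\cdot\lim_N\prod_{n\le N}\phi_n$ is measurable, and the density argument above completes the proof.
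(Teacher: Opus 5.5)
Your proposal is correct and follows the paper's proof. After the density reduction, the coefficient on elementary tensors is $\chi_E(t)\prod_{n}\langle T_n(t)h_n,f_n\rangle$, and $E$ is measurable because it is cut out by $\sum_n|1-\langle T_n(t)h_n,f_n\rangle|<\infty$. The only difference is how the vanishing off $E$ is justified: you take it directly from von Neumann's quasi-convergent definition of the inner product, whereas the paper argues via $C_0$-equivalence classes, orthogonality of incomplete components and Lemma~\ref{C0obviousLemma}. Your route also makes your separate quasi-convergence check redundant; as written, its bound can fail for $C$-sequences with a zero entry, though that does not affect the argument.
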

\begin{proof}
	Since the linear span of elementary tensors is dense in $\otimes_{n\in \NN} H_n$, it is enough to show that, for every pair of elementary tensors $\otimes_{n\in \NN} h_n$ and $\otimes_{n\in \NN} f_n$, the mapping from $[0,\infty)$ into $\CC$ given by
	\begin{equation*}
		t \mapsto \langle \otimes_{n\in \NN} T_n(t) h_n, f_n \rangle, \qquad t \geq 0,
	\end{equation*}
	is measurable.
	The claim is trivial if $\otimes_{n\in \NN} h_n = 0$ or $\otimes_{n\in \NN} f_n = 0$, so assume otherwise. Then $(h_n)_{n\in \NN}$ and $(f_n)_{n\in\NN}$ are $C_0$-sequences. Set
	$$E:= \left\{ t \in [0,\infty) \, : \, (T_n(t)h_n)_{n\in \NN} \, \mbox{ is a } C_0 \mbox{-sequence equivalent to } (f_n)_{n\in \NN}\right\},
	$$
	and let $g:[0,\infty) \to [0,\infty]$ be given by
	$$g(t):= \sum_{n\in \NN} | 1 - \langle T_n(t) h_n, f_n\rangle|, \qquad t \in [0,\infty).
	$$
	Then $g$ is a measurable function since it is the infinite sum of positive measurable functions. Furthermore, using Proposition~\ref{prop:nakagami} we have that
	\begin{equation}\label{eq:another-aux-eq}
		\prod_{n\in \NN} \maxone{\|T_n(t)\|} < \infty.
	\end{equation}
	Thus,
	$$\sum_{n\in \NN} (\|T_n(t) h_n\|-1)_+ <\infty \quad \mbox{for all } t\geq 0,
	$$
	and it follows from Lemma~\ref{C0obviousLemma} that
	\begin{equation*}
		E = \{t \geq 0 \, : \, g(t) < \infty\}.
	\end{equation*}
	Hence $E$ is a measurable set.

	Note that, for all $t \in [0,\infty) \setminus E$, then either $(T_n(t)h_n)_{n\in \NN}$ is a $C_0$-sequence not equivalent to $(f_n)_{n\in \NN}$, or $(T_n(t)h_n)_{n\in \NN}$ is not a $C_0$-sequence. In the former case, $\otimes_{n\in \NN} T_n(t) h_n$ and $\otimes_{n\in \NN} f_n$ are orthogonal elementary tensors. In the latter case, we obtain that  $\otimes_{n\in \NN} T_n(t) h_n = 0$. Indeed, $(T_n(t) h_n)_{n\in \NN}$ is a $C$-sequence by~\eqref{eq:another-aux-eq},
	and every $C$-sequence that induces a nonzero elementary vector is a $C_0$-sequence (see Lemma~\ref{lem:vn1}(ii) or \cite[Lemma 3.3.1]{von1939infinite}). Consequently,
	$$\langle\otimes_{n\in \NN} T_n(t) h_n, f_n \rangle = 0, \qquad t \in [0,\infty) \setminus E,
	$$
	and therefore
	\begin{equation*}\label{eq:weak-meas-E}
	\begin{aligned}
		t \mapsto \langle \otimes_{n\in \NN} T_n(t) h_n, f_n \rangle =  \chi_E(t)\prod_{n\in\mathbb N}
		\langle T_n(t) h_n,f_n\rangle, \qquad t \geq 0,
	\end{aligned}
	\end{equation*}
	where the above product is unconditionally convergent for every $t \in E$. Since each of the factors is a measurable function and $E$ is a measurable set, we deduce that the above function is measurable, completing the proof.
	
\end{proof}

The next lemma shows that strong continuity on $(0,\infty)$ forces each incomplete tensor product to be mapped into a single incomplete tensor product for all positive times. Hence, the behavior of the semigroup on the
complete infinite tensor product immediately reduces to the incomplete tensor product setting. To formalize this idea, we introduce the following notation. Given a complete tensor product semigroup $\otimes_{n\in \NN} \mathcal T_n$ on $\otimes_{n\in \NN} H_n$, we define
$$\Gamma_0 := \bigg\{ \mathcal A \in \Gamma \, : \, \motimes_{n \in \NN}^{\mathcal A} \mathcal T_n \mbox{ is a well-defined, nonzero semigroup}\bigg\}.
$$
Recall that $\Gamma$ denotes the set of equivalence classes of $C_0$-sequences.

\begin{lemma}\label{lem:complete-semigroup-target-incomplete}
Let $(H_n)_{n\in\NN}$ be a sequence of Hilbert spaces and, for each $n\in\NN$,
let $\mathcal T_n=(T_n(t))_{t\ge 0}\subset\linearOp(H_n)$ be a semigroup such that
\[
\motimes_{n\in\NN}\mathcal T_n\subset \linearOp\,\Big(\motimes_{n\in\NN}H_n\Big)
\]
is a nonzero semigroup which is strongly continuous on \((0,\infty)\). Then, for each
$\mathcal B \in \Gamma$, there exists 
$\mathcal A \in \Gamma_0$ such that
\[
\Big(\motimes_{n\in\NN}T_n(t)\Big)
\Big(\motimes_{n\in\NN}^{\mathcal B} H_n\Big)
\subseteq
\motimes_{n\in\NN}^{\mathcal A} H_n,
\qquad t>0.
\]
\end{lemma}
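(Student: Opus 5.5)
The plan is to work first with a single elementary tensor, then pass to the whole component by density, and finally verify that the target component lies in \(\Gamma_0\). Write \(T(t):=\motimes_{n\in\NN}T_n(t)\). By Proposition~\ref{prop:nakagami}, for each \(t>0\) we have \(\prod_{n\in\NN}\maxone{\|T_n(t)\|}<\infty\). Hence, for every \(C_0\)-sequence \((h_n)_{n\in\NN}\), the sequence \((T_n(t)h_n)_{n\in\NN}\) is a \(C\)-sequence and \(T(t)(\otimes_n h_n)=\otimes_n T_n(t)h_n\). By Lemma~\ref{lem:vn1}(ii), this image is either the zero vector or comes from a \(C_0\)-sequence. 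In the latter case it lies in a well-defined component \(\motimes^{\mathcal A(t)}_{n\in\NN} H_n\). Since every vector of \(\motimes^{\mathcal B}_{n\in\NN}H_n\) is a limit of spans of finitely \(\boldsymbol y\)-elementary tensors (\(\boldsymbol y\in\mathcal B\)), and each component is closed, it suffices to find one class \(\mathcal A\) that receives \(T(t)x\) for all such elementary \(x\) and all \(t>0\).

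\emph{Step 1: the class does not depend on \(t\).} Fix a nonzero finitely \(\boldsymbol y\)-elementary tensor \(x=\otimes_n h_n\). If \(T(s)x=0\), then \(T(t)x=T(t-s)T(s)x=0\) for all \(t\ge s\). Hence \(Z_x:=\{t>0: T(t)x=0\}\) is an up-set, and its complement \(I_x\) is an interval \((0,t_x)\) or \((0,t_x]\), possibly empty or all of \((0,\infty)\). On \(I_x\), the map \(t\mapsto T(t)x\) is continuous and nonvanishing, and it takes values in the union of mutually orthogonal closed subspaces. If \(T(t)x\in\motimes^{\mathcal A}H_n\) and \(T(s)x\) lies in an orthogonal component, then \(\|T(t)x-T(s)x\|\ge\|T(t)x\|\). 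This is bounded below near \(t\), so continuity makes \(t\mapsto\mathcal A(t)\) locally constant on \(I_x\). Connectedness of \(I_x\) then gives a single class \(\mathcal A_x\).

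\emph{Step 2: the class does not depend on \(x\).} Two finitely \(\boldsymbol y\)-elementary tensors \(x,x'\) differ in only finitely many factors. So \((T_n(t)h_n)\) and \((T_n(t)h'_n)\) agree for all but finitely many \(n\), and whenever both are \(C_0\)-sequences they are equivalent. Thus \(\mathcal A_x=\mathcal A_{x'}\) whenever both are defined at some common time. If \(I_x\) and \(I_{x'}\) are both nonempty, they share small times, so the classes agree. Tensors with \(I_x=\emptyset\) are mapped to \(0\) for all \(t>0\) and impose no constraint. If all tensors of \(\mathcal B\) are killed, any \(\mathcal A\in\Gamma_0\) works. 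Such an \(\mathcal A\) exists: since \(T\) is nonzero and the components span \(\motimes H_n\), some \(\mathcal B'\) is not killed, and Step 3 below applied to \(\mathcal B'\) produces an element of \(\Gamma_0\).

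\emph{Step 3: \(\mathcal A\in\Gamma_0\); this is the main obstacle.} Continuity on \((0,\infty)\) tells us nothing as \(t\to0\), so I would use the semigroup law instead. Pick \(x=\otimes_n h_n\) with \(I_x\neq\emptyset\), pick \(s\in I_x\), and set \(z_n:=T_n(s)h_n\). Then \(\boldsymbol z=(z_n)_{n\in\NN}\in\mathcal A\) and \(\otimes_n z_n\neq0\). For \(t\in(0,t_x-s)\) we have \(T(t)(\otimes_n z_n)=T(t+s)x\neq0\). By Steps 1--2 this vector lies in \(\motimes^{\mathcal A}H_n\), so \((T_n(t)z_n)\sim\boldsymbol z\), that is, \(\sum_n|1-\langle T_n(t)z_n,z_n\rangle|<\infty\). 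We also know \(T_n(t)\neq0\) for all \(n\) (from \(T(t)x\neq0\)) and \(\prod_n\maxone{\|T_n(t)\|}<\infty\). These are exactly conditions (i)--(iii) of Lemma~\ref{lem:nonZeroInfiniteSemigroup} on a small interval, with reference sequence \(\boldsymbol z\). Hence \(\motimes^{\mathcal A}_{n\in\NN}\mathcal T_n\) is a well-defined nonzero semigroup, and \(\mathcal A\in\Gamma_0\).

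The delicate points are two. First, the case \(t_x\) small has to be handled by choosing \(s\) and \(t\) inside \(I_x\). Second, one must check that the incomplete product over \(\mathcal A\) does not depend on the representative \(\boldsymbol z\), which holds because equivalent references define the same component.
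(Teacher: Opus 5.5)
Your proposal is correct and follows essentially the paper's route. The ingredients match one for one: the target class is constant along each nonzero orbit (orthogonality of distinct components plus connectedness of $I_x$); this transfers to all finitely $\boldsymbol y$-elementary tensors because they differ in only finitely many coordinates; density finishes the inclusion; and membership in $\Gamma_0$ comes from Lemma~\ref{lem:nonZeroInfiniteSemigroup} with reference sequence $(T_n(s)h_n)_{n\in\NN}$.

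Your Step~2 compares classes at a common small time, and this is slightly more careful than the paper's written argument. The paper compares $(T_n(t)g_n)_{n\in\NN}$ with $(T_n(t)h_n)_{n\in\NN}$ at the same arbitrary $t$, which tacitly requires $\motimes_{n\in\NN}T_n(t)h_n\neq0$. One small thing to tidy: Proposition~\ref{prop:nakagami} gives $\prod_{n}\maxone{\|T_n(t)\|}<\infty$ directly only when all $T_n(t)\neq0$. That is exactly the situation in your Step~3, and elsewhere you only use that $(T_n(t)h_n)_{n\in\NN}$ is a $C$-sequence, which holds by definition.
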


\begin{proof}
We first record a simple claim.

\smallskip
\noindent
\emph{Claim.} Let $\eta=\otimes_{n\in\NN}h_n$ be an elementary tensor such that
\[
\motimes_{n\in\NN}T_n(t)h_n\neq 0
\]
for some $t>0$. Then there exists $\mathcal A (\eta) \in \Gamma_0$
such that
\[
\motimes_{n\in\NN}T_n(t)h_n\in \motimes_{n\in\NN}^{\mathcal A(\eta)} H_n.
\qquad t>0,
\]

Indeed, define
\[
I_{\eta}:=\Big\{t>0:\ \motimes_{n\in\NN}T_n(t)h_n\neq 0\Big\}.
\]
Since $\otimes_{n\in\NN}\mathcal T_n$ is strongly continuous on $(0,\infty)$, the set
$I_{\eta}$ is open. It is also downward closed: if $s\in I_{\eta}$ and $0<t<s$, then
\[
\motimes_{n\in\NN}T_n(s)h_n
=
\Big(\motimes_{n\in\NN}T_n(s-t)\Big)
\Big(\motimes_{n\in\NN}T_n(t)h_n\Big),
\]
so $\otimes_{n\in\NN}T_n(t)h_n\neq 0$. Consequently,
\[
I_{\eta}=(0,t_{\eta})
\qquad \text{for some } t_{\eta}\in(0,\infty].
\]
Choose any $s_{\eta}\in I_{\eta}$. For every $t\in I_{\eta}$, the vector
$
\motimes_{n\in\NN}T_n(t)h_n
$
is a nonzero elementary tensor and therefore belongs to
\[
X\setminus\{0\},
\qquad
X:=\bigcup_{\mathcal B\in\Gamma}\,\,\motimes_{n\in\NN}^{\mathcal B} H_n.
\]
Moreover,
\[
X\setminus\{0\}
   =
\bigcup_{\mathcal B\in\Gamma}
\Big(\motimes_{n\in\NN}^{\mathcal B} H_n\setminus\{0\}\Big),
\]
and its connected components are precisely the sets
\[
\motimes_{n\in\NN}^{\mathcal B} H_n\setminus\{0\},
\qquad \mathcal B\in\Gamma.
\]
Indeed, if $\mathcal A, \mathcal B \in \Gamma$ with \(\mathcal A\ne \mathcal B\), then the spaces
\[
H_{\mathcal A}:=\motimes^{\mathcal A}_{n\in\mathbb N}H_n,
\qquad
H_{\mathcal B}:=\motimes^{\mathcal B}_{n\in\mathbb N}H_n
\]
are orthogonal. Hence, for \(0\ne \xi\in H_{\mathcal A}\), the ball
\(B(\xi,\|\xi\|/2)\) meets \(X\setminus\{0\}\) only in
\(H_{\mathcal A}\setminus\{0\}\). Thus each \(H_{\mathcal A}\setminus\{0\}\) is relatively open,
and its complement in \(X\setminus\{0\}\) is relatively open as well.
Since \(H_{\mathcal A}\setminus\{0\}\) is path connected over the complex field, these
sets are precisely the connected components of \(X\setminus\{0\}\).

Since
\[
I_{\eta}\ni t\longmapsto \motimes_{n\in\NN}T_n(t)h_n
\]
is continuous and $I_{\eta}$ is connected, its image is contained in one connected component
of $X\setminus\{0\}$. Hence, if we set $\mathcal A(\eta)$ to be the equivalence class containing
\[
\boldsymbol{x}:=(T_n(s_{\eta})h_n)_{n\in\NN},
\]
then
\[
\motimes_{n\in\NN}T_n(t)h_n\in \motimes_{n\in\NN}^{\mathcal A(\eta)}H_n,
\qquad t\in I_{\eta}.
\]
Combining the preceding argument on the nonzero part of the orbit with the
trivial zero case, we obtain the desired conclusion for every \(t>0\).

Next we show that $\mathcal A(\eta) \in \Gamma_0$. Choose $\tau_{\eta}>0$ so that $s_{\eta}+\tau_{\eta}<t_{\eta}$ if $t_{\eta}<\infty$, and choose
any $\tau_{\eta}>0$ if $t_{\eta}=\infty$. Then, for every $0\le r\le \tau_{\eta}$,
\[
(T_n(r)x_n)_{n\in\NN}=(T_n(r+s_{\eta})h_n)_{n\in\NN}
\sim (T_n(s_{\eta})h_n)_{n\in\NN}=\boldsymbol{x},
\]
so
\[
\sum_{n\in\NN}|1-\langle T_n(r)x_n,x_n\rangle|<\infty,
\qquad 0\le r\le \tau_{\eta}.
\]
Since the complete tensor product semigroup is not identically zero, the
semigroup law gives a number \(\tau_0>0\) such that
\[
\motimes_{n\in\NN}T_n(r)\ne0,
\qquad 0<r\le \tau_0 .
\]
Replacing \(\tau_\eta\) by \(\min\{\tau_\eta,\tau_0\}\), if necessary, and
using Proposition~\ref{prop:nakagami}, we obtain
\[
\prod_{n\in\NN}\|T_n(r)\|
=
\left\|\motimes_{n\in\NN}T_n(r)\right\|
\in(0,\infty),
\qquad 0<r\le\tau_\eta .
\]
Then Lemma~\ref{lem:nonZeroInfiniteSemigroup} shows that
\[
\motimes_{n\in\NN}^{\mathcal A(\eta)}\mathcal T_n
\]
is a well-defined nonzero semigroup on
\(\motimes_{n\in\NN}^{\mathcal A(\eta)}H_n\), and hence
\(\mathcal A(\eta)\in\Gamma_0\). This proves the claim. \medskip


We now continue with the proof of the lemma. Fix $\mathcal B \in \Gamma$ and a $C_0$-sequence $\boldsymbol{y} = (y_n)_{n\in \NN} \in \mathcal B$. If
\(
\motimes_{n\in\NN}T_n(t)h_n=0
\)
for every $t>0$ and every finitely $\boldsymbol{y}$-elementary tensor $\otimes_{n\in\NN}h_n$,
then
\[
\left(\motimes_{n\in\NN}T_n(t)\right)
\left(\motimes_{n\in\NN}^{\mathcal B}H_n\right)=\{0\},
\qquad t>0.
\]
Since $\otimes_{n\in\NN}\mathcal T_n$ is nonzero on the complete infinite tensor product,
there exists some elementary tensor $\eta_0$ with nonzero orbit. Applying the claim to $\eta_0$,
we obtain $\mathcal A(\eta_0) \in \Gamma_0$,
and the desired
inclusion is then trivial.

Assume now that there exists a finitely $\boldsymbol{y}$-elementary tensor
$
\eta:=\otimes_{n\in\NN}h_n
$
with nonzero orbit. Applying the claim to $\eta$, we obtain $\mathcal A(\eta) \in \Gamma_0$
such that
\[
\motimes_{n\in\NN}T_n(t)h_n\in \motimes_{n\in\NN}^{\mathcal A(\eta)}H_n.
\qquad t>0,
\]

Let
$
\zeta:=\otimes_{n\in\NN}g_n
$
be any other finitely $\boldsymbol{y}$-elementary tensor and fix $t>0$. 
If  $\otimes_{n\in \NN} T_n(t) \zeta = 0$, there is nothing to prove. Otherwise,  $(T_n(t) g_n)_{n\in \NN}$ is a $C_0$-sequence, which implies that 
$$(T_n(t)g_n)_{n\in \NN} \sim (T_n(t) h_n)_{n\in \NN},
$$
since the sequences $(g_n)_{n\in \NN}$ and $(h_n)_{n\in \NN}$ differ only in finitely many coordinates. Consequently, 
$$(T_n(t) g_n)_{n\in \NN} \in \mathcal A(\eta).
$$

Therefore, we conclude that for every $t>0$ and every finitely $\boldsymbol{y}$-elementary tensor $\zeta$,
\[
\motimes_{n\in \NN} T_n(t) \zeta \in 
\motimes_{n\in\NN}^{\mathcal A(\eta)}H_n.
\]
%
Since the linear span of finitely $\boldsymbol{y}$-elementary tensors is dense in
$\otimes_{n\in\NN}^{\mathcal B}H_n$, it follows that
\[
\left(\motimes_{n\in\NN}T_n(t)\right)
\left(\motimes_{n\in\NN}^{\mathcal B}H_n\right)
\subseteq
\motimes_{n\in\NN}^{\mathcal A(\eta)}H_n,
\qquad t>0.
\]

\end{proof}

Under the hypothesis of previous lemma, assume furthermore that 
$$\limsup_{t\to0^+} \|\otimes_{n\in \NN}  T_n(t)\| < \infty,
$$
and let $P \in \linearOp(\motimes_{n\in \NN} H_n)$ be the projection into the strong-continuity space of $\motimes_{n\in \NN} \mathcal T_n$. Then, it follows from Lemma~\ref{lem:complete-semigroup-target-incomplete} that 
$$\ran P \subseteq \bigoplus_{\mathcal A \in \Gamma_0} \bigg(\motimes_{n\in \NN}^{\mathcal A} H_n \bigg),
$$
and then
$$\motimes_{n\in \NN} T_n(t) = \bigg(\bigoplus_{\mathcal A \in \Gamma_0} \bigg(\motimes_{n\in \NN}^{\mathcal A} T_n(t) \bigg)\bigg) P, \qquad t>0.
$$
That is, $\motimes_{n\in \NN} \mathcal T_n$ factorizes through its restriction to the incomplete tensor products belonging to $\Gamma_0$.

Despite the restrictiveness imposed by strong continuity on $(0,\infty)$, it is not difficult to construct complete tensor product semigroups satisfying this regularity. The example below provides a canonical way to build such semigroups.

\begin{example}
	For each $n\in \NN$, let $H_n=L^2(\TT)$	and let
	\[
	T_n(t)=\e^{t\Delta_{\TT}},
	\qquad t\geq0,
	\]
	be the heat semigroup on $L^2(\TT)$.  Since $\|T_n(t)\|= 1$ for all $t\geq0$ and $n\in \NN$,
	$$\motimes_{n\in\NN} \mathcal T_n \subset \linearOp\left(\motimes_{n \in \NN} H_n\right)
	$$
	is a well-defined nonzero semigroup.
	Since \(e^{t\Delta_{\mathbb T}}\) is positive and injective for every \(t>0\), the
	partial isometry in its polar decomposition is the identity. Hence, Proposition~\ref{prop:nakagami}(ii) gives that
	\begin{equation*}
		\lim_{N\to \infty} \left(\motimes_{n=1}^N T_n(t)\right) \otimes I = \motimes_{n\in\NN} T_n(t), \qquad t\geq 0,
	\end{equation*}
	in the strong operator topology. As for every $N\in \NN$ the finite tensor $\otimes_{n=1}^N \mathcal T_n$ is a $C_0$-semigroup and the strong measurability is preserved under limits in the strong operator topology, we obtain that $\otimes_{n\in\NN} \mathcal T_n$ is a strongly measurable semigroup, thus strongly continuous in $(0,\infty)$.
\end{example}
	
%

The next surprising result, based on Lemma~\ref{lem:complete-semigroup-target-incomplete}, explains the additional rigidity forced by strong continuity at $0$.

\begin{theorem}\label{cor:C0-complete-implies-invariant-incomplete}
Let \((H_n)_{n\in\mathbb N}\) be a sequence of Hilbert spaces and, for each
\(n\in\mathbb N\), let $\mathcal T_n = (T_n(t))_{t\geq 0} \subset \linearOp(H_n)$ be a semigroup. Assume that $\otimes_{n\in\NN}\mathcal T_n$
is a $C_0$-semigroup on the complete infinite tensor product $H:=\otimes_{n\in\NN}H_n$. 
Then, for every equivalence class $\mathcal A\in\Gamma$, the corresponding incomplete tensor
product
\[
H_{\mathcal A}:=\motimes_{n\in\NN}^{\,\mathcal A}H_n
\]
is invariant under $\otimes_{n\in\NN}T_n(t)$ for all $t\ge 0$. In particular,
$$\mathcal T_\mathcal A:= \motimes_{n\in \NN}^{\mathcal A} \mathcal T_n
$$
is a well-defined $C_0$-semigroup on $H_\mathcal A$ that coincides with the restriction of $\otimes_{n\in \NN} \mathcal T_n$ to $H_\mathcal A$, and
\[
\motimes_{n\in\NN}\mathcal T_n=\bigoplus_{\mathcal A\in\Gamma} \mathcal T_{\mathcal A}.
\]
\end{theorem}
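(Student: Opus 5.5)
We will combine Lemma~\ref{lem:complete-semigroup-target-incomplete} with strong continuity at $0$. Write $S(t):=\otimes_{n\in\NN}T_n(t)$. Since the semigroup is $C_0$, it is nonzero and strongly continuous on $(0,\infty)$. So Lemma~\ref{lem:complete-semigroup-target-incomplete} applies: for each $\mathcal B\in\Gamma$ there is $\mathcal A=\mathcal A(\mathcal B)\in\Gamma_0$ with $S(t)H_{\mathcal B}\subseteq H_{\mathcal A}$ for all $t>0$.

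The first step is to show that $\mathcal A(\mathcal B)=\mathcal B$. Take a nonzero $\xi\in H_{\mathcal B}$. Then $S(t)\xi\to\xi$ as $t\to0^+$, while $S(t)\xi\in H_{\mathcal A}$ for every $t>0$. Since $H_{\mathcal A}$ is closed, we get $\xi\in H_{\mathcal A}$. But the incomplete components are mutually orthogonal, so $\xi\in H_{\mathcal A}\cap H_{\mathcal B}$ with $\xi\neq0$ forces $\mathcal A=\mathcal B$. Hence $S(t)H_{\mathcal B}\subseteq H_{\mathcal B}$ for all $t\ge0$, which is the claimed invariance, and moreover $\Gamma_0=\Gamma$.

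Next we identify the restriction. Since $\mathcal B\in\Gamma_0$, the incomplete semigroup $\otimes^{\mathcal B}_{n\in\NN}\mathcal T_n$ is well defined and nonzero, and by Lemma~\ref{lem:nonZeroInfiniteSemigroup} it is defined for all $t>0$. By Theorem~\ref{incompleteOperatorTh}, applied at each $t$ where its hypotheses hold, $\otimes^{\mathcal B}_{n\in\NN}T_n(t)=S(t)|_{H_{\mathcal B}}$. More simply, both operators act factorwise on the dense set of finitely $\boldsymbol y$-elementary tensors and are bounded, so they coincide. The restriction of a $C_0$-semigroup to a closed invariant subspace is again $C_0$, so $\mathcal T_{\mathcal B}$ is a $C_0$-semigroup on $H_{\mathcal B}$.

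Finally, $H=\bigoplus_{\mathcal A\in\Gamma}H_{\mathcal A}$ by von Neumann's decomposition, and each summand is invariant. Then $S(t)=\bigoplus_{\mathcal A}S(t)|_{H_{\mathcal A}}=\bigoplus_{\mathcal A}\mathcal T_{\mathcal A}(t)$. We must still check that the orthogonal complements are invariant too. For each $\mathcal A$, the complement $H_{\mathcal A}^\perp$ is the closed span of the other components, each of which is invariant, so $H_{\mathcal A}^\perp$ is invariant as well.

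The main subtle point is the first step. Lemma~\ref{lem:complete-semigroup-target-incomplete} gives a target component only for $t>0$, so the closedness/orthogonality argument at $t\to0^+$ is what pins the target down. We also have to make sure that $\mathcal T_{\mathcal B}$ is defined at every $t>0$ and not only near $0$; Lemma~\ref{lem:nonZeroInfiniteSemigroup} settles this.
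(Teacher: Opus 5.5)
Your proposal is correct and follows essentially the paper's route. You apply Lemma~\ref{lem:complete-semigroup-target-incomplete}, then use strong continuity at $0$ together with the mutual orthogonality of the incomplete components to force the target class to be $\mathcal B$ itself. The paper phrases this step as $\|S(t)\xi-\xi\|^2=\|S(t)\xi\|^2+\|\xi\|^2\ge\|\xi\|^2$, whereas you argue via closedness of $H_{\mathcal A}$; the two are equivalent. From there both arguments read off the identification with $\otimes^{\mathcal B}_{n\in\NN}\mathcal T_n$, the block-diagonal decomposition, and the $C_0$-property of the restrictions. Your additional observations that $\Gamma_0=\Gamma$ and that the complements $H_{\mathcal A}^\perp$ are invariant are correct, and they make explicit what the paper leaves implicit.
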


\begin{proof}
Fix an equivalence class $\mathcal B\in\Gamma$. By Lemma~\ref{lem:complete-semigroup-target-incomplete},
there exists $\mathcal A \in \Gamma_0$ such that
\[
\Big(\motimes_{n\in\NN}T_n(t)\Big)
\Big(\motimes_{n\in\NN}^{\mathcal B}H_n \Big)
\subseteq
\motimes_{n\in\NN}^{\mathcal A}H_n,
\qquad t>0.
\]
Assume that $\mathcal A \neq \mathcal B$. Then the incomplete tensor products
$\otimes_{n\in \NN}^{\mathcal A}H_n$ and $\otimes_{n\in \NN}^{\mathcal B}H_n$ are orthogonal. Choose a nonzero
vector $
\xi\in \otimes_{n\in\NN}^{\mathcal B}H_n$. For every $t>0$, we have
\[
\Big(\motimes_{n\in\NN}T_n(t)\Big)\xi
   \perp \xi,
\]
and therefore
\[
\left\|\left(\otimes_{n\in\NN}T_n(t)\right)\xi-\xi\right\|^2
  =
\left\|\left(\otimes_{n\in\NN}T_n(t)\right)\xi\right\|^2+\|\xi\|^2
  \ge \|\xi\|^2.
\]
This contradicts the strong continuity at $0$ of $\otimes_{n\in\NN}\mathcal T_n$. Hence
$\mathcal A = \mathcal B$, and so
\[
\Big(\motimes_{n\in\NN}T_n(t)\Big)
\Big(\motimes_{n\in\NN}^{\mathcal B}H_n\Big)
\subseteq
\motimes_{n\in\NN}^{\mathcal B}H_n,
\qquad t\ge 0.
\]
Since $\mathcal B$ was arbitrary, every incomplete tensor product $H_{\mathcal B}$ is invariant. Hence, it follows from Theorem~\ref{incompleteOperatorTh} and Lemma~\ref{lem:nonZeroInfiniteSemigroup} that $\mathcal T_\mathcal B := \otimes_{n\in \NN}^{\mathcal B} \mathcal T_n$ is a well-defined nonzero semigroup on $H_\mathcal B$ and that
$$\mathcal T_\mathcal B = \motimes_{n\in \NN} \mathcal T_n \bigg|_{H_\mathcal B}.
$$

Now recall that the complete tensor product decomposes as the orthogonal direct sum
\[
H=\bigoplus_{\mathcal A\in\Gamma}H_{\mathcal A}.
\]
Therefore each operator $\otimes_{n\in\NN}T_n(t)$ is block diagonal with respect to this
decomposition, which yields
\[
\motimes_{n\in\NN}T_n(t)=\bigoplus_{\mathcal A\in\Gamma}T_{\mathcal A}(t),
\qquad t\ge0.
\]
Finally, if $\xi\in H_{\mathcal A}$, then
\[
T_{\mathcal A}(t)\xi=\Big(\motimes_{n\in\NN}T_n(t)\Big)\xi\to\xi
\qquad (t\downarrow0),
\]
so each restricted family $\mathcal T_{\mathcal A}$ is a $C_0$-semigroup on $H_{\mathcal A}$.
\end{proof}

Theorem~\ref{cor:C0-complete-implies-invariant-incomplete} shows that, if one insists on a $C_0$-semigroup on the complete infinite tensor product, 
then it must decompose along incomplete tensor products. In this sense, nontrivial
infinite tensor product semigroup theory naturally belongs to the incomplete infinite tensor
product setting. 
This explains why, in the existing literature, infinite tensor products of
semigroups \(\mathcal T_n=(T_n(t))_{t\geq 0}\) on Hilbert spaces \(H_n\) are often not
treated directly on the complete infinite tensor product
$
\motimes_{n\in\mathbb N} H_n,
$
or are considered under assumptions which ensure that the product semigroup
preserves the relevant incomplete tensor products; see, for instance,
\cite{arveson2001infinite, badea2017kazhdan, bedos2004infinite,gill1978infinite, reents1974infinite}.  
Moreover, the converse of Theorem~\ref{cor:C0-complete-implies-invariant-incomplete} holds in the sense explained in the following remark.

\begin{remark}
	Let $(H_n)_{n\in \NN}$ be a sequence of Hilbert spaces, and for each $n\in \NN$, let $\mathcal T_n \subset \linearOp(H_n)$ be a semigroup. Assume that for every $\mathcal A \in \Gamma$,
	$$\mathcal T_\mathcal A := \motimes_{n\in \NN}^{\mathcal A} \mathcal T_n \subset \linearOp\left(\motimes_{n\in \NN}^{\mathcal A} H_n\right)
	$$
	is a $C_0$-semigroup. 
	Fix \(\mathcal A\in\Gamma\).  Since \(\mathcal T_\mathcal A\) is a \(C_0\)-semigroup, it is locally
	bounded near \(0\).  By the norm identity in Lemma~\ref{lem:nonZeroInfiniteSemigroup},
	\[
	\|T_\mathcal A (t)\|=\prod_{n\in\mathbb N}\|T_n(t)\|,
	\]
	for \(t>0\).  Hence \(\prod_{n\in\NN}\|T_n(t)\|\) is locally bounded as \(t\to0\).
	Then Proposition~\ref{prop:nakagami} yields that
	$$\mathcal T := \motimes_{n\in \NN} \mathcal T_n = \bigoplus_{\mathcal A \in \Gamma} \mathcal T_\mathcal A \subset \linearOp \left(\motimes_{n\in \NN} H_n\right),
	$$
	and a simple density argument on the linear span of elementary tensors shows that $\mathcal T$ is a $C_0$-semigroup.
\end{remark}
\medskip

Below we provide a simple example illustrating the loss of regularity of the complete tensor product, even when each of the factors is a unitary, uniformly continuous group.

\begin{example}\label{ex:strong-comp-counter-1}
	Set $H := \otimes_{n\in \NN} \CC$ and define 
	$$E(t) := \motimes_{n\in\NN} e^{i t}, \qquad t \in \RR,
	$$
	where we regard $e^{it}$ as a multiplication operator on $\CC$. 
	Then $\mathcal E = (E(t))_{t\in\RR}$ is the complete tensor product of unitary and uniformly continuous groups. 
	
	It is clear that, for each $t\in[0,2\pi)$, the $C_0$-sequence $\boldsymbol e_t :=(e^{it})_{n\in\NN}$ belongs to a different equivalence class. Hence, the elementary tensors 
	$$\otimes_{n\in \NN} e^{it} = E(t) \left(\otimes_{n\in \NN} 1\right), \qquad  t \in [0,2\pi),
	$$
	belong to different incomplete tensor products. Using Lemma~\ref{lem:complete-semigroup-target-incomplete},  we deduce that $\mathcal E$ is not strongly continuous in $(0,\infty)$.
\end{example} 

Still, if each semigroup \(\mathcal T_n\) is strongly continuous and \((h_n)_{n\in\NN}\in\times_{n\in\NN}H_n\) is a \(C_0\)-sequence, then the mapping given by
\begin{equation}\label{nonMeasMappingEq}
	t \mapsto \left\|\otimes_{n\in \NN} T_n(t) h_n\right\| = \prod_{n\in \NN} \|T_n(t) h_n\|, \qquad t \geq 0,
\end{equation}
is measurable on $[0,\infty)$. However, the measurability of such a mapping may fail if the canonical norm on $\otimes_{n\in \NN} H_n$ is changed in \eqref{nonMeasMappingEq} by another equivalent Hilbertian norm, as we show in the remark below. The generality of our standing assumptions, in particular the absence of measurability assumptions on the factors, is partly motivated by this phenomenon.

\begin{example}\label{ex:strong-comp-counter-2}
	Let $H$, $\mathcal E$ and $(\boldsymbol e_t)_{t\in[0,2\pi)}$ be as in Example~\ref{ex:strong-comp-counter-1}, and take any non-measurable function $\alpha:[0,2\pi) \to [1,2]$. We define $\|\cdot\|_{\rm{eq}}$ to be the only Hilbertian norm on $H$ satisfying that:
	\begin{itemize}
		\item[(i)] all the incomplete infinite tensor products of $H$ are mutually $\|\cdot\|_{\rm{eq}}$-orthogonal;
		\item[(ii)] if $h \in H$ belongs to an incomplete tensor product $\otimes_{n \in \NN}^{\mathcal A} \CC$, where $\mathcal A$ is an equivalence class of $C_0$-sequences, then
		\begin{align*}
			\|h\|_{\rm{eq}} = \begin{cases}
				\alpha(t) \|h\|, \qquad & \mbox{if } \boldsymbol{e}_t \in \mathcal A \mbox{ for some (hence unique) } t \in [0,2\pi),
				\\ \|h\|, \qquad & \mbox{otherwise}. 
			\end{cases}
		\end{align*}
	\end{itemize}
	Since $\alpha$ and $1/\alpha$ are bounded, it follows that $\|\cdot\|_{\rm{eq}}$ is an equivalent Hilbertian norm on $\otimes_{n\in \NN} \CC$. Moreover, one has
	\begin{align*}
		\|E(t) \left(\otimes_{n\in\NN} 1\right) \|_{\rm{eq}} = \|\otimes_{n\in \NN} e^{it} \|_{\rm{eq}} = \alpha(t), \qquad t\in [0,2\pi), 
	\end{align*}
	which is not a measurable function. 
\end{example}

\section{Similarity to contraction semigroups on infinite tensor products}\label{sec:similarity-tensor}

At this point the tensor-product framework developed in the previous sections is in place, and we
can return to the main similarity problem. We first study semigroups acting on incomplete
infinite tensor products of Hilbert spaces, where the theory is both more flexible and more
natural from the semigroup viewpoint. We then turn to the complete infinite tensor product,
whose role is complementary: it shows how much of the splitting mechanism survives in the
more rigid complete setting and where that rigidity becomes decisive.

\subsection{Similarity on incomplete infinite tensor products}\label{IncompleteSubSect}

We begin with the incomplete setting, which is the natural environment for
the semigroup-theoretic part of the paper. The main result of this subsection
is Theorem~\ref{thm:main_incomplete}. Its proof has two distinct converse
mechanisms. In the case of negative growth bound, we rebalance the factors
and then combine uniform control of the complementary tensor products with
the one-semigroup similarity theorem from Section~\ref{sec:sim-reg-th}. In the zero-growth
case, we extract contractive norms on the individual factors by means of
auxiliary norms and a Banach-limit argument. The preliminary lemmas below
isolate these mechanisms and the numerical input needed for the proof. Proposition~\ref{prop:locboundednes} shows that the local boundedness assumption in Theorem~\ref{thm:main_incomplete} is essential.


For a semigroup $\mathcal T=(T(t))_{t\ge0}$ locally bounded on $(0,\infty)$, we define its exponential type by
\[
 \omega(\mathcal T)
 :=
 \inf_{t>0}\frac{1}{t}\log\|T(t)\|
 \in[-\infty,\infty),
\]
where $\log 0:=-\infty$. By the subadditivity of $t\mapsto \log\|T(t)\|$, one has
\[
 \omega(\mathcal T)=\lim_{t\to\infty}\frac{1}{t}\log\|T(t)\|.
\]
Moreover,
\[
 r(T(t))=e^{t\, \omega(\mathcal T)}, \qquad t >0,
\]
with the convention $e^{-\infty}=0$.

We need the following elementary fact on unconditionally convergent series.
\begin{lemma}\label{strictlynegativeLemma}
 Let $(b_n)_{n\in \NN} \subset[-\infty,\infty)$ be such that
 \[
  \sum_{n\in\NN} (b_n)_+<\infty
  \qquad\text{and}\qquad
  \sum_{n\in\NN} b_n\in[-\infty,0),
 \]
 where the latter sum is understood in the extended unconditional sense (as defined in Section~\ref{sec:num-prod}). Then there exists $(a_n)_{n\in\NN} \subset \RR$ such that
 \[
  \sum_{n\in \NN} |a_n| < \infty,\qquad \sum_{n\in \NN} a_n =0,
 \]
 and $a_n + b_n < 0$ for all $n \in \NN$.
\end{lemma}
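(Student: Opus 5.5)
Since $-b_n>0$ may be infinite, the plan is to build $a_n$ as $-b_n$ minus a small positive slack, corrected so that the total sum vanishes. Throughout, $\sum_{n\in\NN}(b_n)_+<\infty$ (in particular $b_n<\infty$ for every $n$).

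\emph{Case 1: $\sum_{n\in\NN} b_n=s\in(-\infty,0)$ finite.} Then no $b_n$ equals $-\infty$, and $\sum_{n\in\NN}|b_n|<\infty$, because a finite extended unconditional sum forces absolute convergence (Section~\ref{sec:num-prod}). Fix any summable sequence $\varepsilon_n>0$ with $\sum_{n\in\NN}\varepsilon_n=-s>0$, for instance $\varepsilon_n=-s\,2^{-n}$. Set $a_n:=-b_n-\varepsilon_n$. Then $a_n+b_n=-\varepsilon_n<0$ and $\sum_{n\in\NN}|a_n|\le\sum_{n\in\NN}|b_n|+\sum_{n\in\NN}\varepsilon_n<\infty$. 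Moreover $\sum_{n\in\NN} a_n=-s-(-s)=0$.

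\emph{Case 2: $\sum_{n\in\NN} b_n=-\infty$.} This happens either because some $b_n=-\infty$, or because $\sum_{n\in\NN}(b_n)_-=-\infty$. The idea is to choose $a_n\le -b_n-\varepsilon_n$ only where needed, using a bounded quantity in place of $-b_n$. Set $P:=\sum_{n\in\NN}(b_n)_+<\infty$ and let $a_n:=-(b_n)_+-2^{-n}$ for every $n$ except those in a finite set $F$ where positive mass is placed.
\begin{itemize}
\item For $n\notin F$: if $b_n\le 0$ then $a_n+b_n\le -2^{-n}<0$; if $b_n>0$ then $a_n+b_n=-2^{-n}<0$.
\item The choice of $F$: the sum of these $a_n$ over $n\notin F$ is at least $-(P+1)$. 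We need indices $n\in F$ with $a_n>0$ large enough to compensate, while still keeping $a_n<-b_n$. Since $\sum_{n\in\NN}(b_n)_-=-\infty$ or some $b_n=-\infty$, we can pick a finite set $F$ with $\sum_{n\in F}(-b_n)>P+2$. Where $b_n=-\infty$ any finite value is allowed, so a single index suffices.
\item On $F$, choose $a_n\in(0,-b_n)$, adjusted so that the total sum is exactly $0$. This is possible because the required total on $F$, namely $\sum_{n\notin F}\bigl((b_n)_++2^{-n}\bigr)\le P+1$, lies in the open range $\bigl(0,\sum_{n\in F}(-b_n)\bigr)$ available for $\sum_{n\in F}a_n$; take, e.g., $a_n=\theta(-b_n)$ with a suitable $\theta\in(0,1)$, or split the mass appropriately in the $-\infty$ case.
\end{itemize}
Absolute summability follows since only finitely many terms are modified and the rest are dominated by $(b_n)_++2^{-n}$.

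The only delicate point is the bookkeeping in Case 2: making sure the compensating finite set exists and that indices with $b_n=-\infty$ are handled (any real value works there). Everything else is routine.
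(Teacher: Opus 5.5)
Your argument is correct, but it is organized differently from the paper's. The paper splits the index set by the sign of $b_n$. On $J^+=\{n: b_n\ge 0\}$ it sets $a_n=-(b_n+\varepsilon_n)$ with a summable positive slack $(\varepsilon_n)$. On $J^-$ it asserts that there exist $a_n$ with $0\le a_n<-b_n$ whose total is exactly $\sum_{n\in J^+}(b_n+\varepsilon_n)$; this is possible because that number is strictly smaller than $-\sum_{n\in J^-}b_n$. The result is one uniform construction in which $a_n$ has a fixed sign on each of $J^\pm$, which gives the clean identity $\sum_n|a_n|=2\sum_{n\in J^+}(b_n+\varepsilon_n)$.

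You instead split according to whether $\sum_n b_n$ is finite or equal to $-\infty$. In the finite case you shift every term, taking $a_n=-b_n-\varepsilon_n$ with $\sum_n\varepsilon_n=-\sum_n b_n$; this uses absolute summability of $(b_n)$ rather than a sign decomposition. In the infinite case you put slack $2^{-n}$ on every index and concentrate all the compensation on a finite set $F$.

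What your route buys is that it carries out explicitly the step the paper leaves implicit, namely the existence of the $J^-$ part. That step in fact hides exactly your two cases: proportional scaling $a_n=\theta(-b_n)$ when $\sum_{n\in J^-}b_n$ is finite, and finite compensation when it equals $-\infty$ or some $b_n=-\infty$.

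One small point to state explicitly: $F$ must be chosen inside $\{n: b_n<0\}$. This is possible because $\sum_n(b_n)_-=-\infty$ or some $b_n=-\infty$. Without this restriction, the interval $(0,-b_n)$ is empty for any $n\in F$ with $b_n\ge 0$.
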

\begin{proof}
 Set $J^+:= \{n \in \NN \, : \, b_n \geq 0\}$ and $J^-:= \NN \setminus J^+$. Then
 \[
  \sum_{n\in J^+} b_n<\infty
 \]
 and
 \[
  \sum_{n\in J^+} b_n< -\sum_{n\in J^-} b_n,
 \]
 where the latter sum is allowed to be $-\infty$.
 Choose a summable family $(\varepsilon_n)_{n\in J^+}\subset(0,\infty)$ such that
 \[
  \sum_{n\in J^+} (b_n+\varepsilon_n)< -\sum_{n\in J^-} b_n.
 \]
 Then there exists $(a_n)_{n\in J^-}$ such that $0 \leq a_n < -b_n$ for all $n\in J^-$ and
 \[
  \sum_{n\in J^-} a_n = \sum_{n\in J^+} (b_n + \varepsilon_n).
 \]
 For $n\in J^+$, set $a_n = - (b_n + \varepsilon_n)$. Then $a_n + b_n < 0$ for every $n\in \NN$. Moreover,
 \[
 \sum_{n\in\NN}|a_n|
 =
 \sum_{n\in J^-} a_n + \sum_{n\in J^+}(b_n+\varepsilon_n)
 =
 2\sum_{n\in J^+}(b_n+\varepsilon_n)
 <\infty,
 \]
 so the sequence $(a_n)_{n\in \NN}$ is absolutely summable. Finally,
 \[
  \sum_{n\in \NN} a_n =  \sum_{n\in J^-} a_n  -\sum_{n\in J^+} (b_n + \varepsilon_n)= 0,
 \]
 as claimed.
 
\end{proof}

We also record here the following semigroup counterpart of Lemma~\ref{infiniteSpectralRadiusLemma}, which will be used repeatedly in the proof of the main theorem below. This result is crucial for the proof of Theorem~\ref{thm:main_incomplete}.

\begin{lemma}\label{lem:omega_tensor}
 Let $(H_n)_{n\in \NN}$ be a sequence of Hilbert spaces, and for each $n\in\NN$ let
 $\mathcal T_n=(T_n(t))_{t\ge0}\subset\linearOp(H_n)$ be a semigroup locally bounded on $(0,\infty)$.
 Assume first that $\otimes_{n\in\NN} \mathcal T_n\subset \linearOp\bigl(\otimes_{n\in\NN} H_n\bigr)$ is a well-defined semigroup that is locally bounded in $(0,\infty)$. Then
 \[
  \omega\Bigl(\otimes_{n\in\NN} \mathcal T_n\Bigr)
  =
  \sum_{n\in\NN}\omega(\mathcal T_n),
 \]
 where the sum is understood in the extended unconditional sense (as defined in Section~\ref{sec:num-prod}).
 
 In addition, let \(\boldsymbol y=(y_n)_{n\in\NN}\in\times_{n\in\NN}H_n\) be a
 \(C_0\)-sequence and assume that
 $\otimes_{n\in\NN}^{\boldsymbol{y}} \mathcal T_n\subset \linearOp\bigl(\otimes_{n\in\NN}^{\boldsymbol{y}} H_n\bigr)$ is a well-defined nonzero semigroup that is locally bounded in $(0,\infty)$. Then
 \[
  \omega\Bigl(\otimes_{n\in\NN}^{\boldsymbol{y}} \mathcal T_n\Bigr)
  =
  \sum_{n\in\NN}\omega(\mathcal T_n),
 \]
 again in the extended unconditional sense (as defined in Section~\ref{sec:num-prod}).
\end{lemma}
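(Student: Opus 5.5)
The idea is to reduce everything to the spectral radius identity of Lemma~\ref{infiniteSpectralRadiusLemma}, applied at one fixed time, together with the formula $r(T(t))=e^{t\,\omega(\mathcal T)}$ for $t>0$. This formula holds for any semigroup locally bounded on $(0,\infty)$: local boundedness gives $\omega(\mathcal T)=\lim_{t\to\infty}t^{-1}\log\|T(t)\|$, and the spectral radius formula along the powers $T(t)^k=T(kt)$ gives the rest.

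First I will check that this formula applies to both the factors and the tensor product. The factors $\mathcal T_n$ are locally bounded by assumption. The products $\otimes_{n\in\NN}\mathcal T_n$ and $\otimes_{n\in\NN}^{\boldsymbol y}\mathcal T_n$ are locally bounded on $(0,\infty)$ by hypothesis, so the formula holds for them too.

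For the complete case I fix $t>0$ and set $A_n:=T_n(t)$. Since $\otimes_{n\in\NN}T_n(t)$ is well-defined, Lemma~\ref{infiniteSpectralRadiusLemma} gives
\[
e^{t\,\omega(\otimes_n\mathcal T_n)}=r\Big(\otimes_{n\in\NN}T_n(t)\Big)=\prod_{n\in\NN}r(T_n(t))=\prod_{n\in\NN}e^{t\,\omega(\mathcal T_n)} .
\]
Taking logarithms with the convention $\log 0=-\infty$, and using the remark after Lemma~\ref{lem:maxmin} (an unconditionally convergent product of nonnegative numbers has logarithm equal to the extended unconditional sum of the logarithms), I get $t\,\omega(\otimes_n\mathcal T_n)=\sum_n t\,\omega(\mathcal T_n)$. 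Dividing by $t$ gives the claim.

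The main point needing care is the convergence of $\prod_n r(T_n(t))$ and of $\sum_n\omega(\mathcal T_n)$ in the extended unconditional sense. Since $r(T_n(t))\le\|T_n(t)\|$ and $\prod_n\max\{1,\|T_n(t)\|\}<\infty$ by Proposition~\ref{prop:nakagami} and Lemma~\ref{lem:vn1}, the positive parts $\sum_n\omega(\mathcal T_n)_+$ are finite. Hence the extended sum exists, and Lemma~\ref{lem:vn1}(i) gives unconditional convergence of the product. This positive-part control is what makes the extended sum meaningful, so that $-\infty$ is the only possible divergence.

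The zero case needs a separate check. If $\otimes_n T_n(t)=0$ for some $t>0$, then both sides equal $-\infty$: the left side because the semigroup then vanishes for all larger times, and the right side because Lemma~\ref{infiniteSpectralRadiusLemma} also covers the zero-product case. The same holds if some $\omega(\mathcal T_n)=-\infty$.

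For the incomplete case I repeat the argument with the second part of Lemma~\ref{infiniteSpectralRadiusLemma}. That part requires $\otimes_{n\in\NN}^{\boldsymbol y}T_n(t)\neq0$. This holds for every $t>0$ by Lemma~\ref{lem:nonZeroInfiniteSemigroup}, whose proof shows that a nonzero incomplete product semigroup is nonzero on small times and then, through Lemma~\ref{powersIncompleteLem}, at every $t>0$. The finiteness of the positive parts comes from condition (ii) of that lemma, exactly as before.
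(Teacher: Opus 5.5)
Your route is the paper's: apply Lemma~\ref{infiniteSpectralRadiusLemma} at a single time, use $r(T(t))=e^{t\,\omega(\mathcal T)}$, and take logarithms. The paper simply takes $t=1$. The complete case, including your separate treatment of the zero-product case, is fine.

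The incomplete case, however, rests on a false claim. A nonzero incomplete product semigroup need \emph{not} satisfy $\otimes_{n\in\NN}^{\boldsymbol y}T_n(t)\neq0$ for every $t>0$. Lemma~\ref{lem:nonZeroInfiniteSemigroup} only gives nonvanishing on an initial interval. For larger $t$ it gives the identity with the strong-limit product and the norm formula $\|\otimes_{n\in\NN}^{\boldsymbol y}T_n(t)\|=\prod_{n\in\NN}\|T_n(t)\|$, and that norm can be $0$; the proof of Lemma~\ref{powersIncompleteLem} explicitly treats powers that vanish.

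Here is a counterexample. Let $H_1=L^2(0,1)$ carry the nilpotent shift $(T_1(t)f)(x)=f(x+t)\chi_{[0,1-t]}(x)$, and let $H_n=\CC$, $T_n(t)=I$, $y_n=1$ for $n\ge2$. Under the finite-tensor embedding, $\otimes_{n\in\NN}^{\boldsymbol y}\mathcal T_n$ is unitarily equivalent to $\mathcal T_1$, which is nonzero but vanishes for all $t\ge1$. So for large $t$ your step invokes the incomplete part of Lemma~\ref{infiniteSpectralRadiusLemma} outside its hypothesis.

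The repair costs nothing, because the identity you want is independent of the chosen time. Fix $t\in(0,\tau]$ with $\tau$ as in Lemma~\ref{lem:nonZeroInfiniteSemigroup}. There the product is nonzero, and condition (ii) of that lemma gives the positive-part bound you need.

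Note that the paper's own proof, which evaluates at $t=1$, has the same blind spot: in the example above, $t=1$ is exactly where the product first vanishes, although the identity itself still holds there. With this repair, your argument is the more careful version.
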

\begin{proof}
 Apply Lemma~\ref{infiniteSpectralRadiusLemma} at time $1$. In the complete case,
 \[
  r\Bigl(\otimes_{n\in\NN} T_n(1)\Bigr)=\prod_{n\in\NN} r(T_n(1)),
 \]
 and in the incomplete case,
 \[
  r\Bigl(\otimes_{n\in\NN}^{\boldsymbol{y}} T_n(1)\Bigr)=\prod_{n\in\NN} r(T_n(1)).
 \]
 Since $r(T_n(1))=e^{\omega(\mathcal T_n)}$ for every $n\in\NN$, and likewise
 $r(S(1))=e^{\omega(\mathcal S)}$ for every semigroup $\mathcal S$ locally bounded on $(0,\infty)$,
 the result follows by taking logarithms, with the convention $\log 0=-\infty$.
\end{proof}

We are ready to state the main result of this section. 

\begin{theorem}\label{thm:main_incomplete}
 Let $(H_n)_{n\in\NN}$ be a family of Hilbert spaces, let \(\boldsymbol y=(y_n)_{n\in\NN}\in\times_{n\in\NN}H_n\) be a
 \(C_0\)-sequence, and for each $n\in\NN$ let $\mathcal T_n=(T_n(t))_{t\ge0}\subset\linearOp(H_n)$ be a semigroup locally bounded on $(0,\infty)$. Assume that $\motimes_{n\in\NN}^{\boldsymbol y}\mathcal T_n\subset \linearOp\bigl(\motimes_{n\in\NN}^{\boldsymbol y}H_n\bigr)$ is a nonzero semigroup. Then the following assertions hold.
 \begin{enumerate}
  \item[(i)] If there exists a sequence $(d_n)_{n\in\NN}\subset\RR$ such that
  \begin{equation}\label{eq:dn}
   \sum_{n\in\NN}|d_n|<\infty,
   \qquad
   \sum_{n\in\NN} d_n=0,
  \end{equation}
  $e_{d_n}\mathcal T_n\in\mathcal{SC}(H_n)$ for all $n\in\NN$, and
  \[
   \prod_{n\in\NN}\mathcal C(e_{d_n}\mathcal T_n)<\infty,
  \]
  then
  \[
   \motimes_{n\in\NN}^{\boldsymbol y}\mathcal T_n
   \in
   \mathcal{SC}\Bigl(\motimes_{n\in\NN}^{\boldsymbol y}H_n\Bigr).
  \]
  
  \item[(ii)] Conversely, if
  \[
   \motimes_{n\in\NN}^{\boldsymbol y}\mathcal T_n
   \in
   \mathcal{SC}\Bigl(\motimes_{n\in\NN}^{\boldsymbol y}H_n\Bigr),
  \]
  then there exists a sequence $(d_n)_{n\in\NN}\subset\RR$ satisfying \eqref{eq:dn} such that
  \[
   e_{d_n}\mathcal T_n\in\mathcal{SC}(H_n)\qquad\text{for all } n\in\NN,
  \]
  and
  \[
   \sup_{n\in\NN}\mathcal C(e_{d_n}\mathcal T_n)<\infty.
  \]
 \end{enumerate}
\end{theorem}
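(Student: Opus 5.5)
For part (i), I would set $\mathcal S_n := e_{d_n}\mathcal T_n$ and pick, for each $n$, an invertible positive $R_n$ with $\|h\|\le\|R_nh\|\le\mathcal C(\mathcal S_n)\|h\|$ such that $\mathcal S_n$ is contractive in the norm $\|R_n\cdot\|$. Since $\prod_n\mathcal C(\mathcal S_n)<\infty$, Proposition~\ref{prop:equivalent-infinite-tensor}(ii) yields an equivalent Hilbertian norm on $\otimes^{\boldsymbol y}_{n\in\NN}H_n$, namely the incomplete tensor norm of the renormed spaces $\mathscr H_n$, with distortion $\prod_n\mathcal C(\mathcal S_n)$. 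Because $\sum_n|d_n|<\infty$ and $\sum_n d_n=0$, the product $\prod_n e^{d_nt}=1$ converges unconditionally. On finitely $\boldsymbol y$-elementary tensors one then has $\otimes^{\boldsymbol y}T_n(t)=\otimes^{\boldsymbol y}S_n(t)$. Applying Theorem~\ref{incompleteOperatorTh} in the spaces $\mathscr H_n$, whose $C_0$-classes coincide with the original ones, gives $\|\otimes^{\boldsymbol y}S_n(t)\|_{\mathscr H}\le\prod_n\|S_n(t)\|_{\mathscr H_n}\le1$. If the product vanishes at some $t$, the operator is $0$ there and the bound is trivial. Density finishes (i).

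For part (ii), let $\mathcal T:=\otimes^{\boldsymbol y}\mathcal T_n\in\SC$ with constant $C$, so $\|T(t)\|\le C$. By Lemma~\ref{lem:omega_tensor}, $\sum_n\omega(\mathcal T_n)=\omega(\mathcal T)\le0$, and each $\omega_n:=\omega(\mathcal T_n)$ is finite or $-\infty$. Lemma~\ref{lem:unif_bound}(i), applied to $f_n=\|T_n(\cdot)\|$, shows that each $\mathcal T_n$ is bounded near $0$. I would then split into two cases.

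\emph{Case $\omega(\mathcal T)<0$.} Lemma~\ref{strictlynegativeLemma} gives a summable zero-sum sequence $(a_n)$ with $a_n+\omega_n<0$, and I would set $d_n:=a_n$. To control $e_{d_n}\mathcal T_n$ uniformly, use the tensor structure. For a unit vector $x$ in the $n$-th slot and $\tau$ fixed, $\|T_n(t)\|\prod_{\ell\ne n}\|T_\ell(t)\|=\|T(t)\|\le C$. Lemma~\ref{lem:unif_bound}(ii) provides a set of $\nu_\delta$-measure at least $\tfrac12$ on which the complementary product is bounded by $M$ uniformly in $n$; there $\|T_n(t)\|$ is controlled from above by products over $\ell\ne n$. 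Rather than bounding from below directly, I would build the two norms required by Theorem~\ref{Th:eqNorm}. The norm $\|\cdot\|_1$ making $e_{d_n}\mathcal T_n$ quasi-contractive with uniformly bounded $C_1$ and $\lambda$ comes from restricting the contractive renorming of $\mathcal T$ to slices $H_n\otimes(\otimes_{\ell\ne n}z_\ell)$ and averaging the complementary factor with $\mu_\delta$. The norm $\|\cdot\|_2$ making $e^{d_n\tau}T_n(\tau)$ contractive, for a common large $\tau$, comes from the strict negativity of the growth bound $a_n+\omega_n$ together with a uniform Neumann-type/Rota renorming $\|h\|_2^2=\sum_k\|e^{kd_n\tau}T_n(\tau)^kh\|^2$. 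For this sum to be uniformly bounded, $\sup_k\|(e^{d_n\tau}T_n(\tau))^k\|$ must decay geometrically, uniformly in $n$, and this would be extracted again from $\|T(k\tau)\|\le C$ and the complementary-product bound. Proposition~\ref{Prop:simConstant} then bounds $\mathcal C(e_{d_n}\mathcal T_n)$ uniformly.

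\emph{Case $\omega(\mathcal T)=0$.} Here all $\omega_n$ are finite and I would take $d_n:=-\omega_n$. The factors then have exponential type $0$ and cannot be made strictly contractive at time $\tau$ by Rota's trick. Instead, I would define on $H_n$ the auxiliary norms $\|h\|_{n,z}:=\|\tilde R(h\otimes z)\|$, where $\tilde R$ is the renorming of $\mathcal T$ and $z$ is the unit complementary elementary tensor. I would then take a Banach limit over the orbit $t\mapsto(\otimes_{\ell\ne n}e^{d_\ell t}T_\ell(t)y_\ell)$, normalized. This produces an $e_{d_n}\mathcal T_n$-contractive Hilbertian seminorm with distortion at most $C$ times a uniform normalization factor.

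I expect the main obstacle to be the uniform (in $n$) control of the complementary products $\prod_{\ell\ne n}e^{d_\ell t}\|T_\ell(t)y_\ell\|$, both from below and from above, along the orbit. Without it the Banach-limit seminorm could degenerate or distort unboundedly. I would try to obtain this control from the $C_0$-equivalence $(T_\ell(t)y_\ell)\sim\boldsymbol y$ furnished by Lemma~\ref{lem:nonZeroInfiniteSemigroup}, combined with Lemma~\ref{lem:unif_bound}(ii).
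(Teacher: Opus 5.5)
\textbf{Verdict: genuine gap.} Part (i) is the paper's argument. Your architecture for (ii) also matches the paper: the split by the sign of $\omega(\mathcal T)$, factor norms obtained by averaging the contractive renorming of $\mathcal T$ against complementary orbits, Theorem~\ref{Th:eqNorm} in the negative case, and a Banach limit in the zero case. The gap is in the quantitative core, and in the negative-growth case the mechanism you propose cannot work.

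\textbf{Negative-growth case.} You take $d_n=a_n$ from Lemma~\ref{strictlynegativeLemma} with $b_n=\omega_n$, and then want $\|\cdot\|_2$ from a Rota sum $\sum_k\|(e^{a_n\tau}T_n(\tau))^kh\|^2$ with geometric decay uniform in $n$.
\begin{itemize}
\item Whenever $\omega(\mathcal T)>-\infty$, Lemma~\ref{lem:omega_tensor} gives $\sum_n|\omega_n|<\infty$. Since also $(a_n)\in\ell^1$, you get $a_n+\omega_n\to0$.
\item Hence $r(e^{a_n\tau}T_n(\tau))=e^{(a_n+\omega_n)\tau}\to1$ for every fixed $\tau$. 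So no bound $\|(e^{a_n\tau}T_n(\tau))^k\|\le Cq^k$ with $q<1$ can hold uniformly in $n$.
\item The Rota distortions are at least $(1-e^{2(a_n+\omega_n)\tau})^{-1/2}$ (test on approximate eigenvectors at a peripheral spectral point), so they are unbounded. No information about $\|T(k\tau)\|$ changes these spectral radii.
\end{itemize}
The missing idea is that Theorem~\ref{Th:eqNorm} needs only $\|T(\tau)\|_2\le1$, not decay, and that the balancing should be chosen to give this in the original norm. The paper does this as follows.
\begin{itemize}
\item Fix $\delta$ with $T(\delta)\neq0$, and shift a margin $d\in(\omega(\mathcal T),0)$ into two factors so that $\sum_{n\le N}\omega_n<0$ and $\prod_{n>N}\|T_n(\delta)\|<1$.
\item Apply Lemma~\ref{strictlynegativeLemma} to $b_n=\omega_n$ for $n\le N$ and $b_n=\delta^{-1}\log\|T_n(\delta)\|$ for $n>N$.
\item After rescaling, $\|T_n(\delta)\|<1$ for all $n>N$. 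So one takes $\|\cdot\|_2=\|\cdot\|$, $C_2=1$, $\tau=\delta$ uniformly in $n$, and the finitely many head factors are handled one at a time.
\end{itemize}

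\textbf{Uniform lower bound on the normalising constants.} This is the obstacle you flag yourself, and the route you suggest, the $C_0$-equivalence $(T_\ell(t)y_\ell)\sim\boldsymbol y$, gives no control uniform in $n$ and $t$. The fixed complementary orbit $z_t=\otimes_{\ell\ne n}T_\ell(t)y_\ell$ is the wrong object. In the zero-growth case it can decay exponentially although $\omega_\ell=0$: take $T_1(t)=\mathrm{diag}(1,e^{-t})$ on $\CC^2$ with $y_1$ the eigenvector for $e^{-t}$. Then the estimate $\|T_n(s)h\otimes z_t\|_{\rm{eq}}\le\|h\otimes z_{t-s}\|_{\rm{eq}}$ gives, after pointwise normalisation, only $\|T_n(s)\|\le\|z_{t-s}\|/\|z_t\|=e^{s}$. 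Averaged normalisation instead leaves a defect of order $s$ that does not vanish as the averaging window grows.

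The paper chooses complementary unit vectors whose orbit is nearly maximal at the terminal time, and propagates the lower bound backwards.
\begin{itemize}
\item \emph{Negative case.} Choose $h_n$ with $\|(\otimes_{\ell\ne n}T_\ell(\delta))h_n\|\ge\frac12\|T(\delta)\|/\sup_k\|T_k(\delta)\|$. For $t\in\delta-A_n$, with $A_n$ from Lemma~\ref{lem:unif_bound}(ii) and $\nu_\delta(\delta-A_n)\ge\frac12$, one has $\|(\otimes_{\ell\ne n}T_\ell(\delta))h_n\|\le M\|(\otimes_{\ell\ne n}T_\ell(t))h_n\|$. This gives $\inf_nD_n>0$.
\item \emph{Zero case.} Since $\|T_\ell(t)\|\ge1$, one has $1\le\prod_{\ell\ne n}\|T_\ell(t)\|\le K$. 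A unit $h_{n,m}$ with $\|(\otimes_{\ell\ne n}T_\ell(m))h_{n,m}\|\ge\frac12$ then satisfies $\|(\otimes_{\ell\ne n}T_\ell(t))h_{n,m}\|\ge1/(2K)$ on $[0,m]$, so $D_{n,m}\ge1/(4K^2)$. The boundary defect $\mathcal C^2K^2s/(mD_{n,m})$ of the $\mu_m$-averaged norm is then $O(1/m)$ and disappears under the Banach limit in $m$.
\end{itemize}
Without these choices, your factor norms control neither the distortion nor contractivity.
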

\begin{proof}
 To prove (i), set $C_n:=\mathcal C(e_{d_n}\mathcal T_n)$ 
  and for each $n\in\NN$, let $\|\cdot\|_n^{\sim}$ be an equivalent Hilbertian norm on $H_n$ such that
  \[
   \|h\| \le \|h\|_n^{\sim} \le C_n \|h\|,\qquad h\in H_n,
  \]
  and
  \[
   \|e^{d_n t}T_n(t)h\|_n^{\sim} \le \|h\|_n^{\sim},\qquad h\in H_n,\ t\ge0.
  \]
  Using Proposition~\ref{prop:equivalent-infinite-tensor}(ii), we obtain that $\boldsymbol y$ is also a $C_0$-sequence with respect to the norms $\|\cdot\|_n^{\sim}$, that the corresponding incomplete tensor product norm on $\otimes_{n\in\NN}^{\boldsymbol y}H_n$ is then an equivalent Hilbertian norm, with distortion controlled by $\prod_n C_n$, and by construction the semigroup
  \(
   \motimes_{n\in\NN}^{\boldsymbol y} e_{d_n}\mathcal T_n
  \)
  is contractive for that tensorized norm. Since $\sum_{n\in\NN} d_n=0$, we have
  \[
   \motimes_{n\in\NN}^{\boldsymbol y} e^{d_n t}T_n(t)
   =
   \motimes_{n\in\NN}^{\boldsymbol y} T_n(t),\qquad t\ge0.
  \]
  Hence $\otimes_{n\in \NN}^{\boldsymbol{y}} \mathcal T_n \in \mathcal{SC}\left(\otimes_{n\in \NN}^{\boldsymbol{y}} H_n\right)$.

 We now prove (ii). 
 The proof is rather long, so we divide it into several steps.

 \smallskip
 \noindent\textbf{Step 1. Reduction to negative growth.} First assume that $\omega \left(\otimes_{n\in\NN}^{\boldsymbol{y}} \mathcal T_n\right) < 0$ and choose $d \in \RR$ such that
  \[
   \omega \Big(\motimes_{n\in\NN}^{\boldsymbol{y}}  \mathcal T_n\Big) < d < 0.
  \]
  Recall that $\omega \left(\otimes_{n\in\NN}^{\boldsymbol{y}} \mathcal T_n\right) = \sum_{n\in \NN} \omega(\mathcal T_n)$ in the extended unconditional sense (as defined in Section~\ref{sec:num-prod}), see Lemma~\ref{lem:omega_tensor}. Hence we may choose $N \in \NN$ such that
  \[
   \sum_{n=1}^N \omega(\mathcal T_n) < d.
  \]
  Using Lemma~\ref{lem:nonZeroInfiniteSemigroup}, choose $\delta>0$ such that
  \[
   \otimes_{n\in\NN}^{\boldsymbol{y}} T_n(\delta) \neq 0
   \qquad \text{and} \qquad
   \prod_{n\in\NN} \maxone{\|T_n(\delta)\|}<\infty.
  \]
  Since the tails of a convergent nonzero product converge to $1$, enlarging $N$ if necessary we may moreover assume that
  \[
   \prod_{n>N} \maxone{\|T_n(\delta)\|}< e^{-d\delta}.
  \]
  Then, up to replacing $\mathcal T_N$ and $\mathcal T_{N+1}$ by $e_{-d} \mathcal T_N$ and $e_d \mathcal T_{N+1}$ respectively, we may assume $\sum_{n=1}^N \omega\left( \mathcal T_n\right) < 0$ and $\prod_{n>N} \|T_n(\delta)\| < 1$.
  Thus, if we set
  \[
  b_n=
  \begin{cases}
  	\omega(T_n),& n=1,\ldots,N,\\[2mm]
  	\delta^{-1}\log\|T_n(\delta)\|,& n>N,
  \end{cases}
  \]
  then
  \(
  \sum_{n\in\mathbb N}b_n<0
  \)
  in the extended unconditional sense.  By Lemma~\ref{strictlynegativeLemma}, there exists
  \((a_n)_{n\in\mathbb N}\in\ell^1\) with
  \[
  \sum_{n\in\mathbb N}a_n=0,
  \qquad
  a_n+b_n<0\quad(n\in\mathbb N).
  \]
  Replacing \(\mathcal T_n\) by \(e_{a_n} \mathcal T_n\), we may therefore suppose that
  \[
  \omega(\mathcal T_n)<0\quad(n=1,\ldots,N),
  \qquad
  \|T_n(\delta)\|<1\quad(n>N).
  \]
  
  \smallskip
  \noindent\textbf{Step 2. Uniform control of complementary products.}
  For each \(n\in\NN\), set
  \(\boldsymbol y^n=(y_\ell)_{\ell\ne n}\in\times_{\ell\ne n}H_\ell\), which is a
  \(C_0\)-sequence. Using Lemma~\ref{lem:nonZeroInfiniteSemigroup}, it follows that there exists $\tau>0$ such that, for all $t \in [0,\tau]$,
  $$T_\ell(t) \neq 0 \quad (\ell\in \NN), \qquad \prod_{\ell\in \NN} \maxone{\|T_\ell(t)\|} < \infty, \qquad \sum_{\ell \in \NN} |1 - \langle T_\ell (t) y_\ell, y_\ell \rangle | < \infty.
  $$ 
  Hence, applying Lemma~\ref{lem:nonZeroInfiniteSemigroup} to the family $(\mathcal T_\ell)_{\ell \neq n}$, where $n\in \NN$, we obtain that
  \begin{equation}\label{eq:ellSemigroup}
  	\motimes_{\ell \neq n}^{\boldsymbol{y}^n} \mathcal T_\ell \subset \linearOp \Big(\motimes_{\ell \neq n}^{\boldsymbol{y}^n} H_\ell\Big)
  \end{equation}
  is a well-defined nonzero semigroup.
  
  Now define
  $$f_n(t) := \|T_n(t)\|, \qquad t > 0, \, n\in \NN,
  $$
  and note that, since $\otimes_{n\in \NN}^{\boldsymbol{y}} \mathcal T_n$ is similar to a contraction semigroup, Lemma~\ref{lem:nonZeroInfiniteSemigroup} yields that
  $$\limsup_{t\to 0^+} \prod_{n\in \NN} f_n(t) = \limsup_{t\to0^+} \|\otimes_{n\in \NN}^{\boldsymbol{y}} T_n(t)\| < \infty.
  $$
  Then, using Lemma~\ref{lem:unif_bound}(ii) and the identity
  $$\prod_{\ell \neq n} f_\ell(t) = \|\otimes_{\ell \neq n}^{\boldsymbol{y}^n} T_\ell(t)\|, \qquad n\in \NN, \, t>0,
  $$
  let $M > 0$ be such that, if 
  \begin{equation*}
   A_n :=  \left\{ t \in (0,\delta) \, : \, \left\|\otimes_{\ell \neq n}^{\boldsymbol{y}^n}  T_\ell(t) \right\| \leq M\right\}, \qquad n \in \NN,
  \end{equation*}
  then $\nu_\delta(A_n) \geq 1/2$ for all $n\in \NN$. Note also that
  $$\inf_{n\in \NN} \left\| \otimes_{\ell \neq n}^{\boldsymbol{y}^n} T_\ell(\delta) \right\| =\frac{\left\| \otimes_{\ell \in \NN}^{\boldsymbol{y}} T_\ell(\delta) \right\|}{\sup_{n\in \NN} \|T_n(\delta)\|}  > 0.
  $$
  Using these facts, for each $n\in \NN$ choose $h _n \in \otimes_{\ell \neq n}^{\boldsymbol{y}^n} H_\ell$ with $\|h_n\|=1$ and
  \[
   \left\| \left( \otimes_{\ell \neq n}^{\boldsymbol{y}^n} T_\ell(\delta) \right) h_n \right\|
   \ge \frac12 \left\| \otimes_{\ell \neq n}^{\boldsymbol{y}^n} T_\ell(\delta) \right\|.
  \]
  Then
  \[
   \inf_{n\in \NN} \left\| \left( \otimes_{\ell \neq n}^{\boldsymbol{y}^n} T_\ell(\delta) \right) h_n \right\|
   \ge \frac12 \inf_{n\in \NN}\left\| \otimes_{\ell \neq n}^{\boldsymbol{y}^n} T_\ell(\delta) \right\|>0.
  \]
  Set
  $$D_n := \mu_\delta \left[ \left\| \left( \otimes_{\ell \neq n}^{\boldsymbol{y}^n} T_\ell(\cdot) \right) h_n\right\|^2 \right], \qquad n \in \NN.
  $$
  Then, for all $n\in \NN$ and all $t \in \delta - A_n$, we have that
  \begin{align*}
   \left\| \left( \otimes_{\ell \neq n}^{\boldsymbol{y}^n} T_\ell(\delta) \right) h_n\right\|
   &\leq \left\| \otimes_{\ell \neq n}^{\boldsymbol{y}^n} T_\ell(\delta - t) \right\|
        \left\| \left( \otimes_{\ell \neq n}^{\boldsymbol{y}^n} T_\ell(t) \right) h_n\right\| 
   \leq M \left\| \left( \otimes_{\ell \neq n}^{\boldsymbol{y}^n} T_\ell(t) \right) h_n\right\|.
  \end{align*}
  where we used that $\delta - t \in A_n$ if $t \in \delta - A_n$. As $\nu_\delta(\delta - A_n) = \nu_\delta (A_n) \geq 1/2$, it follows that
  \begin{equation*}
  \begin{aligned}
   D:= \inf_{n\in \NN} D_n & \geq \inf_{n \in \NN}  \left( \mu_\delta \left[\left\| \left( \otimes_{\ell \neq n}^{\boldsymbol{y}^n} T_\ell(\cdot) \right) h_n\right\|^2  \chi_{\delta - A_n}(\cdot) \right] \right)
   \geq  \frac{\inf_{n\in \NN} \left\| \left( \otimes_{\ell \neq n}^{\boldsymbol{y}^n} T_\ell(\delta) \right) h_n \right\|^2}{2M^2} > 0.
  \end{aligned}
  \end{equation*}


\smallskip
\textbf{Step 3: Step 3. Construction of factorwise contractive norms.}
Now, let $\|\cdot\|_{\rm{eq}}$ be an equivalent Hilbertian norm on $\otimes_{\ell \in \NN}^{\boldsymbol{y}} H_\ell$ making $\otimes_{\ell \in \NN}^{\boldsymbol{y}} \mathcal T_\ell$ a contractive semigroup and satisfying $\| \cdot\| \leq \|\cdot \|_{\rm{eq}} \leq \mathcal C \|\cdot\|$, where $\mathcal C := \mathcal C\left(\otimes_{\ell \in \NN}^{\boldsymbol{y}} \mathcal T_\ell\right)$. 
  
  Recall that by the associative law of incomplete tensor products, one has $ \otimes_{\ell \in \NN}^{\boldsymbol{y}} H_\ell \simeq H_n  \otimes \left( \otimes_{\ell \neq n}^{\boldsymbol{y}^n} H_\ell\right)$ unitarily for every $n\in \NN$, see~\eqref{eq:assLawIncomplete}. Then, for each $n \in \NN$, define
  \begin{align*}
   \|f\|_n^2 := \frac{1}{D_n}\mu_\delta \left[ \left\|f \otimes \left( \otimes_{\ell \neq n}^{\boldsymbol{y}^n} T_\ell(\cdot) \right) h_n \right\|_{\rm{eq}}^2 \right], \qquad f \in H_n.
  \end{align*}
  We obtain that
  \begin{align*}
   \|f\|_n^2 & \leq \frac{\mathcal C^2}{D_n} \mu_\delta \left[ \left\|f \otimes \left( \otimes_{\ell \neq n}^{\boldsymbol{y}^n} T_\ell(\cdot) \right) h_n \right\|^2 \right] \leq \mathcal C^2  \|f\|^2, \qquad f \in H_n, \, n \in \NN,
  \end{align*}
  and similarly we get that
  \begin{align*}
   \|f\|_n^2 & \geq \|f\|^2, \qquad f \in H_n, \, n \in \NN.
  \end{align*}
  Consequently, $\|\cdot\|_n$ is an equivalent Hilbertian norm on $H_n$ for every $n\in \NN$. Moreover, for all $s \in (0,\delta)$, $n \in \NN$ and $f \in H_n$, we have that
  \begin{equation}\label{eq:quasi_1}
   \begin{aligned}
   &\mu_\delta \left[ \left\| T_n(s) f \otimes \left(\otimes_{\ell \neq n}^{\boldsymbol{y}^n} T_\ell(\cdot)\right) h_n \right\|_{\rm{eq}}^2 \chi_{[s,\delta)} (\cdot) \right]
   \\= & \mu_\delta \left[ \left\| T_n(s) f \otimes \left(\otimes_{\ell \neq n}^{\boldsymbol{y}^n} T_\ell(s+\cdot)\right) h_n \right\|_{\rm{eq}}^2 \chi_{[0,\delta-s)} (\cdot) \right]
   \\ \leq & \mu_\delta \left[ \left\| f \otimes \left(\otimes_{\ell \neq n}^{\boldsymbol{y}^n} T_\ell(\cdot)\right) h_n \right\|_{\rm{eq}}^2 \right]
   = D_n \|f\|_n^2,
   \end{aligned}
  \end{equation}
  and
  \begin{equation}\label{eq:quasi_2}
  \begin{aligned}
   &\mu_\delta \left[ \left\| T_n(s) f \otimes \left(\otimes_{\ell \neq n}^{\boldsymbol{y}^n} T_\ell(\cdot)\right) h_n \right\|_{\rm{eq}}^2 \chi_{[0,s)} (\cdot) \right]
   \\\leq & \mu_\delta \left[ \left\| T_n(s - \cdot) f \otimes  h_n \right\|_{\rm{eq}}^2 \chi_{[0,s)} (\cdot) \right]
    \leq  \mathcal C^2 K_{n,s}^2  \frac{s}{\delta} \|f\|^2
    \leq  \mathcal C^2 K_{n,s}^2   \frac{s}{\delta} \|f\|_n^2,
  \end{aligned}
  \end{equation}
  where 
  \begin{equation}\label{eq:Kns_bound}
   K_{n,s}:= \sup_{t \in [0,s]} \|T_n(t)\| = \sup_{t\in [0,s]} f_n(t) < \infty,
  \end{equation}
  see Lemma~\ref{lem:unif_bound}(i). Since 
  $$\|T_n(s) f\|_n^2 := \frac{1}{D_n} \mu_\delta \left[ \left\| T_n(s) f \otimes \left(\otimes_{\ell \neq n}^{\boldsymbol{y}^n} T_\ell(\cdot)\right) h_n \right\|_{\rm{eq}}^2 \right],
  $$
  we obtain from \eqref{eq:quasi_1} and \eqref{eq:quasi_2} that
  $$\|T_n(s)\|_n \leq \sqrt{1 + \frac{\mathcal C^2 K_{n,s}^2}{\delta D_n} s}  \leq \sqrt{1 + \frac{\mathcal C^2 K_{n,s}^2}{\delta D} s}, \qquad s \in [0,\delta], \, n \in \NN.
  $$
  Using Lemma~\ref{lem:unif_bound}(i), we obtain that
  $$\limsup_{s\to 0^+} K_{n,s} \leq K:= \limsup_{s \to 0^+} \left\| \otimes_{\ell \in \NN}^{\boldsymbol{y}} T_\ell(s) \right\| < \infty.
  $$
  Hence
  \[
   \limsup_{s\to 0^+}\frac{\|T_n(s)\|_n-1}{s}
   \le \frac{\mathcal C^2 K^2}{2\delta D},\qquad n\in\NN,
  \]
  and thus, using the submultiplicativity of each of the mappings $s \mapsto \|T_n(s)\|_n$,
  \begin{equation}\label{eq:boundTnC}
   \|T_n(s)\|_n \leq \exp\left(\frac{\mathcal C^2 K^2}{2\delta D} \, s\right), \qquad s \geq 0, \, n \in \NN.
  \end{equation}
  
  Consequently, $\mathcal T_n \in \mathcal{SQC}(H_n)$ and 
  \begin{equation*}
   \mathcal C \left(e_{- \frac{\mathcal C^2 K^2}{2\delta D}}\mathcal T_n \right) \leq \mathcal C, \qquad n \in \NN.
  \end{equation*}

  \smallskip
  \noindent\textbf{Step 4. Application of the low-regularity similarity theorem.}
  We are now exactly in the situation of the one-semigroup similarity theorem from Section~\ref{sec:sim-reg-th}: the norms $\|\cdot\|_n$ provide uniform quasi-contractive control, while the estimate $\|T_n(\delta)\|<1$ for $n>N$ supplies the one-time contractive input.
  
  Since $\|h\|\leq \|h\|_n \leq \mathcal C\|h\|$, it follows from~\eqref{eq:boundTnC} that
  \[
   \|T_n(t)\| \leq \mathcal C\exp\left(\frac{\mathcal C^2 K^2}{2\delta D}t\right), \qquad t\in[0,\maxone{\delta}],\ n>N,
  \]
  so the local bound required in Proposition~\ref{Prop:simConstant} is uniform in $n$. 
  Since \(\|T_n(\delta)\|<1\), the operator \(T_n(\delta)\) is already
  contractive in the original Hilbert norm.  Thus the one-time contractive
  input in Theorem~\ref{Th:eqNorm} may be taken with \(C_2=1\), which yields that
  $\mathcal T_n \in \mathcal{SC}(H_n)$ for all $n > N$ and
  \begin{equation*}\label{eq:Cn_bound}
   \sup_{n>N} \mathcal C(\mathcal T_n)< \infty.
  \end{equation*}
  For the finitely many remaining factors \(n=1,\ldots,N\), the inequality
  \(\omega(\mathcal T_n)<0\), together with the local boundedness near zero obtained
  above, gives a quasi-contractive equivalent norm. Choosing \(\tau>0\) so
  large that \(\|T_n(\tau)\|<1\), the operator \(T_n(\tau)\) is similar to a
  contraction.  Hence Theorem~\ref{Th:eqNorm} applies and yields \(\mathcal T_n\in\mathcal{SC}(H_n)\).

  Returning to the original family, the required balancing sequence is obtained
  by adding to \((a_n)_{n\in\NN}\) the finitely supported balancing used above to replace
  \(\mathcal T_N\) and \(\mathcal T_{N+1}\).  This does not affect absolute summability, and the
  sum of the resulting sequence is still zero.
  
  Hence, we have proven assertion (ii) when $\omega \left(\otimes_{n\in\NN}^{\boldsymbol{y}} \mathcal T_n\right) < 0$. \medskip

  \smallskip
  \noindent\textbf{Step 5. The zero-growth case.} If $\omega \left(\otimes_{n\in\NN}^{\boldsymbol{y}} \mathcal T_n\right) = 0$, the aim in this case is to extract, from a single contractive Hilbertian norm on the whole tensor product, a uniformly controlled family of equivalent Hilbertian norms on the individual factors. First, set 
  $$d_n:=-\omega(\mathcal T_n), \qquad n \in \NN.
  $$
  The preceding lemma gives 
  \[
  \sum_{n\in \NN} d_n = - \omega \left(\otimes_{n\in \NN}^{\boldsymbol{y}} \mathcal T_n\right) = 0
  \]
  in the extended unconditional sense. Hence, the positive and negative parts of the series are both finite, and therefore \((d_n)_{n\in\mathbb N}\in\ell^1\). 
  Then, replacing $\mathcal T_n$ by $e_{d_n} \mathcal T_n$, we may assume that 
  $$\omega(\mathcal T_n)=0, \qquad n\in\NN.
  $$

  
  For each \(n\in\NN\), set \(\boldsymbol y^n:=(y_\ell)_{\ell\ne n}\in\times_{\ell\ne n}H_\ell\), which is a
  \(C_0\)-sequence, and then, arguing as in~\eqref{eq:ellSemigroup}, we obtain that
  $$\motimes_{\ell\neq n}^{\boldsymbol{y}^n} \mathcal T_\ell \subset \linearOp \Big(\motimes_{\ell \neq n}^{\boldsymbol{y}^n} H_\ell\Big)
  $$
  is a well-defined nonzero semigroup. 
  
  Set $K :=  \sup_{t\geq0} \left\| \otimes_{\ell \in \NN}^{\boldsymbol{y}} T_\ell(t)\right\|$ and note that $K< \infty$ since $\otimes_{\ell \in \NN}^{\boldsymbol{y}} \mathcal T_\ell$ is similar to a contractive semigroup. Since $\|T_\ell(t)\|\geq 1$ for all $l\in \NN$ and $t\geq 0$ (recall that $\omega(\mathcal T_\ell)=0$), we have that
  \begin{equation}\label{eq:Kbound}
   1\leq \prod_{\ell \neq n} \left\|  T_\ell(t)\right\| \leq \prod_{\ell \in \NN} \left\|  T_\ell(t)\right\| = \left\| \otimes_{\ell \in \NN}^{\boldsymbol y} T_\ell(t) \right\| \leq K, \qquad t \geq 0, \, n \in \NN.
  \end{equation}
  As $\left\| \otimes_{\ell \neq n}^{\boldsymbol{y}^n} T_\ell(t) \right\| = \prod_{\ell \neq n} \|T_\ell(t)\|$, we deduce that 
  $$1 \leq \left\| \otimes_{\ell \neq n}^{\boldsymbol{y}^n} T_\ell(t) \right\| \leq K, \qquad n\in \NN, \, t \geq 0.
  $$
  Therefore, for each $n,m\in \NN$ we may choose a unit vector $h_{n,m}\in \otimes_{\ell \neq n}^{\boldsymbol{y}^n} H_\ell$ such that
  \[
   \Big\| \Big(\otimes_{\ell \neq n}^{\boldsymbol{y}^n} T_\ell(m) \Big) h_{n,m} \Big\|\geq \frac12.
  \]
  For $t\in[0,m]$ we then have
  \[
   \frac12 \le \Big\| \Big(\otimes_{\ell \neq n}^{\boldsymbol{y}^n} T_\ell(m) \Big) h_{n,m} \Big\| = \Big\|\Big(\otimes_{\ell \neq n}^{\boldsymbol{y}^n} T_\ell(m-t) \Big) \Big(\otimes_{\ell \neq n}^{\boldsymbol{y}^n} T_\ell(t) \Big) h_{n,m} \Big\| \le K\, \Big\| \Big(\otimes_{\ell \neq n}^{\boldsymbol{y}^n} T_\ell(t) \Big) h_{n,m} \Big\|,
  \]
  and hence
  \begin{equation*}
   \left\| \left(\otimes_{\ell \neq n}^{\boldsymbol{y}^n} T_\ell(t) \right) h_{n,m} \right\| 
   \geq \frac{1}{2K}, \qquad t \in [0,m].
  \end{equation*}
  For each $n,m\in \NN$, set also
  \begin{align*}
   D_{n,m} &:= \mu_m \left[ \left\| \left(\otimes_{\ell \neq n}^{\boldsymbol{y}^n} T_\ell(\cdot) \right) h_{n,m}  \right\|^2 \right],
  \end{align*}
  where we consider the canonical tensor product norm in $\otimes_{\ell \neq n}^{\boldsymbol{y}^n} H_\ell$. 
  Then, for all $n,m \in \NN$, we have that $D_{n,m} \geq 1/(4K^2)$ by our choice of $h_{n,m}$, and
  \begin{align*}
   D_{n,m}  \leq \sup_{t \in [0,m]} \left\| \otimes_{\ell \neq n}^{\boldsymbol{y}^n} T_\ell(t) \right\|^2 \leq K^2, 
  \end{align*}
  see \eqref{eq:Kbound}. Now let $\|\cdot\|_{\rm{eq}}$ be a Hilbertian norm on $\otimes_{\ell \in \NN}^{\boldsymbol{y}}H_\ell$ such that $\|\otimes_{\ell \in \NN}^{\boldsymbol{y}} T_\ell(t)\|_{\rm{eq}}\leq 1$ for all $t\geq0$ and $\|\cdot\| \leq \|\cdot\|_{\rm{eq}} \leq \mathcal C \|\cdot\|$, where $\mathcal C := \mathcal C \left( \otimes_{\ell \in \NN}^{\boldsymbol{y}} \mathcal T_\ell\right)$, and define
  \begin{equation*}
   \|f\|_{n,m}^2 := \frac{1}{D_{n,m}}\mu_m \left[ \left\| f \otimes \left(\otimes_{\ell \neq n}^{\boldsymbol{y}^n} T_\ell(\cdot) \right) h_{n,m} \right\|_{\rm{eq}}^2 \right], \qquad f \in H_n,
  \end{equation*}
  where we used the unitary equivalence given by the associative law for incomplete tensor products
  $$\otimes_{\ell \in \NN}^{\boldsymbol{y}} H_\ell \simeq H_n \otimes \left(\otimes_{\ell \neq n}^{\boldsymbol{y}^n} H_\ell\right),
  $$ 
  see~\eqref{eq:assLawIncomplete}.

  Therefore, one has
  \begin{align*}
   \|f\|_{n,m}^2
   &\leq \frac{\mathcal C^2}{D_{n,m}}
      \mu_m \left[ \left\| f \otimes \left(\otimes_{\ell \neq n}^{\boldsymbol{y}^n} T_\ell(\cdot) \right) h_{n,m} \right\|^2 \right] 
   \leq \mathcal C^2 \|f\|^2,
   \qquad n,m\in \NN,\ f \in H_n,
  \end{align*}
  and similarly, $\|f\|_{n,m}  \geq  \|f\|$ for all $n,m \in \NN$ and $f\in H_n$.
  
  Let \(\operatorname{LIM}\) be a shift-invariant mean, also called a Banach limit,
  on \(\ell^\infty(\mathbb \NN)\) and define
  $$\|f\|_n^2 := \operatorname{LIM}_{m\to\infty} \|f\|_{n,m}^2, \qquad f \in H_n, \, n \in \NN.
  $$
  Let $\langle\cdot,\cdot\rangle_{n,m}$ denote the inner product inducing $\|\cdot\|_{n,m}$, and define
  $$\langle f,g\rangle_n := \operatorname{LIM}_{m\to\infty} \langle f,g\rangle_{n,m}, \qquad f,g\in H_n.$$
  Then $\langle\cdot,\cdot\rangle_n$ is a positive semidefinite sesquilinear form on $H_n$ and
  $\|f\|_n^2=\langle f,f\rangle_n$ for all $f\in H_n$.
  Since the two-sided estimates above imply $\|f\| \le \|f\|_n \le \mathcal C\|f\|$, the form is definite,
  and hence, for each $n\in \NN$, $\|\cdot\|_n$ is an equivalent Hilbertian norm on $H_n$ satisfying
  \begin{equation}\label{eq:sim_Const_Hn}
   \| f \| \leq \|f\|_n \leq  \mathcal C \| f \|, \qquad f \in H_n,\, n \in \NN.
  \end{equation}
  Moreover, for every $n,m\in \NN$, $s\in [0,1]$ and $f\in H_n$, one has
  \begin{align*}
   \|T_n(s)f\|_{n,m}^2 &= \frac{1}{D_{n,m}} \mu_m \left[  \left\| T_n(s) f \otimes \left(\otimes_{\ell \neq n}^{\boldsymbol{y}^n} T_\ell(\cdot)\right) h_{n,m} \right\|_{\rm{eq}}^2 \right].
  \end{align*}
  Note that, for all $n,m \in \NN$ and $f \in H_n$,
  \begin{equation}\label{eq:mean_bound_1}
   \begin{aligned}
   &\mu_m \left[ \left\| T_n(s) f \otimes \left(\otimes_{\ell \neq n}^{\boldsymbol{y}^n} T_\ell(\cdot) \right) h_{n,m} \right\|_{\rm{eq}}^2 \chi_{[s,m)}(\cdot) \right]
   \\ \leq & \mu_m \left[ \left\| f \otimes \left(\otimes_{\ell \neq n}^{\boldsymbol{y}^n} T_\ell(\cdot) \right) h_{n,m} \right\|_{\rm{eq}}^2 \chi_{[0,m-s)}(\cdot) \right]
   \leq  D_{n,m} \|f\|_{n,m}^2,
   \end{aligned}
  \end{equation}
  where we used $\|\otimes_{\ell \in \NN}^{\boldsymbol{y}} T(s)\|_{\rm{eq}} \leq 1$ and the identities from Section~\ref{subsec:periodic-means}, and
  \begin{equation}\label{eq:mean_bound_2}
  \begin{aligned}
   &\mu_m \left[ \left\| T_n(s) f \otimes \left(\otimes_{\ell \neq n}^{\boldsymbol{y}^n} T_\ell(\cdot) \right) h_{n,m} \right\|_{\rm{eq}}^2 \chi_{[0,s)}(\cdot) \right] 
   \\ \leq & \mathcal C^2 \mu_m \left[ \left\| T_n(s - \cdot) f \otimes  h_{n,m} \right\|^2 \chi_{[0,s)}(\cdot) \right]
   \\ \leq & \mathcal C^2 \sup_{t\geq 0} \left(\|T_n(t)\|^2 \right) \frac{s}{m} \|f\|^2
   \leq  \mathcal C^2  K^2 \frac{s}{m} \|f\|_{n,m}^2,
  \end{aligned}
  \end{equation}
  where we used that 
  $$\sup_{n\in \NN, \, t\geq 0} \|T_n(t)\| \leq \sup_{t\geq0} \prod_{\ell \in \NN} \left\|T_\ell(t)\right\| = K,
  $$ 
  since $\|T_\ell(t)\| \geq 1$ for all $l\in \NN$ and $t\geq0$ as $\omega(\mathcal T_\ell) = 0$.
  Then \eqref{eq:mean_bound_1} and \eqref{eq:mean_bound_2} imply that $\|T_n(s)\|_{n,m}^2 \leq 1 + \frac{\mathcal C^2 K^2 }{D_{n,m}} \frac{s}{m}$ for $n,m\in \NN$, $s\in [0,1]$. Hence, we conclude
  $$\|T_n(s)\|_n^2 \leq  \operatorname{LIM}_m \left[  1 + \frac{\mathcal C^2 K^2 }{D_{n,m}} \frac{s}{m} \right] = 1, \qquad n \in \NN, \, s\in [0,1],
  $$
  where we used that $D_{n,m}\geq 1/(4K^2)$ for all $n,m\in \NN$. Hence $\|T_n(s)\|_n\le 1$ for all $s\in[0,1]$. If $t\ge0$ and $m\in\NN$ is chosen so that $t/m\in[0,1]$, then $T_n(t)=T_n(t/m)^m$, and therefore $\|T_n(t)\|_n\le 1$. Consequently, $\mathcal T_n \in \mathcal{SC}(H_n)$ and, by \eqref{eq:sim_Const_Hn}, 
  $$\mathcal C(\mathcal T_n) \leq \mathcal C,  \qquad n\in \NN,
  $$
  finishing the proof.
\end{proof}

The quasi-contractive analogue follows by a simple reduction to Theorem~\ref{thm:main_incomplete} via exponential rescaling of the factors. 

\begin{theorem}\label{continuousInfiniteCor11}
 Let $(H_n)_{n\in\NN}$ be a family of Hilbert spaces, let \(\boldsymbol y=(y_n)_{n\in\NN}\in\times_{n\in\NN}H_n\) be a \(C_0\)-sequence, and for each $n\in\NN$ let $\mathcal T_n=(T_n(t))_{t\ge0}\subset\linearOp(H_n)$ be a semigroup locally bounded on $(0,\infty)$. Assume that $\motimes_{n\in\NN}^{\boldsymbol y}\mathcal T_n\subset\linearOp\bigl(\motimes_{n\in\NN}^{\boldsymbol y}H_n\bigr)$ is a nonzero semigroup. Then the following assertions hold.
 \begin{enumerate}
  \item[(i)] If there exists a sequence $(d_n)_{n\in\NN}\subset\RR$ such that
  \[
   \sum_{n\in\NN}|d_n|<\infty,
  \]
  $e_{d_n}\mathcal T_n\in\mathcal{SC}(H_n)$ for all $n\in\NN$, and
  \[
   \prod_{n\in\NN}\mathcal C(e_{d_n}\mathcal T_n)<\infty,
  \]
  then
  \[
   \motimes_{n\in\NN}^{\boldsymbol y}\mathcal T_n
   \in
   \mathcal{SQC}\Bigl(\motimes_{n\in\NN}^{\boldsymbol y}H_n\Bigr).
  \]
  
  \item[(ii)] Conversely, if
  \[
   \motimes_{n\in\NN}^{\boldsymbol y}\mathcal T_n
   \in
   \mathcal{SQC}\Bigl(\motimes_{n\in\NN}^{\boldsymbol y}H_n\Bigr),
  \]
  then there exists a sequence $(d_n)_{n\in\NN}\subset\RR$ such that
  \[
   \sum_{n\in\NN}|d_n|<\infty,
  \]
  \[
   e_{d_n}\mathcal T_n\in\mathcal{SC}(H_n)\qquad\text{for all } n\in\NN,
  \]
  and
  \[
   \sup_{n\in\NN}\mathcal C(e_{d_n}\mathcal T_n)<\infty.
  \]
 \end{enumerate}
\end{theorem}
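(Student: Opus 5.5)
The plan is to reduce both directions to Theorem~\ref{thm:main_incomplete} by absorbing the global exponential factor into a single coordinate. The basic identity is
\[
\motimes_{n\in\NN}^{\boldsymbol y} e^{c_n t}T_n(t)=e^{(\sum_n c_n)t}\motimes_{n\in\NN}^{\boldsymbol y}T_n(t)
\]
for $(c_n)\in\ell^1$. It holds on finitely $\boldsymbol y$-elementary tensors, because there the scalar product $\prod_n e^{c_nt}$ converges unconditionally to $e^{t\sum_n c_n}$. It then extends by density, using Lemma~\ref{lem:nonZeroInfiniteSemigroup}: the rescaled family still satisfies (i)--(iii) there, since multiplying by $e^{c_nt}$ perturbs the products and the sums $\sum_n|1-\langle T_n(t)y_n,y_n\rangle|$ only by summable amounts. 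Local boundedness on $(0,\infty)$ is clearly preserved by rescaling.

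For (i), set $d:=\sum_n d_n$ and define $d_n':=d_n$ for $n\ge2$ and $d_1':=d_1-d$. Then $(d_n')\in\ell^1$ and $\sum_n d_n'=0$. I apply Theorem~\ref{thm:main_incomplete}(i) to the family $\mathcal S_1:=e_{-d}\mathcal T_1$, $\mathcal S_n:=\mathcal T_n$ for $n\ge2$, using the sequence $(d_n')$. For every $n$ we have $e_{d_n'}\mathcal S_n=e_{d_n}\mathcal T_n\in\SC(H_n)$, and the constants are unchanged, so their product is finite. The tensor product of the $\mathcal S_n$ is nonzero by the identity above. Theorem~\ref{thm:main_incomplete}(i) therefore gives $e_{-d}\motimes^{\boldsymbol y}\mathcal T_n\in\SC$, and hence $\motimes^{\boldsymbol y}\mathcal T_n\in\SQC$. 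Alternatively, one can repeat the proof of Theorem~\ref{thm:main_incomplete}(i) verbatim: tensor the norms via Proposition~\ref{prop:equivalent-infinite-tensor} and note that the product of the $e_{d_n}\mathcal T_n$ equals $e_d\motimes^{\boldsymbol y}\mathcal T_n$.

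For (ii), suppose $\motimes^{\boldsymbol y}\mathcal T_n$ is similar to a quasi-contraction semigroup. Then there is $\lambda\in\RR$ with $e_{-\lambda}\motimes^{\boldsymbol y}\mathcal T_n\in\SC$. By the identity, this semigroup equals $\motimes^{\boldsymbol y}\mathcal S_n$ with $\mathcal S_1:=e_{-\lambda}\mathcal T_1$ and $\mathcal S_n:=\mathcal T_n$ for $n\ge2$. This is a nonzero semigroup, locally bounded on $(0,\infty)$, and it lies in $\SC$. Theorem~\ref{thm:main_incomplete}(ii) yields a sequence $(d_n')$ satisfying \eqref{eq:dn} with $e_{d_n'}\mathcal S_n\in\SC(H_n)$ and $\sup_n\mathcal C(e_{d_n'}\mathcal S_n)<\infty$. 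Setting $d_1:=d_1'-\lambda$ and $d_n:=d_n'$ for $n\ge2$ gives $(d_n)\in\ell^1$ with $e_{d_n}\mathcal T_n=e_{d_n'}\mathcal S_n$, which is the required conclusion; the zero-sum condition is simply lost.

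The only step that needs real care is the rescaling identity, in particular checking that the rescaled incomplete product is still well-defined and nonzero. Once the nonzero-product criterion of Lemma~\ref{lem:nonZeroInfiniteSemigroup} is available, this is routine.
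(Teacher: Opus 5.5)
Your proposal follows the paper's argument. Part (ii) is identical: absorb $e_{-\lambda}$ into the first factor, apply Theorem~\ref{thm:main_incomplete}(ii), and shift $d_1$ back. For (i) the paper instead rescales every factor, $\mathcal S_n:=e_{d_n}\mathcal T_n$, and applies Theorem~\ref{thm:main_incomplete}(i) with the zero balancing sequence; this is equivalent to your single-coordinate shift.

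There is one sign slip in your main route for (i). With $\mathcal S_1=e_{-d}\mathcal T_1$ and $d_1'=d_1-d$ you get $e_{d_1'}\mathcal S_1=e_{d_1-2d}\mathcal T_1$, not $e_{d_1}\mathcal T_1$, and this need not lie in $\SC(H_1)$. Take $\mathcal S_1:=e_{d}\mathcal T_1$ instead. Then Theorem~\ref{thm:main_incomplete}(i) gives $e_d\motimes^{\boldsymbol y}_{n\in\NN}\mathcal T_n\in\SC$, which your alternative norm-tensoring argument already states with the correct sign, and the $\SQC$ conclusion follows.
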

\begin{proof}
 To prove (i), consider the semigroups
  \[
   \mathcal S_n:=e_{d_n}\mathcal T_n,\qquad n\in\NN.
  \]
Since \(\sum_{n\in\NN}|d_n|<\infty\), the scalar product
\[
\prod_{n\in\NN} e^{d_nt}
=
\exp\left(t\sum_{n\in\NN}d_n\right)
\]
is finite and nonzero for every \(t\ge0\).  Hence the rescaled incomplete
tensor product is again well defined and nonzero, and
\[
\motimes_{n\in\NN}^{\boldsymbol y}S_n(t)
=
\exp\left(t\sum_{n\in\NN}d_n\right)
\motimes_{n\in\NN}^{\boldsymbol y}T_n(t),
\qquad t\ge0 .
\]
Moreover, each \(\mathcal S_n\) belongs to \(\mathcal{SC}(H_n)\), the
sequence \((0)_{n\in\NN}\) satisfies the balancing condition~\eqref{eq:dn},
and
  \[
   \prod_{n\in\NN}\mathcal C(\mathcal S_n)<\infty.
  \]
  Hence Theorem~\ref{thm:main_incomplete}(i) gives
  \(
   \motimes_{n\in\NN}^{\boldsymbol y}\mathcal S_n\in
   \mathcal{SC}\!\left(\motimes_{n\in\NN}^{\boldsymbol y}H_n\right).
  \)
It follows that
\[
\motimes_{n\in\NN}^{\boldsymbol y}\mathcal T_n
\in
\mathcal{SQC}\!\left(\motimes_{n\in\NN}^{\boldsymbol y}H_n\right).
\]

 We now prove (ii). Suppose that $\otimes_{n\in\NN}^{\boldsymbol y}\mathcal T_n\in\mathcal{SQC}\!\left(\otimes_{n\in\NN}^{\boldsymbol y}H_n\right)$. Then there exists $c\in\RR$ such that
  \[
   e_{-c}\!\left(\motimes_{n\in\NN}^{\boldsymbol y}\mathcal T_n\right)
   \in
   \mathcal{SC}\!\left(\motimes_{n\in\NN}^{\boldsymbol y}H_n\right).
  \]
  Define
  \[
   \mathcal S_1:=e_{-c}\mathcal T_1,\qquad \mathcal S_n:=\mathcal T_n\ \ (n\ge2).
  \]
  Then
  \[
   \motimes_{n\in\NN}^{\boldsymbol y}\mathcal S_n
   =
   e_{-c}\!\left(\motimes_{n\in\NN}^{\boldsymbol y}\mathcal T_n\right)
   \in
   \mathcal{SC}\!\left(\motimes_{n\in\NN}^{\boldsymbol y}H_n\right).
  \]
  Applying Theorem~\ref{thm:main_incomplete}(ii) to the family $(\mathcal S_n)_{n\in\NN}$, we obtain a sequence $(a_n)_{n\in\NN}\subset\RR$ such that
  \[
   \sum_{n\in\NN}|a_n|<\infty,
  \]
  $e_{a_n}\mathcal S_n\in\mathcal{SC}(H_n)$ for all $n\in\NN$, and
  \(
   \sup_{n\in\NN}\mathcal C(e_{a_n}\mathcal S_n)<\infty.
  \)
  Setting
  \[
   d_1:=a_1-c,\qquad d_n:=a_n\ \ (n\ge2),
  \]
  we obtain
  \[
   \sum_{n\in\NN}|d_n|<\infty,
  \]
  $e_{d_n}\mathcal T_n\in\mathcal{SC}(H_n)$ for all $n\in\NN$, and
  \(
   \sup_{n\in\NN}\mathcal C(e_{d_n}\mathcal T_n)<\infty.
  \)
\end{proof}

We conclude this section with a  counterexample showing that the local boundedness assumption used earlier
is not merely technical. 
The example is elementary in construction but sharp enough to show that, without local boundedness, even very simple tensor-product phenomena may behave in a qualitatively different way, already in the simplest possible setting: tensor products of two one-dimensional Hilbert spaces.


Before that, we recall the construction of an auxiliary function that will be needed for the construction of our example.

\begin{remark}\label{rem:additive}
	Here we recall the construction given in \cite{hamel1905basis} of an additive function $f: \RR \to \RR$ that is not locally bounded. Let $B$ be a Hamel basis of $\RR$ as a vector space over the field $\mathbb Q$, and fix $b \in B$. Then define $f$ on $B$ as
	$$f(b) = 1, \qquad f(a) = 0, \quad a \in B \setminus\{b\},
	$$
	and extend $f$ by linearity to $\RR$. Then $f$ is additive by construction, and since $f^{-1}(\{q\})$ is dense in $\RR$ for every $q \in \mathbb Q$, we obtain that the graph of $f$ is dense in $\RR^2$. In particular, $f$ is not bounded, neither from above nor from below, in any open set.
\end{remark}

\begin{proposition}\label{prop:locboundednes}
	There exist groups of bounded operators $\mathcal T = (T(t))_{t\in \RR} \subset \linearOp(\CC)$ and $\mathcal S = (S(t))_{t\in \RR} \subset \linearOp(\CC)$ such that neither $\mathcal T$ nor $\mathcal S$ is locally bounded (hence they are not in $\mathcal{SQC}(\CC)$) and such that $\mathcal T \otimes \mathcal S$ is the identity group on $\CC \otimes \CC$.
\end{proposition}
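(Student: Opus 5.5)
The plan is to build both groups from the non-locally bounded additive function $f:\RR\to\RR$ of Remark~\ref{rem:additive}. Since $\linearOp(\CC)\cong\CC$, a group of bounded operators on $\CC$ is just a multiplicative homomorphism $\RR\to\CC\setminus\{0\}$. We set
\[
T(t):=e^{f(t)},\qquad S(t):=e^{-f(t)},\qquad t\in\RR,
\]
regarded as multiplication operators on $\CC$.

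First we check the group property. Additivity of $f$ gives $f(0)=0$ and $f(s+t)=f(s)+f(t)$. Hence $T(0)=I$ and $T(s+t)=T(s)T(t)$ for all $s,t\in\RR$, and the same holds for $S$. So $\mathcal T$ and $\mathcal S$ are groups, in fact positive scalar groups.

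Next comes the failure of local boundedness. We have $\|T(t)\|=e^{f(t)}$ and $\|S(t)\|=e^{-f(t)}$. By Remark~\ref{rem:additive}, $f$ is unbounded above and unbounded below on every open set. Therefore $\|T(\cdot)\|$ and $\|S(\cdot)\|$ are unbounded on every interval, including every interval $(0,\varepsilon)$ and every compact subinterval of $(0,\infty)$.

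Membership in $\mathcal{SQC}(\CC)$ would force local boundedness. If $T(t)=R^{-1}Q(t)R$ with $\|Q(t)\|\le e^{\lambda t}$, then $\|T(t)\|\le\|R\|\|R^{-1}\|e^{\lambda t}$, which is bounded on bounded intervals. This contradicts the previous step, so neither $\mathcal T$ nor $\mathcal S$ lies in $\mathcal{SQC}(\CC)$.

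Finally, the tensor product. Identify $\CC\otimes\CC$ with $\CC$ via $a\otimes b\mapsto ab$. Under this identification $T(t)\otimes S(t)$ acts as multiplication by $e^{f(t)}e^{-f(t)}=1$, so $\mathcal T\otimes\mathcal S$ is the identity group.

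There is no real obstacle here. The only point needing care is that "not locally bounded" must hold near every point, in particular on $(0,\infty)$, and not merely somewhere. That is exactly what the density of the graph of $f$ from Remark~\ref{rem:additive} provides.
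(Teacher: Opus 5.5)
Your proposal is correct and is essentially the paper's own proof: the paper also takes $T(t)=e^{f(t)}$ with $f$ from Remark~\ref{rem:additive} and $S(t):=T(-t)=e^{-f(t)}$, so that $T(t)\otimes S(t)=I$. Your explicit argument that membership in $\mathcal{SQC}(\CC)$ would force local boundedness spells out a step the paper only states in parentheses.
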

\begin{proof}
	Let $f: \RR \to \RR$ be the additive, non-locally bounded function given in Remark~\ref{rem:additive}. Define
	$$T(t)z := e^{f(t)} z, \qquad t \in \RR, \, z \in \CC.
	$$
	Set $\mathcal T =(T(t))_{t\in \RR}$ and set also $\mathcal S = (S(t))_{t\in \RR}$ where $S(t) := T(-t)$ for each $t\in \RR$. Note that, since $f$ is additive, $\mathcal S$ and $\mathcal T$ are groups of bounded operators on $\CC$. It is clear that
	$$\|T(t)\| = e^{f(t)}, \qquad \|S(t)\| = e^{-f(t)}, \qquad t\in \RR,
	$$
	and therefore neither $\mathcal T$ nor $\mathcal S$ is locally bounded. 
	Moreover,
	\begin{align*}
		T(t) \otimes S(t) &= e^{f(t)} \otimes e^{-f(t)} = I, \qquad t \in \RR.
	\end{align*}
	Thus $\mathcal T$ and $\mathcal S$ satisfy the claimed properties, finishing the proof.
	
\end{proof}

\subsection{Non-sharpness of the similarity theorem}

The previous subsection gives sufficient and necessary conditions for an
incomplete tensor product semigroup to be similar to a contraction semigroup in
terms of the similarity constants of its factors.  There remains, however, a
genuine gap between the two directions.  After a suitable rescaling of the
factors, Theorem~\ref{thm:main_incomplete} gives, schematically,
\[
\prod_{n\in\mathbb N} \mathcal C( \mathcal T_n)<\infty
\quad\Longrightarrow\quad
\mathcal T\in \mathcal{SC}(H)
\quad\Longrightarrow\quad
\sup_{n\in\mathbb N} \, \mathcal C( \mathcal T_n)<\infty,
\]
where \(\mathcal T\) denotes the corresponding incomplete tensor product semigroup on
the incomplete tensor product space \(H\).

The examples below show that neither implication can be reversed.  The first
example proves that similarity of the infinite tensor product need not come from
a convergent product of the factor similarity constants.  The second shows that
uniform boundedness of the factor similarity constants, even together with
the well-definedness of $\mathcal T$, does not imply similarity
of the incomplete tensor product semigroup to a contraction semigroup.

Thus the product condition in the sufficient part of Theorem~\ref{thm:main_incomplete} is not a mere
technicality.  Even for two-dimensional factors, similarity at the level of the
individual factors is insufficient: one must also control the tensorized
Hilbertian geometries.  This is why the quantitative hypotheses in the infinite
splitting theorem are structural rather than cosmetic.

Before the claimed examples, we collect in the following lemma an elementary but nontrivial fact on similarity constants of projections.

\begin{lemma}\label{lem:Psim}
	Let $H$ be a Hilbert space, and let $P \in \linearOp(H)$ be a projection. Then $P$ is similar to a contraction and
	\begin{equation*}
		\inf\{\|R\|\,\|R^{-1}\|:
		RPR^{-1}\text{ is a contraction}\}
		=	
		\|P\|+\sqrt{\|P\|^2-1}.
	\end{equation*}
\end{lemma}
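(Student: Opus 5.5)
We may assume \(P\neq 0\), since otherwise the formula degenerates (\(P=0\) is trivially a contraction). The plan is to reformulate everything in terms of the geometry of the pair of closed subspaces \(M:=\ran P\) and \(N:=\ker P\), which are complementary: \(H=M\dotplus N\). Set
\[
c:=\sup\{|\langle u,v\rangle| : u\in M,\ v\in N,\ \|u\|=\|v\|=1\}\in[0,1).
\]
This is the cosine of the minimal angle \(\theta\in(0,\pi/2]\) between \(M\) and \(N\). The classical identity is \(\|P\|=1/\sin\theta=(1-c^2)^{-1/2}\), and I would recall it from the block form \(P=\begin{pmatrix} I & X\\ 0&0\end{pmatrix}\) with respect to \(H=M\oplus M^\perp\), where \(\|P\|^2=1+\|X\|^2\). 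A short computation then gives
\[
\|P\|+\sqrt{\|P\|^2-1}=\frac{1+c}{\sqrt{1-c^2}}=\sqrt{\frac{1+c}{1-c}}=\cot(\theta/2).
\]
So the claim reduces to showing that the optimal condition number is \(\sqrt{(1+c)/(1-c)}\).

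\textbf{Lower bound.} If \(RPR^{-1}\) is a contraction, then it is an idempotent contraction, hence an orthogonal projection. Equivalently, \(RM\perp RN\), that is, \(M\perp N\) in the norm \(\|x\|_\sharp:=\|Rx\|\). Take unit vectors \(u\in M\), \(v\in N\) with \(\langle u,v\rangle\) real (after rotating \(v\) by a unimodular scalar) and close to \(c\). Orthogonality in \(\|\cdot\|_\sharp\) gives \(\|u+v\|_\sharp=\|u-v\|_\sharp\). In the original norm we have \(\|u\pm v\|^2=2\pm2\langle u,v\rangle\). Therefore
\[
\|R\|\,\|R^{-1}\|\ \ge\ \frac{\|u+v\|}{\|u-v\|}=\sqrt{\frac{1+\langle u,v\rangle}{1-\langle u,v\rangle}},
\]
and letting \(\langle u,v\rangle\to c\) yields the lower bound.

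\textbf{Upper bound and similarity.} Define the equivalent Hilbertian norm
\[
\|x\|_\sharp^2:=\|Px\|^2+\|(I-P)x\|^2,
\]
so that \(M\perp_\sharp N\) and \(P\) is a \(\sharp\)-orthogonal projection, hence a contraction. What remains is to compute the distortion of \(\|\cdot\|_\sharp\) against \(\|\cdot\|\). For \(x=u+v\) with \(u\in M\), \(v\in N\), we have
\[
\|x\|^2=\|u\|^2+\|v\|^2+2\Real\langle u,v\rangle,\qquad \|x\|_\sharp^2=\|u\|^2+\|v\|^2.
\]
Since \(2|\langle u,v\rangle|\le c\,(\|u\|^2+\|v\|^2)\), this gives
\[
(1-c)\|x\|_\sharp^2\le\|x\|^2\le(1+c)\|x\|_\sharp^2.
\]
Hence the distortion constant of \(\|\cdot\|_\sharp\) is at most \(\sqrt{(1+c)/(1-c)}\), which matches the lower bound. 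Since \(\mathcal C_H(\|\cdot\|_\sharp)=\|R\|\,\|R^{-1}\|\) for the implementing \(R\), this proves both that \(P\) is similar to a contraction and the stated formula for the infimum.

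\textbf{Main obstacle.} The main point needing care is the angle identity \(\|P\|=(1-c^2)^{-1/2}\) in infinite dimensions. I would prove it from the block form: for \(x=u+v\) with \(u\in M\), \(v\in N\), one has \(\|Px\|=\|u\|\), and \(\inf_{v\in N}\|u+v\|=\|u\|\sqrt{1-c_u^2}\), where \(c_u\) denotes the cosine of the angle between \(u\) and \(N\). Taking the supremum over unit \(u\in M\) then gives \(\|P\|^{-2}=1-c^2\). Beyond this identity, only elementary real-part manipulations are needed.
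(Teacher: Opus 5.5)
Your proof is correct, and it takes a different route from the paper.

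\textbf{The paper's route.} The paper writes $P=\begin{pmatrix} I&A\\ 0&0\end{pmatrix}$ relative to $\ran P\oplus(\ran P)^\perp$. It first proves an auxiliary $2\times 2$ fact: the Gram matrix $\begin{pmatrix}1&a\\ a&c\end{pmatrix}$ has minimal distortion $a+\sqrt{1+a^2}$, attained at $c=1+2a^2$, by an eigenvalue computation. Both bounds are derived from this fact.
\begin{itemize}
\item For the upper bound it uses the explicit similarity $S_r=\begin{pmatrix} I&A\\ 0&r\end{pmatrix}$ with $r=\|P\|$, which conjugates $P$ to the orthogonal projection onto $\ran P$. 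It then reduces $\|S_r\|\,\|S_r^{-1}\|$ to two-dimensional pieces spanned by pairs with $Av=au$.
\item For the lower bound it restricts the contractive norm to the invariant planes $\operatorname{span}\{Av_k/\|Av_k\|,v_k\}$.
\end{itemize}

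\textbf{Your route.} You work coordinate-free with the cosine $c$ of the minimal angle between $\ran P$ and $\ker P$. The renorming $\|Px\|^2+\|(I-P)x\|^2$, together with $2|\langle u,v\rangle|\le c(\|u\|^2+\|v\|^2)$, gives the upper bound at once. The identity $\|u+v\|_\sharp=\|u-v\|_\sharp$ gives the lower bound with no optimization at all.

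\textbf{What each buys.} Your version is shorter. It also avoids the reduction to two-dimensional reducing subspaces, which in infinite dimensions tacitly needs a spectral argument for $|A|$: exact pairs with $Av=au$ need not exist, hence the paper's ``up to approximation''. In addition, it shows that the norm $\|h\|_*^2=\|Ph\|^2+\|(I-P)h\|^2$ used later in the non-sharpness example is already an optimal renorming for a projection. The paper's version, in return, exhibits an explicit similarity onto the orthogonal projection with the same range. It also gives an explicit two-dimensional model showing where the constant $\|A\|+\sqrt{1+\|A\|^2}$ comes from.

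The two optimal norms differ in general. The paper's is $\|Px\|^2+\|P\|^2\|Qx\|^2$, with $Q$ the orthogonal projection onto $(\ran P)^\perp$, whereas yours replaces the second term by $\|(I-P)x\|^2$. So the optimal renorming is not unique.
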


\begin{proof}
	We first prove the following claim.
	
	\emph{Claim}. Fix $a>0$ and, for each $c>0$, set
	\[
	\|x\|_c^2:=\langle M_cx,x\rangle_{\CC^2}=x^*M_cx,
	\qquad x\in\CC^2,
	\]
	where
	$$M_{c}:= \begin{pmatrix}
		1 & a \\
		a & c
	\end{pmatrix}.
	$$ 
	Then $\|\cdot\|_c$ is a norm if and only if
	$M_c$ is positive definite, equivalently $c>a^2$, and the distortion constant $\mathcal C_{\CC^2}(\|\cdot\|_{c})$ attains its absolute minimum at $c_{a} := 1+2a^2$, with value
	\begin{equation}\label{eq:Msim}
		\mathcal C_{\CC^2}(\|\cdot\|_{c_a}) = a + \sqrt{1+a^2}.
	\end{equation}
	
	To prove the claim, note first that $M_c$ is positive definite if and only if $c>a^2$. For these values of $c$, the eigenvalues of $M_c$ are given by
	$$\lambda_\pm(c)
	=
	\frac{1+c\pm\sqrt{(c-1)^2+4a^2}}{2},
	$$
	and~\eqref{eq:Msim} follows by minimizing the right-hand side of the following identity
	$$\mathcal C_{\CC^2}(\|\cdot\|_{c}) = \sqrt{\frac{\lambda_+(c)}{\lambda_-(c)}}, \qquad c>a^2.
	$$
	\bigskip
	
	Now we continue with the proof of the lemma. Put $M=\operatorname{Ran}P$ and write $H=M\oplus M^\perp$.  Then $P$ has the
	block form
	\[
	P=\begin{pmatrix} I&A\\ 0&0\end{pmatrix}
	\]
	for some $A\in\mathcal L(M^\perp,M)$.  Hence
	\[
	\|P\|=(1+\|A\|^2)^{1/2}.
	\]
	Indeed, for $u\in M$ and $v\in M^\perp$ one has $P(u+v)=u+Av$, and optimizing
	over $u$ for fixed $v$ gives the displayed formula.
	
	First we obtain the upper bound for the similarity constant.  For $r>0$ set
	\[
	S_r := \begin{pmatrix}   I& A\\0& r\end{pmatrix}.
	\]
	Then
	\[
	S_rPS_r^{-1}=\begin{pmatrix} I&0\\0&0\end{pmatrix},
	\]
	so $S_rPS_r^{-1}$ is the orthogonal projection onto $M$.  Then $\|S_r\| \|S_r^{-1}\|$ 
	is computed on the two-dimensional reducing subspaces generated by pairs $u\in M$, $v\in M^\perp$ with $Av=au$, $a\geq0$, and is therefore reduced
	to the matrices
	\[
	U_r := \begin{pmatrix} 1& a\\0&r\end{pmatrix},
	\qquad 0\leq a\leq\|A\|.
	\]
	and hence to the distortion coefficient of the positive definite matrices
	$$U_r^\ast U_r = \begin{pmatrix}
		1 & a \\
		a & a^2 + r^2
	\end{pmatrix}, \qquad 0 \leq a \leq \|A\|.
	$$
	For each fixed $r>0$, the corresponding condition number is increasing as a function of $a\in[0,\|A\|]$. Hence the supremum over the
	two-dimensional reducing subspaces is obtained, up to approximation, at
	\(a=\|A\|\).
	Then the formula~\eqref{eq:Msim} yields that the optimal choice is $r=(1+\|A\|^2)^{1/2}$ and gives
	\[
	\|S_r\|\,\|S_r^{-1}\| = \sqrt{1+\|A\|^2} + \|A\| =\|P\|+\sqrt{\|P\|^2-1}.
	\]
	This proves the required upper bound.
	
	Conversely, suppose that $RPR^{-1}$ is a contraction.  A contractive projection
	on a Hilbert space is an orthogonal projection, so the Hilbertian norm
	$\|x\|_R:=\|Rx\|$ makes $\ran P$ orthogonal to $\ker P$.
	Choose unit vectors $v_k\in M^\perp$ such that $\|Av_k\|\to\|A\|$.  If
	$Av_k\neq0$, put $u_k=Av_k/\|Av_k\|$ and consider the two-dimensional invariant
	subspace
	\[
	L_k:=\operatorname{span}\{u_k,v_k\}, \qquad k \in \NN.
	\]
	The restriction of $P$ to $L_k$ has, in the orthonormal basis $(u_k,v_k)$, the
	matrix
	\[
	\begin{pmatrix}1&\|Av_k\|\\0&0\end{pmatrix}, \qquad k \in \NN.
	\]
	Hence, up to normalization, there exists $c_k> \|A v_k\|^2$ such that the restricted norm $\|\cdot\|_R|_{L_k}$ is represented by the positive definite matrix
	$$\begin{pmatrix}1&\|Av_k\|\\  \|A v_k\|& c_k \end{pmatrix}, \qquad k \in \NN.
	$$
	Since the distortion of $\|\cdot\|_R|_{L_k}$ is not larger than
	$\|R\|\,\|R^{-1}\|$,  then the two-dimensional Gram-matrix computation~\eqref{eq:Msim} yields
	\[
	\|R\|\,\|R^{-1}\|
	\geq
	(1+\|Av_k\|^2)^{1/2}+\|Av_k\|, \qquad k \in \NN.
	\]
	Letting $k\to\infty$ gives
	\[
	\|R\|\,\|R^{-1}\|
	\geq
	\sqrt{1+\|A\|^2}+\|A\|
	=
	\|P\|+\sqrt{\|P\|^2-1}.
	\]
	This proves the lower bound and hence the projection estimate.
\end{proof}


\begin{theorem}
	There exist a sequence of Hilbert spaces $(H_n)_{n\in\NN}$, a \(C_0\)-sequence \(\boldsymbol y=(y_n)_{n\in\NN}\in\times_{n\in\NN}H_n\), and for each $n\in\NN$, $C_0$-semigroups $\mathcal T_n = (T_n(t))_{t\geq0}$ and $\mathcal S_n = (S_n(t))_{t\geq0}$ on $H_n$, such that the following hold.
	\begin{enumerate}
		\item [(i)] There is no summable sequence $(d_n)_{n\in\NN} \in\ell^1$ with
		$\sum_{n\in\NN} d_n = 0
		$
		such that
		$$\prod_{n\in\NN} \mathcal C (e_{d_n} \mathcal T_n) < \infty.
		$$
		Yet, $\motimes_{n\in\NN}^{\boldsymbol y} \mathcal T_n$ is a well-defined nonzero semigroup satisfying
		$$\motimes_{n\in\NN}^{\boldsymbol y} \mathcal T_n \in \mathcal{SC} \left(\motimes_{n \in \NN}^{\boldsymbol{y}} H_n\right).
		$$
		\item[(ii)] The semigroup $\motimes_{n \in \NN}^{\boldsymbol{y}} \mathcal S_n$ is well-defined,
		 nonzero, and
		$$\motimes_{n\in\NN}^{\boldsymbol y} \mathcal S_n \notin \mathcal{SC} \left(\motimes_{n \in \NN}^{\boldsymbol{y}} H_n\right).
		$$
		Yet, for every $n\in \NN$, $\mathcal S_n \in \mathcal{SC}(H_n)$ and
		$$\lim_{n\to \infty} \mathcal C(\mathcal S_n) = 1,
		$$
		so, in particular, $\sup_{n\in\NN} \mathcal C(\mathcal S_n) < \infty$.
	\end{enumerate}
\end{theorem}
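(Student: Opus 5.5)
The plan is to build both examples from two-dimensional factors $H_n=\CC^2$ whose semigroups interpolate between the identity and a non-orthogonal projection. Lemma~\ref{lem:Psim} then computes the factor similarity constants exactly. Fix the standard basis $e_1,e_2$ of $\CC^2$, and for $a\ge 0$ let
\[
P_a=\begin{pmatrix}1&a\\0&0\end{pmatrix},\qquad Q_a:=I-P_a .
\]
For $\lambda>0$ consider the $C_0$-semigroup
\[
T_{a,\lambda}(t):=P_a+e^{-\lambda t}Q_a,\qquad t\ge 0 .
\]
Any norm making $T_{a,\lambda}$ contractive makes the strong limit $P_a$ contractive. So Lemma~\ref{lem:Psim} gives the lower bound $\mathcal C(T_{a,\lambda})\ge a+\sqrt{1+a^2}$. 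The similarity $S_r$ from the proof of that lemma, with $r=\sqrt{1+a^2}$, turns $P_a$ into an orthogonal projection and $Q_a$ into the complementary orthogonal projection. Hence it turns the whole semigroup into a contraction semigroup, so in fact $\mathcal C(T_{a,\lambda})=a+\sqrt{1+a^2}$. Since $T_{a,\lambda}(t)e_1=e_1$, the reference sequence $y_n=e_1$ gives $\langle T(t)y_n,y_n\rangle=1$, and condition (iii) of Lemma~\ref{lem:nonZeroInfiniteSemigroup} is automatic. Finally, a direct $2\times 2$ computation should show
\[
\|T_{a,\lambda}(t)\|^2-1\asymp a^2\,\min\{1,\lambda t\}^2 .
\]
The point is that the deviation from $1$ is quadratic in $a$, while the similarity constant deviates from $1$ linearly in $a$.

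For (ii), take $\mathcal S_n:=T_{a_n,\lambda_n}$ with $a_n\to0$, $\sum_n a_n^2=\infty$, and $\lambda_n\downarrow 0$ so fast that $\sum_n a_n^2\lambda_n^2<\infty$. By the norm estimate, $\prod_n\|S_n(t)\|<\infty$ for every $t$. Since $S_n(t)\neq0$, Lemma~\ref{lem:nonZeroInfiniteSemigroup} shows that $\otimes^{\boldsymbol y}_n\mathcal S_n$ is a well-defined nonzero semigroup, with $\|\otimes^{\boldsymbol y}_nS_n(t)\|=\prod_n\|S_n(t)\|$. As $t\to\infty$ each factor norm increases to $\sqrt{1+a_n^2}$. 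Monotone convergence then gives $\sup_t\prod_n\|S_n(t)\|\ge\prod_n\sqrt{1+a_n^2}=\infty$. So the tensor semigroup is unbounded, hence not in $\mathcal{SC}$, while $\mathcal C(\mathcal S_n)=a_n+\sqrt{1+a_n^2}\to1$.

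For (i), the same factors with $\lambda_n=\lambda$ fixed and $a_n$ satisfying $\sum a_n=\infty$, $\sum a_n^2<\infty$ are the natural candidates. First, every admissible rescaling must be trivial. Indeed $\omega(\mathcal T_n)=0$, so if $\sum d_n=0$ with $d_n\neq0$ for some $n$, then some $d_n>0$ and $e_{d_n}\mathcal T_n$ is unbounded, so $\mathcal C(e_{d_n}\mathcal T_n)=\infty$. Thus only $d_n\equiv0$ can give a finite product, and then $\prod_n\mathcal C(\mathcal T_n)=\prod_n(a_n+\sqrt{1+a_n^2})=\infty$ because $\sum a_n=\infty$. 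Well-definedness and boundedness of the tensor semigroup follow as before from $\sum a_n^2<\infty$.

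The main obstacle is proving that $\otimes_n^{\boldsymbol y}\mathcal T_n$ is nevertheless similar to a contraction semigroup. The tensor semigroup is
\[
\sum_{k\ge0}e^{-\lambda kt}E_k,\qquad E_k=\sum_{|F|=k}Q_F\otimes P_{F^c},
\]
a combination of commuting idempotents. I would first try to show that the Boolean algebra generated by the idempotents $\otimes^{\boldsymbol y}(Q_{a_n}\text{ on }F,\ P_{a_n}\text{ off }F)$ is uniformly bounded on the incomplete product. The route is to write $P_{a_n}=R_n^{-1}\Pi R_n$ with $R_n=I+a_nN$, where $N=\begin{pmatrix}0&1\\0&0\end{pmatrix}$ is nilpotent. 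The infinite product $\otimes R_n$ is unbounded, since its norm is $\prod\|R_n\|=\infty$ by Theorem~\ref{incompleteOperatorTh}. However, the conjugating operator need not be a product operator. The hope is to use instead an operator of the form $\exp(\sum_n a_nN_n)$ restricted to a suitable graded subspace, and to control it by an $\ell^2$ (Hilbert--Schmidt-type) estimate in $(a_n)$. Once boundedness of this Boolean algebra is established, the classical Wermer--Mackey theorem yields a single similarity making all these idempotents orthogonal, and hence $\otimes^{\boldsymbol y}\mathcal T_n$ contractive. If this route fails, the fallback is to pair consecutive factors so that each pair has similarity constant $1+O(a_n^2)$, and then apply Theorem~\ref{thm:main_incomplete}(i) to the paired family.
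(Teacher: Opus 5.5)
Your factors are exactly the paper's ($T_{a,\lambda}=V_{a,\lambda}$). Your computation of $\mathcal C(T_{a,\lambda})$ and your argument that only $d_n\equiv0$ is admissible in (i) are correct. Your proof of (ii) via unboundedness of $t\mapsto\prod_n\|S_n(t)\|$ is a valid, slightly shorter alternative to the paper's finite-block projections $E_N$, up to the fixable estimate noted below.

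The genuine gap is the decisive assertion of (i), that $\otimes^{\boldsymbol y}_n\mathcal T_n\in\mathcal{SC}$. You only offer hopes for it, and both routes you suggest fail.

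\emph{Wermer--Mackey route.} The Boolean algebra generated by the atoms $E_F$ is not bounded. On the invariant subspace $(\otimes_{n\le N}\CC^2)\otimes(\otimes_{n>N}y_n)$, let $E_N^{+}$ and $E_N^{-}$ be the sums of the atoms with $F\subseteq\{1,\dots,N\}$ and $|F|$ even, respectively odd. Then $E_N^+-E_N^-=\otimes_{n\le N}(2P_{a_n}-I)$, whose norm is $\prod_{n\le N}(a_n+\sqrt{1+a_n^2})\to\infty$ precisely because $\sum_n a_n=\infty$. So no single similarity can orthogonalize all atoms, and Wermer--Mackey cannot be invoked.

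\emph{Exponential conjugator.} Since the $N_n$ commute and square to zero, $\exp(\sum_n a_nN_n)=\otimes_n(I+a_nN_n)$, which is the same unbounded operator. On subspaces of bounded degree it is bounded, but only with constants growing with the degree.

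\emph{Pairing fallback.} The limit projection of a finite block $G$ is $\otimes_{n\in G}P_{a_n}$, of norm $\prod_{n\in G}(1+a_n^2)^{1/2}$. Lemma~\ref{lem:Psim} then forces $\mathcal C(\otimes_{n\in G}\mathcal T_n)\ge 1+(\sum_{n\in G}a_n^2)^{1/2}$, which is linear, not quadratic, in $a$. For pairs the product of these constants still diverges.

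The missing idea is that you need not orthogonalize the atoms, only the two-block decomposition $\ran P\oplus\ker P$, where $P=\lim_{t\to\infty}T(t)=\otimes^{\boldsymbol y}_nP_{a_n}$. The paper's argument runs as follows.
\begin{itemize}
\item Identify $H$ with square-free polynomials via $e_F\leftrightarrow z_F$. Then $T(t)f=f(e^{-t}\boldsymbol z+(1-e^{-t})\boldsymbol\alpha)$ and $Pf=f(\boldsymbol\alpha)\mathbf 1$.
\item On the core $\mathcal P$ the generator is $-(D-B_{\boldsymbol\alpha})$, with $D$ the degree operator and $B_{\boldsymbol\alpha}=\sum_n\alpha_n\partial_n$; this is your $\sum_n a_nN_n$.
\item The key estimate is $\|B_{\boldsymbol\alpha}D^{-1/2}\|\le\|\boldsymbol\alpha\|_2$. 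Combined with $\|f\|\le M\langle Df,f\rangle^{1/2}$ for $f\in\mathcal P\cap\ker P$, where $M=\|P\|$, it gives $\operatorname{Re}\langle(D-B_{\boldsymbol\alpha})f,f\rangle\ge(1-\|\boldsymbol\alpha\|_2M)\langle Df,f\rangle\ge0$ whenever $\|\boldsymbol\alpha\|_2M<1$.
\item Hence $T(t)$ is contractive on $\ker P$ in the original norm, and $\|Ph\|^2+\|(I-P)h\|^2$ is the required equivalent norm. Without the smallness condition, split off finitely many factors.
\end{itemize}

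Finally, a correction in (ii). As $t\to0^+$ one has $\|T_{a,\lambda}(t)\|^2=1+\lambda t(\sqrt{1+a^2}-1)+O(t^2)$, so the deviation is $\asymp a^2\min\{1,\lambda t\}$, not its square. You therefore need $\sum_n a_n^2\lambda_n<\infty$ rather than $\sum_n a_n^2\lambda_n^2<\infty$. For instance, $a_n^2=1/n$ and $\lambda_n=1/\log(n+1)$ satisfy your condition but give $\prod_n\|S_n(t)\|=\infty$. Since you choose $\lambda_n$, this is easily fixed.
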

\begin{proof}
	We first collect some definitions and properties that will be used in the proofs of both (i) and (ii). Put
	\[
	H_n=\CC^2,
	\qquad y_n=e_0 \in \CC^2,\qquad n\in \NN,
	\]
	and define
	\[
	H:=\motimes_{n\in\NN}^{\boldsymbol{y}}H_n.
	\]
	Fix positive numbers $\alpha,\varepsilon>0$, and define
	\[
	Q_\alpha:=
	\begin{pmatrix}
		1&\alpha\\
		0&0
	\end{pmatrix},
	\qquad
	V_{\alpha,\varepsilon}(t) :=e^{-\varepsilon t} I  + (1-e^{-\varepsilon t}) Q_\alpha,
	\qquad t\geq0.
	\]
	Equivalently,
	\[
	V_{\alpha,\varepsilon}(t)=
	\begin{pmatrix}
		1&\alpha (1- e^{-\varepsilon t})\\
		0& e^{-\varepsilon t}
	\end{pmatrix}, \qquad t\geq0.
	\]
	Then \(\mathcal V_{\alpha,\varepsilon} = (V_{\alpha,\varepsilon}(t))_{t\geq0}\) is a uniformly continuous \(C_0\)-semigroup, \(V_{\alpha,\varepsilon}(0)=I\), and
	\(V_{\alpha,\varepsilon}(t)e_0=e_0\) for all \(t\geq0\). Moreover, using the definition of $V_{\alpha,\varepsilon}(t)$ and observing that 
	$$\lim_{t\to \infty} V_{\alpha,\varepsilon}(t) = Q_\alpha,
	$$
	it follows that an equivalent Hilbertian norm $\|\cdot\|_{\rm{eq}}$ in $\CC^2$ makes $\mathcal V_{\alpha,\varepsilon}$ contractive if and only if it makes the projection $Q_\alpha$ contractive. Using Lemma~\ref{lem:Psim} and the identity $\|Q_\alpha\|^2 = 1+\alpha^2$, we obtain that
	\begin{equation}\label{eq:simValpha}
		\mathcal C (\mathcal V_{\alpha,\varepsilon}) = \|Q_\alpha\| + \sqrt{ \|Q_\alpha\|^2 -1} = \alpha + \sqrt{1+\alpha^2}.
	\end{equation} \bigskip

		(i) Let \(\boldsymbol{\alpha} := (\alpha_n)_{n\in\NN}\) be a real non-negative sequence such that
		\[
		\boldsymbol{\alpha} \in\ell^2\setminus\ell^1
		\]
		and, in addition,
		\[
		\|\boldsymbol{\alpha}\|_2
		\prod_{n\in\NN}(1+\alpha_n^2)^{1/2}<1 .
		\]
		Then, set $P_n$ and $\mathcal T_n = (T_n(t))_{t\geq0}$ as
		$$P_n:= Q_{\alpha_n}, \qquad T_n(t) := V_{\alpha_n,1}(t) =
		\begin{pmatrix}
			1&\alpha_n (1- e^{- t})\\
			0& e^{- t}
		\end{pmatrix}, \qquad t\geq0, \, n\in\NN.
		$$
		We first check that their incomplete tensor product is well-defined and nonzero. The inequality
		\[
		\|T_n(t)\|
		\leq
		\e^{-t}\|I\|+(1-\e^{-t})\|P_n\|
		\leq
		\|P_n\|
		=
		(1+\alpha_n^2)^{1/2},
		\qquad t\geq0,
		\]
		yields that
		\[
		\prod_{n\in\NN} \maxone{\|T_n(t)\|}
		\leq
		\prod_{n\in\NN}(1+\alpha_n^2)^{1/2}<\infty,
		\qquad t\geq0,
		\]
		see Lemma~\ref{lem:vn1}(i). Moreover, $\langle T_n(t)e_0,e_0\rangle=1$ for all $n\in \NN$ and $t\geq 0$. Using Proposition~\ref{strongContInfProp}, we obtain that $\mathcal T = (T(t))_{t\geq0}$, where
		\[
		T(t):=\motimes_{n\in\NN}^{\boldsymbol{y}}T_n(t), \qquad t \geq 0,
		\]
		is a well-defined nonzero \(C_0\)-semigroup on $H$. We set $\mathcal T = (T(t))_{t\geq0}$. Similarly, using Theorem~\ref{incompleteOperatorTh} we have that
		$$P:= \motimes_{n\in \NN}^{\boldsymbol{y}} P_n \in \linearOp(H)
		$$
		is a well-defined nonzero projection with
		$$\|P\| = \prod_{n\in \NN} \|P_n\| = \prod_{n\in \NN} \sqrt{1+\alpha_n^2} =: M.
		$$
		Moreover, we have that,
		\begin{align}\label{eq:PTcommute}
			P = \lim_{t\to \infty} T(t),  
		\end{align}
		in the strong operator topology. Indeed, the above formula holds when acting on finitely $\boldsymbol{y}$-elementary tensors and, using the uniform boundedness of $(T(t))_{t\geq0}$, it extends to $H$ by linearity and density.

		Since $\omega(\mathcal T_n) = 0$ for all $n\in \NN$, it follows that $e_d \mathcal T_n \in \mathcal{SC}(\CC^2)$ implies that $d\leq 0$. Hence, the null sequence is the only balancing sequence $(d_n)_{n\in\NN} \in \ell^1$ satisfying
		$$\sum_{n\in\NN} d_n = 0
		$$
		and $e_{d_n} \mathcal T_n \in \mathcal{SC}(\CC^2)$ for all $n\in\NN$. As $\boldsymbol{\alpha} \notin \ell^1$, the identity~\eqref{eq:simValpha} yields that
		$$\prod_{n\in\NN} \mathcal C (\mathcal T_n) = \prod_{n\in\NN} \left(\alpha_n  + \sqrt{1 +\alpha_n^2} \right) \geq \prod_{n\in\NN} (1+\alpha_n) = \infty.
		$$
		
		Thus, we finish the proof of (i) if we show that $\mathcal T \in \mathcal{SC}(H)$.
		 We
		begin with a concrete model of the incomplete tensor product.  Recall that
		\({\finiteN}\) denotes the net of all finite subsets of \(\NN\).  For
		\(F\in{\finiteN}\), set
		\[
		e_F:=\motimes_{n\in\NN}x_n^{(F)},
		\qquad
		x_n^{(F)}=
		\begin{cases}
			e_1, & n\in F,\\
			e_0, & n\notin F .
		\end{cases}
		\]
		As shown in \cite[Theorem V]{von1939infinite}, these vectors form an orthonormal basis of $H$.
		Indeed, $H$ is the closed linear span of finitely $\boldsymbol{y}$-elementary tensors, and these tensors are linear combinations of the vectors
		\(e_F\).  Moreover,
		\[
		\langle e_F,e_G\rangle
		=\prod_{n\in F\cup G}\langle x_n^{(F)},x_n^{(G)}\rangle
		=\delta_{F,G} .
		\]
		Thus
		\[
		H\simeq \ell^2({\finiteN})
		=
		\left\{
		(c_F)_{F\in{\finiteN}}:
		\sum_{F\in{\finiteN}}|c_F|^2<\infty
		\right\}
		\]
		with norm
		\[
		\left\|\sum_{F\in{\finiteN}}c_F e_F\right\|^2
		=
		\sum_{F\in{\finiteN}}|c_F|^2 .
		\]
		It is more convenient for our purposes to use a square-free polynomial
		notation, which is equivalent to the above one.  For each $F \in \finiteN$, we identify \(e_F\) with the monomial
		\[
		z_F:=\prod_{n\in F}z_n,
		\qquad z_\varnothing=1 .
		\]
		Thus a vector of \(H\) is written formally as
		\[
		f=\sum_{F\in{\finiteN}}c_F z_F,
		\qquad
		\|f\|^2=\sum_{F\in{\finiteN}}|c_F|^2 .
		\]
		The finite-coordinate core is
		\[
		{\mathcal P}:=\operatorname{span}\{z_F:F\in{\finiteN}\},
		\]
		where $z_{\varnothing} := \mathbf{1}$, i.e., $z_{\varnothing}$ is the constant function with value $1$. Thus \({\mathcal P}\) is the space of finite linear combinations of square-free
		monomials, or equivalently the space of polynomial functions depending on only
		finitely many of the variables \(z_n\), with each variable occurring with degree
		at most one, and its closure under the above norm is \(H\). 
		
		Put
		\[
		\alpha_F:=\prod_{n\in F}\alpha_n,
		\qquad \alpha_\varnothing:=1 .
		\]
		Since \(\boldsymbol{\alpha}\in\ell^2\), then Lemma~\ref{lem:vn1} implies that
		\[
		\sum_{F\in{\finiteN}}\alpha_F^2
		=
		\prod_{n\in\NN}(1+\alpha_n^2)
		<\infty .
		\]
		Hence evaluation at \( \boldsymbol{\alpha}\), defined on $\mathcal P$ as
		$$f(\boldsymbol{\alpha}) = \sum_{F \in \finiteN} c_F \alpha_F, \qquad f = \sum_{F \in \finiteN} c_F z_F \in \mathcal P, 
		$$
		extends to a bounded functional on \(H\).
		Furthermore, the projection $P$ acts on the orthonormal basis $(z_F)_{F\in \finiteN}$ as
		\[
		P z_F = \left(\motimes_{n\in\NN}^{\boldsymbol{y}}P_n\right)z_F
		=
		\left(\prod_{n\in F}\alpha_n\right) \mathbf{1} = \alpha_F \mathbf{1}, \qquad F\in{\finiteN}.
		\]
		Thus, by linearity and density, \(Pf=f(\boldsymbol{\alpha}) \mathbf 1\) first for
		\(f\in\mathcal P\), and then for all \(f\in H\), where \(f(\boldsymbol{\alpha})\) denotes
		the continuous extension of the evaluation functional.
		Consequently 
		$$\ker P=\{f\in H:f(\boldsymbol{\alpha})=0\}.
		$$
		Similarly, on \({\mathcal P}\), the semigroup $\mathcal T$ has the composition form
		\begin{equation}\label{eq:Tcomp}
		T(t)f =f(\e^{-t}\boldsymbol{z}+(1-\e^{-t})\boldsymbol{\alpha}), \qquad f \in \mathcal P, \, t\geq 0,
		\end{equation}
		where $\boldsymbol{z}$ stands for $(z_n)_{n\in \NN}$ as free variables. Indeed, for a monomial \(z_F\) with $F \in \finiteN$,
		\[
		\begin{aligned}
			T(t)z_F
			&=
			\prod_{n\in F}
			\bigl(\e^{-t}z_n+(1-\e^{-t})\alpha_n\bigr)  
			=
			z_F(\e^{-t} \boldsymbol{z}+(1-\e^{-t})\boldsymbol{\alpha}), \qquad t \geq 0,
		\end{aligned}
		\]
		which proves the formula by linearity on \({\mathcal P}\).

		We next compute the generator of $\mathcal T$ on the finite-coordinate core $\mathcal P$. Note that $\mathcal P$ is contained in the domain of the generator since, using the finite-tensor product embedding~\eqref{eq:finite_tensors}, it follows that $\mathcal T$ acts on each fixed element of $\mathcal P$ as the tensor product of a finite number of $(\mathcal T_n)_{n\in \NN}$. Differentiating~\eqref{eq:Tcomp}, at \(t=0\), we obtain
		\[
		\frac{d}{dt}T(t)f\bigg|_{t=0}
		=
		\sum_{n\in\NN}(\alpha_n-z_n)\partial_n f, \qquad f \in \mathcal P.
		\]
		Thus the generator acts on $\mathcal P$ as \(-(D - B_{\boldsymbol{\alpha}})\), where \(D\) is the degree operator
		\[
		Dz_F =|F|z_F, \qquad F \in \finiteN,
		\]
		and \(B_{\boldsymbol{\alpha}}\) is the coordinate-deletion operator
		\[
		B_{\boldsymbol{\alpha}} z_F
		=
		\sum_{n\in F}\alpha_n z_{F\setminus\{n\}}, \qquad F \in \finiteN.
		\]
		with the convention $B_{\boldsymbol{\alpha}} \mathbf{1} = 0$.
		
		Since $D$ is a positive operator on $\mathcal P$ with $\ker D = \CC \langle \mathbf{1}\rangle$, we can consider its inverse square root $D^{-1/2}$ on $\mathcal P$ with the convention \(D^{-1/2} \mathbf{1} := 0\).
		Then, the elementary estimate
		\begin{equation}\label{eq:BalphaDestimate}
		\|B_{\boldsymbol{\alpha}} D^{-1/2} f\|\leq \|\boldsymbol{\alpha}\|_2 \|f\|, \qquad f \in \mathcal P,
		\end{equation}
		is proved as follows. If
		\[
		f_m=\sum_{|F|=m}c_Fz_F
		\]
		belongs to the homogeneous subspace of $\mathcal P$ of degree \(m\geq1\), then $D^{-1/2} f_m = m^{-1/2} f_m$. Thus,
		\[
		B_{\boldsymbol{\alpha}} D^{-1/2}f_m
		=
		\frac1{\sqrt m}\sum_{|G|=m-1}
		\left(\sum_{n\notin G}\alpha_n c_{G\cup\{n\}}\right)z_G,
		\]
		which is a homogeneous polynomial of degree $m-1$. By Cauchy's inequality,
		\[
		\begin{aligned}
			\|B_{\boldsymbol{\alpha}} D^{-1/2}f_m\|^2
			&\leq
			\frac1m \sum_{|G|=m-1}
			\left(\sum_{n\notin G}\alpha_n^2\right)
			\left(\sum_{n\notin G}|c_{G\cup\{n\}}|^2\right)  
			\leq
			\frac{\|\boldsymbol{\alpha}\|_2^2}{m}
			\sum_{|G|=m-1}\sum_{n\notin G}|c_{G\cup\{n\}}|^2 .
		\end{aligned}
		\]
		Each set \(F\) with \(|F|=m\) occurs exactly \(m\) times in the last double
		sum.  Hence
		\[
		\|B_{\boldsymbol{\alpha}} D^{-1/2}f_m\|^2
		\leq
		\|\boldsymbol{\alpha}\|_2^2\|f_m\|^2.
		\]
		Since the homogeneous subspaces are mutually orthogonal, the same estimate
		holds on \({\mathcal P}\), which proves~\eqref{eq:BalphaDestimate}.
		
		We will now prove that $D - B_{\boldsymbol{\alpha}}$ is an accretive operator on $\mathcal P \cap \ker P$. To see this, fix \(f\in{\mathcal P}\cap\ker P\) and write
		\[
		f=c_\varnothing\mathbf 1+g,
		\qquad g\perp\mathbf 1.
		\]
		Since \(\ran P=\mathbb C \mathbf 1\), the condition \(g\perp1\) means
		that \(g\in(\ran P)^\perp\), where the orthogonal complement is
		taken with respect to the original Hilbert norm.
		Since \(Pf = f(\boldsymbol{\alpha}) \mathbf{1} =0\), one has
		\[
		c_\varnothing=-g(\boldsymbol{\alpha}),
		\]
		or equivalently, $Pg = -c_{\varnothing} \mathbf{1}$. As $P$ is a projection, the norm of its restriction to $(\ran P)^\perp$ is equal to $(M^2-1)^{1/2}$.
		Therefore, $|c_\varnothing|^2 \leq (M^2-1)\|g\|^2$, and consequently
		\[
		\|f\|^2=|c_\varnothing|^2+\|g\|^2
		\leq
		M^2\|g\|^2.
		\]
		Since \(g\) is orthogonal to the constants and \(D\ge I\) on the orthogonal
		complement of the constants, we have
		\[
		\|g\|^2\le \langle Dg,g\rangle=\langle Df,f\rangle .
		\]
		Together with \(\|f\|^2\le M^2\|g\|^2\), this gives
		\[
		\|f\|\le M\langle Df,f\rangle^{1/2}.
		\]
		Combining this with \(\|B_{\boldsymbol{\alpha}} D^{-1/2}\|\le \|{\boldsymbol{\alpha}}\|_2\) and
		\(B_{\boldsymbol{\alpha}} \mathbf 1=0\), we obtain
		\[
		\begin{aligned}
			|\langle B_{\boldsymbol{\alpha}} f,f\rangle|
			&=
			|\langle B_{\boldsymbol{\alpha}} D^{-1/2}D^{1/2}f,f\rangle|  
			\le
			\|{\boldsymbol{\alpha}}\|_2\langle Df,f\rangle^{1/2}\|f\|      
			\le
			\|{\boldsymbol{\alpha}}\|_2 M \langle Df,f\rangle .
		\end{aligned}
		\]
		Consequently,
		\[
		\operatorname{Re}\langle (D-B_{\boldsymbol{\alpha}})f,f\rangle
		\ge
		(1-\|{\boldsymbol{\alpha}}\|_2M)\langle Df,f\rangle
		\ge 0 .
		\]
		Since this holds for every \(f\in\mathcal P\cap\ker P\), the required
		accretivity estimate follows.

		Next, note that \({\mathcal P}\cap\ker P\) is dense in \(\ker P\).  Indeed, if
		\(h\in\ker P\) and \(p_k\in{\mathcal P}\) with \(p_k\to h\), then
		\[
		{\mathcal P}\cap\ker P \ni  p_k-Pp_k\to h-Ph=h .
		\]
		Because of~\eqref{eq:PTcommute}, $P$ and $\mathcal T$ commute, and then space \(\ker P\) is invariant under \(\mathcal T\). 
		For \(f\in{\mathcal P}\cap\ker P\), the invariance of this core under \(T(t)\)
		and the accretivity estimate give
		\[
		\frac{d}{dt}\|T(t)f\|^2
		=
		-2\operatorname{Re}
		\langle (D - B_{\boldsymbol{\alpha}}) T(t)f,T(t)f\rangle
		\leq0 .
		\]
		Thus \(T(t)\) is contractive on \({\mathcal P}\cap\ker P\), and by density and
		boundedness it is contractive on all of \(\ker P\).
		
		Finally define an equivalent Hilbertian norm on \(H\) by
		\[
		\|h\|_*^2
		=
		\|Ph\|^2+\|(I-P)h\|^2, \qquad h \in H .
		\]
		In this norm the decomposition $H=\operatorname{Ran}P\oplus\ker P$
		is orthogonal.  Since \(T(t)\) is the identity on \(\operatorname{Ran}P\) and is
		contractive on \(\ker P\), we have
		\[
		\|T(t)h\|_*\leq \|h\|_*,
		\qquad t\geq0,\, h\in H.
		\]
		Therefore
		\[
		\mathcal T\in \mathcal{SC}(H),
		\]
		completing the proof of (i).
		\bigskip

		(ii) Let $(\varepsilon_n)_{n\in\NN}$ and $(\alpha_n)_{n\in\NN}$ be sequences of positive real numbers such that
		$$(\varepsilon_n)_{n\in\NN} \in \ell^1, \qquad (\alpha_n)_{n\in\NN} \notin \ell^2, \qquad \lim_{n\to \infty} \alpha_n = 0.
		$$
		Then, define $\mathcal S_n =(S_n(t))_{t\geq0}$ by
		$$S_n(t) := V_{\alpha_n, \varepsilon_n}(t) = \begin{pmatrix}
			1& \alpha_n(1- e^{-\varepsilon_n t})\\
			0& e^{-\varepsilon_n t}
		\end{pmatrix}, \qquad n\in \NN, \, t \geq 0,
		$$
		and set $P_n := Q_{\alpha_n}$. Using~\eqref{eq:simValpha}, we obtain that each $\mathcal S_n$ is similar to a contraction on $\CC^2$ and that
		$$\mathcal C (\mathcal S_n) = \alpha_n + \sqrt{1+\alpha_n^2}, \qquad n\in \NN.
		$$
		Thus, $\lim_{n\to \infty} \mathcal C (\mathcal S_n) =1$. Moreover, decomposing $S_n(t)$ into its diagonal and its off-diagonal part, it follows that for every $T>0$ there exists $C_T>0$ such that
		$$\|S_n(t)\| \leq 1 + C_T \varepsilon_n \alpha_n, \qquad n\in \NN, \, t \in [0,T].
		$$
		Since $(\varepsilon_n)_{n\in\NN} \in \ell^1$ and $(\alpha_n)_{n\in\NN}$ is bounded, the above implies
		$$\prod_{n\in\NN} \maxone{\|S_n(t)\|} < \infty, \qquad t\geq 0. 
		$$
		Then, we obtain from Proposition~\ref{strongContInfProp} that $\mathcal S = (S(t))_{t\geq0}$, given by
		$$S(t) := \motimes_{n\in\NN}^{\boldsymbol{y}} S_n(t), \qquad t \geq 0,
		$$
		is a $C_0$-semigroup on $H$.
		
		We claim that $\mathcal S \notin \mathcal{SC}(H)$. Indeed, assume by contradiction that $\mathcal S \in \mathcal{SC}(H)$. For any $N\in \NN$, we regard the finite tensor product $\otimes_{n=1}^N H_n$ as a subspace of $H$ through the unitary isomorphism \eqref{eq:finite_tensors}. 
		Under this identification, the restriction of \(\mathcal S\) to
		\(\motimes_{n=1}^N H_n\) is unitarily equivalent to
		\(
		\motimes_{n=1}^N \mathcal S_n .
		\)
		This yields that $\otimes_{n=1}^N \mathcal S_n \in \mathcal{SC} (\otimes_{n=1}^N H_n)$ and
		\begin{equation}\label{eq:NfiniteSim}
			\mathcal C \left(\motimes_{n=1}^N \mathcal S_n\right) \leq \mathcal C \left( \mathcal S \right), \qquad N \in \NN.
		\end{equation}
		Now recall that, for all $n\in \NN$, we have $\lim_{t\to\infty} S_n(t) = P_n$. Hence, if for each $N\in \NN$ we define $E_N := \otimes_{n=1}^N P_n$, then
		\begin{equation*}
			E_N = \lim_{t\to\infty} \motimes_{n=1}^N S_n(t).
		\end{equation*}
		Consequently $E_N$ is similar to a contraction and its similarity constant $C_N$ satisfies
		\begin{equation}\label{eq:notSim2}
			C_N \leq \mathcal C \left(\motimes_{n=1}^N \mathcal S_n\right), \qquad N \in \NN.
		\end{equation}
		On the other hand, since $E_N$ is itself a projection, Lemma~\ref{lem:Psim} yields that
		\begin{equation*}
			C_N =  \|E_N\| + \sqrt{\|E_N\|^2-1} \geq \|E_N\| = \prod_{n=1}^N \|P_n\| =  \prod_{n=1}^N (1+\alpha_n^2)^{1/2} , \qquad n\in \NN.
		\end{equation*}
		Using Lemma~\ref{lem:vn1}(i), then the assumption $(\alpha_n)_{n\in\NN} \notin \ell^2$ implies that
		$$\lim_{N\to \infty} C_N = \infty.
		$$
		This contradicts the upper bounds given in~\eqref{eq:NfiniteSim} and~\eqref{eq:notSim2}. Thus $\mathcal S\notin \mathcal {SC}(H)$ as claimed, finishing the proof.


\end{proof}

\subsection{Similarity on complete infinite tensor products}\label{CompleteSubSect}

We then turn to the complete infinite tensor product. In contrast with the incomplete case, this
setting is much more rigid from the viewpoint of continuity and invariance, and the purpose of
this subsection is to clarify how that rigidity interacts with similarity.

In fact, as explained in Subsection~\ref{subsec:complete-regularity} (see Examples~\ref{ex:strong-comp-counter-1} and~\ref{ex:strong-comp-counter-2}), the complete tensor product semigroup may lose most of the regularity properties of each of its factors. Despite this fact, we prove below Theorem~\ref{split_infinite}, which is the counterpart of Theorem~\ref{thm:main_incomplete} in the setting of complete infinite tensor products. 
This is possible because the arguments used in the proof of Theorem~\ref{thm:main_incomplete} do not rely on any regularity properties of either the factors or their incomplete tensor product. 
Although the proof adapts to the setting of complete tensor products with only minor modifications, these modifications involve several technical aspects related to the distinction between incomplete and complete tensor products. Since the subtleties of these constructions are somewhat delicate, we have chosen to include the full proof of Theorem~\ref{thm:main_complete} for the convenience of the reader.

\begin{theorem}\label{thm:main_complete}\label{split_infinite}
 Let $(H_n)_{n\in\NN}$ be a family of Hilbert spaces, and for each $n\in\NN$ let $\mathcal T_n=(T_n(t))_{t\ge0}\subset\linearOp(H_n)$ be a semigroup locally bounded on $(0,\infty)$. Assume that $\motimes_{n\in\NN}\mathcal T_n\subset\linearOp\bigl(\motimes_{n\in\NN}H_n\bigr)$ is a nonzero semigroup. Then the following assertions hold.
 \begin{enumerate}
  \item[(i)] If there exists a sequence $(d_n)_{n\in\NN}\subset\RR$ such that
  \begin{equation}\label{eq:dnBis}
   \sum_{n\in\NN}|d_n|<\infty,
   \qquad
   \sum_{n\in\NN} d_n=0,
  \end{equation}
  $e_{d_n}\mathcal T_n\in\mathcal{SC}(H_n)$ for all $n\in\NN$, and
  \[
   \prod_{n\in\NN}\mathcal C(e_{d_n}\mathcal T_n)<\infty,
  \]
  then
  \[
   \motimes_{n\in\NN}\mathcal T_n
   \in
   \mathcal{SC}\Bigl(\motimes_{n\in\NN}H_n\Bigr).
  \]
  
  \item[(ii)] Conversely, if
  \[
   \motimes_{n\in\NN}\mathcal T_n
   \in
   \mathcal{SC}\Bigl(\motimes_{n\in\NN}H_n\Bigr),
  \]
  then there exists a sequence $(d_n)_{n\in\NN}\subset\RR$ satisfying \eqref{eq:dnBis} such that
  \[
   e_{d_n}\mathcal T_n\in\mathcal{SC}(H_n)\qquad\text{for all } n\in\NN,
  \]
  and
  \[
   \sup_{n\in\NN}\mathcal C(e_{d_n}\mathcal T_n)<\infty.
  \]
 \end{enumerate}
\end{theorem}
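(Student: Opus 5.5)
The plan is to transfer the proof of Theorem~\ref{thm:main_incomplete} to the complete tensor product, using the complete-space analogues of the tools invoked there.

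For (i), set $C_n:=\mathcal C(e_{d_n}\mathcal T_n)$ and choose equivalent Hilbertian norms $\|\cdot\|_n^\sim$ on $H_n$ with $\|h\|\le\|h\|_n^\sim\le C_n\|h\|$ that make $e_{d_n}\mathcal T_n$ contractive. By Proposition~\ref{prop:equivalent-infinite-tensor}(i), the complete tensor norm built from the $\|\cdot\|_n^\sim$ is an equivalent Hilbertian norm on $\otimes_{n\in\NN}H_n$ with distortion $\prod_n C_n<\infty$. In the tensorized geometry, Proposition~\ref{prop:nakagami} (applied to the spaces $\mathscr H_n$) gives
\[
\Bigl\|\otimes_{n\in\NN} e^{d_nt}T_n(t)\Bigr\|_\sim=\prod_{n\in\NN}\|e^{d_nt}T_n(t)\|_n^\sim\le 1.
\]
Because $\sum_n d_n=0$ with $(d_n)\in\ell^1$, the scalar factors multiply to $1$ on every elementary tensor. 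Hence $\otimes_n e^{d_nt}T_n(t)=\otimes_nT_n(t)$, and this proves (i).

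For (ii), first record the complete-space facts the argument needs.
\begin{itemize}
\item Nonvanishing gives $\tau_0>0$ with $\otimes_nT_n(t)\neq0$ for $t\le\tau_0$, and by Proposition~\ref{prop:nakagami}, $\|\otimes_nT_n(t)\|=\prod_n\|T_n(t)\|\in(0,\infty)$.
\item Lemma~\ref{lem:omega_tensor} gives $\omega(\otimes_n\mathcal T_n)=\sum_n\omega(\mathcal T_n)$.
\item Lemma~\ref{lem:unif_bound} applies verbatim to $f_n(t)=\|T_n(t)\|$, since its proof uses only submultiplicativity and the norm identity.
\end{itemize}
Split into the cases $\omega<0$ and $\omega=0$, as in Theorem~\ref{thm:main_incomplete}. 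Note that $\omega\le0$ because the semigroup is similar to a contraction.

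The rebalancing in Step~1 (Lemma~\ref{strictlynegativeLemma}) is purely numerical and carries over unchanged. The factor-wise norms of Steps~3 and~5 then use the complete associativity
\[
\otimes_{\ell\in\NN}H_\ell\simeq H_n\otimes\Bigl(\otimes_{\ell\ne n}H_\ell\Bigr)
\]
from \cite[Theorem VII]{von1939infinite}, together with the complementary complete products $\otimes_{\ell\ne n}\mathcal T_\ell$. These are well defined by Proposition~\ref{prop:nakagami}, since $\prod_{\ell\ne n}\|T_\ell(t)\|$ converges unconditionally, and they satisfy $\|\otimes_{\ell\ne n}T_\ell(t)\|=\prod_{\ell\ne n}\|T_\ell(t)\|$. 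For each $n$, pick unit vectors $h_n$ (respectively $h_{n,m}$) in the complementary complete product on which $\otimes_{\ell\ne n}T_\ell(\delta)$ (respectively $T_\ell(m)$) has norm at least half its operator norm. Then define
\[
\|f\|_n^2:=D_n^{-1}\mu_\delta\Bigl[\bigl\|f\otimes(\otimes_{\ell\ne n}T_\ell(\cdot))h_n\bigr\|_{\rm eq}^2\Bigr],
\]
and use the Banach-limit variant in the zero-growth case. The estimates \eqref{eq:quasi_1}, \eqref{eq:quasi_2}, \eqref{eq:mean_bound_1} and \eqref{eq:mean_bound_2} rely only on the semigroup law, the contractivity of $\otimes_n\mathcal T_n$ in $\|\cdot\|_{\rm eq}$, and the norm identities. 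Since the paper stresses that no measurability is used, these arguments should pass through unchanged. In the negative-growth case, Theorem~\ref{Th:eqNorm} and Proposition~\ref{Prop:simConstant} then supply uniform similarity constants, and in the zero-growth case one obtains $\mathcal C(\mathcal T_n)\le\mathcal C$ directly.

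The main obstacle is the lower bounds $D=\inf_nD_n>0$ in Step~2 and $D_{n,m}\ge1/(4K^2)$ in Step~5. These require uniform positivity of $\|\otimes_{\ell\ne n}T_\ell(\delta)\|$, and also that the vectors $h_n$ can be chosen so that $f\otimes h_n$ is a well-defined element whose orbit norm factorizes as $\|T_n(s)f\|\cdot\|(\otimes_{\ell\ne n}T_\ell(\cdot))h_n\|$. In the complete setting, $h_n$ is a general vector rather than an elementary tensor, so the plan is to take $h_n$ from a single incomplete component on which the complementary operator nearly attains its norm. Proposition~\ref{prop:nakagami} and the lower-bound construction in the proof of Theorem~\ref{incompleteOperatorTh} show that near-norming vectors can be taken of the form $\otimes_{\ell\ne n}g_\ell$. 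The factorization then follows from the associativity unitary. I expect this step to need the most care, namely checking that the near-norming choice is uniform in $n$ via the identity
\[
\inf_n\prod_{\ell\ne n}\|T_\ell(\delta)\|=\frac{\prod_\ell\|T_\ell(\delta)\|}{\sup_n\|T_n(\delta)\|}>0.
\]
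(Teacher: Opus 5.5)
Your proposal is correct and follows the paper's proof essentially step for step: part (i) tensorizes the norms via Proposition~\ref{prop:equivalent-infinite-tensor}(i), and part (ii) uses the same rebalancing, the complementary complete products $\otimes_{\ell\ne n}\mathcal T_\ell$, the averaged factor norms, Theorem~\ref{Th:eqNorm}, and the Banach-limit argument, with $D>0$ obtained from exactly the identity you display together with Lemma~\ref{lem:unif_bound}(ii). The step you flag as delicate is harmless: the paper simply takes arbitrary unit vectors $h_n\in\otimes_{\ell\ne n}H_\ell$, because $\|f\otimes g\|=\|f\|\,\|g\|$ for every $g$ in the two-fold Hilbert tensor product and the associativity unitary intertwines $\otimes_{\ell}T_\ell(t)$ with $T_n(t)\otimes\bigl(\otimes_{\ell\ne n}T_\ell(t)\bigr)$, so there is no need to restrict to elementary or single-component vectors.
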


\begin{proof}
To prove (i), set $C_n:=\mathcal C(e_{d_n}\mathcal T_n)$ 
and for each $n\in\NN$, let $\|\cdot\|_n^{\sim}$ be an equivalent Hilbertian norm on $H_n$ such that
\[
\|h\| \le \|h\|_n^{\sim} \le C_n \|h\|,\qquad h\in H_n,
\]
and
\[
\|e^{d_n t}T_n(t)h\|_n^{\sim} \le \|h\|_n^{\sim},\qquad h\in H_n,\ t\ge0.
\]
Using Proposition~\ref{prop:equivalent-infinite-tensor}(i), we obtain that the corresponding complete tensor product norm on $\otimes_{n\in\NN} H_n$ is then an equivalent Hilbertian norm, with distortion controlled by $\prod_n C_n$, and by construction the semigroup
\(
\motimes_{n\in\NN} e_{d_n}\mathcal T_n
\)
is contractive for that tensorized norm. Since $\sum_{n\in\NN} d_n=0$, we have
\[
\motimes_{n\in\NN} e^{d_n t}T_n(t)
=
\motimes_{n\in\NN} T_n(t),\qquad t\ge0.
\]
Hence $\otimes_{n\in \NN} \mathcal T_n \in \mathcal{SC}\left(\otimes_{n\in \NN} H_n\right)$.


 We now prove (ii). As in the proof of Theorem~\ref{thm:main_incomplete}, we divide the argument into steps 
 
 \smallskip
 \noindent \textbf{Step 1. The negative-growth case.}
 First assume that $\omega \left(\otimes_{n\in\NN} \mathcal T_n\right) < 0$ and choose $d \in  \RR$ such that
  \[
   \omega \left(\otimes_{n\in\NN}  \mathcal T_n \right)  < d < 0.
  \]
  Recall that $\omega \left(\otimes_{n\in\NN} \mathcal T_n\right) = \sum_{n\in \NN} \omega(\mathcal T_n)$ in the extended unconditional sense (as defined in Section~\ref{sec:num-prod}), see Lemma~\ref{lem:omega_tensor}. Hence we may choose $N \in \NN$ such that
  \[
   \sum_{n=1}^N \omega(\mathcal T_n) < d.
  \]
  Since $\otimes_{n\in\NN} \mathcal T_n$ is nonzero, we may choose $\delta>0$ such that
  \[
   \otimes_{n\in\NN} T_n(\delta) \neq 0.
  \]
  By the norm formula for complete tensor products,
  \begin{equation}\label{eq:deltaNonzero}
   \prod_{n\in\NN} \|T_n(\delta)\| = \left\|\otimes_{n\in\NN} T_n(\delta)\right\| \in (0,\infty).
  \end{equation}
  Hence also $\prod_{n\in\NN} \maxone{\|T_n(\delta)\|} <\infty$, and enlarging $N$ if necessary we may moreover assume that
  \[
   \prod_{n>N} \maxone{\|T_n(\delta)\|} < e^{-d\delta}.
  \]
  Then, up to replacing $\mathcal T_N$ and $\mathcal T_{N+1}$ by $e_{-d} \mathcal T_N$ and $e_d \mathcal T_{N+1}$ respectively, we may assume $\sum_{n=1}^N \omega\left( \mathcal T_n\right) < 0$ and $\prod_{n>N} \|T_n(\delta)\| < 1$. 
  Thus, if we set
  \[
  b_n=
  \begin{cases}
  	\omega(T_n),& n=1,\ldots,N,\\[2mm]
  	\delta^{-1}\log\|T_n(\delta)\|,& n>N,
  \end{cases}
  \]
  then
  \(
  \sum_{n\in\mathbb N}b_n<0
  \)
  in the extended unconditional sense.  By Lemma~\ref{strictlynegativeLemma}, there exists
  \((a_n)_{n\in\mathbb N}\in\ell^1\) with
  \[
  \sum_{n\in\mathbb N}a_n=0,
  \qquad
  a_n+b_n<0\quad(n\in\mathbb N).
  \]
  Replacing \(\mathcal T_n\) by \(e_{a_n} \mathcal T_n\), we may therefore suppose that
  \[
  \omega(\mathcal T_n)<0\quad(n=1,\ldots,N),
  \qquad
  \|T_n(\delta)\|<1\quad(n>N).
  \]

%

 \smallskip
\noindent \textbf{Step 2. Complementary complete products.}
  Next, we pass from non-vanishing at the single time \(\delta\) to a
  small-time statement.  Since
  \(
  \motimes_{\ell\in\NN}T_\ell(\delta)\ne0,
  \)
  the semigroup law for the complete product implies
  \[
  \motimes_{\ell\in\NN}T_\ell(t)\ne0,
  \qquad 0<t\le\delta .
  \]
  Indeed, if the product vanished at some \(0<t\le\delta\), then writing
  \(\delta=mt+r\), with \(m\in\NN\) and \(0\le r<t\), would force
  \(\motimes_{\ell\in\NN}T_\ell(\delta)\) to vanish.  Hence, by
  Proposition~\ref{prop:nakagami},
  \[
  \prod_{\ell\in\NN}\|T_\ell(t)\|
  =
  \left\|\motimes_{\ell\in\NN}T_\ell(t)\right\|
  \in(0,\infty),
  \qquad 0<t\le\delta .
  \]
  In particular,
  \[
  \prod_{\ell\ne n}\|T_\ell(t)\|\in(0,\infty),
  \qquad n\in\NN,\quad 0<t\le\delta .
  \]
  Hence, by Proposition~\ref{prop:nakagami},
  \[
  \motimes_{\ell\ne n}T_\ell(t),
  \qquad n\in\NN,\quad 0<t\le\delta,
  \]
  are well-defined and nonzero.
  
  Set
  \begin{equation}\label{eq:average1}
  	f_\ell(t):=\|T_\ell(t)\|,
  	\qquad t>0,\ \ell\in\NN .
  \end{equation}
  For arbitrary \(t>0\), choose \(k\in\NN\) such that \(s:=t/k\le\delta\).  By
  submultiplicativity,
  \[
  \prod_{\ell\ne n}\|T_\ell(t)\|
  \le
  \left(\prod_{\ell\ne n}\|T_\ell(s)\|\right)^k
  <\infty .
  \]
  Thus Proposition~\ref{prop:nakagami} applies to the family
  \((T_\ell(t))_{\ell\ne n}\) for every \(t>0\), and shows that the complete
  tensor product operator
  \(
  \motimes_{\ell\ne n}T_\ell(t)
  \)
  is well-defined and bounded.  Moreover,
  \[
  \left\|\motimes_{\ell\ne n}T_\ell(t)\right\|
  =
  \prod_{\ell\ne n} f_\ell(t),
  \qquad n\in\NN,\quad t>0 .
  \]
  The family \(\{\motimes_{\ell\ne n}T_\ell(t):t\ge0\}\), with the identity
  operator at \(t=0\), is a semigroup, because for \(s,t\ge0\) the bounded
  operators
  \[
  \left(\motimes_{\ell\ne n}T_\ell(s)\right)
  \left(\motimes_{\ell\ne n}T_\ell(t)\right)
  \quad\text{and}\quad
  \motimes_{\ell\ne n}T_\ell(s+t)
  \]
  agree on the dense linear span of elementary tensor vectors.  
  Moreover, since $\otimes_{\ell \in \NN} \mathcal T_\ell \in \mathcal{SC}(\otimes_{\ell \in \NN} H_\ell)$, it follows that
  $$\limsup_{t\to 0^+} \prod_{\ell \in \NN} f_\ell (t) = \limsup_{t\to 0^+} \| \otimes_{\ell \in \NN} T_\ell(t)\| < \infty.
  $$
  Therefore, Lemma~\ref{lem:unif_bound}(ii) implies that there exists $M > 0$ such that, if 
  \begin{equation*}
   A_n :=  \left\{ t \in (0,\delta) \, : \, \left\|\otimes_{\ell \neq n}  T_\ell(t) \right\| \leq M\right\}, \qquad n \in \NN,
  \end{equation*}
  then $\nu_\delta(A_n) \geq 1/2$ for all $n\in \NN$. Note also that
  $$\inf_{n\in \NN} \left\| \otimes_{\ell \neq n} T_\ell(\delta) \right\| =\frac{\left\| \otimes_{\ell \in \NN} T_\ell(\delta) \right\|}{\sup_{n\in \NN} \|T_n(\delta)\|}  > 0.
  $$
  Using these facts, for each $n\in \NN$ choose $h _n \in \otimes_{\ell \neq n} H_\ell$ with $\|h_n\|=1$ and
  \[
   \left\|\left(\otimes_{\ell \neq n} T_\ell(\delta) \right) h_n \right\|
   \ge \frac12 \left\|\otimes_{\ell \neq n} T_\ell(\delta) \right\|.
  \]
  Then
  \[
   \inf_{n\in \NN} \left\|\left(\otimes_{\ell \neq n} T_\ell(\delta) \right) h_n \right\|
   \ge \frac12 \inf_{n\in \NN}\left\|\otimes_{\ell \neq n} T_\ell(\delta) \right\|>0.
  \]
  Set
  $$D_n := \mu_\delta \left[ \left\| \left( \otimes_{\ell \neq n} T_\ell(\cdot) \right) h_n\right\|^2 \right], \qquad n \in \NN.
  $$
  Then, for all $n\in \NN$ and all $t \in \delta - A_n$, we have that
  \begin{align*}
   \left\| \left( \otimes_{\ell \neq n} T_\ell(\delta) \right) h_n\right\|
   &\leq \left\| \otimes_{\ell \neq n} T_\ell(\delta - t) \right\|
        \left\| \left(\otimes_{\ell \neq n} T_\ell(t) \right) h_n\right\| 
   \leq M \left\| \left( \otimes_{\ell \neq n} T_\ell(t) \right) h_n\right\|.
  \end{align*}
  where we used that $\delta - t \in A_n$ if $t \in \delta - A_n$. As $\nu_\delta(\delta - A_n) = \nu_\delta (A_n) \geq 1/2$, it follows that
  \begin{equation*}
   \begin{aligned}
    D:= \inf_{n\in \NN} D_n & \geq \inf_{n \in \NN}  \left( \mu_\delta \left[\left\| \left( \otimes_{\ell \neq n} T_\ell(\cdot) \right) h_n\right\|^2  \chi_{\delta - A_n}(\cdot) \right] \right)
    \geq  \frac{\inf_{n\in \NN} \left\| \left( \otimes_{\ell \neq n} T_\ell(\delta) \right) h_n \right\|^2}{2M^2} > 0.
   \end{aligned}
  \end{equation*}

\smallskip
\noindent\textbf{Step 3. Construction of the factor norms.}
  Now, let $\|\cdot\|_{\rm{eq}}$ be an equivalent Hilbertian norm on $\otimes_{\ell \in \NN} H_\ell$ making $\otimes_{\ell \in \NN} \mathcal T_\ell$ a contractive semigroup and satisfying $\| \cdot\| \leq \|\cdot \|_{\rm{eq}} \leq \mathcal C \|\cdot\|$, where $\mathcal C := \mathcal C\left(\otimes_{\ell \in \NN} \mathcal T_\ell\right)$. 
  
  Recall that by the associative law of complete tensor products, one has $ \otimes_{\ell \in \NN} H_\ell \simeq H_n  \otimes \left( \otimes_{\ell \neq n} H_\ell\right)$ unitarily for every $n\in \NN$, see~\eqref{eq:assoc_law1}. Then, for each $n \in \NN$, define
  \begin{align}\label{eq:average2}
   \|f\|_n^2 := \frac{1}{D_n}\mu_\delta \left[ \left\|f \otimes \left( \otimes_{\ell \neq n} T_\ell(\cdot) \right) h_n \right\|_{\rm{eq}}^2 \right], \qquad f \in H_n.
  \end{align}
  We obtain that
  \begin{align*}
   \|f\|_n^2 & \leq \frac{\mathcal C^2}{D_n} \mu_\delta \left[ \left\|f \otimes \left( \otimes_{\ell \neq n} T_\ell(\cdot) \right) h_n \right\|^2 \right] \leq \mathcal C^2  \|f\|^2, \qquad f \in H_n, \, n \in \NN,
  \end{align*}
  and similarly we get that
  \begin{align*}
   \|f\|_n^2 & \geq \|f\|^2, \qquad f \in H_n, \, n \in \NN.
  \end{align*}
  Consequently, $\|\cdot\|_n$ is an equivalent Hilbertian norm on $H_n$ for every $n\in \NN$. Moreover, for all $s \in (0,\delta)$, $n \in \NN$ and $f \in H_n$, we have that

  \begin{equation}\label{eq:quasi_1Bis}
   \begin{aligned}
    &\mu_\delta \left[ \left\| T_n(s) f \otimes \left(\otimes_{\ell \neq n} T_\ell(\cdot)\right) h_n \right\|_{\rm{eq}}^2 \chi_{[s,\delta)} (\cdot) \right]
    \\=& \mu_\delta \left[ \left\| T_n(s) f \otimes \left(\otimes_{\ell \neq n} T_\ell(s+\cdot)\right) h_n \right\|_{\rm{eq}}^2 \chi_{[0,\delta-s)} (\cdot) \right]
    \\ \leq & \mu_\delta \left[ \left\| f \otimes \left(\otimes_{\ell \neq n} T_\ell(\cdot)\right) h_n \right\|_{\rm{eq}}^2 \right]
    = D_n \|f\|_n^2,
   \end{aligned}
  \end{equation}
  and
  \begin{equation}\label{eq:quasi_2Bis}
   \begin{aligned}
    &\mu_\delta \left[ \left\| T_n(s) f \otimes \left(\otimes_{\ell \neq n} T_\ell(\cdot)\right) h_n \right\|_{\rm{eq}}^2 \chi_{[0,s)} (\cdot) \right]
    \\\leq & \mu_\delta \left[ \left\| T_n(s - \cdot) f \otimes  h_n \right\|_{\rm{eq}}^2 \chi_{[0,s)} (\cdot) \right]
    \\ \leq &  K_{n,s}^2  \mathcal C^2  \frac{s}{\delta} \|f\|^2
    \leq  K_{n,s}^2 \mathcal C^2  \frac{s}{\delta} \|f\|_n^2,
   \end{aligned}
  \end{equation}
  where 
  \begin{equation}\label{eq:Kns_boundBis}
   K_{n,s}:= \sup_{t \in [0,s]} \|T_n(t)\| = \sup_{t\in [0,s]} f_n(t) < \infty,
  \end{equation}
  see Lemma~\ref{lem:unif_bound}(i). Since 
  $$\|T_n(s) f\|_n^2 := \frac{1}{D_n} \mu_\delta \left[ \left\| T_n(s) f \otimes \left(\otimes_{\ell \neq n} T_\ell(\cdot)\right) h_n \right\|_{\rm{eq}}^2 \right],
  $$
  we obtain from \eqref{eq:quasi_1Bis} and \eqref{eq:quasi_2Bis} that
  $$\|T_n(s)\|_n \leq \sqrt{1 + \frac{\mathcal C^2 K_{n,s}^2}{\delta D_n} s}  \leq \sqrt{1 + \frac{\mathcal C^2 K_{n,s}^2}{\delta D} s}, \qquad s \in [0,\delta], \, n \in \NN.
  $$
  Using Lemma~\ref{lem:unif_bound}(i), we obtain that
  $$\limsup_{s\to 0^+} K_{n,s} \leq K:= \limsup_{s \to 0^+} \left\| \otimes_{\ell \in \NN} T_\ell(s) \right\| < \infty.
  $$  
  Hence
  \[
   \limsup_{s\to 0^+}\frac{\|T_n(s)\|_n-1}{s}
   \le \frac{\mathcal C^2 K^2}{2\delta D},\qquad n\in\NN,
  \]
  and thus, using the submultiplicativity of the mappings $s\mapsto \|T_n(s)\|_n$,
  \begin{equation}\label{eq:quasiComplete}
   \|T_n(s)\|_n \leq \exp\left(\frac{\mathcal C^2 K^2}{2\delta D} \, s\right), \qquad s \geq 0, \, n \in \NN.
  \end{equation}
  
  Consequently, $\mathcal T_n \in \mathcal{SQC}(H_n)$ and 
  \begin{equation*}
   \mathcal C \left(e_{- \frac{\mathcal C^2 K^2}{2\delta D}}\mathcal T_n \right) \leq \mathcal C, \qquad n \in \NN.
  \end{equation*}

  \smallskip
  \noindent\textbf{Step 4. Application of the low-regularity theorem.}
  We are again in the setting of the one-semigroup similarity theorem from Section~\ref{sec:sim-reg-th}: the auxiliary norms supply the quasi-contractive control, while $\|T_n(\delta)\|<1$ for $n>N$ provides the required one-time contractive estimate.
  
  Since $\|h\|\leq \|h\|_n \leq \mathcal C\|h\|$, it follows from~\eqref{eq:quasiComplete} that
  \[
   \|T_n(t)\| \leq \mathcal C\exp\left(\frac{\mathcal C^2 K^2}{2\delta D}t\right), \qquad t\in[0,\maxone{\delta}],\ n>N,
  \]
  so the local bound required in Theorem~\ref{Th:eqNorm} is uniform in $n$. 
  Since \(\|T_n(\delta)\|<1\), the operator \(T_n(\delta)\) is already
  contractive in the original Hilbert norm.  Thus the one-time contractive
  input in Theorem~\ref{Th:eqNorm} may be taken with \(C_2=1\), which yields that
  $\mathcal T_n \in \mathcal{SC}(H_n)$ for all $n > N$ and
  \begin{equation*}\label{eq:Cn_boundBis}
   \sup_{n>N} \mathcal C(\mathcal T_n)< \infty.
  \end{equation*}
  For the finitely many remaining factors \(n=1,\ldots,N\), the inequality
  \(\omega(\mathcal T_n)<0\), together with the local boundedness near zero obtained
  above, gives a quasi-contractive equivalent norm. Choosing \(\tau>0\) so
  large that \(\|T_n(\tau)\|<1\), the operator \(T_n(\tau)\) is similar to a
  contraction.  Hence Theorem~\ref{Th:eqNorm} applies and yields \(\mathcal T_n\in\mathcal{SC}(H_n)\).

	Returning to the original family, the required balancing sequence is obtained
  by adding to \((a_n)_{n\in\NN}\) the finitely supported balancing used above to replace
  \(\mathcal T_N\) and \(\mathcal T_{N+1}\).  This does not affect absolute summability, and the
  sum of the resulting sequence is still zero.
  
  Hence, we have proven assertion (ii) when $\omega \left(\otimes_{n\in\NN} \mathcal T_n\right) < 0$. \medskip

  \smallskip
  \noindent\textbf{Step 5. The zero-growth case.} If $\omega \left(\otimes_{n\in\NN} \mathcal T_n\right) = 0$, set 
  $$d_n:=-\omega(\mathcal T_n), \qquad n \in \NN.
  $$
  Then Lemma~\ref{lem:omega_tensor} gives
  \[
  \sum_{n\in \NN} d_n = - \omega \left(\otimes_{n\in \NN} \mathcal T_n\right) = 0
  \]
  in the extended unconditional sense. Hence, the positive and negative parts of the series are both finite, and therefore \((d_n)_{n\in\mathbb N}\in\ell^1\). 
  Then, replacing $\mathcal T_n$ by $e_{d_n} \mathcal T_n$, we may assume that 
  $$\omega(\mathcal T_n)=0, \qquad n\in\NN.
  $$

%
  
  The goal is now the same as in the incomplete case: starting from one contractive Hilbertian norm on the whole tensor product, we construct a uniformly controlled family of equivalent Hilbertian norms on the factors and then recover factorwise contractivity.
  Set $K :=  \sup_{t\geq0} \left\| \otimes_{\ell \in \NN} T_\ell(t)\right\|$, and note that $K< \infty$ since $\otimes_{\ell \in \NN} \mathcal T_\ell$ is similar to a contractive semigroup. Since $\|T_\ell(t)\|\geq 1$ for all $l\in \NN$ and $t\geq 0$ (recall that $\omega(\mathcal T_\ell)=0$), we have that
  \begin{equation}\label{eq:KboundBis}
   1\leq \prod_{\ell \neq n} \left\|  T_\ell(t)\right\| \leq \prod_{\ell \in \NN} \left\|  T_\ell(t)\right\| = \left\| \otimes_{\ell \in \NN} T_\ell(t) \right\| \leq K, \qquad t \geq 0, \, n \in \NN.
  \end{equation}
  Arguing as in~\eqref{eq:ellSemigroup}, we deduce that for every $n\in \NN$,
  $$\motimes_{\ell \neq n} \mathcal T_\ell \subset \linearOp (\motimes_{\ell \neq n} H_\ell),
  $$
  is a well-defined, nonzero semigroup. Since $\left\| \otimes_{\ell \neq n} T_\ell(t) \right\| = \prod_{\ell \neq n} \|T_\ell(t)\|$, the inequalities~\eqref{eq:KboundBis} imply that 
  $$1 \leq \|\otimes_{\ell \neq n} T_\ell (t)\| \leq K, \qquad n \in \NN, \, t \geq 0.
  $$
  Therefore, for each $n,m\in \NN$ we may choose a unit vector $h_{n,m}\in \otimes_{\ell \neq n} H_\ell$ such that
  \[
   \|\otimes_{\ell \neq n} T_\ell (m) h_{n,m}\|\geq \frac12.
  \]
  For $t\in[0,m]$ we then have
  \[
   \frac12 \le \| (\otimes_{\ell \neq n} T_\ell (m)) h_{n,m}\| = \| (\otimes_{\ell \neq n} T_\ell (m-t)) (\otimes_{\ell \neq n} T_\ell (t)) h_{n,m}\| \le K\,\| (\otimes_{\ell \neq n} T_\ell (t)) h_{n,m}\|,
  \]
  and hence
  \begin{equation*}
   \left\| \left(\otimes_{\ell \neq n} T_\ell(t) \right) h_{n,m} \right\|
	\geq \frac{1}{2K}, \qquad t \in [0,m].
  \end{equation*}
  For each $n,m\in \NN$, set also
  \begin{align*}
   D_{n,m} &:= \mu_m \left[ \left\| \left(\otimes_{\ell \neq n} T_\ell(\cdot) \right) h_{n,m}  \right\|^2 \right],
  \end{align*}
  where we consider the canonical tensor product norm in $\otimes_{\ell \neq n} H_\ell$. Then, for all $n,m \in \NN$, we have that $D_{n,m} \geq 1/(4K^2)$ by our choice of $h_{n,m}$, and
  \begin{align*}
   D_{n,m}  \leq \sup_{t \in [0,m]} \left\| \otimes_{\ell \neq n} T_\ell(t) \right\|^2 \leq K^2, 
  \end{align*}
  see \eqref{eq:KboundBis}. Now let $\|\cdot\|_{\rm{eq}}$ be a Hilbertian norm on $\otimes_{\ell \in \NN} H_\ell$ such that $\|\otimes_{\ell \in \NN} \mathcal T_\ell(t)\|_{\rm{eq}}\leq 1$ for all $t\geq0$ and $\|\cdot\| \leq \|\cdot\|_{\rm{eq}} \leq \mathcal C \|\cdot\|$, where $\mathcal C := \mathcal C \left( \otimes_{\ell \in \NN} \mathcal T_\ell\right)$, and define
  \begin{equation*}
   \|f\|_{n,m}^2 := \frac{1}{D_{n,m}}\mu_m \left[ \left\| f \otimes \left(\otimes_{\ell \neq n} T_\ell(\cdot) \right) h_{n,m} \right\|_{\rm{eq}}^2 \right], \qquad f \in H_n,
  \end{equation*}
  where we used the unitary equivalence given by the associative law for complete tensor products
  $$\otimes_{\ell \in \NN} H_\ell \simeq H_n \otimes \left(\otimes_{\ell \neq n} H_\ell\right),
  $$ 
  see~\eqref{eq:assoc_law1}.

  Therefore, one has
  \begin{align*}
   \|f\|_{n,m}^2
   &\leq \frac{\mathcal C^2}{D_{n,m}}
      \mu_m \left[ \left\| f \otimes \left(\otimes_{\ell \neq n} T_\ell(\cdot) \right) h_{n,m} \right\|^2 \right] 
   \leq \mathcal C^2 \|f\|^2,
   \qquad n,m\in \NN,\ f \in H_n,
  \end{align*}
  and similarly, $\|f\|_{n,m}  \geq  \|f\|$ for all $n,m \in \NN$ and $f\in H_n$.
  Fix a shift-invariant mean $\operatorname{LIM}$ on $\ell^\infty(\NN)$ and define
  $$\|f\|_n^2 := \operatorname{LIM}_{m\to\infty} \|f\|_{n,m}^2, \qquad f \in H_n, \, n \in \NN.
  $$
  Let $\langle\cdot,\cdot\rangle_{n,m}$ denote the inner product inducing $\|\cdot\|_{n,m}$, and define
  $$\langle f,g\rangle_n := \operatorname{LIM}_{m\to\infty} \langle f,g\rangle_{n,m}, \qquad f,g\in H_n.$$
  Then $\langle\cdot,\cdot\rangle_n$ is a positive semidefinite sesquilinear form on $H_n$ and
  $\|f\|_n^2=\langle f,f\rangle_n$ for all $f\in H_n$.
  Since the two-sided estimates above imply $\|f\| \le \|f\|_n \le \mathcal C\|f\|$, the form is definite,
  and hence, for each $n\in \NN$, $\|\cdot\|_n$ is an equivalent Hilbertian norm on $H_n$ satisfying
  \begin{equation}\label{eq:sim_Const_HnBis}
   \| f \| \leq \|f\|_n \leq  \mathcal C \| f \|, \qquad f \in H_n,\, n \in \NN.
  \end{equation}
  Moreover, for every $n,m\in \NN$, $s\in [0,1]$ and $f\in H_n$, one has
  \begin{align*}
   \|T_n(s)f\|_{n,m}^2 &= \frac{1}{D_{n,m}} \mu_m \left[  \left\| T_n(s) f \otimes \left(\otimes_{\ell \neq n} T_\ell(\cdot)\right) h_{n,m} \right\|_{\rm{eq}}^2 \right]
  \end{align*}
  Note that, for all $n,m \in \NN$ and $f \in H_n$,
  \begin{equation}\label{eq:mean_bound_1Bis}
   \begin{aligned}
    &\mu_m \left[ \left\| T_n(s) f \otimes \left(\otimes_{\ell \neq n} T_\ell(\cdot) \right) h_{n,m} \right\|_{\rm{eq}}^2 \chi_{[s,m)}(\cdot) \right]
    \\ \leq & \mu_m \left[ \left\| f \otimes \left(\otimes_{\ell \neq n} T_\ell(\cdot) \right) h_{n,m} \right\|_{\rm{eq}}^2 \chi_{[0,m-s)}(\cdot) \right]
    \leq  D_{n,m} \|f\|_{n,m}^2,
   \end{aligned}
  \end{equation}
  where we used $\|\otimes_{\ell \in \NN} T(s)\|_{\rm{eq}} \leq 1$ and the identities from Section~\ref{subsec:periodic-means}, and
  \begin{equation}\label{eq:mean_bound_2Bis}
   \begin{aligned}
    &\mu_m \left[ \left\| T_n(s) f \otimes \left(\otimes_{\ell \neq n} T_\ell(\cdot) \right) h_{n,m} \right\|_{\rm{eq}}^2 \chi_{[0,s)}(\cdot) \right] 
    \\ \leq & \mathcal C^2 \mu_m \left[ \left\| T_n(s - \cdot) f \otimes  h_{n,m} \right\|^2 \chi_{[0,s)}(\cdot) \right]
    \\ \leq & \mathcal C^2 \sup_{t\geq 0} \left(\|T_n(t)\|^2 \right) \frac{s}{m} \|f\|^2
    \leq  \mathcal C^2  K^2 \frac{s}{m} \|f\|_{n,m}^2,
   \end{aligned}
  \end{equation}
  where we used that 
  $$\sup_{n\in \NN, \, t\geq 0} \|T_n(t)\| \leq \sup_{t\geq0} \prod_{\ell \in \NN} \left\|T_\ell(t)\right\| = K,
  $$ 
  since $\|T_\ell(t)\| \geq 1$ for all $l\in \NN$ and $t\geq0$ as $\omega(\mathcal T_\ell) = 0$.
  Then \eqref{eq:mean_bound_1Bis} and \eqref{eq:mean_bound_2Bis} imply that $\|T_n(s)\|_{n,m}^2 \leq 1 + \frac{\mathcal C^2 K^2 }{D_{n,m}} \frac{s}{m}$ for $n,m\in \NN$, $s\in [0,1]$. Hence, we conclude
  $$\|T_n(s)\|_n^2 \leq  \operatorname{LIM}_m \left[  1 + \frac{\mathcal C^2 K^2 }{D_{n,m}} \frac{s}{m} \right] = 1, \qquad n \in \NN, \, s\in [0,1],
  $$
  where we used that $D_{n,m}\geq 1/(4K^2)$ for all $n,m\in \NN$. Hence $\|T_n(s)\|_n\le 1$ for all $s\in[0,1]$. If $t\ge0$ and $m\in\NN$ is chosen so that $t/m\in[0,1]$, then $T_n(t)=T_n(t/m)^m$, and therefore $\|T_n(t)\|_n\le 1$. Consequently, $\mathcal T_n \in \mathcal{SC}(H_n)$ and, by \eqref{eq:sim_Const_HnBis}, 
  $$\mathcal C(\mathcal T_n) \leq \mathcal C, \qquad n\in \NN,
  $$
  finishing the proof.
\end{proof}


The analogue of Theorem~\ref{continuousInfiniteCor11} for quasi-contraction semigroups also holds
in the setting of complete tensor products.  Its statement and proof are
completely analogous, apart from the evident replacement of incomplete
tensor products by complete tensor products, so we omit the details.

\subsection{Similarity of infinite tensor products of operators}

In this section, we present a counterpart of our similarity results adapted to infinite tensor products of single operators. These may be of interest to a broader audience, and naturally extend the ones given in \cite[Section 7]{OlivaMazaTomilovTensorI} for finite tensor products to the infinite setting.

This setting is simpler than that of semigroups for several reasons. First, the analogues of the averaging functions considered in the proofs of Theorems~\ref{thm:main_incomplete} and~\ref{thm:main_complete} (see, for instance, \cite[Section 7]{OlivaMazaTomilovSimilarity}) are now discrete functions from $\ZZ_+$ to $[0,\infty)$. Hence, these functions are automatically measurable, and then the shift invariant means used in the proofs can be replaced by finite sums. Moreover, all arguments needed to prove local bounds near zero that also are uniform across all factors become trivial. 

Second, and most importantly, proving similarity to a contraction for a single operator does not require the splitting similarity result for small times and a single large time provided in Section~\ref{sec:sim-reg-th}. As a consequence, all the results from Section~\ref{sec:sim-reg-th} become unnecessary in this context. Even more, the proof in the case where the spectral radius of the operator tensor product is strictly less than one (the analogue of Theorem~\ref{thm:main_incomplete}(ii) when $\omega (\otimes_{n\in \NN}^{\boldsymbol y} \mathcal T_n )<0$) becomes almost immediate after a suitable rescaling of the operators using Lemmas~\ref{infiniteSpectralRadiusLemma} and~\ref{strictlynegativeLemma}.

In view of the observations above, we restrict ourselves to presenting a sample result for incomplete tensor products of operators, without including the proofs. 
We also omit the corresponding result for complete tensor products of operators, since it is analogous to the incomplete case presented below.

\begin{theorem}\label{thm:main_incomplete_op}
	Let $(H_n)_{n\in\NN}$ be a family of Hilbert spaces, let \(\boldsymbol y=(y_n)_{n\in\NN}\in\times_{n\in\NN}H_n\) be a
	\(C_0\)-sequence and, for each $n\in\NN$, let $A_n \in \linearOp(H_n)$. Assume that $\motimes_{n\in\NN}^{\boldsymbol y} A_n \in \linearOp\bigl(\motimes_{n\in\NN}^{\boldsymbol y}H_n\bigr) \setminus\{0\}$. Then the following assertions hold.
	\begin{enumerate}
		\item[(i)] If there exists a sequence $(\alpha_n)_{n\in\NN}\subset (0,\infty)$ such that
		\begin{equation}\label{eq:dn2}
			\prod_{n\in\NN}\alpha_n
			\quad\hbox{converges unconditionally to }1,
		\end{equation}
		$ \alpha_n A_n$ is similar to a contraction for all $n\in\NN$, and
		\[
		\prod_{n\in\NN}\mathcal C(\alpha_n A_n)<\infty,
		\]
		then $\otimes_{n\in\NN}^{\boldsymbol y} A_n$
		is similar to a contraction on $\motimes_{n\in\NN}^{\boldsymbol y}H_n$.
		
		\item[(ii)] Conversely, if $\otimes_{n\in\NN}^{\boldsymbol y} A_n$ is similar to a contraction on $\otimes_{n\in\NN}^{\boldsymbol y}H_n$, then there exists a sequence $(\alpha_n)_{n\in\NN}\subset (0,\infty)$ satisfying \eqref{eq:dn2} such that $\alpha_n A_n$ is similar to a contraction for all $n\in \NN$, and
		\[
		\sup_{n\in\NN} \,\mathcal C(\alpha_n A_n)<\infty.
		\]
	\end{enumerate}
\end{theorem}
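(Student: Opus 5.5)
The plan is to mirror the proof of Theorem~\ref{thm:main_incomplete}, replacing continuous time by $\ZZ_+$ and the periodic means $\mu_\delta$ by finite averages. For (i), I choose for each $n$ an equivalent Hilbertian norm $\|\cdot\|_n^{\sim}$ with $\|h\|\le\|h\|_n^{\sim}\le C_n\|h\|$, where $C_n=\mathcal C(\alpha_nA_n)$, such that $\alpha_nA_n$ is contractive for it. By Proposition~\ref{prop:equivalent-infinite-tensor}(ii), these norms tensorize to an equivalent Hilbertian norm on $\otimes_{n\in\NN}^{\boldsymbol y}H_n$ with distortion $\prod_nC_n$. Next, Theorem~\ref{incompleteOperatorTh} applied in the renormed spaces shows that $\otimes^{\boldsymbol y}_{n}\alpha_nA_n$ is well defined, with norm $\prod_n\|\alpha_nA_n\|_n^{\sim}\le1$. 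The hypotheses there hold: $\alpha_nA_n\ne0$, and $\sum_n|1-\langle \alpha_nA_ny_n,y_n\rangle|<\infty$ because $\sum_n|1-\alpha_n|<\infty$ by Lemma~\ref{lem:vn1}(ii) and $\alpha_n$ is bounded. Since $\prod_n\alpha_n=1$, this operator equals $\otimes^{\boldsymbol y}_nA_n$, which is therefore a contraction in the tensorized norm.

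For (ii), I set $A:=\otimes^{\boldsymbol y}_nA_n$ and use Lemma~\ref{infiniteSpectralRadiusLemma}: $r(A)=\prod_n r(A_n)\le1$. I split into two cases.

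\emph{Case $r(A)<1$.} Put $b_n:=\log\|A_n\|$ for $n>N$ and $b_n:=\log r(A_n)$ for $n\le N$, with $N$ chosen (after a finite rebalancing of two factors, as in Step~1 of the semigroup proof) so that $\sum_n b_n<0$ and the tail satisfies $\prod_{n>N}\max\{1,\|A_n\|\}$ small. Lemma~\ref{strictlynegativeLemma} then gives $(a_n)\in\ell^1$ with $\sum_na_n=0$ and $a_n+b_n<0$. Set $\alpha_n=e^{a_n}$. Then $\|\alpha_nA_n\|<1$ for $n>N$, so $\mathcal C(\alpha_nA_n)=1$ there. For $n\le N$, $r(\alpha_nA_n)<1$, and an operator with spectral radius below one is similar to a strict contraction via $\|h\|^2_\sharp=\sum_k\|B^kh\|^2$. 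Only finitely many constants are involved, so the supremum is finite.

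\emph{Case $r(A)=1$.} Set $\alpha_n:=1/r(A_n)$. Then $\sum_n|\log\alpha_n|<\infty$ and $\prod_n\alpha_n=1$, so after rescaling we may assume $r(A_n)=1$ for all $n$. Consequently $\|A_n^k\|\ge1$, and $1\le\prod_{\ell\ne n}\|A_\ell^k\|\le K:=\sup_k\|A^k\|<\infty$. For each $n$ and $m$, pick a unit vector $h_{n,m}$ in the complementary product with $\|(\otimes_{\ell\ne n}A_\ell^m)h_{n,m}\|\ge1/2$; this forces $\|(\otimes_{\ell\ne n}A_\ell^j)h_{n,m}\|\ge1/(2K)$ for $j\le m$. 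Then define
\[
\|f\|_{n,m}^2:=\frac{1}{D_{n,m}}\cdot\frac1m\sum_{j=0}^{m-1}\Bigl\|f\otimes\bigl(\otimes_{\ell\ne n}A_\ell^j\bigr)h_{n,m}\Bigr\|_{\rm eq}^2,
\]
with the normalizing constant $D_{n,m}\in[1/(4K^2),K^2]$ and $\|\cdot\|_{\rm eq}$ a norm making $A$ contractive with distortion $\mathcal C$.

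The shift estimate is where the discrete setting helps. The terms with $j\ge1$ telescope using contractivity of $A$, and only one boundary term remains, of size at most $\mathcal C^2K^2\|f\|^2/m$. This gives $\|A_nf\|_{n,m}^2\le(1+\mathcal C^2K^2\cdot4K^2/m)\|f\|_{n,m}^2$. Passing to a Banach limit in $m$ produces norms $\|\cdot\|_n$ with $\|f\|\le\|f\|_n\le\mathcal C\|f\|$ for which $A_n$ is contractive, so $\mathcal C(\alpha_nA_n)\le\mathcal C$ uniformly in $n$.

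The main obstacle I expect is the $r(A)<1$ case: controlling the finitely many leading factors and justifying the tail bookkeeping. In particular, the norms $\|A_n\|$ for $n>N$ must be forced below one after a zero-sum $\ell^1$ rebalancing, which requires the extended-sense sum $\sum_nb_n<0$ mixing spectral radii for $n\le N$ with norms for $n>N$. I would handle this exactly as Step~1 of Theorem~\ref{thm:main_incomplete}, with $\delta=1$.
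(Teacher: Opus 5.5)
Your proposal is correct and follows the route the paper indicates for this (unproved) statement. The paper describes the proof as the discrete analogue of Theorem~\ref{thm:main_incomplete}, with finite averages replacing $\mu_\delta$, and with the $r<1$ case handled by rescaling via Lemmas~\ref{infiniteSpectralRadiusLemma} and~\ref{strictlynegativeLemma}. One small point in (i): condition (iii) of Theorem~\ref{incompleteOperatorTh} must be checked for the renormed inner products $\langle\cdot,\cdot\rangle_n^{\sim}$, not only the original ones, and this follows from the original condition together with $\sum_n (C_n^2-1)<\infty$ exactly as in \eqref{eq:eqclasses}.
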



\end{document}